\documentclass[10pt]{amsart}

\usepackage{graphicx, amssymb, enumerate, pstricks, pst-node}
\usepackage{pst-all}
\usepackage{latexsym}
\usepackage{amsmath}
\usepackage{epsfig}
\usepackage{epic}
\usepackage{amssymb}
\usepackage{url}
%\usepackage{lineno}
%\linenumbers
%\usepackage{hyperref}

\sloppy
\setlength{\unitlength}{1mm}

\newtheorem{theorem}{Theorem}[section]
\newtheorem*{hypothesis}{Hypothesis~VII}
\newtheorem{lemma}[theorem]{Lemma}
\newtheorem{corollary}[theorem]{Corollary}

\newtheorem{sublemma}{}[theorem]

\newcommand{\ba}{\backslash}
\newcommand{\cl}{{\rm cl}}
\newcommand{\fcl}{{\rm fcl}}

\newcommand{\cls}{{\cl^*}}

\newcommand{\thc}{$3$-connected}
\newcommand{\ths}{$3$-separation}
\newcommand{\ifc}{internally $4$-connected}
\newcommand{\sfc}{sequentially $4$-connected}

\newcommand{\ffsc}{$(4,4,S)$-connected}

\newcommand{\cn}{contradiction}
\newcommand{\btu}{\bigtriangleup}
\newcommand{\ftv}{$(4,3)$-violator}
\newcommand{\ffsv}{$(4,4,S)$-violator}
\newcommand{\ort}{orthogonality}

\newcommand{\ffspc}{$(4,5,S,+)$-connected}
\newcommand{\ffspv}{$(4,5,S,+)$-violator}
\newcommand{\ns}{non-sequential}
\newcommand{\al}{\alpha}
\newcommand{\be}{\beta}
\newcommand{\ga}{\gamma}
\newcommand{\de}{\delta}
%=============================
\begin{document}

\title[A Splitter Theorem for Internally $4$-connected Binary Matroids VII]{Towards a Splitter Theorem for Internally $4$-connected Binary Matroids VII}

\thanks{The first author was supported by NSF IRFP Grant OISE0967050, an LMS Scheme 4 grant, and an AMS-Simons travel grant. The second author was supported by the National Security Agency.}

\author{Carolyn Chun}
\address{School of Mathematical Sciences,
Brunel University,
London,  
England}
\email{chchchun@gmail.com}

\author{James Oxley}
\address{Department of
 Mathematics, Louisiana State University, Baton Rouge, Louisiana, USA}
\email{oxley@math.lsu.edu}

\subjclass{05B35, 05C40}
\keywords{splitter theorem, binary matroid, internally $4$-connected}
\date{\today}

\begin{abstract} 
Let $M$ be a $3$-connected binary matroid; $M$ is  internally $4$-connected if one side of every $3$-separation is a triangle or a triad, and $M$ is \ffsc\ if one side of every $3$-separation is a triangle, a triad, or a $4$-element fan. 
Assume $M$ is   \ifc\   
 and that neither $M$ nor its dual is  a cubic 
M\"{o}bius or planar ladder or a certain coextension thereof. 
Let $N$ be an \ifc\ proper minor of $M$.  
Our aim is to show that  $M$   has a proper \ifc\ minor with an $N$-minor that can be obtained from $M$ either by removing at most four elements, or by   removing elements in an easily described way from a special substructure of $M$. When this aim cannot be met, the earlier papers in this series showed that, 
 up to duality, $M$ has a good bowtie, that is, a pair, $\{x_1,x_2,x_3\}$ and $\{x_4,x_5,x_6\}$, of disjoint triangles and a cocircuit, $\{x_2,x_3,x_4,x_5\}$, where $M\ba x_3$ has an $N$-minor and is \ffsc. 
We also showed that, when $M$ has a good bowtie, either $M\ba x_3,x_6$ has an $N$-minor and $M\ba x_6$ is \ffsc; or $M\ba x_3/x_2$ has an $N$-minor and is \ffsc. 
In this paper, we show that, when $M\ba x_3,x_6$ has no $N$-minor, $M$ 
has an \ifc\ proper minor  with an $N$-minor that can be obtained  from $M$ by removing at most three elements, or by   removing elements in a well-described way from a special substructure of $M$. 
This is the penultimate step towards obtaining a splitter theorem for the class of \ifc\ binary matroids.
\end{abstract}

\maketitle

\section{Introduction}
\label{introduction}

Seymour's Splitter Theorem~\cite{seymour} proved that if $N$ is a $3$-connected proper minor of a $3$-connected matroid $M$, then $M$ has a proper $3$-connected minor $M'$ with an $N$-minor such that $|E(M)  - E(M')| \le 2$.  Furthermore, such an $M'$ can be found with $|E(M)|-|E(M')|=1$ unless $r(M)\geq 3$ and $M$ is a wheel or a whirl.  
This result has been extremely useful in inductive and constructive arguments for \thc\ matroids.  
In this paper, we prove the penultimate step in obtaining a corresponding result for \ifc\ binary matroids.  
Specifically, we will prove that if $M$ and $N$ are
\ifc\ binary matroids, and $M$ has
a proper $N$-minor, then $M$ has a proper minor $M'$ such that
$M'$ is internally $4$-connected with an $N$-minor, and
$M'$ can be produced from $M$ by a small number of
simple operations. 

 Any unexplained matroid terminology used here will follow \cite{oxrox}. The only $3$-separations allowed in an internally $4$-connected matroid have a triangle or a triad on one side. A $3$-connected matroid  $M$ is {\it $(4,4,S)$-connected} if, for every $3$-separation $(X,Y)$ of $M$, one of $X$ and $Y$ is a triangle, a triad, or a $4$-element fan, that is, a $4$-element set $\{x_1,x_2,x_3,x_4\}$ that can be ordered so that $\{x_1,x_2,x_3\}$ is a triangle and $\{x_2,x_3,x_4\}$ is a triad. 

To provide a context for the main theorem of this paper, we briefly describe our progress towards obtaining the desired splitter theorem.  Johnson and Thomas \cite{johtho} showed that, even for graphs, a splitter theorem in the internally $4$-connected case must take account of some special examples. For $n \ge 3$, let  $G_{n+2}$  be the {\it biwheel} with $n+2$ vertices, that is, $G_{n+2}$ consists of  an $n$-cycle $v_1,v_2,\ldots,v_{n},v_1$, the {\it rim}, and two additional  vertices, $u$ and $w$, both of which are adjacent to every $v_i$. Thus  the dual of $G_{n+2}$ is a cubic planar ladder. Let $M$ be the cycle matroid of $G_{2n+2}$ for some $n \ge 3$ and 
let $N$ be the cycle matroid of the graph that is obtained by proceeding around the rim of $G_{2n+2}$ and alternately deleting the edges from the rim vertex to $u$ and to $w$. Both $M$ and $N$ are internally $4$-connected but there is no   internally $4$-connected proper minor of $M$ that has a proper $N$-minor. We can modify $M$ slightly and still see the same phenomenon. Let $G_{n+2}^+$ be obtained from $G_{n+2}$ by adding a new edge $z$ joining the hubs $u$ and $w$. Let $\Delta_{n+1}$ be the binary matroid that is obtained from 
$M(G_{n+2}^+)$ by deleting the edge $v_{1}v_n$ and adding the third element on the line spanned by $wv_n$ and $uv_{1}$. This new element is also on the line spanned by $uv_n$ and $wv_{1}$. For $r \ge 3$, Mayhew, Royle, and Whittle~\cite{mayroywhi} call $\Delta_r$ the {\it rank-$r$ triangular M\"{o}bius matroid} and note that $\Delta_r \ba z$ is the dual of the cycle matroid of a cubic M\"{o}bius ladder. 
The following is the main result of \cite[Theorem~1.2]{cmoIII}. 

\begin{theorem}
\label{44S}
Let $M$ be an \ifc~binary matroid with an \ifc~proper minor $N$ such that $|E(M)|\geq 15$ and $|E(N)|\geq 6$.  
Then 
\begin{itemize}
\item[(i)] $M$ has a proper minor $M'$ such that $|E(M)-E(M')|\leq 3$ and $M'$ is \ifc\ with an $N$-minor; or 
\item[(ii)] for some $(M_0,N_0)$ in $\{(M,N), (M^*,N^*)\}$, the matroid $M_0$ has a triangle $T$ that contains an element $e$ such that  $M_0\ba e$  is $(4,4,S)$-connected with an $N_0$-minor; or 
\item[(iii)] $M$ is isomorphic to $M(G_{r+1}^+)$, $M(G_{r+1})$, $\Delta_r$,   or 
$\Delta_r \ba z$ for some $r \ge 5$.
\end{itemize}
\end{theorem}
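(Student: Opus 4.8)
The plan is to proceed by a single-element-step reduction driven by Seymour's Splitter Theorem, and then to classify the obstructions to internal $4$-connectivity that arise. Since $N$ is a $3$-connected proper minor of the $3$-connected matroid $M$, the Splitter Theorem, together with the standard simplification/cosimplification bookkeeping for binary matroids, yields an element $e$ such that one of $M\ba e$, $M/e$ is $3$-connected with an $N$-minor. Replacing $(M,N)$ by $(M^*,N^*)$ if necessary — this is exactly why conclusion (ii) is phrased for some $(M_0,N_0)\in\{(M,N),(M^*,N^*)\}$ — we may assume $M\ba e$ is $3$-connected with an $N$-minor. If $M\ba e$ is internally $4$-connected, then $M'=M\ba e$ witnesses conclusion (i), since $|E(M)-E(M')|=1\le 3$. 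So we may assume $M\ba e$ has a $3$-separation $(X,Y)$ with neither side a triangle or a triad. Because $M$ is internally $4$-connected, re-adding $e$ must destroy this $3$-separation, and a short orthogonality and connectivity-function argument forces $e$ to lie in a triangle $T$ of $M$ with $T-e$ straddling $(X,Y)$.

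Next comes the dichotomy. If $M\ba e$ is $(4,4,S)$-connected, then the triangle $T$ and the element $e$ give conclusion (ii) directly. Otherwise $M\ba e$ has a $3$-separation one side of which is a fan of length at least five, or a $4$-element set that is neither a triangle, a triad, nor a $4$-element fan. Interpreting this configuration back inside $M$, and combining it with the triangle $T$, one extracts a bowtie: disjoint triangles $\{x_1,x_2,x_3\}$ and $\{x_4,x_5,x_6\}$ joined by the cocircuit $\{x_2,x_3,x_4,x_5\}$. One then tries the natural small moves around the bowtie — deleting a rim-type element, contracting a spoke-type element, or peeling off a $4$- or $5$-element fan — and checks, via orthogonality and the submodularity of the connectivity function, that some such move removing at most three elements produces an internally $4$-connected minor still having an $N$-minor, which is conclusion (i). The only way this can fail is if every attempted move forces the bowtie to extend: each failure reveals a new adjacent triangle and a new $4$-element cocircuit, and the bowtie chain propagates.

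The \emph{main obstacle} is the wrap-around analysis. When the bowtie chain cannot be broken, it must eventually close up on itself, and the task is to show that, under the hypotheses $|E(M)|\ge 15$ and $|E(N)|\ge 6$, the only matroids in which this closure can occur are the biwheels $M(G_{r+1})$ and $M(G_{r+1}^+)$ and the triangular M\"obius matroids $\Delta_r$ and $\Delta_r\ba z$ with $r\ge 5$ — that is, conclusion (iii). This amounts to pinning down, up to isomorphism, how the two ends of a chain of triangles linked by $4$-element cocircuits can be glued together while keeping the matroid binary and internally $4$-connected: the untwisted identification produces the biwheels (with or without the hub edge $z$), and the twisted identification produces the M\"obius matroids. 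Verifying that no other gluing is binary-representable, that the resulting matroid is genuinely internally $4$-connected, and that the size hypotheses eliminate the small coincidental cases is the delicate core of the argument; by contrast, the initial single-element step and the construction of the first bowtie are comparatively routine once the fan, bowtie, and sequential $3$-separation machinery developed in the earlier papers of this series is available.
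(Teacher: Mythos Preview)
This theorem is not proved in the present paper: the sentence introducing it states explicitly that it is the main result of \cite{cmoIII}, and it is invoked here only as background for the later arguments. There is therefore no proof in this paper against which to compare your attempt.

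On the substance of your sketch, the first non-routine step is already a gap. You assert that if $M\ba e$ is $3$-connected with a $3$-separation $(X,Y)$ having $|X|,|Y|\ge 4$, then ``a short orthogonality and connectivity-function argument forces $e$ to lie in a triangle $T$ of $M$ with $T-e$ straddling $(X,Y)$.'' But the connectivity computation points the other way. Since $M$ is internally $4$-connected and both sides of $(X\cup e,Y)$ and of $(X,Y\cup e)$ have at least four elements, neither can be a $3$-separation of $M$; comparing $\lambda_M(X\cup e)$ and $\lambda_M(X)$ with $\lambda_{M\ba e}(X)=2$ forces $e\notin\cl_M(X)$ and $e\notin\cl_M(Y)$, equivalently $e\in\cl^*_M(X)\cap\cl^*_M(Y)$. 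Thus $e$ lies in a cocircuit contained in $X\cup e$ and another contained in $Y\cup e$, and every circuit through $e$ does straddle $(X,Y)$ --- but nothing forces any such circuit to be a \emph{triangle}. Internally $4$-connected binary matroids need not have a triangle through a prescribed element (indeed $M(K_{4,4})$ has no triangles at all), so this is not a minor omission. Since conclusion~(ii) requires the element to lie in a triangle, your route to (ii) is blocked here, and passing to the dual does not repair it because you have only arranged for $M\ba e$, not $M/e$, to be $3$-connected with an $N$-minor. The remainder of your outline --- the bowtie moves producing (i) and the wrap-around classification producing (iii) --- is narrative rather than argument; those steps are precisely the substantial content of \cite{cmoIII}, not routine consequences of the Splitter Theorem.
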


\begin{figure}[htb]
\centering
\includegraphics[scale = 0.7]{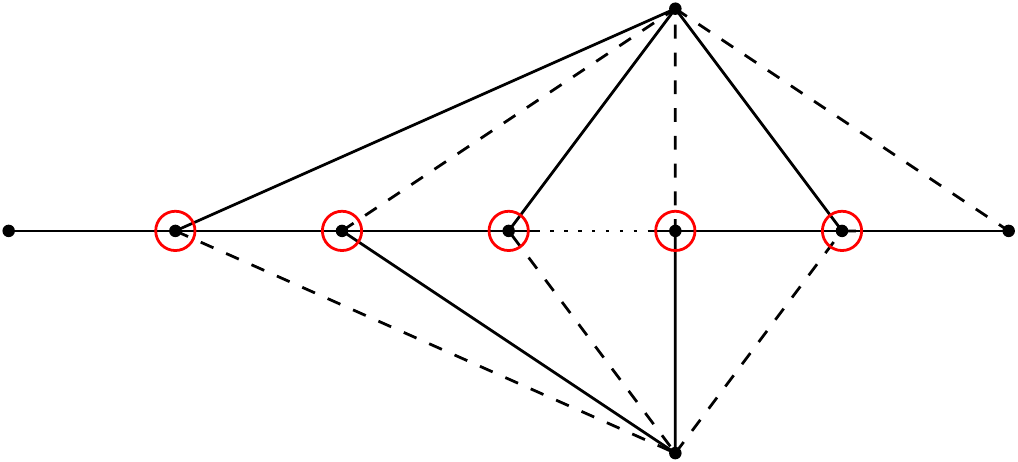}
\caption{All the elements shown are distinct. There are at least three dashed elements; and all dashed elements are deleted.}
\label{gmcdashed}
\end{figure}

That theorem led us to consider those matroids for which the second outcome in the theorem holds. In order to state the next result, we need to define some special structures. Let $M$ be an \ifc\ binary matroid and $N$ be an \ifc\ proper minor of $M$. Suppose $M$ has 
 disjoint triangles $T_1$ and $T_2$ and a $4$-cocircuit $D^*$ contained in their union. 
We call this structure a {\it bowtie} and denote it by $(T_1,T_2,D^*)$. If   $D^*$ has an element $d$ such that $M\ba d$ has an $N$-minor and $M\ba d$ is $(4,4,S)$-connected, then $(T_1,T_2,D^*)$ is a {\it good bowtie}. Motivated by (ii)  of the last theorem, 
we aim to discover more about the structure of $M$ when it has a triangle containing an element $e$ such that $M\ba e$ is \ffsc\ with an $N$-minor. One possible outcome here is that $M$ has a good bowtie. Indeed, as the next result shows, if that outcome or its dual does not arise, we get a small number of easily described alternatives. We shall need two more definitions.  A {\it terrahawk} is the graph  that is obtained from a  cube by adjoining a new vertex and adding edges from the new  vertex to each of the four vertices that bound some fixed face of the cube. Figure~\ref{gmcdashed} shows  a modified graph diagram, which we will use to keep track of some of the circuits and cocircuits in $M$, even though $M$ need not be graphic.   
Each of the cycles in such a graph diagram corresponds to a circuit  of $M$ while a circled vertex indicates a known cocircuit of $M$.
We refer to the structure in Figure~\ref{gmcdashed} as an {\it open rotor chain} noting that all of the elements in the figure are distinct and, for some $n \ge 3$, there are $n$ dashed edges.  The figure may suggest that $n$ must be even but we impose no such restriction. 
We will refer to deleting the dashed elements from Figure~\ref{gmcdashed} as {\it trimming an open rotor chain}.   

{   
\begin{figure}[h]
\center
\includegraphics{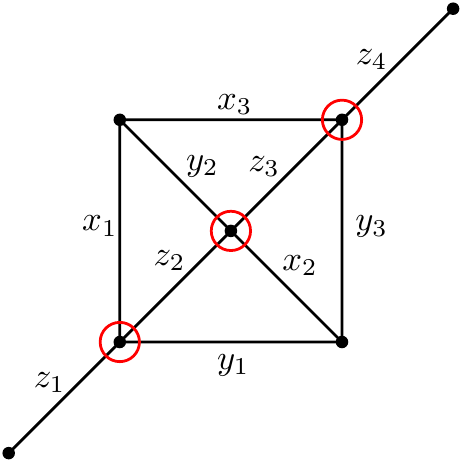}

\caption{An augmented $4$-wheel.}
\label{a4w}
\end{figure}

We need to define another special structure.  
An {\it augmented $4$-wheel} consists of a $4$-wheel restriction of $M$ with triangles $\{z_2,x_1,y_2\}, \{y_2,x_3,z_3\}, \{z_3,y_3,x_2\}, \{x_2,y_1,z_2\}$ along with two additional distinct elements $z_1$ and $z_4$ such that $M$ has $\{x_1,y_1,z_1,z_2\}, \{x_2,y_2,z_2,z_3\}$, and $\{x_3,y_3,z_3,z_4\}$ as cocircuits.  We call $\{x_2,y_2,z_2,z_3\}$ the {\it central cocircuit} of the augmented $4$-wheel. A diagrammatic representation of an augmented $4$-wheel is shown in Figure~\ref{a4w}. 
}

The following is~\cite[Corollary~1.4]{cmoV}.
 
 \begin{theorem}
\label{mainone4}
Let $M$ and $N$ be \ifc\ binary matroids such that $|E(M)| \ge 16$ and $|E(N)|\geq 6$. Suppose that $M$ has a triangle $T$ containing an element $e$ for which $M\ba e$ is \ffsc\ with an $N$-minor. Then one of the following holds.
\begin{itemize}
\item[(i)] $M$ has an \ifc\ minor $M'$ that has an $N$-minor such that  $1 \le |E(M) - E(M')| \le 3$; or {  $|E(M) - E(M')| = 4$ and, for some $(M_1,M_2)$ in $\{(M,M'), (M^*,(M')^*)\}$, the matroid $M_2$ is obtained from $M_1$ by deleting the central cocircuit of an augmented $4$-wheel; or }
\item[(ii)] $M$ or $M^*$  has a good bowtie; or 
\item[(iii)]  $M$ is the cycle matroid of a terrahawk; or
\item[(iv)] for some $(M_0,N_0)$ in $\{(M,N), (M^*,N^*)\}$, the matroid $M_0$ contains 
an open rotor chain that can be trimmed to obtain an \ifc\ matroid with an $N_0$-minor.  
\end{itemize}
\end{theorem}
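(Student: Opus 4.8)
The plan is to follow the inductive template of the earlier papers in this series: analyse the $3$-separations of $M\ba e$, determine how the triangle $T$ and the element $e$ interact with them, and then either clean up $M\ba e$ with a bounded number of further moves or extract enough structure to force one of the exceptional outcomes.

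First, if $M\ba e$ is \ifc\ we are done, taking $M'=M\ba e$ in outcome~(i) with $|E(M)-E(M')|=1$. So assume $M\ba e$ is \ffsc\ but not \ifc. Then $M\ba e$ has a $3$-separation $(X,Y)$ in which neither side is a triangle or a triad; since $M\ba e$ is \ffsc, one side, say $X$, must be a $4$-element fan, which we order as $\{f_1,f_2,f_3,f_4\}$ with $\{f_1,f_2,f_3\}$ a triangle and $\{f_2,f_3,f_4\}$ a triad of $M\ba e$. (That $X$ has exactly four elements and cannot be extended to a longer fan uses $|E(M)|\ge 16$.) Because $M$ is \ifc\ with at least $16$ elements it has no triads, so $\{e,f_2,f_3,f_4\}$ is a cocircuit of $M$, while $\{f_1,f_2,f_3\}$ is still a triangle of $M$. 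By orthogonality $|T\cap\{e,f_2,f_3,f_4\}|$ is even, and since $e\in T$ this intersection has exactly two elements; thus $T$ contains exactly one of $f_2$, $f_3$, $f_4$.

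Suppose first that $f_4\in T$, say $T=\{e,f_4,y\}$, and that $y\notin\{f_1,f_2,f_3\}$. Then $\{f_1,f_2,f_3\}$ and $\{e,f_4,y\}$ are disjoint triangles of $M$ whose union contains the cocircuit $\{e,f_2,f_3,f_4\}$, so $(\{f_1,f_2,f_3\},\{e,f_4,y\},\{e,f_2,f_3,f_4\})$ is a bowtie; and since $M\ba e$ is \ffsc\ with an $N$-minor, taking $d=e$ shows this bowtie is good, so outcome~(ii) holds. The remaining possibilities --- $y\in\{f_1,f_2,f_3\}$, or $T$ meeting $\{f_1,f_2,f_3\}$ because it contains $f_2$ or $f_3$ --- are where the real work lies. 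In each of them $T$ shares a single element with the triangle $\{f_1,f_2,f_3\}$, and in a simple binary matroid two triangles meeting in one element produce a $4$-circuit; this circuit, the cocircuit $\{e,f_2,f_3,f_4\}$, and the impossibility of extending the fan together form the seed of a chain of triangles and $4$-cocircuits. Following the pattern of the series, one attempts to grow this chain, at each stage asking whether deleting the appropriate element leaves an \ffsc\ matroid with an $N$-minor; if so, a good bowtie has been exposed. Assuming this never happens constrains both the chain and the location of the $N$-minor enough that the chain must terminate, and the manner in which it closes up yields the remaining outcomes: trimming an open rotor chain (outcome~(iv)); or, when the chain is short and wraps around, a $4$-wheel whose augmentation allows the deletion of a central cocircuit (the four-element case of outcome~(i)); or a rigid configuration forcing $M$ to be the cycle matroid of a terrahawk (outcome~(iii)). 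Whenever a single deletion already gives an \ifc\ minor with an $N$-minor we stay in outcome~(i), and Theorem~\ref{44S} together with the other results of the series is used to dispatch the leftover configurations, including the matroids in the exceptional family of Theorem~\ref{44S}.

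The main obstacle, I expect, is the bookkeeping at the ends of the chain: one must simultaneously control (a) which sets are circuits and cocircuits of the matroids produced along the way, (b) which deletions and contractions retain an $N$-minor, and (c) which of them remain $3$-connected, \ffsc, or \ifc, and it is the interaction of these three constraints near where the chain meets $T$ that is the technical heart of the proof. A secondary difficulty is eliminating the spurious $3$-separations created by the reductions, which would be handled by repeated appeals to orthogonality and to the connectivity lemmas used throughout the series, together with a separate treatment of the boundary cases in which $|E(M)|$ is close to $16$.
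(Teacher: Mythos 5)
First, a point of orientation: the paper does not prove this statement at all --- it is quoted verbatim as \cite[Corollary~1.4]{cmoV}, the main result of the fifth paper in the series, so what you are being asked to supply is in effect the content of an entire preceding paper. Your opening reduction is essentially the right first step and matches how these arguments begin: $M\ba e$ is either \ifc\ (outcome (i)) or has a $4$-fan $(f_1,f_2,f_3,f_4)$, and orthogonality between $T$ and the cocircuit $\{e,f_2,f_3,f_4\}$ correctly forces $T$ to contain exactly one of $f_2,f_3,f_4$; the case $f_4\in T$ with $T$ disjoint from $\{f_1,f_2,f_3\}$ does immediately give a good bowtie. But one intermediate claim is false as stated: an \ifc\ binary matroid with at least $16$ elements certainly can have triads (the cycle matroid of the terrahawk, one of the outcomes of this very theorem, has $16$ elements and four triads). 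The correct reason $\{e,f_2,f_3,f_4\}$ is a cocircuit of $M$ is that if $\{f_2,f_3,f_4\}$ were a triad of $M$, then $\{f_1,f_2,f_3,f_4\}$ would be a $4$-fan of $M$ itself, hence a $(4,3)$-violator, contradicting internal $4$-connectivity. The conclusion survives, but only because the triad in question meets a triangle.

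The genuine gap is everything after that. The cases in which $T$ meets $\{f_1,f_2,f_3\}$ --- which you yourself identify as ``where the real work lies'' --- are dispatched with ``following the pattern of the series, one attempts to grow this chain'' and ``assuming this never happens constrains the chain \ldots enough that the chain must terminate.'' That is a statement of strategy, not an argument. Nothing in the proposal actually constructs the quasi rotor that seeds the rotor chain, proves that a right-maximal rotor chain exists and that trimming it yields an \emph{internally $4$-connected} matroid with an $N$-minor (outcome (iv)) --- this last point requires a detailed analysis of the $3$-separations of the trimmed matroid of exactly the kind carried out in Lemmas~\ref{moreteeth} and \ref{rainbow} of the present paper for analogous structures; nothing identifies the augmented $4$-wheel and verifies that deleting its central cocircuit preserves an $N$-minor and internal $4$-connectivity; and nothing shows how the terrahawk is forced as the unique closed-up configuration. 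Each of these requires tracking circuits, cocircuits, $N$-minors, and connectivity simultaneously through an unbounded sequence of moves, and duality must also be handled (outcomes (ii) and (iv) are stated up to duality, which your sketch never addresses). As it stands the proposal establishes only the easy endpoints of the case analysis and defers the entire body of the theorem to the literature it is meant to reprove.
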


Note that there is a small error in \cite[Theorem 1.1]{cmoV} since it requires at least five elements to be removed when trimming an open rotor chain. But, as the proof there makes clear, trimming exactly four elements is a possibility. Trimming exactly three elements is also possible but that is included under (i) of \cite[Theorem~1.1]{cmoV}. 

\begin{figure}[b]
\center
\includegraphics{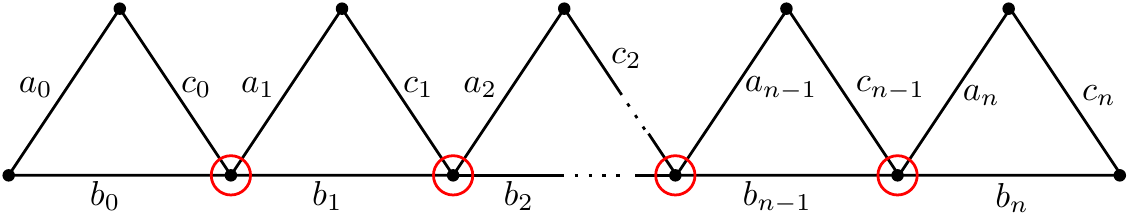}
\caption{A  string of bowties.  All elements are distinct except that $a_0$ may be the same as $c_n$.}
\label{bowtiechainfig}
\end{figure}

%In the preceding chapters, we have shown that $M$ has a bowtie $(\{1,2,3\},\{4,5,6\},\{2,3,4,5\})$ such that $M\ba 4$ is \ffsc\ with an $N$-minor and $M\ba 4,1$ has no $N$-minor or $M\ba 1$ is \ffsc.  
%In this paper, we dispense with the case that $M\ba 4,1$ has no $N$-minor.  

%In~\cite{cmoVI}, we proved the following result.  

This theorem leads us to consider a good bowtie $(\{x_1,x_2,x_3\}, \linebreak \{x_4,x_5,x_6\}, \{x_2,x_3,x_4,x_5\})$ in an \ifc\ binary matroid $M$ where $M\ba x_3$ is \ffsc\ with an $N$-minor. In $M\ba x_3$, we see that $\{x_5,x_4,x_2\}$ is a triad and $\{x_6,x_5,x_4\}$ is a
triangle, so
$\{x_6,x_5,x_4,x_2\}$ is a $4$-element fan. By \cite[Lemma 2.5]{cmoIV}, which is included below as Lemma~\ref{airplane}, either
\begin{itemize}
\item[(i)] $M\ba x_3,x_6$ has an $N$-minor; or 
\item[(ii)] $M\ba x_3,x_6$ does not have an $N$-minor, but $M\ba x_3/x_2$ is \ffsc\ with an $N$-minor.
\end{itemize}
In~\cite{cmoVI}, we considered the case when (i) holds and  $M\ba x_6$ is not \ffsc.  
In this paper, we focus on the case when (ii) holds. 
The next and final paper in this series will complete the work to obtain the splitter theorem by considering the case when $M\ba x_3,x_6$ has an $N$-minor and $M\ba x_6$ is \ffsc.  
Before stating the main result  of \cite{cmoVI}, we define some structures that require special attention.

In a matroid $M$, a {\it string of bowties}  is a sequence $\{a_0,b_0,c_0\},\linebreak \{b_0,c_0,a_1,b_1\},\{a_1,b_1,c_1\}, 
\{b_1,c_1,a_2, 
 b_2\},\dots,\{a_n,b_n,c_n\}$ with $n \ge 1$ such that   
 \begin{itemize} 
 \item[(i)] $\{a_i,b_i,c_i\}$ is a triangle for all $i$ in $\{0,1,\dots, n\}$;  
 \item[(ii)] $\{b_j,c_j,a_{j+1},b_{j+1}\}$ is a cocircuit for all $j$ in $\{0,1,\dots ,n-1\}$; and 
 \item[(iii)] the elements $a_0,b_0,c_0,a_1,b_1,c_1,\ldots,a_n,b_n$, and $c_n$ are distinct except that $a_0$ and $c_n$ may be equal.
 \end{itemize}

\begin{figure}[b]
\center
\includegraphics[scale=0.72]{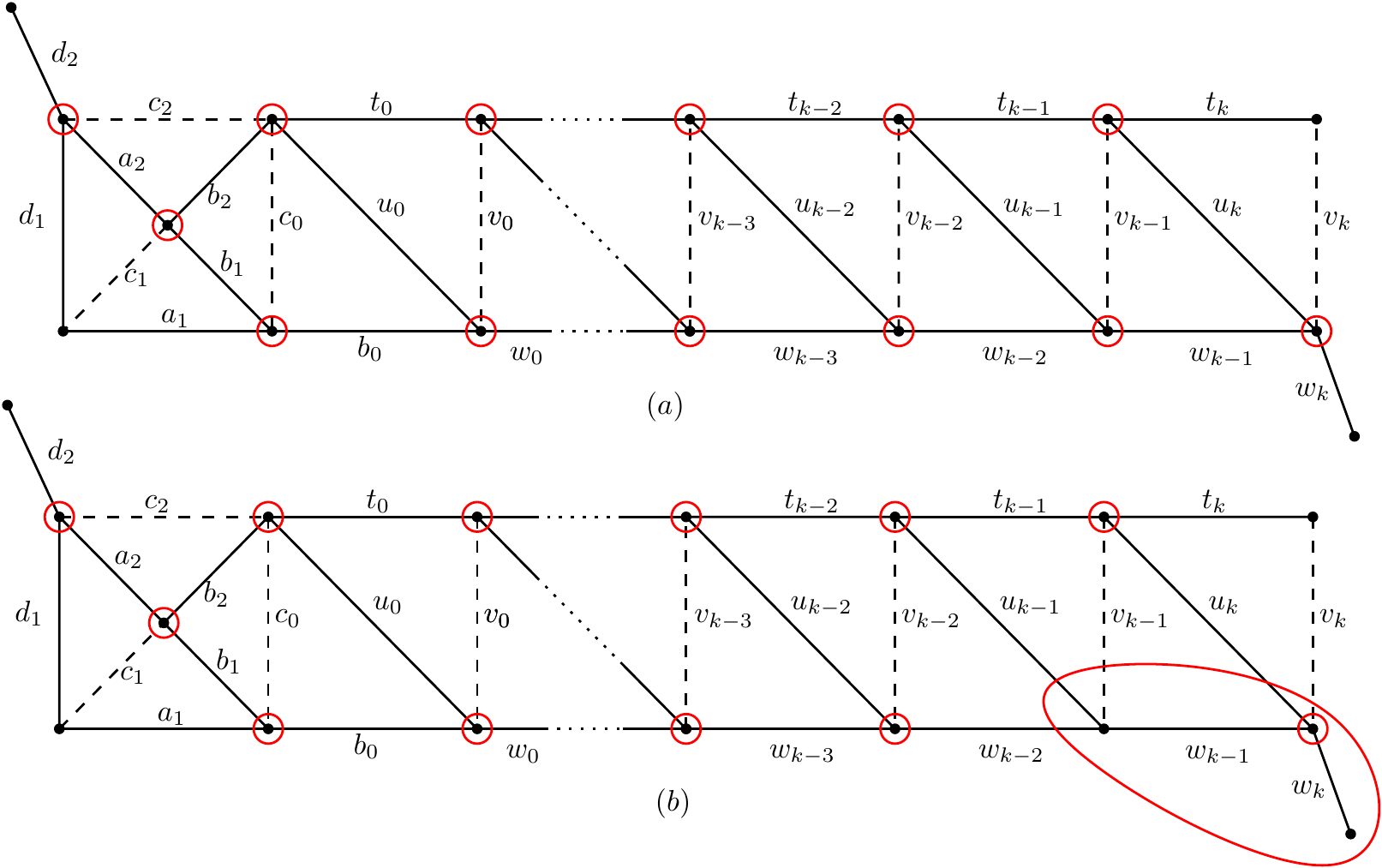}
\caption{In both (a) and (b), the elements shown are distinct, except that $d_2$ may be $w_k$.  Furthermore, in (a), $k\geq 0$; and in (b), $k \geq 1$ and $\{w_{k-2},u_{k-1},v_{k-1},u_k,v_k\}$ is a cocircuit.}
\label{bonesaws0}
\end{figure}

The reader should note that this differs slightly from the definition we gave in \cite{cmochain} in that  here we allow $a_0$ and $c_n$ to be equal instead of requiring all of the elements to be distinct. 
Figure~\ref{bowtiechainfig} illustrates a string of bowties, but this diagram may obscure the  potential complexity of such a string. Evidently $M\ba c_0$ has $\{c_1,b_1,a_1,b_0\}$ as a 4-fan. Indeed, $M\ba c_0,c_1,\ldots,c_i$ has a $4$-fan for all $i$ in $\{0,1,\ldots,n-1\}$. We shall say that the matroid $M\ba c_0,c_1,\ldots,c_n$ has been obtained from $M$ by {\it trimming a string of bowties}. This operation plays a prominent role in our main theorem, and is the underlying operation in trimming an open rotor chain. 
Before stating this theorem, we introduce the other operations that incorporate this process of trimming a string of bowties. 
Such a string can attach to the rest of the matroid in a variety of ways. In most of these cases, the operation of trimming the string will produce an \ifc\ minor of $M$ with an $N$-minor. But, when the bowtie string is embedded in a modified  quartic ladder in certain ways, we need to adjust the trimming process.

\begin{figure}[htb]
\center
\includegraphics[scale=0.72]{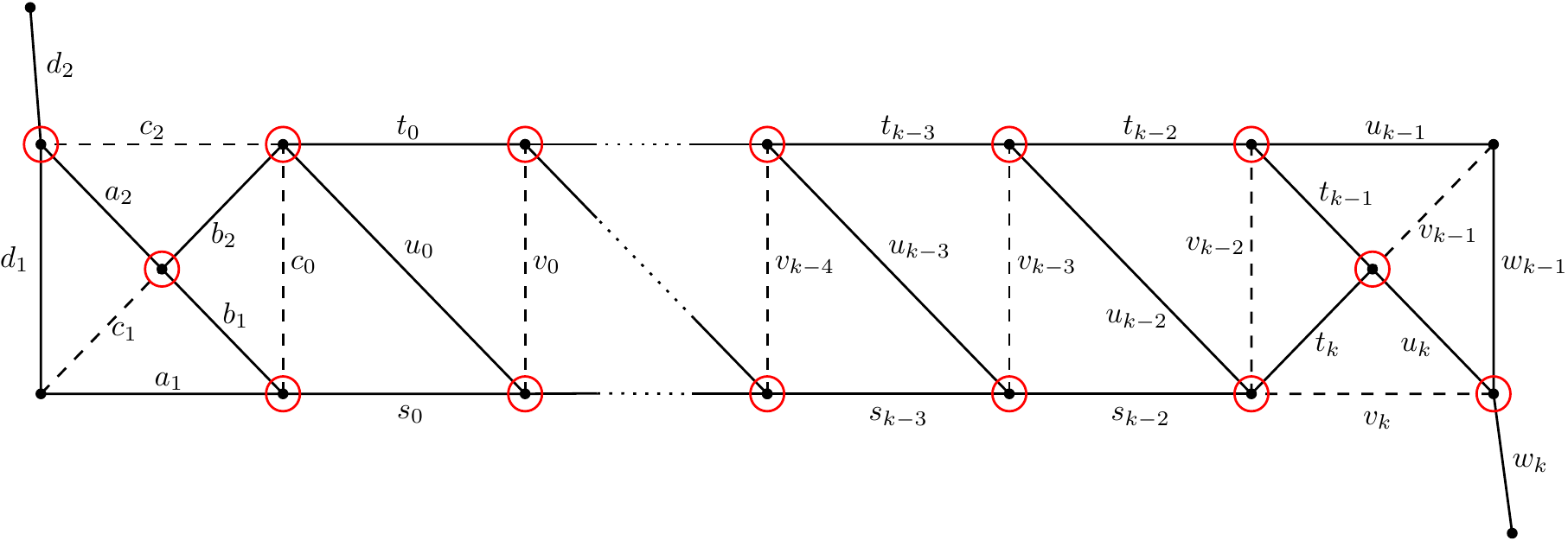}
\caption{In this configuration, $k\geq 2$ and the elements are all distinct except that $d_2$ may be $w_k$.}
\label{caterpillarwhole0}
\end{figure}

%\begin{figure}[htb]
%\center
%\includegraphics[scale= 0.8]{000.pdf}
%\caption{The configuration in Figure~\ref{caterpillarwhole0} redrawn omitting two triangles and two $5$-cocircuits.}
%\label{unravel}
%\end{figure}

Consider the three configurations shown in  Figure~\ref{bonesaws0} and Figure~\ref{caterpillarwhole0} where the elements in each configuration are distinct except that $d_2$ may equal $w_k$. We refer to each of these configurations as an {\it enhanced quartic ladder}. Indeed, in each configuration, we can see a portion of a quartic ladder, which can be thought of as two interlocking bowtie strings, one pointing up and one pointing down. In each case, we focus on $M\ba c_2,c_1,c_0,v_0,v_1,\ldots,v_k$ saying that this matroid has been obtained from $M$ by an {\it enhanced-ladder move}. %In Figure~\ref{unravel}, the configuration in 
%Figure~\ref{caterpillarwhole0} has been redrawn omitting the triangles $\{c_0,b_1,b_2\}$ and $\{v_{k-2},t_{k-1},t_k\}$ as well as the cocircuits $\{b_2,c_0,c_2,t_0,u_0\}$ and $\{s_{k-2},u_{k-2},v_{k-2},t_k,v_k\}$. The ladder structure is evident there and the enhanced-ladder move corresponds to deleting all of the dashed edges.

\begin{figure}[t]
\center
\includegraphics[scale=0.72]{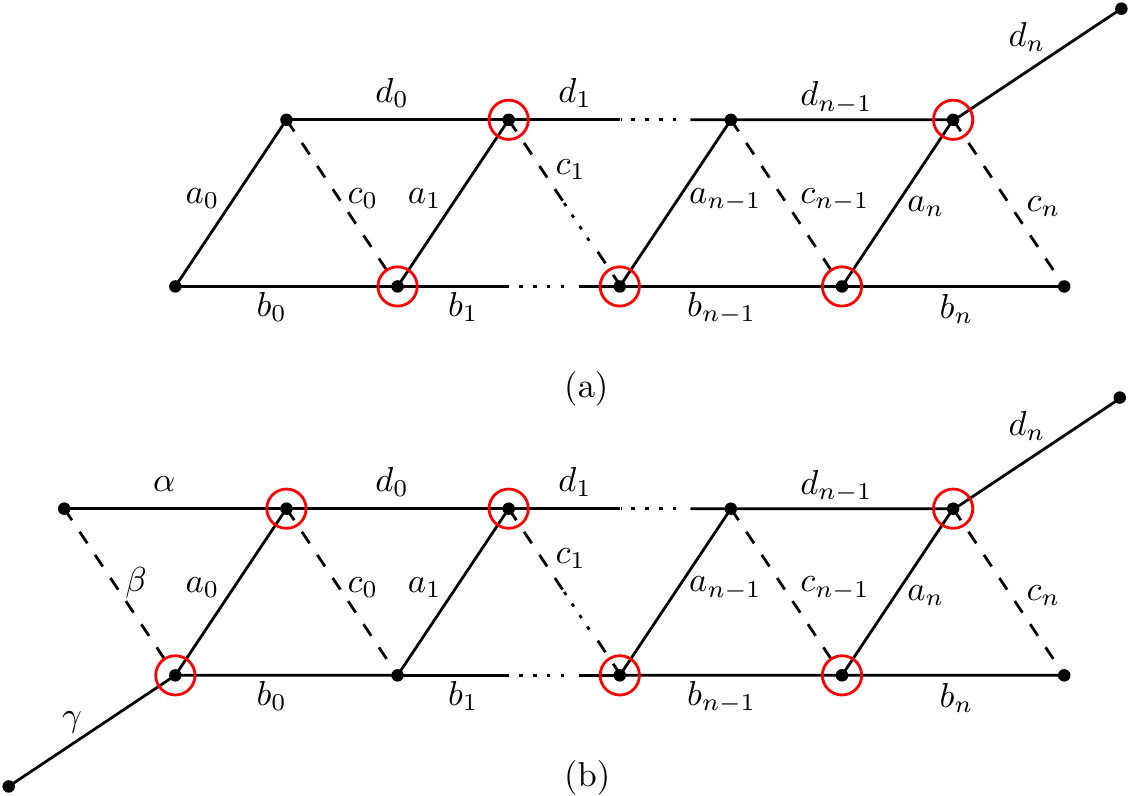}
\caption{In (a) and (b), $n\geq 2$ and the elements shown are distinct, with the exception that $d_n$ may be the same as $\gamma$ in (b).  Either $\{d_{n-2},a_{n-1},c_{n-1},d_{n-1}\}$ or $\{d_{n-2},a_{n-1},c_{n-1},a_n,c_n\}$ is a cocircuit in (a) and (b).  Either $\{b_0,c_0,a_1,b_1\}$ or $\{\be,a_0,c_0,a_1,b_1\}$ is also a cocircuit in (b).}
\label{laddery}
\end{figure}

\begin{figure}[b]
\center
\includegraphics[scale=0.72]{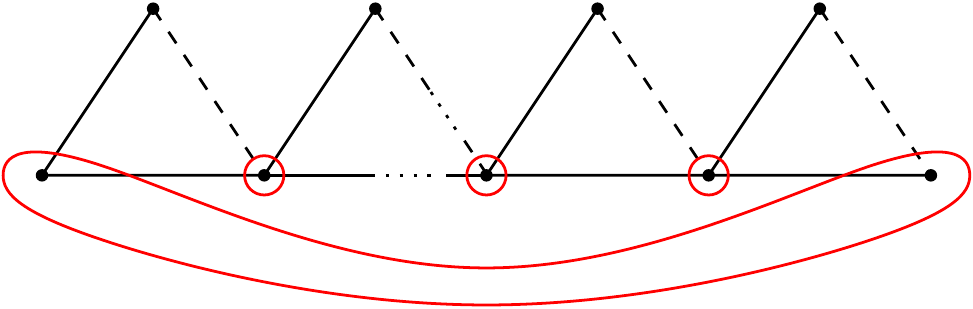}
\caption{A bowtie ring.  All elements are distinct.  The ring contains at least three triangles.}
\label{btringfig}
\end{figure}

Suppose that $\{a_0,b_0,c_0\}, \{b_0,c_0,a_1,b_1\}, \{a_1,b_1,c_1\},\dots, \{a_n,b_n,c_n\}$ is a bowtie string for some $n \ge 2$.  Assume, 
in addition, that $\{b_n,c_n,a_0,b_0\}$ is a cocircuit. Then the string of bowties has wrapped around on itself as in Figure~\ref{btringfig}, and we call the resulting structure a {\it ring of bowties}. 
We refer to each of the structures in Figure~\ref{laddery} as a {\it ladder structure} and we refer to removing the dashed elements in Figure~\ref{btringfig}  and Figure~\ref{laddery} as {\it trimming a ring of bowties} and  {\it trimming a ladder structure}, respectively.  
%In addition to the configurations in Figure~\ref{gmcdashed} and enhanced ladders, a string of bowties may be contained in one of the ladder structures shown in Figure~\ref{laddery}, or in a ring of bowties, shown in Figure~\ref{btringfig}.  

In the case that trimming a string of bowties in $M$ yields an \ifc\ matroid with an $N$-minor, we are able to ensure that the string of bowties belongs to one of the more highly structured objects we have described above.  
The following theorem is the main result of~\cite[Theorem~1.3]{cmoVI}.  

\begin{theorem}
\label{killcasek}
Let $M$ and $N$ be \ifc\ binary matroids such that $|E(M)|\geq 13$ and $|E(N)|\geq 7$.  
Assume that $M$ has  a bowtie $(\{x_0,y_0,z_0\},\{x_1,y_1,z_1\},\{y_0,z_0,x_1,y_1\})$, where $M\ba z_0$ is \ffsc, $M\ba z_0,z_1$ has an $N$-minor, and $M\ba z_1$ is not \ffsc.  
Then one of the following holds.  
\begin{itemize}
\item[(i)] $M$ has a proper minor $M'$ such that $|E(M)|-|E(M')|\leq 3$ and $M'$ is \ifc\ with an $N$-minor; or 
\item[(ii)] $M$ contains an open rotor chain, a ladder structure, or a ring of bowties that can be trimmed to obtain an \ifc\ matroid with an $N$-minor; or
\item[(iii)] $M$ contains an enhanced quartic ladder from which an \ifc\ minor of $M$ with an $N$-minor can be obtained by an enhanced-ladder move.
\end{itemize}
\end{theorem}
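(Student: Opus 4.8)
The plan is to follow the template of the earlier papers in the series: from the failure of $(4,4,S)$-connectivity of $M\ba z_1$ extract a concrete $3$-separation, use \ort\ against the triangles and cocircuits already in sight to grow a string of bowties out of the given \bt, push that string until it can grow no further, and then handle the finitely many ways in which the maximal string can terminate inside $M$.

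I would begin by pinning down $M\ba z_1$. Since $\{x_1,y_1,z_1\}$ is a triangle of the \ifc\ matroid $M$ and $|E(M)|\ge 13$, the element $z_1$ lies in no triad --- a triangle and a triad through $z_1$ would meet in two elements and hence form a $4$-fan, which an \ifc\ matroid on at least $13$ elements cannot have on one side of a $3$-separation --- and one checks that $M\ba z_1$ is $3$-connected. As $M\ba z_1$ is not \ffsc\ it therefore has a $(4,4,S)$-violator $(U,V)$, with $\min\{|U|,|V|\}\ge 4$ and neither side a $4$-fan. Lifting $(U,V)$ to $M$ shows that $z_1\notin\cl_M(U)\cup\cl_M(V)$, since otherwise $M$ would have a $3$-separation with a large set on each side, contrary to internal $4$-connectivity. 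Hence $M$ has cocircuits through $z_1$, one inside $U\cup\{z_1\}$ and one inside $V\cup\{z_1\}$; by \ort\ with $\{x_1,y_1,z_1\}$, one of $x_1,y_1$ is in $U$ and the other in $V$, so the triangle $\{x_1,y_1,z_1\}$ straddles the separation. Using \ort\ of these cocircuits with the cocircuit $\{y_0,z_0,x_1,y_1\}$ and with the triangle $\{x_0,y_0,z_0\}$, and the binarity of $M$, one locates --- up to a bounded list of degenerate configurations, each of which either gives outcome~(i) at once or is subsumed by one of the structures in~(ii) or~(iii) --- a new element together with a triangle and a $4$-cocircuit that extend the given \bt\ to a string of bowties.

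I would then iterate, keeping $\{a_0,b_0,c_0\}=\{x_0,y_0,z_0\}$ and $c_0=z_0$. Given a string of bowties $\{a_0,b_0,c_0\},\{b_0,c_0,a_1,b_1\},\{a_1,b_1,c_1\},\dots,\{a_k,b_k,c_k\}$ with $M\ba c_0$ \ffsc\ and $M\ba c_0,c_1,\dots,c_k$ having an $N$-minor --- both true at the start by hypothesis --- I would ask whether $M\ba c_k$ is \ffsc. If it is not, the analysis of the previous paragraph, applied to the terminal \bt\ $(\{a_{k-1},b_{k-1},c_{k-1}\},\{a_k,b_k,c_k\},\{b_{k-1},c_{k-1},a_k,b_k\})$, produces the next triangle and cocircuit and lengthens the string; and Lemma~\ref{airplane} at this stage either carries the $N$-minor over to $M\ba c_0,c_1,\dots,c_{k+1}$, so the induction continues, or else already puts us into one of the special configurations of~(i)--(iii). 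Since $|E(M)|$ is finite and the elements of a bowtie string are essentially distinct, the process terminates in a maximal string $\{a_0,b_0,c_0\},\dots,\{a_n,b_n,c_n\}$ for which $M\ba c_0,c_1,\dots,c_n$ still has an $N$-minor.

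The last step, which I expect to be the real obstacle, is the terminal analysis: classifying how the maximal string is embedded in $M$. If trimming the string, that is, deleting $c_0,c_1,\dots,c_n$, already produces an \ifc\ matroid with an $N$-minor, then the structural analysis forces the string to lie inside a ring of bowties, an open rotor chain, or a ladder structure, giving outcome~(ii) --- or, when $n\le 2$, the simpler outcome~(i), since then at most three elements have been removed. If the naive trim does not work, one shows either that the string is embedded in a modified quartic ladder in one of the ways that calls for the enhanced-ladder move, giving outcome~(iii), or else that a bounded local modification, deleting at most three elements, yields the required \ifc\ minor, giving outcome~(i). Throughout, one must check both that the prescribed deletions retain an $N$-minor --- using the fan lemmas and the fact that $M\ba c_0,\dots,c_n$ does --- and that the resulting matroid is \ifc, which comes down to verifying that no new $3$-separation is created without a triangle or triad on one of its sides. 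The difficulty is concentrated here: the ways the string can attach to the rest of $M$ are numerous, and in the obstructed cases --- where the naive trim leaves a bad $3$-separation behind --- one has to identify the embedding precisely and substitute the correct modified operation while holding on to the $N$-minor, which is where most of the argument lies.
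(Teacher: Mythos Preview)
The paper you are working from does not contain a proof of this statement. Theorem~\ref{killcasek} is explicitly introduced as ``the main result of~\cite[Theorem~1.3]{cmoVI}'': it is quoted from the preceding paper in the series and used here only as a black box (in particular, to establish that Hypothesis~VII may be assumed in the proof of Theorem~\ref{mainguy}). There is therefore nothing in the present paper against which to compare your sketch.

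That said, two comments on the sketch itself. First, your opening move --- lifting a general $(4,4,S)$-violator of $M\ba z_1$ back to $M$ and chasing cocircuits through $z_1$ --- is more laborious than necessary. Lemma~\ref{6.3rsv}, which this paper imports from~\cite{cmoVI}, already says that if $M\ba z_1$ fails to be $(4,4,S)$-connected then $\{x_1,y_1,z_1\}$ is the central triangle of a quasi rotor; this hands you a third triangle, a second $4$-cocircuit, and an extra transversal triangle immediately, a much sharper starting configuration than an arbitrary $3$-separation. Second, your overall architecture --- grow a right-maximal bowtie string carrying the $N$-minor along via Lemma~\ref{airplane} and Lemma~\ref{stringybark}, then classify how the string terminates inside $M$ --- is indeed the shape of the argument in this series of papers. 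But you correctly identify that the terminal analysis is where essentially all the work lies, and your sketch does not engage with it: the point of~\cite{cmoVI} is precisely to carry out that case analysis, and it is long. So your proposal is a plausible outline of the strategy, but not yet a proof.
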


\begin{figure}[t]
\center
\includegraphics[scale = 0.8]{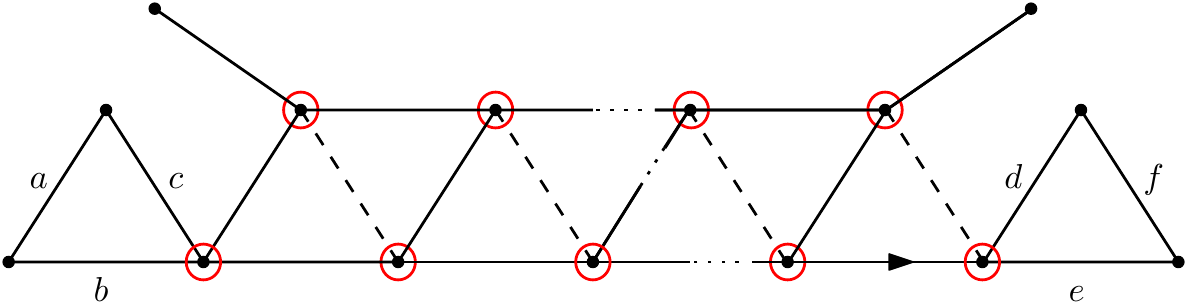}
\caption{All of the elements are distinct except that $a$ may be $f$, or $\{a,b,c\}$ may be $\{d,e,f\}$. There are at least three dashed elements.}
\label{lobster}
\end{figure}

In Theorem~\ref{killcasek}, not all of the moves  that we perform on $M$ to obtain an intermediate \ifc\ binary matroid with an $N$-minor are bounded in size, but each unbounded move is highly structured. % and may be the best possible move to obtain an intermediate matroid.  
%We therefore consider such unbounded moves to be acceptable in this paper, as well, and we have one more move to add to the list.  
In this paper, we shall require one more such unbounded move. 
When $M$ contains the structure in Figure~\ref{lobster}, where the elements are all distinct except that $a$ may be $f$,  or $\{a,b,c\}$ may be $\{d,e,f\}$, we say that $M$ contains an {\it open quartic ladder}. We will refer to deleting the dashed elements and contracting the arrow edge as a {\it mixed ladder move}.  This is the only unbounded move that uses a contraction as well as a number of deletions. Note that both of the   vertices of degree one in the diagram differ from the vertices closest to them. %  otherwise a $4$-fan exists  after performing   a mixed ladder move. 

The following theorem is the main result of this paper.  

\begin{theorem}
\label{mainguy}
Let $M$ and $N$ be \ifc\ binary matroids such that  $|E(M)|\geq 16$ and $|E(N)|\geq 7$.  
Let $M$ have a bowtie $(\{1,2,3\},\{4,5,6\},\{2,3,4,5\})$, where $M\ba 4$ is \ffsc\ with an $N$-minor, and $M\ba 1,4$ has no $N$-minor.  
Then, for some  $(M_0,N_0)$ in $\{(M,N), (M^*,N^*)\}$, one of the following holds. 
\begin{itemize}
\item[(i)] $M_0$ has a proper \ifc\ minor $M'$ such that $M'$ has an $N_0$-minor and either $|E(M)|-|E(M')|\leq 3$,  or {  $|E(M) - E(M')| = 4$ and $M'$ is obtained from $M_0$ by deleting the central cocircuit of an augmented $4$-wheel; or }
\item[(ii)] $M_0$ contains an open rotor chain, a ladder structure, or a ring of bowties that can be trimmed to obtain an \ifc\ matroid with an $N_0$-minor; or %$M^*$ has a ladder structure that can be trimmed to obtain an \ifc\ matroid with an $N^*$-minor; or
\item[(iii)] $M_0$ contains an open quartic ladder and an \ifc\ matroid with an $N_0$-minor can be obtained by a mixed ladder move; or 
\item[(iv)] $M_0$ contains an enhanced quartic ladder from which an \ifc\ minor of $M_0$ with an $N_0$-minor can be obtained by an enhanced-ladder move.  
\end{itemize}
\end{theorem}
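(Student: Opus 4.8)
The plan is to follow the template of the earlier papers in this series: working with $M$ or, by duality, with $M^*$, grow a highly structured configuration outward from the given bowtie and then dispose of the finitely many ways in which that configuration can terminate. First I would record the immediate consequences of the hypotheses. Since $M\ba 1,4$ has no $N$-minor, Lemma~\ref{airplane} places us in its case~(ii), so $M\ba 4/5$ is \ffsc\ with an $N$-minor; moreover $M\ba 4$ is \ffsc\ with an $N$-minor and, as the triad $\{2,3,5\}$ and the triangle $\{1,2,3\}$ show, has $\{1,2,3,5\}$ as a $4$-element fan with ends $1$ and $5$. Typically neither $M\ba 4$ nor $M\ba 4/5$ is \ifc. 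The opening move is to run through the bounded reductions --- deletions and contractions of single elements of $\{1,2,3,5,6\}$, their two- and three-element combinations, and the deletion of the central cocircuit of an augmented $4$-wheel whenever $M$ or $M^*$ contains one --- and to test, using orthogonality and the standard connectivity lemmas for binary \ifc\ and \ffsc\ matroids, whether any of these yields an \ifc\ matroid with an $N$-minor. If one does, outcome~(i) holds, so henceforth I assume none does. In particular $M\ba 4/5$ is not \ifc, so it has a $3$-separation that exhibits a $4$-element fan $X$.

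Next I would analyse that revealed fan. Orthogonality of $X$ with the triangle $\{1,2,3\}$ and with the cocircuits of $M\ba 4/5$ inherited from $\{2,3,4,5\}$ and from the cocircuits near the bowtie pins down $X$ and forces a new triangle together with a new $4$-cocircuit joining it to the existing structure --- in other words the start of a string of bowties sitting inside $M$, attached near the triangle $\{1,2,3\}$, with the contraction of $5$ recorded as part of the data. The core of the argument is the induction that lengthens this string: the invariant is that deleting the ``$c$''-elements of the string, together with contracting $5$, preserves the $N$-minor, and at each stage either one of the now-admissible bounded reductions succeeds, giving outcome~(i), or a freshly revealed fan extends the string by one more bowtie. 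Since $M$ is finite the process halts, and the substance of the proof is the classification of how it halts. The $N$-minor must be tracked through every step; this is done, as in \cite{cmoVI}, via the analysis of trimming a string of bowties combined with repeated appeals to Lemma~\ref{airplane} and the analogous local lemmas, but now with the contraction of $5$ carried along.

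The string can wrap around on itself to give a ring of bowties; it can grow into one of the ladder structures of Figure~\ref{laddery}; it can be an open rotor chain; it can be embedded in an open quartic ladder as in Figure~\ref{lobster}; or it can be part of an enhanced quartic ladder. In the first three cases the correct operation is trimming, and in the last an enhanced-ladder move, exactly as in Theorem~\ref{killcasek}; the new case is the open quartic ladder, in which the recorded contraction of $5$ propagates along the ladder to become the contraction of the arrow edge, and the correct operation is the mixed ladder move. In each terminal case one must verify, by a $3$-separation analysis using the circuits and cocircuits known to lie along the structure together with the hypotheses $|E(M)|\ge 16$ and $|E(N)|\ge 7$, that the prescribed minor is \ifc\ and has an $N$-minor; one must also separately handle the short, degenerate configurations in which the string is too small for any of these named structures to apply, checking that these fall under outcome~(i).

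The hard part will be the case analysis in the inductive step and in the classification of terminations: at each lengthening of the string every way it can re-attach to the rest of $M$ must be ruled out or exploited, and for each terminal configuration internal $4$-connectivity of the output must be confirmed by a delicate $3$-separation argument. The genuinely new ingredient --- and the reason this case needs its own paper alongside \cite{cmoVI} --- is the mixed ladder move: because we are in case~(ii) of Lemma~\ref{airplane}, the $N$-minor is obtained from the contraction-minor $M\ba 4/5$ rather than from a deletion-minor, so the bookkeeping that in \cite{cmoVI} involved only deletions must here interleave a contraction with the trimming, and showing that the resulting open-quartic-ladder move delivers an \ifc\ matroid with an $N$-minor is where most of the new work lies.
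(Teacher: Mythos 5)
Your high-level template --- use Lemma~\ref{airplane} to get that $M\ba 4/5$ is \ffsc\ with an $N$-minor, grow a bowtie string, track the $N$-minor while trimming, and classify the ways the string terminates --- is indeed the philosophy of the series, and you correctly identify the mixed ladder move as the new ingredient. But as written the proposal has two concrete gaps that would stop the argument. First, you never invoke Theorem~\ref{killcasek} to secure Hypothesis~VII. The paper's very first step is: if Hypothesis~VII fails then Theorem~\ref{killcasek} already gives the conclusion, so we may assume it holds. That hypothesis is what guarantees that each successive deletion $M\ba c_0,c_1,\dots,c_i$ along the string stays \ffsc, which is exactly what your inductive step needs in order to ``reveal a fresh fan'' at the next stage (it is used in Lemmas~\ref{ILK}, \ref{killtobler2}, \ref{beachtheend}, \ref{BAkiller}, among others). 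Without it the invariant of your induction cannot be maintained.

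Second, the actual case division is not driven by a $4$-fan of $M\ba 4/5$ attached near $\{1,2,3\}$. Lemma~\ref{ABCDE} analyses $M\ba 6$ and $M\ba 6/5$ via Lemma~\ref{6.3rsv} and produces three configurations (A), (B), (C) in which the new triangle attaches at $4$ or $6$, on the far side of the bowtie from $\{1,2,3\}$; the subsequent treatment is genuinely different in each. In particular, in configuration (A) the structure that grows is a string of \emph{triads} of $M$, i.e.\ a bowtie string in $M^*$, and the proof must pass to the dual (Lemmas~\ref{beachtheend} and \ref{AconfigBOOM}), producing the dual outcomes --- an open-rotor-chain, ladder, or enhanced-ladder win in $M^*$, or the deletion of the central cocircuit of an augmented $4$-wheel in $M^*$. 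Your proposal gestures at duality in one clause but then describes everything as deletions in $M$ plus one contraction, which cannot yield those outcomes. Relatedly, your dichotomy ``either a bounded reduction succeeds or the fan extends the string by one bowtie'' is false in general: the revealed cocircuit can re-attach through $\{4,5,6\}$ itself (configuration (A)'s cocircuit $\{4,6,7,8\}$) or wrap back to $\{a_0,b_0\}$ (the cocircuit $\{a_0,b_0,x,c_n\}$ in Lemma~\ref{killtobler2}), and these cases require the separate ladder and enhanced-ladder analyses rather than an extension of the string.
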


An outline of the proof of this theorem is given in Section~\ref{out}. That section separates the argument into three subcases and these cases are treated in the three subsequent sections.  The results from those three sections are combined in Section~\ref{pomt} to complete the proof of the theorem.  Before all of that, Section~\ref{prelim} gives some basic preliminaries while 
Sections~\ref{bqr}  and \ref{bl} present some properties of, respectively, bowties and quasi rotors, and bowties and ladders.

The next corollary follows immediately by combining Theorem~\ref{mainguy} with Theorem~\ref{killcasek}.  This corollary provides the context for the next and final paper~\cite{cmoVIII} in this series, which proves a splitter theorem for \ifc\ binary matroids.  

\begin{corollary}
\label{chapter7}
Let $M$ and $N$ be \ifc\ binary matroids such that  $|E(M)|\geq 16$ and $|E(N)|\geq 7$.  
If $M$ has a bowtie $(\{1,2,3\},\{4,5,6\},\{2,3,4,5\})$, where $M\ba 4$ is \ffsc\ with an $N$-minor, then either $M\ba 1,4$ has an $N$-minor  and   $M\ba 1$ is \ffsc,  or one of the following 
holds for some  $(M_0,N_0)$ in $\{(M,N), (M^*,N^*)\}$.  
\begin{itemize}
\item[(i)] $M_0$ has a proper \ifc\ minor $M'$ such that $M'$ has an $N_0$-minor and either $|E(M)|-|E(M')|\leq 3$,  or {  $|E(M) - E(M')| = 4$ and $M'$ is obtained from $M_0$ by deleting the central cocircuit of an augmented $4$-wheel; or } 
\item[(ii)] $M_0$ contains an open rotor chain, a ladder structure, or a ring of bowties that can be trimmed to obtain an \ifc\ matroid with an $N_0$-minor; or %$M^*$ contains a ladder structure that can be trimmed to obtain an \ifc\ matroid with an $N^*$-minor; or
\item[(iii)] $M_0$ contains an open quartic ladder and an \ifc\ matroid with an $N_0$-minor can be obtained by a mixed ladder move; or 
\item[(iv)] $M_0$ contains an enhanced quartic ladder from which an \ifc\ minor of $M_0$ with an $N_0$-minor can be obtained by an enhanced-ladder move.  
\end{itemize}
\end{corollary}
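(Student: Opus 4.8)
\textbf{Proof proposal for Corollary~\ref{chapter7}.}

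The plan is to simply combine Theorem~\ref{mainguy} with Theorem~\ref{killcasek} via the dichotomy recorded in the introduction immediately after the statement of Lemma~\ref{airplane}. First, suppose $M$ has a bowtie $(\{1,2,3\},\{4,5,6\},\{2,3,4,5\})$ with $M\ba 4$ being \ffsc\ with an $N$-minor. In $M\ba 4$, the set $\{5,2,3\}$ is a triad (since $\{2,3,4,5\}$ is a cocircuit of $M$) and $\{6,5,2\}$ is a triangle (wait --- more carefully, $\{4,5,6\}$ is a triangle of $M$, so in $M\ba 4$ we retain the triangle information appropriately); the point is that, as observed in the introduction using \cite[Lemma~2.5]{cmoIV} (our Lemma~\ref{airplane}), we have the dichotomy: either (a) $M\ba 1,4$ has an $N$-minor, or (b) $M\ba 1,4$ has no $N$-minor but $M\ba 4/3$ (the appropriate contraction) is \ffsc\ with an $N$-minor. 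Actually the relevant statement for this corollary is cleaner: either $M\ba 1,4$ has an $N$-minor, in which case we must further check whether $M\ba 1$ is \ffsc, or $M\ba 1,4$ has no $N$-minor.

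So the argument splits into two cases. In the first case, $M\ba 1,4$ has an $N$-minor. If in addition $M\ba 1$ is \ffsc, then we are in the first alternative of the corollary's conclusion and there is nothing to prove. If $M\ba 1$ is \emph{not} \ffsc, then relabelling so that the bowtie reads $(\{x_0,y_0,z_0\},\{x_1,y_1,z_1\},\{y_0,z_0,x_1,y_1\})$ with $z_0 = 4$ and $z_1 = 1$, we have exactly the hypotheses of Theorem~\ref{killcasek}: $M\ba z_0$ is \ffsc, $M\ba z_0,z_1$ has an $N$-minor, and $M\ba z_1$ is not \ffsc. Here one has to be a little careful about which element of the bowtie plays the role of $z_0$ and which plays $z_1$; the cocircuit $\{2,3,4,5\}$ is $\{y_0,z_0,x_1,y_1\}$, so $\{4,5\} = \{z_0, y_1\}$ forces $z_0 = 4$, and then the triangle $\{4,5,6\}=\{z_0,y_1,?\}$... this requires matching up the two triangle/cocircuit structures correctly, but it is routine. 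Theorem~\ref{killcasek} then yields one of its three conclusions (i)--(iii), each of which is subsumed by conclusions (i), (ii), and (iv) respectively of the corollary (note Theorem~\ref{killcasek}(i) allows $|E(M)|-|E(M')| \le 3$, which matches; and there is no augmented-$4$-wheel clause needed here, but the corollary's (i) is a weaker requirement so this is fine). The size hypotheses $|E(M)| \ge 16$ and $|E(N)| \ge 7$ of the corollary imply the hypotheses $|E(M)| \ge 13$ and $|E(N)| \ge 7$ of Theorem~\ref{killcasek}.

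In the second case, $M\ba 1,4$ has no $N$-minor. Then, with the labelling $(\{1,2,3\},\{4,5,6\},\{2,3,4,5\})$ exactly as in Theorem~\ref{mainguy}, the hypotheses of that theorem are met verbatim: $M$ has this bowtie, $M\ba 4$ is \ffsc\ with an $N$-minor, and $M\ba 1,4$ has no $N$-minor; the size constraints $|E(M)|\ge 16$ and $|E(N)|\ge 7$ are identical. Theorem~\ref{mainguy} then delivers, for some $(M_0,N_0) \in \{(M,N),(M^*,N^*)\}$, one of the four outcomes (i)--(iv), which are precisely the four outcomes (i)--(iv) listed in the corollary. Combining the two cases completes the proof. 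The only genuine content here is bookkeeping: verifying that the bowtie of the corollary can be relabelled to match the hypotheses of Theorem~\ref{killcasek} in the first subcase, and observing that the conclusions of the two invoked theorems together with the ``$M\ba 1$ is \ffsc'' alternative exhaust all possibilities. The main (very minor) obstacle is making sure the element-labelling in the bowtie is consistent between the two theorems --- in particular that $z_0$, the element one deletes to get a \ffsc\ matroid, corresponds to $4$ in both, and that the triad/triangle roles of $2,3$ and $5,6$ are assigned so that the cocircuit and triangle axioms of a string-of-bowties hypothesis in Theorem~\ref{killcasek} are satisfied; this is immediate from the symmetry of the bowtie structure. $\square$
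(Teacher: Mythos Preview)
Your proposal is correct and takes essentially the same approach as the paper, which simply asserts that the corollary ``follows immediately by combining Theorem~\ref{mainguy} with Theorem~\ref{killcasek}.'' Your case split (on whether $M\ba 1,4$ has an $N$-minor, and in the affirmative case whether $M\ba 1$ is \ffsc) is exactly the right bookkeeping, and the relabelling for Theorem~\ref{killcasek}---taking $(x_0,y_0,z_0)=(6,5,4)$ and $(\{x_1,y_1\},z_1)=(\{2,3\},1)$---works; note only the small slip where you wrote $\{4,5\}=\{z_0,y_1\}$ when you meant $\{y_0,z_0\}$, and the unnecessary detour through Lemma~\ref{airplane} before settling on the trivial dichotomy.
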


\section{Preliminaries}
\label{prelim}

In this section, we give some basic definitions mainly relating to matroid connectivity. 
Let $M$ and $N$ be matroids. We shall sometimes write $N\preceq M$ to indicate that $M$ has an $N$-minor, that is, a minor isomorphic to $N$. 
Now let $E$ be the ground set of $M$ and $r$ be its rank function.   
The {\it connectivity function} $\lambda_M$ of $M$ is defined on all subsets $X$ of $E$ by $\lambda_M(X) = r(X) + r(E-X) - r(M)$.  Equivalently, $\lambda_M(X) = r(X) + r^*(X) - |X|.$ We will sometimes abbreviate $\lambda_M$ as $\lambda$.
For a positive integer $k$, a subset $X$ or a partition $(X,E-X)$ of $E$ is 
{\em $k$-separating} if $\lambda_M(X)\le k-1$.  A $k$-separating partition $(X,E-X)$ of $E$ is a {\it $k$-separation} if  $|X|,|E-X|\ge k$. If $n$ is an integer exceeding one, a matroid  is {\it $n$-connected} if it has  no $k$-separations for all $k < n$.  This definition \cite{wtt} has the attractive property that a matroid is $n$-connected if and only if its dual is. Moreover, this matroid definition of $n$-connectivity is relatively compatible with the graph notion of $n$-connectivity when $n$ is $2$ or $3$. For example, when $G$ is a graph with at least four vertices and with no isolated vertices, $M(G)$ is a $3$-connected matroid if and only if $G$ is a $3$-connected simple graph. But the link between $n$-connectivity for matroids and graphs breaks down for $n \ge 4$. In particular, a $4$-connected matroid with at least six elements cannot have a triangle. Hence, for $r \ge 3$, neither $M(K_{r+1})$ nor $PG(r-1,2)$ is $4$-connected. This motivates the consideration of other types of $4$-connectivity  in which certain $3$-separations are allowed.

A matroid is \emph{internally $4$-connected} if it is $3$-connected and, whenever $(X,Y)$ is a $3$-separation, either $|X|=3$ or $|Y|=3$.  Equivalently, a $3$-connected matroid $M$ is \ifc\ if and only if, for every $3$-separation $(X,Y)$ of $M$, either $X$ or $Y$ is a triangle or a triad of $M$.  A graph $G$ without isolated vertices is {\it internally $4$-connected} if $M(G)$ is internally $4$-connected. 
 
Let $M$ be a  matroid. A subset $S$  of $E(M)$ is a {\em fan}   in $M$ if $|S| \ge 3$ and there is an ordering $(s_1,s_2,\ldots,s_n)$
of $S$ such that $\{s_1,s_2,s_3\},\{s_2,s_3,s_4\},\ldots,\{s_{n-2},s_{n-1},s_n\}$  alternate between triangles and triads.   
We call $(s_1,s_2,\ldots,s_n)$   a {\it fan ordering} of 
$S$. 
For convenience, we will often refer to the fan ordering as the fan.  
We will be mainly concerned with $4$-element and $5$-element fans. By convention, we shall always view a fan ordering of a $4$-element fan as beginning with a triangle and we shall use the term {\it $4$-fan} to refer to both the $4$-element fan and such a fan ordering of it. Moreover, we shall use the terms {\it $5$-fan} and {\it $5$-cofan} to refer to the two different types of $5$-element fan where the first contains two triangles and the second two triads. Let $(s_1,s_2,\ldots,s_n)$ be a fan ordering of a fan $S$. When $M$ is $3$-connected and $n \ge 4$, every fan ordering of $S$ has its first and last elements in $\{s_1,s_n\}$. We call these elements  the {\it ends} of the fan while the elements of $S - \{s_1,s_n\}$  are called the {\it internal elements} of the fan.  When $(s_1,s_2,s_3,s_4)$ is a $4$-fan, our convention is that $\{s_1,s_2,s_3\}$ is a triangle, and we call $s_1$  the {\it guts    element} of the fan and $s_4$ the {\it coguts    element} of the fan since $s_1 \in \cl(\{s_2,s_3,s_4\})$ and $s_4 \in \cls(\{s_1,s_2,s_3\})$.

A set $U$ in a matroid $M$ is {\em fully closed} if it is closed in both $M$ and $M^*$. Let $(X,Y)$ be a partition of $E(M)$. If $(X,Y)$ is $k$-separating in $M$ for some positive integer $k$,  and $y$ is an element of  $Y$ that is also in $\cl(X)$ or $\cl ^*(X)$, then it is well known and easily checked that $(X \cup y,Y-y)$ is $k$-separating, and we say that we have {\it moved} $y$ into $X$. More generally, $(\fcl(X),Y-\fcl(X))$ is $k$-separating in $M$. Let $n$ be an   integer exceeding one. If $M$ is $n$-connected, an $n$-separation $(U,V)$ of $M$ is {\it sequential} if $\fcl(U)$ or $\fcl(V)$ is $E(M)$. In particular, when $\fcl(U) = E(M)$, there is an ordering $(v_1,v_2,\ldots,v_m)$ of the elements of $V$ such that 
$U \cup \{v_m,v_{m-1},\ldots,v_i\}$ is $n$-separating for all $i$ in $\{1,2,\ldots,m\}$. When this occurs, the set $V$ is called {\it sequential}. Moreover, if $n \le m$, then $\{v_1,v_2,\ldots,v_n\}$ is a circuit or a cocircuit of $M$. 
A \thc\ matroid is {\it \sfc} if all of its $3$-separations are sequential. It is straightforward to check that, when $M$ is  binary, a sequential set with $3, 4$, or $5$ elements is a fan. Let $(X,Y)$ be a \ths\ of a \thc\ binary matroid $M$. We shall frequently be interested in $3$-separations that indicate that $M$ is, for example, not \ifc. We call $(X,Y)$ or $X$ a {\it \ftv} if $|Y| \ge |X| \ge 4$. Similarly, $(X,Y)$ is a {\it \ffsv} if, for each $Z$ in $\{X,Y\}$, either $|Z| \ge 5$, or $Z$ is \ns. 
We also say that $(X,Y)$ is a {\it \ffspv} if, for each $Z\in\{X,Y\}$, either $|Z|\geq 6$, or $Z$ is \ns, or $Z$ is a $5$-cofan.  
A binary matroid that has no \ffsv\ is {\it \ffsc}, as we defined in the introduction, and it is {\it \ffspc} if its has no \ffspv.

Next we note another special structure from~\cite{zhou}, which has arisen frequently in our work towards the desired splitter theorem. In an internally $4$-connected binary matroid $M$, we call 
$(\{1,2,3\},\{4,5,6\},\{7,8,9\},\{2,3,4,5\},\{5,6,7,8\},\{3,5,7\})$  a {\it quasi rotor} with {\it central triangle} $\{4,5,6\}$ and {\it central element $5$} 
if $\{1,2,3\}, \{4,5,6\},$ and $\{7,8,9\}$ are disjoint triangles in $M$ such that $\{2,3,4,5\}$ and $\{5,6,7,8\}$ are cocircuits and $\{3,5,7\}$ is a triangle.  
%When $M$ contains such a quasi rotor, if it also has an element $\tau$ such that $\{3,4,\tau\}$ or $\{6,7,\tau\}$ is a triangle, then both of these sets are triangles of $M$ and we call $\tau$ the {\it tip} of the quasi rotor. 
The next section is dedicated to results concerning bowties and quasi rotors.  

\begin{figure}[htb]
\centering
\includegraphics{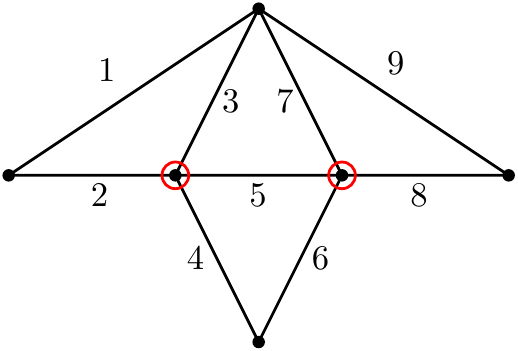}
\caption{A quasi rotor, where $\{2,3,4,5\}$ and $\{5,6,7,8\}$ are cocircuits.}
\label{rotorfig}
\end{figure}

For all non-negative integers $i$, it will be convenient  to adopt the convention throughout the paper of using $T_i$ and $D_i$ to denote, respectively, a triangle $\{a_i,b_i,c_i\}$ and a cocircuit $\{b_i,c_i,a_{i+1},b_{i+1}\}$.  
Let $M$ have $(T_0,T_1,T_2,D_0,D_1,\{c_0,b_1,a_2\})$ as a quasi rotor.  
Now $T_2$ may also be the central triangle of a quasi rotor.  
In fact, we may have a structure like one of the two depicted in Figure~\ref{rotorchainabc}.  
If $T_0,D_0,T_1,D_1,\dots ,T_n$ is a string of bowties in $M$, for some $n\geq 2$, and $M$ has the additional structure that $\{c_i,b_{i+1},a_{i+2}\}$ is a triangle for all $i$ in  $\{0,1,\dots ,n-2\}$, then we say that $((a_0,b_0,c_0),(a_1,b_1,c_1),\dots ,(a_n,b_n,c_n))$ is a {\it rotor chain}.  Clearly, deleting $a_0$ from a rotor chain gives an open rotor chain. 
Note that every  three consecutive triangles within a rotor chain have the structure of a quasi rotor;   
that is, for all $i$  in $\{0,1,\dots ,n-2\}$, the sequence $(T_i,T_{i+1},T_{i+2},D_i,D_{i+1},\{c_i,b_{i+1},a_{i+2}\})$ is a quasi rotor.  
Zhou~\cite{zhou} considered a similar structure that he called a {\it double fan of 
length $n-1$};  it  consists of all of the  elements in the rotor chain except for $a_0,b_0,b_n$, and $c_n$. 

\begin{figure}[b]
\centering
\includegraphics[scale=0.75]{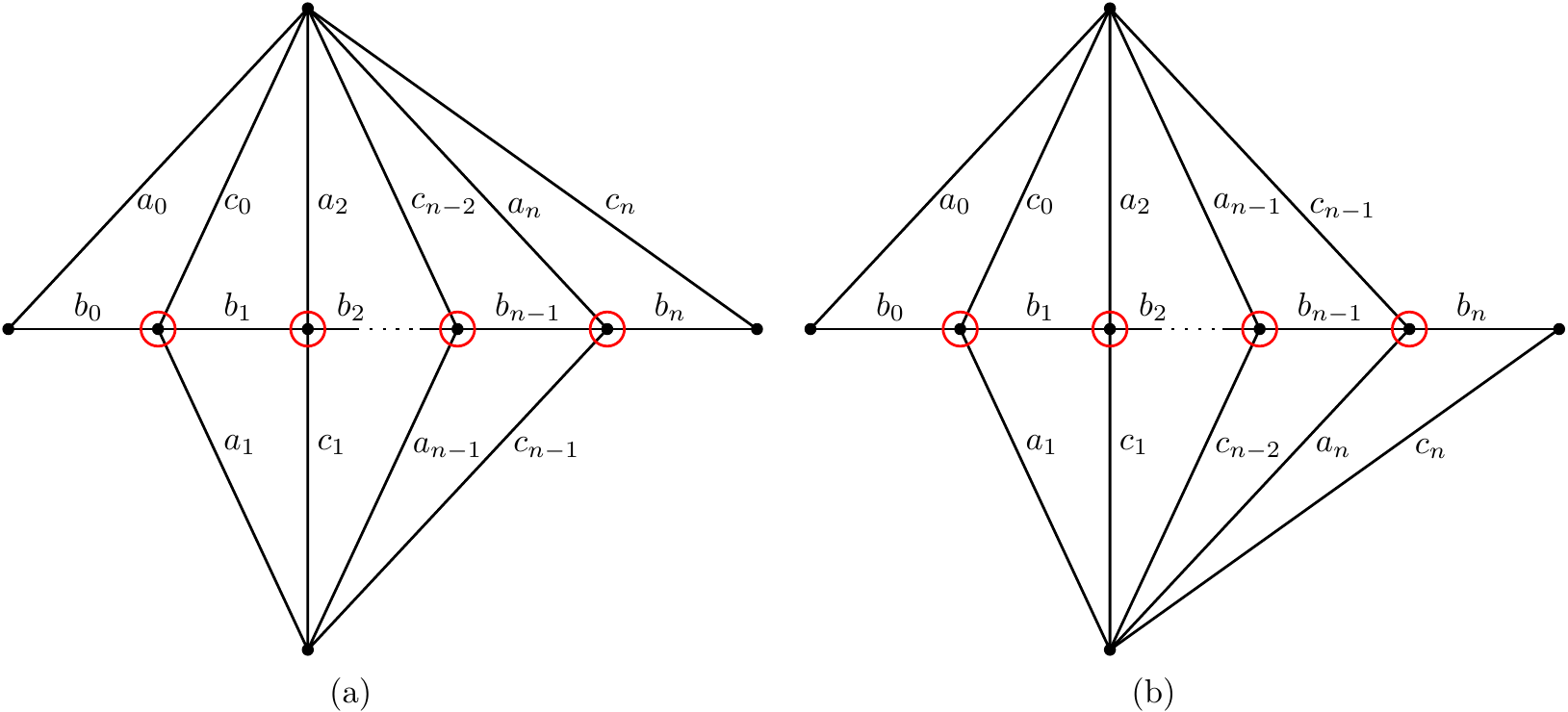}
\caption{Right-maximal rotor chain configurations.  In the case that $n$ is even, the rotor chain is depicted on the left.  If $n$ is odd, then the rotor chain has the form on the right.}
\label{rotorchainabc}
\end{figure}

If a rotor chain $((a_0,b_0,c_0),(a_1,b_1,c_1),\dots ,(a_n,b_n,c_n))$ cannot be extended to a rotor chain of the form $((a_0,b_0,c_0),(a_1,b_1,c_1),\dots ,(a_{n+1},b_{n+1},c_{n+1}))$, then we call it a {\it right-maximal rotor chain}.  
%If $\tau$ is the tip of any quasi rotor in a rotor chain, then it is easy to see that $\tau$ is the tip of every quasi rotor in the rotor chain. Hence we shall refer to $\tau$ as the {\it tip of the rotor chain}.  

In the introduction, we defined a string of bowties. 
We say that such a string $T_0,D_0,T_1,D_1,\dots ,T_n$ is a {\it right-maximal bowtie string} in $M$ if $M$ has no triangle $\{u,v,w\}$ such that $T_0,D_0,T_1,D_1,\dots,T_n,\{x,c_n,u,v\},\{u,v,w\}$ is a bowtie string for some $x$ in $\{a_n,b_n\}$.  

%We note that if $T_0,D_0,T_1,D_1,T_2$ is a string of bowties all of whose elements are distinct and $\{c_0,b_1,a_2\}$ is also a triangle, then  $(T_0,T_1,T_2,D_0,D_1,\{c_0,b_1,a_2\})$ is a quasi rotor.  

%One possibility that will arise is when a bowtie string wraps around on itself to form a ring.   
%Suppose that $M$ has a bowtie string $T_0,D_0,T_1,\dots, T_k$. 
%Suppose that $a_0 \neq c_k$. Then all the elements of $T_0,T_1,\dots,T_k$ are distinct.  
%If $(T_k,T_0,\{b_k,c_k,a_0,b_0\})$ is a bowtie where $D_k = \{b_k,c_k,a_0,b_0\}$, then we say that $(T_0,D_0,T_1,D_1,\dots ,T_k,\{b_k,c_k,a_0,b_0\})$ is a {\it ring of bowties}.  

{  For each positive integer $n\ge 3$, let $M_n$ be the binary matroid that is obtained from a wheel of rank $n$ by adding a single element $\gamma$ such that if $B$ is the basis of $M({\mathcal W}_n)$ consisting of the set of spokes of the wheel, then the fundamental circuit $C(\gamma, B)$ is $B \cup \gamma$. Observe that $M_3 \cong F_7$ and $M_4 \cong M^*(K_{3,3})$. Assume that the spokes of $M({\mathcal W}_n)$, in cyclic order, are $x_1,x_2,\ldots,x_n$ and that $\{x_i,y_i,x_{i+1}\}$ is a triangle of $M({\mathcal W}_n)$ for all $i$ in $\{1,2,\dots,n\}$ where we interpret all subscripts modulo $n$. Then, for all $i$ in $\{1,2,\dots,n\}$, the set $\{y_{i-1},x_i,y_i\}$ is a triad of $M({\mathcal W}_n)$  and $\{\gamma,y_{i-1},x_i,y_i\}$ is a cocircuit of $M_n$. It is straightforward to check that $M_n$ is \ifc. Kingan and Lemos~\cite{kinglem} denote $M_n$ by $F_{2n+1}$. 
When $n$ is odd, which is the case that will be of most interest to us here, $M_n^*$ is isomorphic to what Mayhew, Royle, and Whittle~\cite{mayroywhi} call the {\it rank-$(n+1)$ triadic M\"{o}bius matroid}, $\Upsilon_{n+1}$.} 
%Moreover, for $n \ge 2$, the matroid $M_{2n+1}\ba x_1,x_3,\dots,x_{2n+1}/y_1$ is \ifc\ as can be seen by noting that  this matroid is the dual of the matroid $\Delta_{n+1}$, the rank-$(n+1)$ triangular M\"{o}bius matroid. }

\section{Some results for bowties and quasi rotors}
\label{bqr}

In this section, we gather together a number of results that will be needed to prove the main theorem beginning with Lemma 2.5 from \cite{cmoIV} and Lemmas 4.1 and 4.2 from \cite{cmoVI}.  {  The second of these  will often be used implicitly without reference.}

\begin{lemma}
\label{airplane}
Let $M$ and $N$  be  \ifc~ binary matroids  and $\{e,f,g\}$ be a triangle of $M$ such that $N\preceq M\ba e$ and $M\ba e$ is \ffsc. Suppose $|E(N)| \ge 7$ and $M\ba e$ has $(1,2,3,4)$ as a $4$-fan. 
Then either 
\begin{itemize}
\item[(i)]  $N\preceq M\ba e\ba 1$; or 
\item[(ii)] $N\preceq M\ba e/4$ and $M\ba e/4$ is \ffsc.
\end{itemize}
\end{lemma}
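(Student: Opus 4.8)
\textbf{Proof proposal for Lemma~\ref{airplane}.}

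The plan is to analyze the fan $(1,2,3,4)$ in $M\ba e$, where $\{1,2,3\}$ is a triangle and $\{2,3,4\}$ is a triad. First I would record the structural constraints that $N\preceq M\ba e$ and $M\ba e$ is \ffsc\ impose, together with the fact that $M$ itself is \ifc. Since $\{2,3,4\}$ is a triad of $M\ba e$, it is a cocircuit of $M\ba e$, so either $\{2,3,4\}$ or $\{2,3,4,e\}$ is a cocircuit of $M$. Because $M$ is \ifc\ and hence has no triads meeting a triangle unless forced, and because $\{1,2,3\}$ is a triangle of $M$ as well (triangles persist under deletion), I would argue that $\{2,3,4,e\}$ is a cocircuit of $M$ and $\{e,f,g\}$, $\{1,2,3\}$, $\{2,3,4,e\}$ form a bowtie-like configuration; this is the setting in which the earlier papers operate. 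The key dichotomy is whether $N\preceq M\ba e\ba 1$. If it does, we are in case~(i) and there is nothing more to prove.

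So assume $N\not\preceq M\ba e\ba 1$. The goal is then to show $N\preceq M\ba e/4$ and that $M\ba e/4$ is \ffsc. For the minor statement, I would use the standard fan-based minor-shifting argument: in $M\ba e$, since $(1,2,3,4)$ is a $4$-fan with guts element $1$ and coguts element $4$, for any minor we have $M\ba e \ba 1 \cong$ something comparable to $M\ba e/4$ up to the behavior on $\{2,3\}$; more precisely one shows that an $N$-minor of $M\ba e$ avoiding the option of deleting $1$ must use $4$ in a way that lets us contract it. Concretely, take a minor-minimal realization of $N$ inside $M\ba e$; standard arguments about how an $N$-minor interacts with a fan (as in \cite{oxrox} and as used throughout this series) show that either $1$ can be deleted or $4$ can be contracted while retaining the $N$-minor. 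Since the former is excluded by hypothesis, $N\preceq M\ba e/4$.

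The substantive part is showing $M\ba e/4$ is \ffsc. Here I would suppose for contradiction that $M\ba e/4$ has a \ffsv\ $(U,V)$, so for each side either the side has at least five elements or it is \ns. I would pull this separation back to $M\ba e$: since $4\in\cl^*_{M\ba e}(\{2,3\})$ (because $\{2,3,4\}$ is a triad of $M\ba e$), contracting $4$ is "almost free" on the $\{2,3\}$ side, and a $3$-separation of $M\ba e/4$ yields a $3$-separation of $M\ba e$ after possibly adjusting by the elements $2,3$. Because $M\ba e$ is \ffsc, this pulled-back separation must be non-sequential-free, i.e.\ one side is small and sequential; I would then trace through the few ways a small sequential side (a triangle, triad, $4$-fan, or $5$-fan/cofan) of $M\ba e$ can give rise to a \ffsv\ of $M\ba e/4$, using the location of $4$ and the triad $\{2,3,4\}$, and derive a contradiction in each case --- typically either the separation was sequential after all, or $M\ba e$ fails to be \ffsc, or $M$ fails to be \ifc. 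The main obstacle I anticipate is exactly this case analysis: controlling how the fan element $4$ and its triad interact with a hypothetical bad separation of the contraction, and in particular ruling out the possibility that contracting $4$ creates a genuinely new non-sequential $3$-separation that was not visible (even after fully closing) in $M\ba e$. This is the kind of delicate connectivity bookkeeping --- moving elements into or out of a separating set via $\cl$ and $\cl^*$, and invoking $|E(N)|\ge 7$ to forbid $N$-minors on small sides --- that these papers handle with care, and I would lean on Lemmas~4.1 and 4.2 of \cite{cmoVI} (cited above) and the basic fan/connectivity facts in Section~\ref{prelim} to close it out.
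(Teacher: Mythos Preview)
The paper does not prove this lemma; it is quoted as Lemma~2.5 of \cite{cmoIV} and simply cited at the start of Section~\ref{bqr}. So there is no argument in this paper to compare your attempt against.

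On the merits of your outline: the two halves are correctly identified. The minor-preservation step is exactly Lemma~\ref{2.2} applied to the $4$-fan $(1,2,3,4)$ in $M\ba e$, and your informal description of it is fine. Your structural observation that $\{2,3,4,e\}$ must be a cocircuit of $M$ (since $M$, being \ifc\ with enough elements, cannot have the triad $\{2,3,4\}$ meeting the triangle $\{1,2,3\}$) is also correct, though in fact that observation is not needed for either conclusion.

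The connectivity half---showing $M\ba e/4$ is \ffsc---is where the content lies, and your sketch is in the right direction but is not yet an argument. The pull-back idea is sound: given a \ffsv\ $(U,V)$ of $M\ba e/4$, one arranges $\{1,2,3\}\subseteq U$, and then $(U\cup 4,V)$ is a $3$-separation of $M\ba e$ with $|U\cup 4|\ge 5$; one checks that the $V$-side still satisfies the violator condition (non-sequentiality transfers because $4\in\cl^*_{M\ba e}(U)$), contradicting \ffsc\ of $M\ba e$. What you have not addressed is why $M\ba e/4$ is $3$-connected to begin with (equivalently, why $4$ lies in no triangle of $M\ba e$; this uses that a $5$-fan in $M\ba e$ would itself be a \ffsv), nor the boundary bookkeeping when $\{1,2,3\}$ initially straddles $(U,V)$. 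These are routine but must be written out; the proof of Lemma~\ref{realclaim1} in this paper is a template for exactly this style of argument.
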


\begin{lemma}
\label{2.2} 
Let $N$ be an \ifc\  matroid having at least seven elements and $M$ be a binary matroid with an $N$-minor. If $(s_1,s_2,s_3,s_4)$ is a $4$-fan in $M$, then $M\ba s_1$ or $M/s_4$ has an $N$-minor. If $(s_1,s_2,s_3,s_4,s_5)$ is a $5$-fan in $M$, then either $M\ba s_1,s_5$ has an $N$-minor, or both $M\ba s_1/s_2$ and $M\ba s_5/s_4$ have $N$-minors. 
\end{lemma}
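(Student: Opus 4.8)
The plan is to prove Lemma~\ref{2.2}, which asserts two things: a statement about $4$-fans and a statement about $5$-fans. I would handle these separately, and in each case exploit the basic structure of a fan together with the robustness of having an $N$-minor when $|E(N)|\ge 7$.

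\textbf{The $4$-fan case.} Let $(s_1,s_2,s_3,s_4)$ be a $4$-fan, so $\{s_1,s_2,s_3\}$ is a triangle and $\{s_2,s_3,s_4\}$ is a triad. First I would recall the elementary fact that, in a binary matroid (indeed any matroid), if $T$ is a triangle then for any $N$-minor one of the three single-element deletions $M\ba t$, $t\in T$, still has an $N$-minor, \emph{unless} the $N$-minor forces us to contract an element of $T$; dually for triads and contractions. More precisely, I would use the standard lemma: if $T^*$ is a triad of $M$ and $N\preceq M$ with $N$ $3$-connected and $|E(N)|\ge 4$, then either $M/t$ has an $N$-minor for some $t\in T^*$, or $M\ba t$ has an $N$-minor for \emph{every} $t\in T^*$ that is not needed — actually the clean statement is: for a triangle $T$, either $N\preceq M\ba t$ for some $t\in T$, or $N\preceq M/t$ for all $t\in T$; dually for a triad. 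Applying this to the triad $\{s_2,s_3,s_4\}$: either $N\preceq M/s_i$ for some $i\in\{2,3,4\}$ — and if $i\in\{2,3\}$ we can finish because $s_2,s_3$ lie in the triangle, so a further argument (using that $\{s_1,s_2,s_3\}$ is a triangle and $N$ is $3$-connected with $\ge 7 > 3$ elements, hence has no parallel or series pairs) lets us instead delete $s_1$; or $N\preceq M\ba s_2$. The cleanest route, which I expect the authors take, is: since $\{s_1,s_2,s_3\}$ is a triangle, $N\preceq M\ba s_1$ or $N\preceq M/s_1$; in the latter case $s_1$ being in a triad-closure forces $\{s_2,s_3\}$ to behave well, and one deduces $N\preceq M/s_4$. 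I would make this precise by a short case analysis on whether contracting or deleting is forced, using that $N$ has no $1$- or $2$-element circuits/cocircuits and $|E(N)|\ge 7$ to rule out degenerate collapses.

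\textbf{The $5$-fan case.} Let $(s_1,s_2,s_3,s_4,s_5)$ be a $5$-fan, so $\{s_1,s_2,s_3\}$ and $\{s_3,s_4,s_5\}$ are triangles and $\{s_2,s_3,s_4\}$ is a triad. Apply the $4$-fan result to the $4$-fan $(s_1,s_2,s_3,s_4)$: either $N\preceq M\ba s_1$ or $N\preceq M/s_4$. Symmetrically, applying it to the $4$-fan $(s_5,s_4,s_3,s_2)$: either $N\preceq M\ba s_5$ or $N\preceq M/s_2$. Now combine. If $N\preceq M\ba s_1$, then inside $M\ba s_1$ the tuple $(s_2,s_3,s_4,s_5)$ — wait, one must check what fan remains; $\{s_3,s_4,s_5\}$ is still a triangle and $\{s_2,s_3,s_4\}$ is still a triad in $M\ba s_1$, but $s_2$ may now be in a small cocircuit. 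The structure to use is that $\{s_1,s_2\}$ was a series pair in $M\ba s_3$-type reasoning; the honest approach is: after $N\preceq M\ba s_1$, we have $\{s_2,s_3,s_4,s_5\}$ behaving as a fan-like structure, and one more application of the triangle/triad lemma gives $N\preceq M\ba s_1,s_5$ or $N\preceq M\ba s_1/s_2$. The symmetric chain starting from $N\preceq M\ba s_5$ gives $N\preceq M\ba s_1,s_5$ or $N\preceq M\ba s_5/s_4$. The outcomes $N\preceq M/s_2$ and $N\preceq M/s_4$ need to be converted: in $M/s_2$, $\{s_1\}$ — no; rather $\{s_1,s_3\}$ or similar becomes a parallel pair or $s_1$ becomes a loop/coloop, forcing $N\preceq M\ba s_1/s_2$ anew, and dually. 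Assembling all cases yields: either $N\preceq M\ba s_1,s_5$, or \emph{both} $N\preceq M\ba s_1/s_2$ and $N\preceq M\ba s_5/s_4$, exactly as claimed. The bookkeeping that both hold simultaneously in the second outcome is the delicate point.

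\textbf{Main obstacle.} The genuine difficulty is the second outcome of the $5$-fan statement — showing that \emph{both} $M\ba s_1/s_2$ and $M\ba s_5/s_4$ have $N$-minors, not merely one of them. This requires arguing that if $N\preceq M\ba s_1,s_5$ fails, then the $N$-minor is forced to use $s_1$ (hence, being in a triangle/series situation after the relevant reductions, one is forced into $M\ba s_1/s_2$) and, by the symmetric argument, also forced to use $s_5$ (giving $M\ba s_5/s_4$), and these two conclusions are compatible. I would handle this by a careful analysis of which of the elements $s_1,s_2,s_4,s_5$ can or must be deleted versus contracted to reach $N$, using repeatedly that $N$ is internally $4$-connected with at least seven elements so it contains no triangles — this last fact is what prevents, e.g., $\{s_3,s_4,s_5\}$ from surviving intact in $N$ and pins down the reductions. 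The rest is routine application of the elementary triangle/triad minor lemmas.
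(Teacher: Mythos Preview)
The paper does not give its own proof of this lemma; it is quoted verbatim from \cite[Lemma~4.2]{cmoVI} and used as a standing tool. So there is no in-paper argument to compare against, and your proposal should be judged on its own.

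Your overall strategy --- reduce the $5$-fan case to two applications of the $4$-fan case and then combine --- is the right one and is how this result is typically proved. But the write-up has one genuine error and several soft spots that would need to be fixed before this counts as a proof.

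The error is your closing claim that, since $N$ is internally $4$-connected with at least seven elements, ``it contains no triangles.'' That is false: internally $4$-connected matroids may have both triangles and triads (think of $M(K_5)$ or $PG(r-1,2)$). What the hypothesis on $N$ actually buys you is that $N$ has no $4$-element fan (a $4$-fan with its complement would give a \ths\ with neither side a triangle or a triad) and, being $3$-connected with $|E(N)|\ge 4$, $N$ is simple and cosimple. These two facts are the only structural constraints on $N$ you need.

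For the $4$-fan part, the argument you sketch is too vague to be a proof. A clean line is: write $N=M/C\ba D$. If $s_1\in D$ or $s_4\in C$ you are done. Otherwise use the triangle $\{s_1,s_2,s_3\}$ and triad $\{s_2,s_3,s_4\}$ together with simplicity/cosimplicity of $N$ to push one of the remaining placements into $M\ba s_1$ or $M/s_4$ via the standard parallel/series swaps (for instance, if $s_1\in C$ then $\{s_2,s_3\}$ is a parallel pair after contracting $s_1$, so one of them must be deleted, and then the triad forces $s_4$ into series with the survivor, etc.). What you wrote gestures at this but never pins down the case analysis; ``a further argument \ldots lets us instead delete $s_1$'' is exactly the step that needs to be written out.

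For the $5$-fan part, the ``delicate point'' you flag is real but easier than you make it sound once you observe that $M\ba s_1/s_2\cong M\ba s_5/s_4$. This follows because in the $5$-fan the triangle $\{s_1,s_2,s_3\}$ and triad $\{s_2,s_3,s_4\}$ give $M\ba s_1/s_2\cong M\ba s_3/s_2\cong M\ba s_3/s_4\cong M\ba s_5/s_4$. So the two conclusions in the second alternative are not separate facts to be proved in parallel; they are the same minor, and ``both'' is automatic once you have either. Your argument would be much shorter and cleaner once you use this isomorphism rather than trying to run two symmetric chains and reconcile them.
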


\begin{lemma}
\label{bowwow}
Let $M$ be  an \ifc\ matroid having at least ten elements. If $(\{1,2,3\}, \{4,5,6\}, \{2,3,4,5\})$ is a bowtie in $M$, then $\{2,3,4,5\}$ is the unique $4$-cocircuit of $M$ that meets both $\{1,2,3\}$ and  $\{4,5,6\}$.
\end{lemma}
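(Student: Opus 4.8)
\textbf{Proof plan for Lemma~\ref{bowwow}.}

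The plan is to argue by contradiction using the standard binary-matroid orthogonality tools. Suppose $D^*$ is a $4$-cocircuit of $M$ distinct from $\{2,3,4,5\}$ that meets both triangles $T_1 = \{1,2,3\}$ and $T_2 = \{4,5,6\}$. Since $M$ is binary, a cocircuit meets a circuit in an even number of elements, so $|D^* \cap T_1|$ and $|D^* \cap T_2|$ are each even, hence each equal to $0$ or $2$; as $D^*$ meets both triangles and $|D^*| = 4$, we get $|D^* \cap T_1| = |D^* \cap T_2| = 2$ and $D^* \subseteq T_1 \cup T_2$. Now $D^* \neq \{2,3,4,5\}$, so writing $D^* \cap T_1 = \{s,t\}$ and $D^* \cap T_2 = \{u,v\}$, the pair $\{s,t\}$ is one of $\{1,2\},\{1,3\},\{2,3\}$ and similarly for $\{u,v\}$, and we are not in the case $\{s,t\}=\{2,3\}$, $\{u,v\}=\{4,5\}$ simultaneously.

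The next step is to take the symmetric difference $D^* \symdif \{2,3,4,5\}$. In a binary matroid the symmetric difference of two cocircuits is a disjoint union of cocircuits, so $C^* := D^* \symdif \{2,3,4,5\}$ is a (possibly empty) cocircuit or union of cocircuits contained in $T_1 \cup T_2$. I would run through the cases for $\{s,t\}$ and $\{u,v\}$: in each case $C^*$ has size $2$ or $4$. If $|C^*| = 2$, then $M$ has a $2$-element cocircuit, contradicting $3$-connectivity (since $|E(M)| \ge 10 > 4$). If $|C^*| = 4$, then $C^*$ is again a $4$-cocircuit meeting both $T_1$ and $T_2$ in two elements each, and one checks it is once more distinct from $\{2,3,4,5\}$ in the relevant subcases; more usefully, in several subcases $C^*$ turns out to be contained in a single triangle union a pair, giving a small separation. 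The cleanest route is probably: from the two cocircuits $D^*$ and $\{2,3,4,5\}$ together with the two triangles $T_1, T_2$, deduce that $T_1 \cup T_2$ is $3$-separating with at least three cocircuits crammed inside a six-element set, then show that the only way a $3$-connected matroid on at least ten elements can support triangles $T_1,T_2$ and two different $4$-cocircuits inside $T_1\cup T_2$ is if $T_1 \cup T_2$ contains a triad or the matroid has rank/corank defect forcing a contradiction. Concretely, $\lambda_M(T_1 \cup T_2) \le r(T_1\cup T_2) + r^*(T_1 \cup T_2) - 6 \le 4 + 3 - 6 = 1$ (using that $T_1,T_2$ are triangles so $r(T_1\cup T_2) \le 4$, and the two distinct cocircuits inside give $r^*(T_1\cup T_2) \le 3$), so $(T_1\cup T_2, E(M) - (T_1\cup T_2))$ is a $3$-separation; since $M$ is internally $4$-connected and $|E(M) - (T_1\cup T_2)| \ge 4$, the set $T_1 \cup T_2$ must be a triangle or triad, which is absurd for a six-element set. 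This contradiction finishes the proof.

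The main obstacle I anticipate is bookkeeping the possibility that $D^* \symdif \{2,3,4,5\}$ degenerates — either it is empty (impossible since $D^* \ne \{2,3,4,5\}$), or it splits as a union of two $2$-cocircuits rather than a single $4$-cocircuit, or the estimate $r^*(T_1 \cup T_2) \le 3$ needs care because I must be sure the two cocircuits are genuinely independent in the cocircuit space so that corank of $T_1 \cup T_2$ is at least $2$ and hence $r(T_1\cup T_2) \le 6 - 2 = 4$ is not circular. I would handle this by noting that two distinct cocircuits both contained in a set $Z$ always certify $r^*(Z) \le |Z| - 2$ when $M$ is simple enough (no parallel elements, guaranteed by $3$-connectivity), which is exactly what is needed; and the $2$-cocircuit case is immediately killed by $3$-connectivity. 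Everything else is a short finite case check on which pairs $\{s,t\} \subseteq T_1$, $\{u,v\} \subseteq T_2$ can occur, and in every case the conclusion is the same forbidden $3$-separation with $T_1 \cup T_2$ on one side.
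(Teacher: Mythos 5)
The paper itself gives no proof of this lemma (it is imported from an earlier paper in the series), but your argument is the standard one and is essentially correct: orthogonality in a binary matroid forces $|D^*\cap\{1,2,3\}|=|D^*\cap\{4,5,6\}|=2$, hence $D^*\subseteq\{1,2,\ldots,6\}$, and then two distinct cocircuits inside a six-element set of rank at most $4$ yield a forbidden separation. One slip to fix: two independent cocircuits contained in $Z$ certify only $r^*(Z)\le |Z|-2=4$, not $3$ — your third cocircuit $C^*=D^*\bigtriangleup\{2,3,4,5\}$ lies in the span of the other two, so it contributes nothing extra — and thus $\lambda(T_1\cup T_2)\le 4+4-6=2$, not $1$. (Your own "obstacle" paragraph states the correct bound $r^*(Z)\le|Z|-2$, contradicting the $\le 3$ in the display.) This is harmless: you obtain a $3$-separation rather than a $2$-separation, and your closing sentence already handles that case correctly, since $|T_1\cup T_2|=6$ and $|E(M)-(T_1\cup T_2)|\ge 4$ contradict internal $4$-connectivity. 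The preliminary case analysis on $|C^*|$ and on the pairs $\{s,t\},\{u,v\}$ is unnecessary once $D^*\subseteq T_1\cup T_2$ is established.
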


When dealing with bowtie structures, we will repeatedly use the following   result 
\cite[Lemma 4.3]{cmoVI}, a modification of \cite[Lemma 6.3]{cmochain} .  

\begin{lemma}
\label{6.3rsv}
Let $(\{1,2,3\}, \{4,5,6\}, \{2,3,4,5\})$ be a bowtie in an \ifc\ binary matroid $M$ with $|E(M)| \ge 13$. Then $M\ba 6$ is \ffsc\ unless $\{4,5,6\}$ is the central triangle of 
a quasi rotor $(\{1,2,3\},\{4,5,6\},\{7,8,9\},\{2,3,4,5\},\{y,6,7,8\},\{x,y,7\})$ for some $x$ in $\{2,3\}$ and some $y$ in $\{4,5\}$.  
In addition, when $M\ba 6$ is \ffsc, one of the following holds. 
\begin{itemize}
\item[(i)] $M\ba 6$ is \ifc; or
\item[(ii)] $M$ has a triangle $\{7,8,9\}$ disjoint from $\{1,2,3,4,5,6\}$ such that $(\{4,5,6\}, \{7,8,9\}, \{a,6,7,8\})$ is a bowtie for some $a$ in $\{4,5\}$; or
\item[(iii)] every \ftv\ of $M\ba 6$ is a $4$-fan of the form $(u,v,w,x)$, where $M$ has a triangle $\{u,v,w\}$ and a cocircuit $\{v,w,x,6\}$ for some $u$ and  $v$ in $\{2,3\}$ and $\{4,5\}$, respectively, and $|\{1,2,3,4,5,6,w,x\}|=8$; or
\item[(iv)] $M\ba 1$ is \ifc\ and $M$ has a triangle $\{1,7,8\}$ and a cocircuit $\{a,6,7,8\}$ where $|\{1,2,3,4,5,6,7,8\}| = 8$ and  $a \in \{4,5\}$. 
\end{itemize}
\end{lemma}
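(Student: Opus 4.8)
The plan is to analyse the $3$-separations of $M\ba 6$, using throughout one workhorse fact: since $M$ is internally $4$-connected with $|E(M)|\ge 13>7$, it has no $4$-fan, because a $4$-fan of $M$ would be a $4$-element $3$-separating set that is neither a triangle nor a triad and has a complement of size at least nine, contradicting internal $4$-connectivity. In particular $6$ lies in no triad of $M$: by orthogonality with the triangle $\{4,5,6\}$, such a triad would be $\{y,6,t\}$ for some $y\in\{4,5\}$, and then $\{4,5,6\}$ together with $\{y,6,t\}$ would be a $4$-fan of $M$. Since $6$ is in a triangle and in no triad, $M\ba 6$ is $3$-connected. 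If $M\ba 6$ is internally $4$-connected we are in case~(i), so assume it is not; then $M\ba 6$ has a $3$-separation $(X,Y)$ with $|X|,|Y|\ge 4$, a $3$-element side of a $3$-separation of a $3$-connected binary matroid always being a triangle or a triad.

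The first reduction is to pin down how $6$ sits relative to $(X,Y)$ in $M$. If $6\in\cl_M(X)$, then $\lambda_M(X\cup 6)=\lambda_{M\ba 6}(X)=2$, so $(X\cup 6,Y)$ is a $3$-separation of $M$ with both sides of size at least four, contradicting internal $4$-connectivity; hence $6\notin\cl_M(X)$, and likewise $6\notin\cl_M(Y)$. Since $\{4,5,6\}$ is a triangle, $6\in\cl_M(\{4,5\})$, so $4$ and $5$ cannot lie on the same side of $(X,Y)$; say $4\in X$ and $5\in Y$. Consequently the cocircuit $\{2,3,4,5\}$ meets both $X$ and $Y$, and the position of the triangle $\{1,2,3\}$ is constrained accordingly. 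Moreover, any triad of $M\ba 6$ that arises in the analysis is either a triad of $M$ — impossible as soon as it crosses a triangle of $M$, by the no-$4$-fan fact — or the restriction of a $4$-cocircuit of $M$ through $6$; this is what repeatedly converts cocircuits of $M\ba 6$ into cocircuits of $M$ containing $6$.

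The heart of the proof is then a case analysis, by orthogonality against the triangles $\{1,2,3\}$ and $\{4,5,6\}$ and the cocircuit $\{2,3,4,5\}$ (and Lemma~\ref{bowwow}, the uniqueness of the $4$-cocircuit of the bowtie), of where a minimal such $3$-separation, or an associated $4$-fan $(s_1,s_2,s_3,s_4)$ of $M\ba 6$, sits. Such a $4$-fan has $\{s_1,s_2,s_3\}$ a triangle of $M$ and, since $\{s_2,s_3,s_4\}$ cannot be a triad of $M$, $\{s_2,s_3,s_4,6\}$ a cocircuit of $M$; orthogonality with $\{4,5,6\}$ forces an element $y\in\{4,5\}\cap\{s_2,s_3,s_4\}$, and orthogonality with $\{1,2,3\}$ and $\{2,3,4,5\}$, together with element counting from $|E(M)|\ge 13$, then determines the intersection of $\{s_1,s_2,s_3\}$ with $\{1,2,3,4,5,6\}$. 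If $\{s_1,s_2,s_3\}$ is disjoint from $\{1,2,3,4,5,6\}$, relabelling it $\{7,8,9\}$ gives the bowtie $(\{4,5,6\},\{7,8,9\},\{y,6,7,8\})$ of~(ii); if it meets $\{1,2,3\}$ in just the element $1$ we get~(iv), with $M\ba 1$ internally $4$-connected obtained by rerunning the whole argument with $1$ in place of $6$; and if it meets $\{1,2,3\}$ in a single element of $\{2,3\}$ we get~(iii), where $\{s_1,s_2,s_3\}$ plays the role of $\{x,y,7\}$ and the whole configuration is a quasi rotor missing only its third triangle. Finally, to obtain the stated dichotomy, one shows that if $M\ba 6$ actually has a $(4,4,S)$-violator then the minimal violator is too large to be accounted for by a single bowtie step; pushing the $3$-separation further and reapplying the orthogonality analysis produces the missing triangle $\{7,8,9\}$, so $\{4,5,6\}$ is the central triangle of the quasi rotor $(\{1,2,3\},\{4,5,6\},\{7,8,9\},\{2,3,4,5\},\{y,6,7,8\},\{x,y,7\})$ with $x\in\{2,3\}$ and $y\in\{4,5\}$; and if no such violator exists, every $(4,3)$-violator of $M\ba 6$ is the single $4$-fan already analysed, so $M\ba 6$ is $(4,4,S)$-connected and one of~(i)--(iv) holds.

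I expect the main obstacle to be this final step: proving that a $(4,4,S)$-violator of $M\ba 6$ forces the \emph{entire} quasi rotor — the third triangle $\{7,8,9\}$ and the exact cocircuit $\{y,6,7,8\}$ — and not merely a single bowtie off $\{4,5,6\}$. This needs tight control over how a minimal violator of size exceeding four decomposes, repeated orthogonality against all of $\{1,2,3\}$, $\{4,5,6\}$, $\{2,3,4,5\}$ and the newly found circuits and cocircuits, and careful disposal of the degenerate shapes of a non-fan $4$-element side (for instance an $X$ that is simultaneously a $4$-circuit and a $4$-cocircuit of $M\ba 6$), as well as ruling the permitted label coincidences — those allowed by $a\in\{4,5\}$, $x\in\{2,3\}$, $y\in\{4,5\}$ and by cardinality conditions such as $|\{1,2,3,4,5,6,w,x\}|=8$ in~(iii) — in or out. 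A secondary difficulty is the symmetric re-derivation in~(iv) showing $M\ba 1$ is internally $4$-connected, which is essentially the whole argument run a second time with the roles of $1$ and $6$ exchanged.
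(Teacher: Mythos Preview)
This lemma is not proved in the present paper. It is quoted verbatim as \cite[Lemma~4.3]{cmoVI}, itself a modification of \cite[Lemma~6.3]{cmochain}, and the paper proceeds immediately to the next lemma without giving any argument. So there is no ``paper's own proof'' to compare your proposal against; the result is imported wholesale from earlier work in the series.

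As for the proposal itself: the opening moves are sound --- that $6$ lies in no triad, that $M\ba 6$ is $3$-connected, that $6\notin\cl_M(X)\cup\cl_M(Y)$ for any \ftv\ $(X,Y)$, and that a $4$-fan $(s_1,s_2,s_3,s_4)$ of $M\ba 6$ yields a cocircuit $\{s_2,s_3,s_4,6\}$ of $M$ meeting $\{4,5\}$ --- are all correct and are exactly how one begins such an analysis. But what you have written is an outline, not a proof: the trichotomy into (ii), (iii), (iv) based on how $\{s_1,s_2,s_3\}$ meets $\{1,2,3\}$ is asserted without actually carrying out the orthogonality bookkeeping, and you yourself flag the genuine difficulty, namely showing that a \ffsv\ forces the full quasi rotor rather than just a single bowtie. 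That step is where the real work lies in the proofs in \cite{cmochain} and \cite{cmoVI}, and it requires a careful analysis of $5$-fans and of how the two ends of a minimal violator interact; your sketch does not supply this. The claim in (iv) that ``$M\ba 1$ is \ifc\ obtained by rerunning the whole argument with $1$ in place of $6$'' is also too quick: the hypotheses are not symmetric in $1$ and $6$, since there is no bowtie with $\{1,2,3\}$ as the triangle adjacent to the deleted element, so one cannot simply swap roles.
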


In Theorem~\ref{killcasek}, we dealt with the case when $M$ has a bowtie $(\{a_0,b_0,c_0\},\{a_1,b_1,c_1\},\{b_0,c_0,a_1,b_1\})$ such that $M\ba c_0$ is \ffsc\ with an $N$-minor and $M\ba c_1$ has an $N$-minor but is not \ffsc.  
We will therefore use the following hypothesis in the next lemma, and throughout this paper.  

\begin{hypothesis} 
If, for $(M_1,N_1)\in \{(M,N),(M^*,N^*)\}$, the matroid $M_1$ has a bowtie $(\{a_0,b_0,c_0\},\{a_1,b_1,c_1\},\{b_0,c_0,a_1,b_1\})$, where $M_1\ba c_0$ is \ffsc\ and $M_1\ba c_0,c_1$ has an $N_1$-minor, then $M_1\ba c_1$ is \ffsc.  
\end{hypothesis}

The next lemma is related to the previous lemma.  
We begin with the same structure in $M$, a bowtie, but we add the additional consideration of preserving an $N$-minor, and we eliminate one outcome by adding Hypothesis~VII.  

\begin{lemma}
\label{ILK}
Let $(\{1,2,3\}, \{4,5,6\}, \{2,3,4,5\})$ be a bowtie in an \ifc\ binary matroid $M$ with $|E(M)| \ge 13$.  Let $N$ be an \ifc\ minor of $M$ having at least seven elements. Suppose that $M\ba 4$ is \ffsc, that $N\preceq M\ba 1,4$, and that Hypothesis~VII holds. Then $M\ba 1$ is \ffsc\ with an $N$-minor and 
\begin{itemize}
\item[(i)] $M\ba 1$ is \ifc; or 
\item[(ii)] $M$ has a triangle $\{7,8,9\}$ such that $(\{1,2,3\}, \{7,8,9\}, \{7,8,1,s\})$ is a bowtie for some $s$ in $\{2,3\}$ and $|\{1,2,\dots ,9\}|=9$; or 
\item[(iii)] every \ftv\ of $M\ba 1$ is a $4$-fan of the form $(4,t,7,8)$, for some $t$ in $\{2,3\}$ where $\{1,2,3,4,5,6,7,8\}| = 8$; or 
%\item[(iv)]  $M\ba 1$ is not \ffsc\ and $M$ has a quasi rotor $(\{6,5,4\},\{3,2,1\},\{x,y,z\},\{2,3,4,5\},\{1,w,x,y\},\{4,w,x\})$ for some $w$ in $\{2,3\}$; or 
\item[(iv)]  $M\ba 6$ is \ifc\ with an $N$-minor.
\end{itemize}
\end{lemma}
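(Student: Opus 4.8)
The plan is to derive Lemma~\ref{ILK} from Lemma~\ref{6.3rsv} by first transferring the hypotheses to the bowtie $(\{1,2,3\},\{4,5,6\},\{2,3,4,5\})$ read with the roles of the two triangles reversed, so that the distinguished element to be deleted is $4$ rather than $6$. Concretely, applying Lemma~\ref{6.3rsv} to the bowtie $(\{4,5,6\},\{1,2,3\},\{2,3,4,5\})$ (that is, with $\{4,5,6\}$ playing the role of $\{1,2,3\}$ and $\{1,2,3\}$ playing the role of $\{4,5,6\}$, and with $4$ in the role of the element ``$6$''), we learn that either $M\ba 4$ fails to be \ffsc, which is excluded by hypothesis, or $\{1,2,3\}$ is the central triangle of a quasi rotor, or one of the four numbered outcomes of Lemma~\ref{6.3rsv} holds. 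The first step is therefore to rule out the quasi-rotor possibility: if $\{1,2,3\}$ were the central triangle of a quasi rotor, then $M$ would have a triangle $\{7,8,9\}$ and a cocircuit structure making $(\{4,5,6\},\{1,2,3\},\{7,8,9\},\dots)$ a quasi rotor, and I would need to argue that this, together with $N\preceq M\ba 1,4$, forces a contradiction with Hypothesis~VII or with \ifc ness of $M$. This is where Hypothesis~VII does its work: the quasi-rotor branch supplies a bowtie on $\{1,2,3\}$ and another triangle, and combined with $M\ba 4$ being \ffsc\ and $M\ba 1,4 \succeq N$ one should be able to read off a bowtie $(\{a_0,b_0,c_0\},\{a_1,b_1,c_1\},\dots)$ satisfying the antecedent of Hypothesis~VII, concluding $M\ba c_1$ is \ffsc\ and extracting outcome~(iv) or a contradiction.

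The second step is to show $N\preceq M\ba 1$; this is immediate since $M\ba 1,4$ has an $N$-minor, and hence so does $M\ba 1$. The main content is then to translate the structural conclusions of Lemma~\ref{6.3rsv}, as applied above, into the four listed outcomes of Lemma~\ref{ILK}. Outcome~(i) of Lemma~\ref{6.3rsv} gives that $M\ba 4$ is \ifc, which is not directly what we want; instead, since we applied the lemma with ``$6$'' $= 4$, the four outcomes describe $M\ba 4$, and I will need to re-run the argument with the correct element deleted. In fact the cleaner route is to apply Lemma~\ref{6.3rsv} directly in the form where ``$1$'' $= 1$: that is, to the bowtie $(\{1,2,3\},\{4,5,6\},\{2,3,4,5\})$ with the element $1$ playing the role of the guts element that we wish to delete. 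But $1$ is the guts element of the $4$-fan $(1,2,3,\text{-})$? No — in Lemma~\ref{6.3rsv} the deleted element ``$6$'' is the element of the second triangle not in the cocircuit. So the correct application is to the bowtie $(\{4,5,6\},\{1,2,3\},\{2,3,4,5\})$ with deleted element $1$, and the four outcomes of Lemma~\ref{6.3rsv} will then literally be statements about $M\ba 1$. Outcome~(i) there becomes our~(i); outcome~(ii) there, a bowtie $(\{1,2,3\},\{7,8,9\},\{a,1,7,8\})$ with $a\in\{2,3\}$, becomes our~(ii) after checking the element count $|\{1,\dots,9\}|=9$; outcome~(iii) there becomes our~(iii) after verifying the $4$-fan has the stated form $(4,t,7,8)$ with $t\in\{2,3\}$, which follows because the triangle-cocircuit pair in that outcome is forced to interact with $\{4,5,6\}$ through the cocircuit $\{2,3,4,5\}$; and outcome~(iv) there gives $M\ba$(guts element of the other triangle), which needs to be identified with $M\ba 6$ and then shown to have an $N$-minor and be \ifc.

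The step I expect to be the main obstacle is eliminating the quasi-rotor exception and, relatedly, establishing the \ffsc ness and $N$-minor claims in outcome~(iv): showing $M\ba 6$ is \ifc\ with an $N$-minor. For the $N$-minor, one applies Lemma~\ref{2.2} or Lemma~\ref{airplane} to an appropriate $4$-fan — in $M\ba 4$ the set $\{1,2,3,5\}$ or similar is a fan, and chasing which single-element deletions preserve $N$ while using the hypothesis $N\not\preceq M\ba 1,4$ to exclude the wrong branch is delicate; the assumption $N\preceq M\ba 1,4$ combined with $|E(N)|\ge 7$ should force the contraction branch of Lemma~\ref{airplane} to fail in a way that hands us $M\ba 6 \succeq N$. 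For the internal $4$-connectivity in~(iv), I would invoke the conclusion of Lemma~\ref{6.3rsv}(iv) directly (it already asserts \ifc ness of the relevant single-element deletion) and then verify, using Lemma~\ref{bowwow} to pin down that $\{2,3,4,5\}$ is the unique small cocircuit crossing the bowtie, that no new $3$-separation is introduced. Throughout, the size hypothesis $|E(M)|\ge 13$ is needed exactly to invoke Lemmas~\ref{6.3rsv} and~\ref{bowwow}, and $|E(N)|\ge 7$ to invoke Lemmas~\ref{airplane} and~\ref{2.2}.
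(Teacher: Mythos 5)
Your skeleton matches the paper's proof: Hypothesis~VII applied to the given bowtie read as $(\{4,5,6\},\{1,2,3\},\{2,3,4,5\})$ with $c_0=4$ and $c_1=1$ yields at once that $M\ba 1$ is \ffsc\ (the antecedent is the given bowtie itself — you do not need to manufacture a bowtie out of the quasi-rotor branch, and no separate contradiction argument is needed to dispose of that exception, since \ffsc ness of $M\ba 1$ already places you in the four-outcome part of Lemma~\ref{6.3rsv}); then Lemma~\ref{6.3rsv} is applied to that reversed bowtie with $1$ in the role of the deleted element, and outcomes (i)--(iii) translate as you describe (for (iii), the guts element must be $4$ rather than $5$ because a triangle $\{5,v,w\}$ with $v\in\{2,3\}$ would put a $5$-fan in $M\ba 4$).

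The genuine gap is precisely the step you flag as ``the main obstacle'' and then leave unresolved: in outcome (iv) you must prove $N\preceq M\ba 6$, and no argument is given. (You also garble the hypothesis there, invoking ``$N\not\preceq M\ba 1,4$'' when the lemma assumes $N\preceq M\ba 1,4$; and the fan you reach for, $\{1,2,3,5\}$ in $M\ba 4$, is not the relevant one.) The paper's argument runs as follows. Outcome (iv), translated, gives that $M\ba 6$ is \ifc\ together with a triangle $\{6,7,8\}$ and a cocircuit $\{1,3,7,8\}$ (up to swapping $2$ and $3$). Suppose $N\not\preceq M\ba 6$. Then $M\ba 1,4$ is $3$-connected with $(6,7,8,3)$ as a $4$-fan, so Lemma~\ref{2.2} forces $N\preceq M\ba 1,4,6$ or $N\preceq M\ba 1,4/3$, and the first is excluded by assumption. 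But $M\ba 1,4/3\cong M\ba 2,4/3$ (after contracting $3$, the triangle $\{1,2,3\}$ makes $1$ and $2$ parallel), which is $\cong M\ba 2,4/5$ (the cocircuit $\{2,3,4,5\}$ makes $\{3,5\}$ a series pair of $M\ba 2,4$), which is $\cong M\ba 2/5\ba 6$ (after contracting $5$, the triangle $\{4,5,6\}$ makes $4$ and $6$ parallel). Hence $N\preceq M\ba 6$ after all; a \cn. Without this chain of isomorphisms — which is the only nontrivial content of the lemma beyond bookkeeping — the proof is incomplete.
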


\begin{proof}  
By Hypothesis~VII, we may assume that $M\ba 1$ is \ffsc.  
Now, if part (i) or (ii)  of Lemma~\ref{6.3rsv} holds, then (i) or (ii)  of the current lemma holds. 
Moreover, if (iii) of Lemma~\ref{6.3rsv} holds, then, since $M\ba 4$ is \ffsc, part (iii) of the current lemma holds. 
Thus we may assume that (iv) of  Lemma~\ref{6.3rsv} holds.  Then, by symmetry, we may assume that $M$ has a triangle $\{6,7,8\}$ and a cocircuit $\{1,3,7,8\}$ where 
$|\{1,2,3,4,5,6,7,8\}| = 8$ and $M\ba 6$ is \ifc. 
We may also assume that $M\ba 6$ has no $N$-minor otherwise (iv) holds.  

Now $M\ba 1,4$ is $3$-connected having $(6,7,8,3)$ as a $4$-fan and  $N \preceq M\ba 1,4$.  As $N \not \preceq M\ba 6$, it follows, by Lemma~\ref{2.2}, that $N\preceq M\ba 1,4/3$. But $M\ba 1,4/3 \cong M\ba 2,4/3 \cong M\ba 2,4/5 \cong M\ba 2/5\ba 6$. Hence $N \preceq M\ba 6$; a \cn. 
\end{proof} 

The next two results \cite[Lemmas 5.2 and 5.7]{cmoVI} are helpful when dealing with bowtie strings.

\begin{lemma}
\label{stringswitch}
Let $T_0,D_0,T_1,D_1,\dots, T_n$ be a string of bowties in a  matroid $M$.  Then
\begin{align*}
M\ba c_0,c_1,\dots ,c_n/b_n &\cong M\ba a_0,a_1,\dots ,a_n/b_0\\
&\cong M\ba c_0,c_1,\dots ,c_{k-1}/b_k\ba a_k,a_{k+1},\dots ,a_n\\
&\cong M\ba c_0,c_1,\dots ,c_{k-1}/b_{k-1}\ba a_k,a_{k+1},\dots ,a_n
\end{align*}
for all $k$ in $\{1,2,\dots ,n\}$.  
\end{lemma}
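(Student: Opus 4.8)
The statement to prove is Lemma~\ref{stringswitch}, a string of isomorphisms between matroids obtained from a bowtie string $T_0,D_0,T_1,\dots,T_n$ by a sequence of deletions and contractions. The plan is to work one bowtie at a time, exploiting the local structure: in the bowtie $(T_j,T_{j+1},D_j)$ we have the triangle $\{a_j,b_j,c_j\}$, the triangle $\{a_{j+1},b_{j+1},c_{j+1}\}$, and the cocircuit $D_j=\{b_j,c_j,a_{j+1},b_{j+1}\}$. The key elementary fact I would isolate first is a ``switching'' move: if $\{x,y,z\}$ is a triangle and $\{y,z,u,v\}$ is a cocircuit with these five elements behaving as in a bowtie, then $M\ba x/y$ and $M\ba x/z$ agree on the relevant ground set, and moreover contracting one of $y,z$ after deleting $x$ has the same effect as working from the other side of the cocircuit. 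Concretely, in a binary matroid, since $\{a_j,b_j,c_j\}$ is a triangle, $M\ba a_j/b_j\cong M\ba a_j/c_j$ on the common ground set, because $b_j$ and $c_j$ are parallel in $M\ba a_j$; dually, using the cocircuit $D_j$, deleting $c_j$ makes $b_j$ lie in a triad with $a_{j+1},b_{j+1}$, so contracting $b_j$ versus some element of $\{a_{j+1},b_{j+1}\}$ can be traded. These are the only two phenomena at play, and the whole lemma is an iteration of them.

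First I would establish the base identity
\[
M\ba c_0,c_1,\dots,c_n/b_n \;\cong\; M\ba a_0,a_1,\dots,a_n/b_0
\]
by induction on $n$, or equivalently by a telescoping argument. For $n=0$ this is just $M\ba c_0/b_0\cong M\ba a_0/b_0$, which holds because in $M$ the set $\{a_0,b_0,c_0\}$ is a triangle, so after contracting $b_0$ the elements $a_0$ and $c_0$ become parallel (or one becomes a loop), hence deleting either gives the same matroid on $E(M)-\{a_0,b_0,c_0\}$. For the inductive step I would peel off the top bowtie: using the cocircuit $D_{n-1}=\{b_{n-1},c_{n-1},a_n,b_n\}$ and the triangle $T_n=\{a_n,b_n,c_n\}$, show that deleting $c_n$ and contracting $b_n$ at the top end can be replaced by deleting $a_n$ and contracting $b_{n-1}$ (this is exactly the local switch applied to the $n$th bowtie), reducing to a bowtie string of length $n-1$ with the same form, to which induction applies. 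Symmetrically (or by the same computation read from the other end) one gets the chain of isomorphisms through the intermediate expressions $M\ba c_0,\dots,c_{k-1}/b_k\ba a_k,\dots,a_n$ and $M\ba c_0,\dots,c_{k-1}/b_{k-1}\ba a_k,\dots,a_n$: the first versus the second differ only by whether we contract $b_k$ or $b_{k-1}$ after deleting $c_0,\dots,c_{k-1}$ and $a_k,\dots,a_n$, and these agree because, once $c_{k-1}$ is deleted, the cocircuit $D_{k-1}$ shows $\{b_{k-1},a_k,b_k\}$ is a triad, while once $a_k$ is deleted the triangle $T_k$ makes the contraction of $b_k$ and $b_{k-1}$ interchangeable on the remaining ground set.

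The bookkeeping obstacle — and the step I expect to be fussiest — is the exceptional case allowed in clause (iii) of the definition of a string of bowties, namely that $a_0$ may equal $c_n$. When $a_0=c_n$, the expressions $M\ba c_0,\dots,c_n/b_n$ and $M\ba a_0,\dots,a_n/b_0$ silently delete the same element under two names, and one must check that every intermediate expression still makes sense (no element is both deleted and contracted, no element is deleted twice in a way that changes the ground set interpretation) and that the isomorphisms still compose. I would handle this by first proving everything under the assumption that all of $a_0,b_0,c_0,\dots,a_n,b_n,c_n$ are distinct, and then remark that identifying $a_0$ with $c_n$ only collapses ground sets consistently on both sides and does not disturb any of the local triangle/cocircuit switches used, since those switches are all performed at indices strictly between the two ends or at one end at a time. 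A second minor point to dispatch carefully is that contractions can create loops or parallel elements; since $N$ is not involved in this lemma, I only need isomorphism of the resulting matroids (loops and parallel classes included), so no simplification is needed and the triangle/cocircuit arguments go through verbatim in the binary setting — indeed the lemma as stated does not even require $M$ binary, only that the listed triangles and cocircuits be present, so I would phrase the local switches using only the circuit and cocircuit axioms.
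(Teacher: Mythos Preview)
The paper does not prove Lemma~\ref{stringswitch}; it is imported verbatim as \cite[Lemma~5.2]{cmoVI}, so there is no in-paper argument to compare against. Your overall plan is the standard one and it works: iterate the two local moves (i) if $\{a_j,b_j,c_j\}$ is a triangle then $a_j$ and $c_j$ are parallel in $M/b_j$, so $M/b_j\ba a_j\cong M/b_j\ba c_j$; and (ii) after deleting both $c_{k-1}$ and $a_k$, the cocircuit $D_{k-1}=\{b_{k-1},c_{k-1},a_k,b_k\}$ leaves $\{b_{k-1},b_k\}$ as a series pair, so contracting either gives isomorphic matroids. Your base case and your inductive chain use exactly these moves in the right order, and your treatment of the possibility $a_0=c_n$ is adequate.

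Two slips in your expository set-up should be fixed, since as written they are false. First, you assert that ``$b_j$ and $c_j$ are parallel in $M\ba a_j$'' and hence $M\ba a_j/b_j\cong M\ba a_j/c_j$. Deleting an element of a triangle does not make the remaining two parallel; contraction does. The isomorphism you actually need---and correctly invoke in your base case---is $M/b_j\ba a_j\cong M/b_j\ba c_j$, coming from $a_j$ and $c_j$ being parallel in $M/b_j$. Second, you attribute the interchangeability of contracting $b_{k-1}$ versus $b_k$ to ``the triangle $T_k$''; in fact it comes from the cocircuit $D_{k-1}$, which becomes the series pair $\{b_{k-1},b_k\}$ once $c_{k-1}$ and $a_k$ have been deleted. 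With those two sentences corrected, the argument is clean.
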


\begin{lemma}
\label{stringybark}
Let $M$ be a binary matroid with an \ifc\  minor $N$ where $|E(N)|\geq 7$.  
Let $T_0,D_0,T_1,D_1,\dots, T_n$ be a string of bowties in  $M$.  
Suppose $M\ba c_0,c_1$ has an $N$-minor but $M\ba c_0,c_1/b_1$ does not. Then $M\ba c_0,c_1,\dots ,c_n$ has an $N$-minor, but $M\ba c_0,c_1,\dots ,c_i/b_i$ has no $N$-minor for all $i$ in $\{1,2,\dots ,n\}$, and   $M\ba c_0,c_1,\dots ,c_j/a_j$ has no $N$-minor for all $j$ in $\{2,3,\ldots,n\}$.
%Furthermore, if $n\geq 2$, then $M\ba c_0,c_1,\dots ,c_n/a_n$ has no $N$-minor.  
\end{lemma}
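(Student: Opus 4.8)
The plan is to prove Lemma~\ref{stringybark} by induction on $n$, using Lemma~\ref{stringswitch} to translate deletion/contraction statements along the string and exploiting the $4$-fans visible after trimming initial segments of the string.

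First I would set up the base of the induction. For $n=1$ the assertion that $M\ba c_0,c_1$ has an $N$-minor is given, the claim about $M\ba c_0,c_1/b_1$ is the hypothesis, and the claim about $M\ba c_0,c_1/a_1$ follows from Lemma~\ref{stringswitch}, which gives $M\ba c_0,c_1/a_1 \cong M\ba c_0,c_1/b_1$ (taking $k=1$ in the displayed chain, the last two lines agree up to relabelling $b_1,a_1$). Actually one must be a little careful: what Lemma~\ref{stringswitch} literally supplies is $M\ba c_0/b_1\ba a_1 \cong M\ba c_0/b_0\ba a_1$, and since $\{b_0,c_0,a_1,b_1\}$ is a cocircuit in $M\ba a_1$ whose complement behaves symmetrically in $b_1$ and $c_0$... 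I would instead argue directly: in $M\ba c_0$, the set $\{c_1,b_1,a_1,b_0\}$ is a $4$-fan (with $\{a_1,b_1,c_1\}$ a triangle and $\{b_0,c_0,a_1,b_1\}$ giving the triad $\{b_0,a_1,b_1\}$ of $M\ba c_0$). Deleting the guts element $c_1$ and contracting the coguts element $b_0$ from this fan, or symmetrically, shows $M\ba c_0,c_1/b_1 \cong M\ba c_0,c_1/a_1$ by the standard fact that a $4$-fan's two ``$\ba\text{guts}/\text{coguts}$'' reductions are isomorphic. This handles $n=1$.

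Next, for the inductive step, suppose the conclusion holds for strings of length $n-1$ and consider a string of length $n$. By induction applied to $T_0,D_0,\dots,T_{n-1}$, we know $M\ba c_0,\dots,c_{n-1}$ has an $N$-minor and $M\ba c_0,\dots,c_i/b_i$ and $M\ba c_0,\dots,c_j/a_j$ have no $N$-minor for the relevant ranges of $i$ and $j$ up to $n-1$. The point is that in $M\ba c_0,c_1,\dots,c_{n-1}$, the quadruple $(c_n, b_n, a_n, b_{n-1})$ is a $4$-fan, because $\{a_n,b_n,c_n\}$ is a triangle and $\{b_{n-1},c_{n-1},a_n,b_n\}$ reduces to the triad $\{b_{n-1},a_n,b_n\}$ after deleting $c_{n-1}$. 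Apply Lemma~\ref{2.2} to this $4$-fan: either $M\ba c_0,\dots,c_{n-1}\ba c_n$ has an $N$-minor, or $M\ba c_0,\dots,c_{n-1}/b_{n-1}$ has one. The second alternative is exactly the case $i=n-1$ of what the induction hypothesis rules out, so the first alternative holds, establishing that $M\ba c_0,c_1,\dots,c_n$ has an $N$-minor. Then to see that $M\ba c_0,\dots,c_n/b_n$ has no $N$-minor, I would use Lemma~\ref{stringswitch} with $k=n$: $M\ba c_0,\dots,c_n/b_n \cong M\ba c_0,\dots,c_{n-1}/b_{n-1}\ba a_n$, which is a restriction of $M\ba c_0,\dots,c_{n-1}/b_{n-1}$, and the latter has no $N$-minor by the induction hypothesis (case $i = n-1$). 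Similarly $M\ba c_0,\dots,c_n/a_n \cong M\ba c_0,\dots,c_{n-1}/b_n\ba a_n$ (the middle line of the Lemma~\ref{stringswitch} chain with $k=n$), and one shows this is a minor of $M\ba c_0,\dots,c_{n-1}/b_{n-1}$ as well, or deduces it from the fact that $b_n \notin \cl$-closure issues make $/a_n$ and $/b_n$ agree on this trimmed matroid via the fan $(c_n,b_n,a_n,b_{n-1})$ argument again. Finally, for $j$ in $\{2,\dots,n-1\}$ the statement about $M\ba c_0,\dots,c_j/a_j$ is already covered by the induction hypothesis since deleting the extra elements $c_{j+1},\dots,c_n$ only restricts further; but I should check that deleting more $c$'s cannot create an $N$-minor where there was none, which is immediate because deletion of elements never adds minors.

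I expect the main obstacle to be the careful bookkeeping of the isomorphisms coming from Lemma~\ref{stringswitch}: one must match up precisely which of the three displayed isomorphic forms is needed at each stage, and confirm that each reduction of the form ``contract $b_i$ or $a_i$, delete a tail of $a$'s'' is genuinely a minor of a matroid already known to lack an $N$-minor, rather than something that could accidentally be larger. The conceptual content — a $4$-fan at the end of every trimmed initial segment, plus Lemma~\ref{2.2} to force the ``delete the guts'' branch — is straightforward; the delicate part is making sure the no-$N$-minor conclusions propagate in the right direction, using that $N$-minors are preserved under taking minors but the \emph{absence} of an $N$-minor passes to \emph{further} minors, never back up. A secondary point requiring care is the degenerate case $a_0 = c_n$ permitted by the revised definition of a string of bowties; I would note that this identification does not affect any of the fans used, since those involve only $b$'s, $c$'s with small indices, and $a$'s, $b$'s with large indices, and $a_0$ never appears in the deletions $\ba c_0,\dots,c_n$ unless $a_0=c_n$, in which case the argument still goes through with $c_n$ read for $a_0$.
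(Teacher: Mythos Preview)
The paper does not prove this lemma here; it is quoted from \cite{cmoVI}. Your overall strategy---induction on $n$, using the $4$-fan $(c_n,b_n,a_n,b_{n-1})$ in $M\ba c_0,\ldots,c_{n-1}$ together with Lemma~\ref{2.2} for the existence step and Lemma~\ref{stringswitch} for the non-existence steps---is the natural one and works. However, your writeup contains two real errors.

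First, you misread the range of $j$: the claim about $M\ba c_0,\ldots,c_j/a_j$ is stated only for $j\in\{2,\ldots,n\}$, so when $n=1$ the base case is literally the hypothesis and there is nothing further to prove. The isomorphism $M\ba c_0,c_1/a_1\cong M\ba c_0,c_1/b_1$ you attempt is false in general; there is no fan identity equating contraction of two distinct interior elements of a $4$-fan. Relatedly, for $i<n$ and $j<n$ in the inductive step, the claims are word-for-word the induction hypothesis, since the index governs \emph{both} the deletion set $\{c_0,\ldots,c_i\}$ and the contracted element. No ``extra deletions $c_{j+1},\ldots,c_n$'' are involved, so that paragraph is based on a misreading.

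Second, and more substantively, your derivation of the case $j=n$ is incorrect: the third displayed form in Lemma~\ref{stringswitch} with $k=n$ is isomorphic to $M\ba c_0,\ldots,c_n/b_n$, not to $M\ba c_0,\ldots,c_n/a_n$, so your citation does not give what you claim. A clean argument is the following. Since $\{a_n,b_n,c_n\}$ is a triangle, contracting $a_n$ makes $b_n$ and $c_n$ parallel, so
\[
M\ba c_0,\ldots,c_n/a_n \;\cong\; M\ba c_0,\ldots,c_{n-1},b_n/a_n.
\]
In the matroid on the right, the cocircuit $D_{n-1}=\{b_{n-1},c_{n-1},a_n,b_n\}$ has lost both $c_{n-1}$ and $b_n$, so $\{b_{n-1},a_n\}$ is a disjoint union of cocircuits there; hence contracting $a_n$ is isomorphic to contracting $b_{n-1}$, giving
\[
M\ba c_0,\ldots,c_{n-1},b_n/a_n \;\cong\; M\ba c_0,\ldots,c_{n-1},b_n/b_{n-1},
\]
which is a minor of $M\ba c_0,\ldots,c_{n-1}/b_{n-1}$. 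The last matroid has no $N$-minor by the induction hypothesis, and you are done. Your $/b_n$ step via Lemma~\ref{stringswitch} (last line, $k=n$) is correct as written.
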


In the following result, we consider a short bowtie string.  

\begin{lemma}
\label{realclaim1}
Let {  $T_0,D_0,T_1,D_1,T_2$} 
%$\{x_1,y_1,z_1\}, \{y_1,z_1,x_2,y_2\}, \{x_2,y_2,z_2\}, \{y_{2},z_{2},x_3,y_3\},\{x_3,y_3,z_3\}$ 
be a string of bowties in an \ifc~ binary matroid $M$.
Suppose $M\ba c_1$ is \ffsc. 
Then $M\ba c_1/b_1$ is \ffspc. Moreover, either $M\ba c_1/b_1$ is \ifc, or  $M\ba c_1/b_1$ has a $4$-fan and, whenever $(\alpha,\beta,\gamma,\delta)$ is such a $4$-fan, $a_1 \in \{\beta,\gamma,\delta\}$, and $\{\beta,\gamma,\delta,c_1\}$ is a cocircuit of $M$.
\end{lemma}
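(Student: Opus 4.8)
The plan is to work inside the matroid $M\ba c_1$, which is \ffsc\ by hypothesis, and analyse the effect of contracting $b_1$. First I would record the relevant local structure: since $T_0,D_0,T_1,D_1,T_2$ is a string of bowties, $\{a_1,b_1,c_1\}$ is a triangle, $\{b_0,c_0,a_1,b_1\}$ and $\{b_1,c_1,a_2,b_2\}$ are cocircuits, and $\{a_2,b_2,c_2\}$ is a triangle. In $M\ba c_1$ the cocircuit $\{b_1,c_1,a_2,b_2\}$ becomes the triad $\{b_1,a_2,b_2\}$, and together with the triangle $\{a_2,b_2,c_2\}$ this gives the $4$-fan $(c_2,b_2,a_2,b_1)$ of $M\ba c_1$ with $b_1$ as its coguts element; symmetrically $\{b_0,c_0,a_1,b_1\}$ gives, after deleting $c_1$, information about $a_1$. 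The first thing to prove is that $M\ba c_1/b_1$ is \ffspc. Here I would use the general principle that contracting the coguts element of a $4$-fan in a \ffsc\ matroid cannot create a $3$-separation that is too wild: any \ffspv\ $(X,Y)$ of $M\ba c_1/b_1$ would lift, by adding $b_1$ back to the appropriate side (using that $b_1$ lies in the closure or coclosure), to a $3$-separating partition of $M\ba c_1$, and then the \ffsc ness of $M\ba c_1$ forces one side to be small — a triangle, triad, or $4$-fan — and one checks that re-contracting $b_1$ keeps that side small enough to avoid being a $5$-cofan violator. This is essentially a bookkeeping argument about how fans behave under contraction of a fan end, of the type used repeatedly in the earlier papers, and I would expect to invoke or mimic the fan/connectivity lemmas already cited.

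Next, assuming $M\ba c_1/b_1$ is not \ifc, it has a \ftv, hence (being \ffspc) a $4$-fan or a $5$-cofan; since $M$ is binary and these small sequential sets are fans, and a $5$-cofan would by the previous paragraph's analysis already be excluded or reducible, I would argue it has a $4$-fan. The substantive claim is the description of an arbitrary $4$-fan $(\alpha,\beta,\gamma,\delta)$ of $M\ba c_1/b_1$: that $a_1\in\{\beta,\gamma,\delta\}$ and $\{\beta,\gamma,\delta,c_1\}$ is a cocircuit of $M$. The mechanism is that $\{\alpha,\beta,\gamma\}$ is a triangle and $\{\beta,\gamma,\delta\}$ a triad of $M\ba c_1/b_1$. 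A triad of $M\ba c_1/b_1$ is a cocircuit of $M\ba c_1$ (cocircuits are unaffected by contraction), hence a cocircuit $C^*$ of $M$ with $C^*\subseteq\{\beta,\gamma,\delta\}\cup\{c_1\}$; since $M$ has no small cocircuits (it is \ifc\ with many elements, so $3$-connected with no small cocircuit meeting things badly — more precisely, $M$ has no triad contained in $\{\beta,\gamma,\delta\}$ unless that triad interacts with the triangle structure, which orthogonality forbids), we get that $\{\beta,\gamma,\delta,c_1\}$ itself is the cocircuit of $M$. For the triangle side, $\{\alpha,\beta,\gamma\}$ being a triangle of $M\ba c_1/b_1$ means $\{\alpha,\beta,\gamma\}$ or $\{\alpha,\beta,\gamma,b_1\}$ is a circuit of $M\ba c_1$, hence of $M$; orthogonality between this circuit and the cocircuit $\{b_1,c_1,a_2,b_2\}$ and between the circuit and $\{\beta,\gamma,\delta,c_1\}$, combined with $M$ being \ifc\ (so no two triangles meet in two elements, etc.), pins down that $a_1$ must be one of $\beta,\gamma,\delta$ — essentially because the only way a new triad can appear after contracting $b_1$ is by "using up" the triangle $\{a_1,b_1,c_1\}$, forcing $a_1$ into the picture.

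The main obstacle I anticipate is the second half: controlling \emph{every} $4$-fan of $M\ba c_1/b_1$ rather than exhibiting one. Showing existence is easy (the fan $(c_2,b_2,a_2,b_1)$ of $M\ba c_1$ contracts, after moving $b_1$, to something fan-like), but proving the rigidity statement — that $a_1$ and the cocircuit $\{\beta,\gamma,\delta,c_1\}$ are forced for an arbitrary $4$-fan — requires carefully ruling out $4$-fans of $M\ba c_1/b_1$ that have nothing to do with the given bowtie string. The tools will be: (a) the correspondence between circuits/cocircuits of $M\ba c_1/b_1$ and those of $M$ (a circuit of the contraction is a circuit of $M\ba c_1$ possibly after deleting $b_1$; a cocircuit is a cocircuit of $M\ba c_1$ possibly after adding $b_1$, hence a cocircuit of $M$ possibly after adding $c_1$); (b) orthogonality in $M$; and (c) the hypothesis that $M$ is \ifc, which kills short circuits and cocircuits and two triangles sharing two points. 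I would organise this as: take the triad $\{\beta,\gamma,\delta\}$ of $M\ba c_1/b_1$, lift to a cocircuit $C^*$ of $M$ with $C^*\subseteq\{\beta,\gamma,\delta,c_1\}$; show $|C^*|=4$ so $C^*=\{\beta,\gamma,\delta,c_1\}$ and in particular $c_1\in C^*$; then take the triangle $\{\alpha,\beta,\gamma\}$, lift to a circuit $C$ of $M$ with $C\subseteq\{\alpha,\beta,\gamma,b_1\}$, apply orthogonality of $C$ with $C^*$ and with $D_1=\{b_1,c_1,a_2,b_2\}$, and deduce $b_1\in C$ so $C=\{\alpha,\beta,\gamma,b_1\}$; finally, the triangle $\{a_1,b_1,c_1\}$ of $M$ and the circuit $C$ both contain $b_1$, and the binary circuit elimination / orthogonality forces $a_1$ into $\{\alpha,\beta,\gamma\}\cap\{\beta,\gamma,\delta\}=\{\beta,\gamma\}$ or into $\delta$, giving $a_1\in\{\beta,\gamma,\delta\}$ as required. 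The \ffspc\ conclusion of the first paragraph is then available to certify that no $5$-cofan violator survives.
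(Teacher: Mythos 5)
Your overall strategy is the paper's: lift a \ffspv\ of $M\ba c_1/b_1$ back to a \ffsv\ of $M\ba c_1$ for the first assertion, and for the second, lift the triad $\{\beta,\gamma,\delta\}$ to a cocircuit of $M$ contained in $\{\beta,\gamma,\delta,c_1\}$ and the triangle $\{\alpha,\beta,\gamma\}$ to a circuit contained in $\{\alpha,\beta,\gamma,b_1\}$, then use \ort\ and the fact that no triad of an \ifc\ matroid meets a triangle. However, the specific chain of deductions you lay out has two genuine defects. The step ``show $|C^*|=4$'' is precisely the hard content of the lemma, and your justification (``$M$ has no small cocircuits'') is not available: internally $4$-connected matroids may certainly contain triads. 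The actual reason $\{\beta,\gamma,\delta\}$ cannot be a triad of $M$ is that a triad of $M$ avoids every triangle of $M$, so $\{\alpha,\beta,\gamma\}$ could not then be a triangle of $M$, forcing $\{\alpha,\beta,\gamma,b_1\}$ to be a circuit; \ort\ of that circuit with \emph{both} $D_0=\{b_0,c_0,a_1,b_1\}$ and $D_1=\{b_1,c_1,a_2,b_2\}$ makes $\{\alpha,\beta,\gamma\}$ meet each of the disjoint sets $\{b_0,c_0,a_1\}$ and $\{a_2,b_2\}$, so $\{\beta,\gamma\}$ meets $T_0\cup T_1\cup T_2$ and the triad meets a triangle after all. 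You never use $D_0$; with $D_1$ alone the element of $T_2$ hit by the circuit could be $\alpha$, and no contradiction results. (The paper runs this as a contrapositive, assuming $a_1\notin\{\beta,\gamma,\delta\}$, but the content is the same.)

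Second, your claim that $b_1\in C$, so that $C=\{\alpha,\beta,\gamma,b_1\}$, is false in general: once $\{\beta,\gamma,\delta,c_1\}$ is known to be a cocircuit, nothing prevents $\{\alpha,\beta,\gamma\}$ from being a triangle of $M$ itself --- this is exactly outcome (ii) of Lemma~\ref{claim2}, where the $4$-fan of $M\ba c_1/b_1$ has a triangle $\{1,2,3\}$ of $M$. Consequently your final step, deriving $a_1\in\{\beta,\gamma,\delta\}$ by circuit elimination between $T_1$ and $C$, rests on a false premise and is in any case the wrong mechanism. The correct and much shorter route is: once $\{\beta,\gamma,\delta,c_1\}$ is a cocircuit of $M$, it meets the triangle $T_1=\{a_1,b_1,c_1\}$ in $c_1$ but not in $b_1$, so \ort\ forces $a_1\in\{\beta,\gamma,\delta\}$; equivalently, as the paper argues, $a_1$ lies in a triangle, so a set $\{\beta,\gamma,\delta\}$ containing $a_1$ cannot be a triad of $M$ and the cocircuit must be the $4$-element one.
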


\begin{proof}
As $M\ba c_1$ is \ffsc\ having $(c_2,b_2,a_2,b_1)$ as a $4$-fan,  $M\ba c_1/b_1$ is \thc.  
Suppose $M\ba c_1/b_1$ has a  $(4,5,S,+)$-violator $(U,V)$. Then, without loss of generality, $|T_2\cap U|\geq 2$. It is not difficult to check that $(U\cup T_2 \cup b_1,V-T_2)$ is a $(4,4,S)$-violator of $M\ba c_1$; a \cn.  
We conclude that $M\ba c_1/b_1$ is \ffspc.

Suppose that $(\alpha,\beta,\gamma,\delta)$ is a $4$-fan in $M\ba c_1/b_1$ such that $a_1\notin \{\beta,\gamma,\delta\}$.  
Orthogonality with $T_1$ implies that $\{\beta,\gamma,\delta,c_1\}$ is not a cocircuit of $M$. Hence $\{\beta,\gamma,\delta\}$ is a triad of $M$.  
As $M$ is \ifc, we deduce that $\{\alpha,\beta,\gamma,b_1\}$ is a circuit of $M$, so, by \ort, $\{\alpha,\beta,\gamma\}$ meets $\{b_0,c_0,{  a_1}\}$ and $\{a_2,b_2\}$.  
Thus $\{\beta,\gamma,\delta\}$ meets {  a triangle of $M$}; a \cn.  
We conclude that $a_1\in\{\be,\ga,\de\}$.  
Since $a_1$ is in a triangle of $M$, we deduce that $\{c_1,\be,\ga,\de\}$ is a $4$-cocircuit of $M$.  
\end{proof}

We continue on this theme with the following lemma.  

\begin{lemma}
\label{claim2}
Let $M$ be an \ifc\ binary matroid %where $|E(M)|\geq 15$,
% JGO Dropped the bound on |E(M)| 
and suppose that $M$ has $T_0,D_0,T_1,D_1,T_2$ as a string of bowties 
and that $M\ba c_1$ is \ffsc.  
%JGO Added the requirement that $M\ba c_1$ is \ffsc.
Then 
\begin{itemize}
\item[(i)] $M\ba c_1/b_1$ is \ifc; or 
\item[(ii)] $M\ba c_1/b_1$ is  \ffspc\ and $M$ has a triangle $\{1,2,3\}$ that avoids $T_1$ such that $\{2,3,a_1,c_1\}$ is a cocircuit; or
%JGO added that $\{1,2,3\}$   avoids $T_1$
\item[(iii)] $M\ba c_1/b_1$ is  \ffspc\ and $M$ has elements $d_0$ and $d_1$ such that $\{d_0,d_1\}$ avoids $T_0\cup T_1\cup T_2$ where $\{d_0,a_1,c_1,d_1\}$ is a cocircuit, and $\{d_0,a_1,s\}$ or $\{d_1,c_1,t\}$ is a triangle for some $s$ in $\{b_0,c_0\}$ or $t$  in 
$\{a_2,b_2\}$.  
\end{itemize}
\end{lemma}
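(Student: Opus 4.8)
The plan is to start from Lemma~\ref{realclaim1}, which already gives that $M\ba c_1/b_1$ is \ffspc\ and that every $4$-fan $(\al,\be,\ga,\de)$ of $M\ba c_1/b_1$ has $a_1\in\{\be,\ga,\de\}$ with $\{\be,\ga,\de,c_1\}$ a cocircuit of $M$. So we may assume $M\ba c_1/b_1$ is not \ifc, fix such a $4$-fan $(\al,\be,\ga,\de)$, and analyse where $a_1$ sits inside $\{\be,\ga,\de\}$, aiming to land in outcome (ii) or (iii). Orthogonality between the triangle $\{\al,\be,\ga\}$ of $M\ba c_1/b_1$ and the various cocircuits on hand (in particular $D_0=\{b_0,c_0,a_1,b_1\}$, $D_1=\{b_1,c_1,a_2,b_2\}$, and the cocircuit $C(b_1)$ in $M$ whose contraction produced the fan) will be the main tool, together with the fact that $M$ is binary and \ifc\ so has no small fans on its own.

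First I would pin down $a_1$'s position: since $\{\be,\ga,\de,c_1\}$ is a cocircuit of $M$ and $\{\al,\be,\ga\}$ becomes a triangle only after contracting $b_1$, the circuit of $M$ underlying $\{\al,\be,\ga\}$ is either $\{\al,\be,\ga\}$ itself or $\{\al,\be,\ga,b_1\}$. If it is $\{\al,\be,\ga\}$, then this triangle meets the cocircuit $\{\be,\ga,\de,c_1\}$ in exactly two elements, which is the setup for outcome (ii) after checking that $a_1$ must be $\al$ (so that the triangle $\{1,2,3\}=\{\al,\be,\ga\}$ avoids $T_1$, and $\{2,3,a_1,c_1\}$ reads off as $\{\be,\ga,\de\}\cup\{c_1\}$ wait --- more carefully, the cocircuit is $\{\be,\ga,\de,c_1\}$ and we want it to be $\{$two elements of the triangle$\}\cup\{a_1,c_1\}$, forcing $a_1\in\{\be,\ga,\de\}$ to be one of $\be,\ga$ and $\de=\al$, or a relabelling thereof). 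The remaining possibility is that the underlying circuit is $\{\al,\be,\ga,b_1\}$; then orthogonality of this $4$-circuit with $D_0$ and $D_1$ forces $\{\al,\be,\ga\}$ to meet $\{b_0,c_0,a_1\}$ and $\{a_2,b_2,c_1\}$ wait it must meet $D_0-b_1=\{b_0,c_0,a_1\}$ and $D_1-b_1=\{c_1,a_2,b_2\}$; but $c_1$ is deleted, so it meets $\{a_2,b_2\}$, and since $M\ba c_1/b_1$ is \thc\ the element $a_1$ cannot lie in this triangle unless it equals $\al$ --- tracing which of $\be,\ga,\de$ can be $a_1$ given the fan structure is the bookkeeping I would do here. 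In this branch the coguts element $\de$ and the second cocircuit element produce the two elements $d_0,d_1$: one of them lies in a triangle with $a_1$ through $\{b_0,c_0\}$ and the other in a triangle with $c_1$ through $\{a_2,b_2\}$, giving outcome (iii).

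The key structural cocircuit to exploit in the (iii) branch is the one certifying the triad aspect: if $\{\be,\ga,\de,c_1\}$ is a cocircuit of $M$ with $a_1$ one of its first three entries, say $a_1=\be$, then the other two, $\ga$ and $\de$, together with $a_1$ and $c_1$ form the $4$-cocircuit $\{\de,a_1,c_1,\ga\}$; renaming $d_0=\ga$ wait --- I need $\{d_0,a_1,c_1,d_1\}$ with $d_0,d_1$ avoiding $T_0\cup T_1\cup T_2$, so I would set $\{d_0,d_1\}=\{\ga,\de\}$ and then argue, using that $M$ is \ifc\ with $|E(M)|$ large and using \ort\ of $T_1$, $D_0$, $D_1$ with the triangle $\{\al,\be,\ga\}$ or $\{\al,\be,\ga,b_1\}$, that neither $\ga$ nor $\de$ can lie in $T_0\cup T_1\cup T_2$; finally the third triangle through one of $\ga,\de$ is produced by combining the fan triangle with the structure of the bowtie string (the element $\al$ or $b_1$ links to $\{b_0,c_0\}$ or to $\{a_2,b_2\}$). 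I expect the main obstacle to be the case analysis disentangling exactly which of $\be,\ga,\de$ is $a_1$ and which of the two symmetric triangle conclusions ($\{d_0,a_1,s\}$ with $s\in\{b_0,c_0\}$ versus $\{d_1,c_1,t\}$ with $t\in\{a_2,b_2\}$) we land in; each subcase needs an \ort\ check against $T_0$, $T_1$, $T_2$, $D_0$, $D_1$ and the $b_1$-cocircuit, and ruling out degenerate coincidences of elements (which is where the hypothesis $|E(M)|$ large, implicit from $M$ being \ifc\ with an \ifc\ minor, and Lemma~\ref{bowwow} on uniqueness of the connecting $4$-cocircuit, will be invoked).
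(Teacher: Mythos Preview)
Your approach is the paper's: start from Lemma~\ref{realclaim1}, fix a $4$-fan $(\al,\be,\ga,\de)$ of $M\ba c_1/b_1$ with $a_1\in\{\be,\ga,\de\}$ and $\{\be,\ga,\de,c_1\}$ a cocircuit of $M$, then sort into (ii) or (iii) via orthogonality and Lemma~\ref{bowwow}. But your case organisation is wrong, and this produces a concrete error. You split first on whether the underlying circuit of $M$ is $\{\al,\be,\ga\}$ or $\{\al,\be,\ga,b_1\}$ and claim the first branch gives (ii). It need not: if $a_1\in\{\be,\ga\}$ and $\{\al,\be,\ga\}$ is already a triangle of $M$, that triangle \emph{contains} $a_1\in T_1$, so it cannot witness (ii). Relatedly, your line ``checking that $a_1$ must be $\al$'' directly contradicts Lemma~\ref{realclaim1}, which places $a_1$ in $\{\be,\ga,\de\}$.

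The correct primary split is on the position of $a_1$. If $a_1=\de$, the cocircuit is $\{\be,\ga,a_1,c_1\}$; when $\{\al,\be,\ga\}$ is a triangle of $M$ it avoids $T_1$ and (ii) holds, while when $\{\al,\be,\ga,b_1\}$ is a circuit, orthogonality with $D_0$ and $D_1$ forces $\{\al,\be,\ga\}$ to meet both $\{b_0,c_0\}$ and $\{a_2,b_2\}$, so $\{\be,\ga\}$ meets $T_0\cup T_2$, contradicting Lemma~\ref{bowwow}. If $a_1\in\{\be,\ga\}$, say $a_1=\ga$, set $\{d_0,d_1\}=\{\be,\de\}$; the cocircuit reads $\{d_0,a_1,c_1,d_1\}$, and Lemma~\ref{bowwow} gives $\{d_0,d_1\}\cap(T_0\cup T_2)=\emptyset$ while $d_0,d_1\ne a_1,b_1,c_1$. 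If $\{\al,\be,a_1\}$ is a triangle of $M$, orthogonality with $D_0$ forces $\al\in\{b_0,c_0\}$, yielding the triangle $\{d_0,a_1,s\}$ of (iii). If $\{\al,\be,a_1,b_1\}$ is a circuit, orthogonality with $D_1$ forces $\al\in\{a_2,b_2\}$, and taking the symmetric difference with $T_1$ gives the triangle $\{\al,\be,c_1\}=\{t,d_1,c_1\}$ of (iii). This is a clean four-case analysis; your two-branch outline would have to be patched in each branch by a secondary split on the position of $a_1$ anyway, so you may as well lead with it.
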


\begin{proof}
Suppose (i) does not hold. 
By~\ref{realclaim1}, %we know that $M\ba c_1/b_1$ is \ffspc\ and every $4$-fan of $M\ba c_1/b_1$ has $a_1$ in its triad.  
 $M\ba c_1/b_1$ has a $4$-fan, $(1,2,3,4)$, where $a_1\in\{2,3,4\}$, and $\{2,3,4,c_1\}$ is a cocircuit.  
Lemma~\ref{bowwow} implies that $\{2,3,4\}$ avoids $T_0$ and $T_2$. 
Now $M$ has $\{1,2,3\}$ or $\{1,2,3,b_1\}$ as a circuit. 
Suppose that $a_1=4$.  
If $\{1,2,3\}$ is a triangle, then (ii) holds, so we assume not.  
Then $\{1,2,3,b_1\}$ is a circuit.  
Now \ort\ implies that $\{1,2,3\}$ meets both $\{b_0,c_0\}$ and $\{a_2,b_2\}$, so $\{2,3\}$ meets $T_0$ or $T_2$; a \cn.  
We deduce that $a_1\neq 4$.  
Without loss of generality, $a_1=3$.  
If $\{1,2,a_1\}$ is a triangle in $M$, then \ort\ implies that $\{1,2\}$ meets $\{b_0,c_0\}$. Hence $1\in\{b_0,c_0\}$ and, {  relabelling $(1,2,4)$ as $(s,d_0,d_1)$, we see that} (iii) holds.  
If $\{1,2,a_1,b_1\}$ is a circuit of $M$, then \ort\ with $D_1$ implies that $\{1,2\}$ meets $\{a_2,b_2\}$. Thus $1\in\{a_2,b_2\}$ and  $\{1,2,a_1,b_1\}\btu T_1$ is $\{1,2,c_1\}$, a triangle, so, {  relabelling $(1,2,4)$ as $(t,d_1,d_0)$},   (iii) holds.  
\end{proof}

Next, we prove a stronger version of~\cite[Lemma~8.4]{cmochain}.  

\begin{lemma}
\label{3peaks}
Let $M$ be an internally $4$-connected binary matroid having 
%$\{1,2,3\},\{2,3,a,b\},\{a,b,c\},\{b,c,4,5\},\{4,5,6\}$ 
{  $T_0,D_0,T_1,D_1,T_2$}
as a string of bowties. 
% all elements distinct except that possibly 6=1
Then
\begin{itemize}
\item[(i)] $M/T_1$ is internally $4$-connected; or  
\item[(ii)] $T_1$ is the central triangle of a quasi rotor; or
\item[(iii)] {  $M\ba c_1/b_1$ is internally $4$-connected; or
%\item[(iv)] $M$ has a cocircuit $\{a,c,z_1,z_2\}$ and a triangle that contains exactly one of $z_1$ and $z_2$ as well as either
%\begin{itemize}
%\item[(a)] $a$ and exactly one of $2$ and $3$; or 
%\item[(b)] $c$  and exactly one of $4$ and $5$.  
%\end{itemize}
%Moreover, $\{z_1,z_2\}$ avoids  $\{1,2,3,4,5,6,a,b,c\}$.  
\item[(iv)] $M\ba c_1/b_1$ is  \ffspc\ and $M$ has elements $d_0$ and $d_1$ such that $\{d_0,d_1\}$ avoids $T_0\cup T_1\cup T_2$, and $\{d_0,a_1,c_1,d_1\}$ is a cocircuit, and $\{d_0,a_1,s\}$ or $\{d_1,c_1,t\}$ is a triangle for some $s$ in $\{b_0,c_0\}$ or $t$ in $\{a_2,b_2\}$.}  
\end{itemize}
\end{lemma}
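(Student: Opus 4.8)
\textbf{Proof proposal for Lemma~\ref{3peaks}.}

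The plan is to run a case analysis driven by Lemma~\ref{6.3rsv} applied to a suitable bowtie, together with the companion Lemma~\ref{claim2} which already handles the deletion-and-contraction side. First I would observe that $(T_1, T_2, D_1)$ is a bowtie, so by Lemma~\ref{6.3rsv} (applied with the roles of $\{1,2,3\}$, $\{4,5,6\}$, $\{2,3,4,5\}$, $6$ played by $T_1$, $T_2$, $D_1$, $c_1$), either $c_1$ is the central element of a quasi rotor with central triangle $T_1$ --- which, together with the cocircuit $D_0$ and triangle $T_0$ on the other side, would put $T_1$ in the role of a central triangle of a quasi rotor and give outcome (ii) --- or $M\ba c_1$ is \ffsc. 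Actually, one has to be careful about which triangle of the quasi rotor is ``central''; the quasi rotor furnished by Lemma~\ref{6.3rsv} has $T_1$ as the leftmost triangle, but combined with $T_0, D_0$ on the left it is exactly the condition that $T_1$ is the central triangle of the quasi rotor $(T_0,T_1,?,D_0,?,?)$ once one checks the triangle $\{c_0,b_1,a_2\}$-type condition. I expect some work is needed here to reconcile the two notions, and if that fails we simply land in the case $M\ba c_1$ \ffsc.

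So assume $M\ba c_1$ is \ffsc. Now I can invoke Lemma~\ref{claim2} with the string $T_0,D_0,T_1,D_1,T_2$: it yields that either (a) $M\ba c_1/b_1$ is \ifc, giving outcome (iii); or (b) $M\ba c_1/b_1$ is \ffspc\ and $M$ has a triangle $\{1,2,3\}$ avoiding $T_1$ with $\{2,3,a_1,c_1\}$ a cocircuit; or (c) $M\ba c_1/b_1$ is \ffspc\ and the elements $d_0,d_1$ with a $4$-cocircuit $\{d_0,a_1,c_1,d_1\}$ and an attached triangle exist, which is precisely outcome (iv). It remains to deal with case (b): I must show that in this situation outcome (i), that $M/T_1$ is \ifc, holds (or, failing that, one of (ii)--(iv)). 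The triangle $\{1,2,3\}$ with cocircuit $\{2,3,a_1,c_1\}$ means $(\{1,2,3\},T_1,\{2,3,a_1,c_1\})$ is a bowtie; combined with $T_1, D_1, T_2$ this is a bowtie string with $T_1$ in the middle. The plan in case (b) is to analyze $M/T_1$ directly: $M/T_1$ is $3$-connected (standard, since $T_1$ is a triangle in a bowtie of an \ifc\ matroid with enough elements), and any $3$-separation of $M/T_1$ that witnesses a failure of internal $4$-connectivity would, after uncontracting $T_1$, pull back to a $3$-separation of $M$ whose small side must be a triangle or triad of $M$; orthogonality with the cocircuits $D_0$, $\{2,3,a_1,c_1\}$, $D_1$ and the triangles $T_0$, $\{1,2,3\}$, $T_2$ then forces this small side to interact with the bowtie structure in one of the prescribed ways, and I expect to be able to argue that any such configuration either contradicts Lemma~\ref{bowwow} (uniqueness of the $4$-cocircuit through two triangles of a bowtie) or reproduces one of outcomes (ii)--(iv).

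The main obstacle will be case (b): the bookkeeping needed to show $M/T_1$ is internally $4$-connected (or else (ii)--(iv) holds) when there is an extra triangle $\{1,2,3\}$ hanging off $T_1$ via a $4$-cocircuit. The difficulty is that contracting a triangle can create new small sides, and one must systematically rule out a $4$-fan or a triad arising in $M/T_1$ using orthogonality against all the circuits and cocircuits in sight; the potential coincidences among $\{1,2,3\}$, $T_0$, $T_2$, $d$-type elements, and the endpoints of the cocircuits make the element-chasing delicate. A secondary, smaller obstacle is the reconciliation in the first paragraph of ``$c_1$ is the central element of a quasi rotor with central triangle $T_1$'' (the output of Lemma~\ref{6.3rsv}) with ``$T_1$ is the central triangle of a quasi rotor'' (the statement of outcome (ii)); I would need to check the triangle $\{c_0, b_1, a_2\}$-style condition on the correct side, and if the quasi rotor from Lemma~\ref{6.3rsv} opens ``the wrong way'' I would need an extra short argument (likely another application of Lemma~\ref{6.3rsv} or direct orthogonality) to still land in (ii), (iii), or (iv).
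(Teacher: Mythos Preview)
Your overall strategy matches the paper's: reduce to $M\ba c_1$ being \ffsc\ via Lemma~\ref{6.3rsv} (else outcome (ii)), then invoke Lemma~\ref{claim2} to obtain (iii), (iv), or the residual case (b) of a triangle $\{1,2,3\}$ avoiding $T_1$ with $\{2,3,a_1,c_1\}$ a cocircuit. Two points deserve comment.

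First, your application of Lemma~\ref{6.3rsv} is mis-aimed: since $c_1\in T_1$, you must apply it to the bowtie $(T_0,T_1,D_0)$ with $T_1$ in the role of $\{4,5,6\}$ and $c_1$ in the role of $6$, not to $(T_1,T_2,D_1)$. Done this way, the lemma says directly that $M\ba c_1$ is \ffsc\ unless $T_1$ is the central triangle of a quasi rotor of the form $(T_0,T_1,\{7,8,9\},D_0,\ldots)$, which is precisely outcome (ii). Your ``secondary obstacle'' about reconciling central triangles is an artifact of applying the lemma to the wrong bowtie and disappears once this is corrected.

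Second, for case (b) the paper does not carry out the direct analysis of $M/T_1$ that you sketch. Instead, after using Lemma~\ref{bowwow} to verify that $\{1,2,3\}$ avoids $D_0$ and $D_1$ and that $\{2,3\}$ avoids $T_0\cup T_2$, it invokes \cite[Lemma~8.3]{cmochain} as a black box: that lemma, given a triangle $T_1$ meeting three disjoint triangles $T_0$, $T_2$, $\{1,2,3\}$ through three $4$-cocircuits at the three distinct pairs in $T_1$, concludes that $M/T_1$ is \ifc\ or $T_1$ is the central triangle of a quasi rotor. Your plan to establish this by hand via orthogonality and separation-chasing is a legitimate alternative, and you are right that it is the substantive step; but be aware that you would essentially be reproving that external lemma, which is nontrivial, rather than filling a short gap.
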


\begin{proof}
{  Assume the lemma does not hold.  
By Lemma~\ref{6.3rsv}, since $T_1$ is not the central triangle of a quasi rotor, $M\ba c_1$ is \ffsc.  
By Lemma~\ref{claim2}, $M$ has a triangle $\{1,2,3\}$ avoiding $T_1$ such that $\{2,3,a_1,c_1\}$ is a cocircuit.  
Lemma~\ref{bowwow} implies that $\{1,2,3\}$ avoids $D_0$ and $D_1$, and that $T_0$ and $T_2$ avoid $\{2,3\}$.  
Then (i) or (ii) holds by~\cite[Lemma~8.3]{cmochain}; a \cn.}  

%Assume that none of (i)--(iii)   holds.  
%Lemma~\ref{6.3rsv} implies that $M\ba a$ is \ffsc.  
%By Lemma~\ref{realclaim1}, $M\ba a/b$ has a $4$-fan, $F$, whose triad, $T^*$, contains $c$, and  $T^* \cup a$ is a $4$-cocircuit, $\{a,c,z_1,z_2\}$, of $M$. 
%By Lemma~\ref{bowwow}, $\{z_1,z_2\}\cap \{1,2,\dots,6,a,b,c\}=\emptyset$.  
%
%Suppose $c$ is in the triangle, $T$, of $F$.  
%Then either $T$  or $T \cup b$ is  a  circuit  of $M$.  
%If the latter occurs, then orthogonality implies that $T \cup b$ meets $\{2,3\}$ and $\{z_1,z_2\}$. 
%By taking the symmetric difference of $T \cup b$ with $\{a,b,c\}$, we see that (iv)(a) occurs.  
%We may now assume that $T$ is   a triangle of $M$.  
%Then \ort\ implies that $T$ meets $\{z_1,z_2\}$ and $\{4,5\}$, and we conclude that (iv)(b) holds.  
%
%It remains to consider the case that $c$ is not in $T$.  
%Then $c$ is the coguts element of  the $4$-fan $F$. Thus $F- c$ is a triangle of $M\ba a/b$ containing $\{z_1,z_2\}$. 
%By \cite[Lemma~8.3]{cmochain}, $\{z_1,z_2\}$ is not contained in a triangle of $M$. Hence $\{z_1,z_2,b\}$ is contained in a $4$-circuit of $M$.  
%Orthogonality implies that this circuit meets both $\{2,3,a\}$ and $\{4,5\}$. Thus $\{z_1,z_2\}$ meets $\{2,3,4,5,a\}$; a \cn.  
\end{proof}

{  To conclude this section, we recall \cite[Lemma~4.5]{cmoVI}, which is useful for dealing with  quasi rotors.  

\begin{lemma}
\label{rotorwin} Let $M$ be an \ifc~ binary matroid having $(\{1,2,3\},\{4,5,6\},\{7,8,9\},\{2,3,4,5\},\{5,6,7,8\},\{3,5,7\})$ as a quasi rotor   
and having at least thirteen elements. Let $N$ be an \ifc~ matroid containing at least seven elements such that $M/e$ has an $N$-minor for some $e$ in $\{3,5,7\}$. Then one of $M\ba 1$, $M\ba 9$, $M\ba 1/2$,  $M\ba 9/8$, or $M\ba 3,4/5$ is \ifc\ with an $N$-minor.
\end{lemma}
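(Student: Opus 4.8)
\textbf{Proof proposal for Lemma~\ref{rotorwin}.}

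The plan is to exploit the symmetry of the quasi rotor together with the connectivity results already available for its constituent bowties. First I would observe that a quasi rotor $(\{1,2,3\},\{4,5,6\},\{7,8,9\},\{2,3,4,5\},\{5,6,7,8\},\{3,5,7\})$ contains the bowtie $(\{1,2,3\},\{4,5,6\},\{2,3,4,5\})$ and, by the near-symmetry of the configuration (swapping the roles of the two outer triangles), also the bowtie $(\{7,8,9\},\{4,5,6\},\{5,6,7,8\})$; moreover $\{3,5,7\}$ together with $T_0=\{1,2,3\}$ and $T_2=\{7,8,9\}$ gives extra rigidity. The key point is that contracting any of $3$, $5$, or $7$ can be rerouted: since $\{3,5,7\}$ is a triangle in a binary matroid, $M/3\cong M/5\cong M/7$ after relabelling within that triangle is impossible in general, but $\{3,5,7\}$ being a triangle does give $M/3$, $M/5$, $M/7$ the same simplification, so a minor of one is a minor of another up to which of $\{3,5,7\}$ survives. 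This lets me reduce from ``$M/e$ has an $N$-minor for some $e\in\{3,5,7\}$'' to a single canonical case.

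Next I would run the $4$-fan machinery. Deleting $1$ from the bowtie $(\{1,2,3\},\{4,5,6\},\{2,3,4,5\})$ leaves, by the usual argument (as in the discussion preceding Lemma~\ref{airplane}), a $4$-fan in $M\ba 1$, and symmetrically $M\ba 9$ has a $4$-fan coming from the bowtie on the other side. The heart of the argument is to apply Lemma~\ref{6.3rsv} to the bowtie $(\{1,2,3\},\{4,5,6\},\{2,3,4,5\})$: either $M\ba 6$ is \ffsc\ (and then one of its four sub-outcomes holds), or $\{4,5,6\}$ is the central triangle of a quasi rotor. But here $\{4,5,6\}$ \emph{is} already the central triangle of the given quasi rotor, so I must be careful: the point is to feed the structural information back in. I would instead apply Lemma~\ref{6.3rsv} (and its dual-flavoured consequences) to the \emph{outer} bowties and to $M\ba 1$ and $M\ba 9$ directly, using Lemma~\ref{2.2} to track the $N$-minor through each $4$-fan. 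Concretely: in $M\ba 1$ there is a $4$-fan $(1',2',3',4')$; by Lemma~\ref{2.2}, either $M\ba 1$ deletes its guts element preserving $N$ (giving a candidate like $M\ba 1$ being \ifc, or reducing further), or $M\ba 1$ contracts the coguts element preserving $N$, and I chase that contraction through the triangle/cocircuit structure of the rotor — typically $M\ba 1/2$ emerges as \ifc\ with an $N$-minor, which is one of the listed outcomes. The analogous analysis on the $9$-side yields $M\ba 9$ or $M\ba 9/8$. The remaining outcome $M\ba 3,4/5$ should arise from the middle: using that $\{2,3,4,5\}$ is a cocircuit and $\{3,5,7\}$ a triangle, $M\ba 3,4/5$ can be identified (via orthogonality and the isomorphisms in Lemma~\ref{stringswitch}-style relabellings) with a deletion/contraction that both preserves the $N$-minor (because some $M/e$, $e\in\{3,5,7\}$, does) and is internally $4$-connected.

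The key steps, in order, would be: (1) record the two outer bowties and the extra triangle $\{3,5,7\}$, and normalise the hypothesis ``$M/e$ has an $N$-minor for some $e\in\{3,5,7\}$'' to a canonical form using that $\{3,5,7\}$ is a triangle; (2) apply Lemma~\ref{6.3rsv} to the bowtie $(\{1,2,3\},\{4,5,6\},\{2,3,4,5\})$ and, symmetrically, to $(\{7,8,9\},\{4,5,6\},\{5,6,7,8\})$, using the quasi-rotor hypothesis to eliminate or pin down the ``central triangle of a quasi rotor'' alternative; (3) in each surviving sub-case of Lemma~\ref{6.3rsv}, use orthogonality with the triangles and cocircuits of the rotor (and the bound $|E(M)|\ge 13$) to force the extra structure to collapse, and use Lemma~\ref{2.2} on the relevant $4$-fans to route the $N$-minor into $M\ba 1$, $M\ba 9$, $M\ba 1/2$, $M\ba 9/8$, or $M\ba 3,4/5$; (4) verify internal $4$-connectivity of whichever of these five matroids the analysis lands on, again via Lemma~\ref{6.3rsv}'s sub-outcomes or a direct $3$-separation count. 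I expect the main obstacle to be step~(3): several sub-cases of Lemma~\ref{6.3rsv} produce a new triangle and cocircuit attached to $\{4,5,6\}$, and I will need to show these either reproduce the quasi-rotor (hence are already accounted for) or contradict orthogonality with $\{3,5,7\}$ or with one of the two $4$-cocircuits; handling the case where the new bowtie on the $\{4,5,6\}$ side interacts nontrivially with the $\{3,5,7\}$ triangle — and keeping careful track of \emph{which} of $\{3,5,7\}$ still carries the $N$-minor after a deletion — is where the bookkeeping is heaviest and where I would expect to invoke Lemma~\ref{stringswitch}-type isomorphisms to reduce the number of genuinely distinct configurations.
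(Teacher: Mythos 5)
The paper does not actually prove this statement; it is recalled verbatim from \cite[Lemma~4.5]{cmoVI}, so there is no in-paper argument to compare yours against. Judged on its own terms, your proposal contains one concrete error and leaves the essential case analysis undone. The error: deleting $1$ from the bowtie $(\{1,2,3\},\{4,5,6\},\{2,3,4,5\})$ does \emph{not} produce a $4$-fan ``by the usual argument''. That argument (the discussion preceding Lemma~\ref{airplane}) applies to deleting an element of the triangle that lies \emph{in} the $4$-cocircuit: in $M\ba 3$ the set $\{2,4,5\}$ contains a cocircuit, so $(6,5,4,2)$ is a ($4$-)fan, whereas $M\ba 1$ acquires no such fan. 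Since your mechanism for routing the $N$-minor into $M\ba 1/2$ runs through this nonexistent fan of $M\ba 1$, it breaks. The correct routing goes through $M\ba 3$: your normalization via the triangle $\{3,5,7\}$ (which is sound) gives $N\preceq M/5$, hence $N\preceq M\ba 3,4/5$ and $N\preceq M\ba 3$; then Lemma~\ref{2.2} applied to the fan $(6,5,4,2)$ of $M\ba 3$ yields $N\preceq M\ba 3,6$ or $N\preceq M\ba 3/2\cong M\ba 1/2$, and symmetrically on the $\{7,8,9\}$ side.

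The gap: the lemma asserts that one of five \emph{specific} minors is \ifc\ with an $N$-minor, and your plan stops exactly where that has to be established. Applying Lemma~\ref{6.3rsv} to the outer bowties yields, besides ``$M\ba 1$ is \ifc'', the alternatives that $\{1,2,3\}$ is itself the central triangle of a further quasi rotor, or that $M\ba 1$ is \ffsc\ with a new bowtie or with prescribed $4$-fans attached. None of these outcomes is on the list of five, and none is ruled out by \ort\ with the triangles and cocircuits of the given rotor --- they are genuinely realizable configurations. The entire content of the lemma is to show that in each such configuration (and in the interaction of the two sides, which is where $M\ba 3,4/5$ must be shown \ifc\ when both outer options fail) one of the five candidates still works. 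You explicitly defer this (``I expect the main obstacle to be step (3)\dots where the bookkeeping is heaviest''), so what you have is a plausible outline with a misplaced key step rather than a proof.
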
}

\section{Bowties and Ladders}
\label{bl}

In this section, we consider how bowties can interact with ladders.  We begin with a lemma that builds from the configuration in Figure~\ref{drossfig2}.

\begin{figure}[htb]
\center
\includegraphics[scale=0.9]{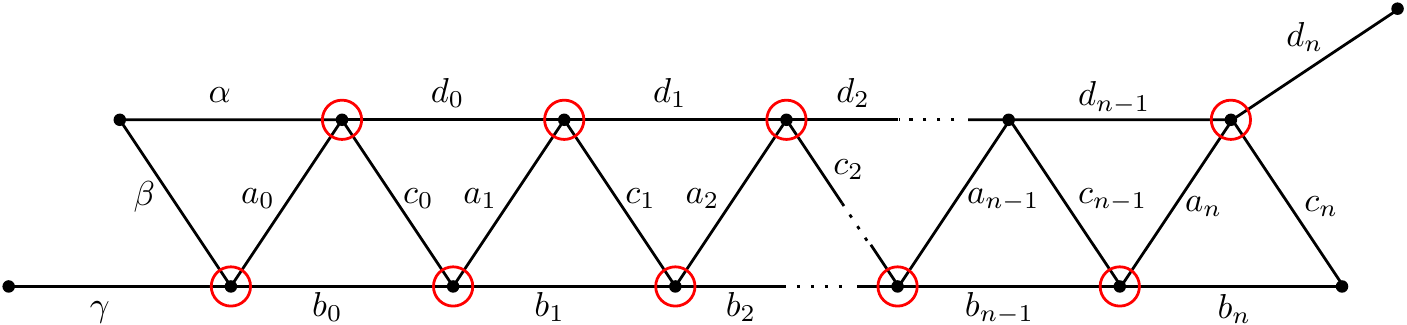}
\caption{$n\ge 1$ and $M$ has either $\{d_{n-2},a_{n-1},c_{n-1},d_{n-1}\}$ or $\{d_{n-2},a_{n-1},c_{n-1},a_n,c_n\}$ as a cocircuit,   where $d_{-1} = \alpha$ if $n = 1$.}
\label{drossfig2}
\end{figure}

\begin{lemma}
\label{moreteeth}
Let $M$ be an \ifc\ binary matroid that has at least {  thirteen} elements. 
Assume that $M$ contains the configuration shown in Figure~\ref{drossfig2} where $n\geq 1$,   all the elements shown are distinct except that $d_n$ and $\gamma$ may be equal, and, in addition to the cocircuits shown, exactly one of 
$\{d_{n-2},a_{n-1},c_{n-1},d_{n-1}\}$ and $\{d_{n-2},a_{n-1},c_{n-1},a_n,c_n\}$ is a cocircuit of $M$. 
Assume also that $M$ is not isomorphic to the cycle matroid of a quartic M\"obius ladder and that $M\ba c_n$ is \ffsc.  
Then $M\ba c_0,c_1,\ldots,c_n,\beta$ is \ffsc\ and if it has a \ftv, then one side of that \ftv\ is a $4$-fan $F$ where either 
\begin{itemize}
\item[(i)] $F$ is a $4$-fan in $M\ba c_n$ {  with $b_n$ as its coguts element}; or 
\item[(ii)] $F$ is a $4$-fan in $M\ba \beta$ {  with $\al$ as its coguts element}.  
\end{itemize}
\end{lemma}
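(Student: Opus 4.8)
The plan is to follow the strategy that has worked for the analogous ``trimming'' lemmas earlier in the series, combining an inductive peeling of the bowtie string with the structural information about $4$-fans provided by Lemmas~\ref{realclaim1}--\ref{3peaks} and the $N$-free analysis in Lemma~\ref{6.3rsv}. First I would record the relevant circuits and cocircuits visible in Figure~\ref{drossfig2}: the triangles $T_0,\dots,T_n$, the cocircuits $D_0,\dots,D_{n-1}$ and $\{d_{-1},\dots\}$-type cocircuits forming the ``down'' side of the quartic ladder, the cocircuit $\{\beta,a_0,c_0,\dots\}$ (so that $\beta$ pairs with $\alpha$), and the extra cocircuit at the far end (one of $\{d_{n-2},a_{n-1},c_{n-1},d_{n-1}\}$, $\{d_{n-2},a_{n-1},c_{n-1},a_n,c_n\}$). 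The key first reduction is to show $M\ba c_0,c_1,\dots,c_n,\beta$ is $3$-connected: each deletion $c_i$ is the guts element of a $4$-fan (using $D_{i-1}$ on one side and $T_i$), so repeatedly applying the fan-reduction principle keeps $3$-connectivity, and deleting $\beta$ likewise removes the guts element of the $4$-fan $(\beta,a_0, \text{?}, \text{?})$ built from the $\beta$-cocircuit together with a triangle through $a_0$ — here I would use the hypothesis that $M$ is not a quartic M\"obius ladder to rule out the degenerate wraparound that would obstruct this.

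Next I would establish that $M\ba c_0,\dots,c_n,\beta$ has no $(4,4,S)$-violator. The natural approach is by contradiction: suppose $(U,V)$ is such a violator, with both sides ``large'' (at least five elements, or non-sequential). I would pull this separation back up toward $M\ba c_n$, reinserting $\beta$ and then $c_0,\dots,c_{n-1}$ one at a time. At each reinsertion the element just added lies in the closure or coclosure of one side (because of the governing cocircuit/triangle), so the separation stays $3$-separating with controlled size; after all reinsertions one obtains a $3$-separation of $M\ba c_n$ whose two sides are still large enough to contradict the hypothesis that $M\ba c_n$ is \ffsc. The bookkeeping — showing that no side shrinks below the threshold, and that a ``non-sequential'' side stays non-sequential — is the routine-but-delicate part; it mirrors the calculation in the proof of Lemma~\ref{realclaim1} where $(U\cup T_2\cup b_1, V-T_2)$ is checked to be a violator.

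Finally, for the description of the $4$-fans, I would suppose $M\ba c_0,\dots,c_n,\beta$ has a $(4,3)$-violator, hence (being \ffsc) a $4$-fan $F$. Reversing the peeling once more, $F$ must already be ``almost'' present before the last deletion; the analysis splits according to whether reinserting $\beta$ or reinserting some $c_i$ destroys the fan. Using orthogonality of the triad in $F$ with the ladder triangles $T_j$ and with the ladder cocircuits (exactly as in the proof of Lemma~\ref{claim2}, where one shows the triad of a would-be fan must meet a triangle of $M$, forcing a contradiction unless the fan is of a very restricted shape), I expect to be driven to one of two situations: either $F$ survives into $M\ba c_n$ and its coguts element must be $b_n$ (the coguts of the end $4$-fan $(c_n,b_n,a_n,b_{n-1})$, by the uniqueness-of-$4$-cocircuit Lemma~\ref{bowwow}), giving (i); or $F$ survives into $M\ba\beta$ with coguts element $\alpha$, giving (ii). The main obstacle is the last step: controlling \emph{all} possible $4$-fans and showing the coguts element is pinned down, which requires careful repeated use of orthogonality against the whole ladder and the two-case end cocircuit, together with Lemma~\ref{bowwow} to exclude spurious $4$-cocircuits; the quartic-M\"obius-ladder exclusion is exactly what prevents a symmetric fan at the ``wrong'' end from appearing.
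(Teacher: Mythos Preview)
Your global architecture---establish $3$-connectedness, then sequential $4$-connectedness, then \ffsc, then classify the $4$-fans---matches the paper's. The substantive divergence is in how the \ffsc\ step is carried out. The paper does \emph{not} pull a hypothetical violator of $M\ba c_0,\dots,c_n,\beta$ all the way back to $M\ba c_n$. Instead it factors through the intermediate matroid $M\ba c_0,\dots,c_n$ and invokes \cite[Lemma~6.5]{cmoVI} (for $n\ge 2$) or \cite[Lemma~6.1]{cmoVI} together with a short preliminary sublemma (for $n=1$) as a black box to conclude that $M\ba c_0,\dots,c_n$ is already \ffsc, with its $4$-fans pinned to the two ends of the ladder. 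Only the single further deletion of $\beta$ then remains, and the sequential-$4$-connectedness step is the one-line observation that the triad through $\{a_0,\alpha\}$ lets you reinsert $\beta$ on the appropriate side.

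What you label ``routine-but-delicate bookkeeping'' is therefore exactly the content of those cited lemmas, and they are not short: they require the full ladder cocircuit structure, not just the local picture near a single reinsertion, and the cases $n=1$ and $n\ge 2$ genuinely need separate treatment. Your comparison with Lemma~\ref{realclaim1} understates this---that lemma controls one contraction after one deletion, whereas here you must control a chain of $n$ deletions, and the argument does not reduce to iterating a one-step lemma because the placement of the various $a_i,b_i,d_i$ relative to the separation has to be tracked globally. Your plan would go through if you import \cite[Lemmas~6.1,~6.3,~6.5]{cmoVI}, but as written it misidentifies where the weight lies.

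The $4$-fan classification follows the same pattern: the paper first asks whether the putative fan $(w_1,w_2,w_3,w_4)$ is already a $4$-fan of $M\ba c_0,\dots,c_n$ (equivalently, whether $\beta$ lies in the governing cocircuit $C^*$). If it is, the end-fan characterization in \cite[Lemma~6.5]{cmoVI}(iii) forces $w_4\in\{b_n,d_0\}$, and a short orthogonality argument rules out $d_0$, giving outcome~(i). If $\beta\in C^*$, a focused orthogonality chase (first $a_0\notin\{w_1,w_2,w_3,w_4\}$, then $\alpha=w_4$, then $C^*\cap\{c_0,\dots,c_n\}=\emptyset$) yields $C^*=\{w_2,w_3,\alpha,\beta\}$ and outcome~(ii). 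Your proposed dichotomy ``whether reinserting $\beta$ or some $c_i$ destroys the fan'' is the same split, but again the first branch rests on the cited classification rather than an ad~hoc argument.
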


\begin{proof} 
First we show the following. 
\begin{sublemma}
\label{all4s}
When $n = 1$, neither $\{d_0,d_1\}$ nor $\{b_0,b_1\}$ is contained in a triangle of $M$. Moreover, none of $a_1, b_0, b_1, d_0$, nor $d_1$ is in a triangle of $M\ba c_0,c_1$.
\end{sublemma}

If $\{d_{n-1},d_n\}$ is in a triangle, then $M\ba c_n$ has a $5$-fan; a \cn.  
If $\{b_0,b_1\}$ is in a triangle, then \ort\ implies that the triangle's third element is in $\{\be,\ga\}$, so $\lambda (\{\al,\be,\ga,d_0\}\cup T_0\cup {  \{a_1,b_1\}}\})\leq 2$; a \cn.  By \cite[Lemma 6.1]{cmoVI}, $a_1$ is not in a triangle of $M\ba c_0,c_1$. If $b_0$ or $b_1$ is in a triangle of $M\ba c_0,c_1$, then \ort\ implies that this triangle contains $\{b_0,b_1\}$; a \cn. Similarly, if $d_0$ or $d_1$ is in a triangle of $M\ba c_0,c_1$, then \ort\ implies that this triangle contains $\{d_0,d_1\}$; a \cn. 
We conclude that  \ref{all4s} holds. 

Next we note that 
\begin{sublemma}
\label{all4s2}
$M\ba c_0,c_1,\dots ,c_n$ is \ffsc.  
\end{sublemma}

This follows immediately from \cite[Lemma~6.5]{cmoVI} when $n > 1$. Moreover, it holds when $n= 1$ by combining   \ref{all4s} with \cite[Lemma~6.1]{cmoVI}. 
 
The matroid $M\ba c_0,c_1,\dots ,c_n$ has $(\beta,\alpha,a_0,d_0)$ or $(\beta,\alpha,a_0,a_1)$ as a $4$-fan. Thus $M\ba c_0,c_1,\ldots,c_n,\beta$ is \thc. Next we observe that $M\ba c_0,c_1,\ldots,c_n,\beta$ is \sfc. To see this, note that if  $M\ba c_0,c_1,\ldots,c_n,\beta$ has a \ns\ \ths\ $(U,V)$, then, as $\{a_0,\alpha\}$ is in a triad, we may assume that this triad is contained in $U$. Thus  $(U \cup \beta,V)$ is a \ns\ \ths\ of $M\ba c_0,c_1,\ldots,c_n$; a \cn. 

Now suppose $M\ba c_0,c_1,\ldots,c_n,\beta$ has a $4$-fan $(w_1,w_2,w_3,w_4)$. Then $M$ has a cocircuit $C^*$ such that $\{w_2,w_3,w_4\} \subsetneqq C^* \subseteq \{w_2,w_3,w_4,\beta,c_0,c_1,\ldots,c_n\}$.

\begin{sublemma}
\label{fanfair} 
 If $(w_1,w_2,w_3,w_4)$ is a $4$-fan of $M\ba c_0,c_1,\ldots,c_n$, then $w_4 = b_n$ and $\{b_n,c_n,w_2,w_3\}$ is a cocircuit of $M$, so $(w_1,w_2,w_3,w_4)$ is a $4$-fan of $M\ba c_n$.
\end{sublemma}

Suppose that this fails.  
If $n>1$, then, by (iii) of~\cite[Lemma~6.5]{cmoVI},  $w_4 = d_0$   and $a_0 \in \{w_2,w_3\}$. Moreover, $(w_1,w_2,w_3,w_4)$ is a $4$-fan of $M\ba c_0$. Thus
$\{w_2,w_3,c_0,d_0\}$ is a $4$-cocircuit of $M$ containing $\{a_0,c_0,d_0\}$. By \ort, this $4$-cocircuit contains $\alpha$ or $\beta$. Hence it is  $\{\alpha,a_0,c_0,d_0\}$. Thus $\{w_1,w_2,w_3\}$ contains $\{\alpha,a_0\}$ and so is $\{\alpha,a_0,\beta\}$; a \cn. We conclude that \ref{fanfair} holds if $n>1$. 

Now let $n=1$.  
By \ref{all4s}, neither $\{b_0,b_1\}$ nor $\{d_0,d_1\}$ is contained in a triangle of $M$. It follows by  \cite[Lemma~6.1]{cmoVI} that $w_4=b_1$.  
Now~\ref{fanfair} holds if $\{w_2,w_3,b_1,c_1\}$ is a cocircuit, so we assume that $\{w_2,w_3,b_1,c_0\}$ or $\{w_2,w_3,b_1,c_0,c_1\}$ is a cocircuit.  
Orthogonality implies that $\{w_2,w_3\}$ meets $\{d_0,a_1\}$; a \cn\ to \ref{all4s}. 
%$\{a_0,b_0\}$ and $\{d_0,a_1\}$. 
%Thus either $\lambda (\{d_0\}\cup T_0\cup T_1\})\leq 2$, a \cn, or $\{w_2,w_3\}=\{b_0,a_1\}$.  In the latter case,  \ort\ implies that $w_1=d_1$, so $\lambda (\{d_0,d_1\}\cup T_0\cup T_1\})\leq 2$, a \cn.  
Thus~\ref{fanfair} holds.  

We may now assume that $(w_1,w_2,w_3,w_4)$ is not a $4$-fan of $M\ba c_0,c_1,\ldots,c_n$. Then $\beta \in C^*$. Thus $\{\alpha,a_0\}$ meets $\{w_2,w_3,w_4\}$. 
Next we show that 
\begin{sublemma}
\label{a1no} 
$a_0 \not \in \{w_1,w_2,w_3,w_4\}$.
\end{sublemma}

First we show that $a_0 \not\in \{w_1,w_2,w_3\}$. Assume the contrary.  Let $n =1$. Then, by \ort, $\{w_1,w_2,w_3\}$ meets  $\{\alpha,d_0\}$ or $\{\alpha,a_1\}$. Thus $d_0$ or $a_1$ is in a triangle of $M\ba c_0,c_1$; a \cn\ to \ref{all4s}. 
%$\{w_1,w_2,w_3\}$ is $\{a_0,\gamma,d_0\}$ or $\{a_0,\gamma,a_1\}$. Hence 
%$\lambda(T_0 \cup T_1 \cup \{\alpha,\beta,\gamma,d_0\}) \le 2$; a \cn. 
Hence we may assume that $n \ge 2$. 
Then, by~\cite[Lemma~6.3]{cmoVI}, the triangle $\{w_1,w_2,w_3\}$ of $M\ba c_0,c_1,\ldots,c_n$ meets $\{a_0,b_0,d_0,a_1,b_1,d_1,\ldots,a_n,b_n,d_n\}$ in $\{a_0\}$ or $\{a_0,d_{n-1},d_n\}$. By \ort, $\{a_0,d_{n-1},d_n\}$ is not a triangle. Thus 
 $\{w_1,w_2,w_3\}$ avoids $\{b_0,d_0,a_1,b_1,d_1,\ldots,a_n,b_n,d_n\}$.  
By \ort\ between $\{w_1,w_2,w_3\}$ and   both $\{\be,\ga,a_0,b_0\}$ and   $\{\al,a_0,c_0,d_0\}$, we find that $\{w_1,w_2,w_3\} = \{a_0,\al,\ga\}$. But $\{a_0,\al,\be\}$ is a triangle; a \cn.  
Hence $a_0 \not \in \{w_1,w_2,w_3\}$.

Suppose now that $a_0 = w_4$. 
Orthogonality between $C^*$ and the circuit $\{a_0,b_0,d_0,a_1\}$ implies that $\{w_2,w_3\}$ meets $\{b_0,d_0,a_1\}$. Thus, by \cite[Lemma~6.3]{cmoVI}, $n=1$.  But now we have a \cn\ to \ref{all4s}. %By 
%\cite[Lemma~6.1]{cmoVI}, $a_1$ is not in a triangle of $M\ba c_0,c_1$. Moreover, if $d_0$ or $b_0$ is in $\{w_1,w_2,w_3\}$, then $\{d_0,d_1\}$ or $\{b_0,b_1\}$ is contained in a triangle; a \cn\ to \ref{all4s}. 
Thus \ref{a1no} holds. 

We now know that $\alpha \in \{w_2,w_3,w_4\}$. Suppose $\al \in \{w_2,w_3\}$. 
If $\{\al,a_0,c_0,d_0\}$ is a cocircuit, then, by \ort, $\{a_0,d_0\}$ meets $\{w_1,w_2,w_3\}$. By~\cite[Lemma~6.3]{cmoVI} and \ref{a1no}, $n=1$ and $d_0 \in \{w_1,w_2,w_3\}$; a \cn\ to \ref{all4s}. 
We deduce that $\{\al,a_0,c_0,a_1,c_1\}$ is a cocircuit of $M$, so $n = 1$. Then \ort\ implies that $\{w_1,w_2,w_3\}$ meets $\{a_0,{  a_1}\}$.  
Thus, by \ref{a1no} and \ref{all4s}, we have a \cn. %implies that $d_1$ is in the triangle, and \ort\ implies that $\{d_0,d_1\}$ is contained in a triangle; a \cn. 
Hence $\al = w_4$. 
  
Now suppose that $c_i \in C^*$ for some $i$  in $\{0,1,\ldots,n\}$. Then $\{w_2,w_3\}$ meets $\{a_i,b_i\}$.  Thus, by~\cite[Lemma~6.3]{cmoVI},  if $n \ge 2$, then $i=0$ and $a_0\in\{w_2,w_3\}$; a \cn\ to~\ref{a1no}.  Moreover, if $n = 1$, then one of $a_0,b_0,a_1$, or $b_1$ is in $\{w_2,w_3\}$; a \cn\ to  \ref{a1no} or \ref{all4s}. We conclude that $C^*$ avoids $\{c_0,c_1,\ldots,c_n\}$, so $C^* = \{w_2,w_3,{  \al},\beta\}$, and $(w_1,w_2,w_3,{  \al})$ is a $4$-fan of $M\ba \be$. 
\end{proof}

\begin{figure}[htb]
\centering
\includegraphics[scale=1.]{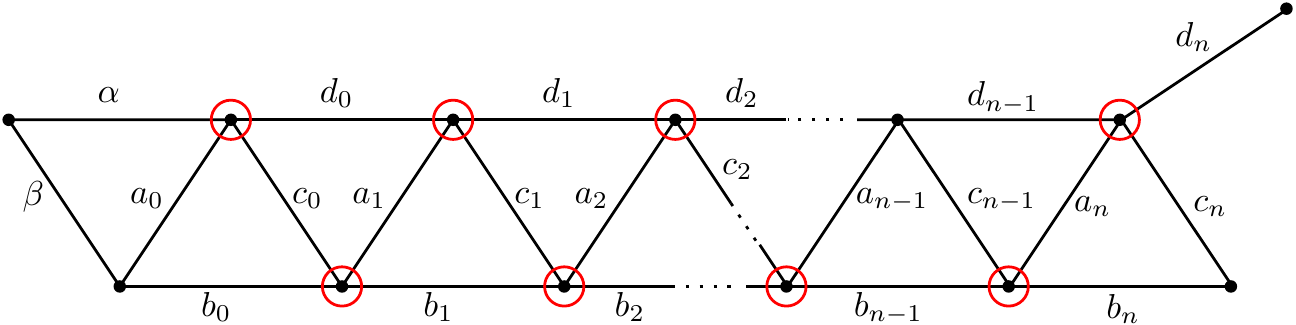}
\caption{Either $\{d_{n-2},a_{n-1},c_{n-1},d_{n-1}\}$ or $\{d_{n-2},a_{n-1},c_{n-1},a_n,c_n\}$ is a cocircuit, {  where $d_{-1}=\al$.}}
\label{drossfigiinouv}
\end{figure}

Beginning with the next lemma and for the rest of the paper, we shall start abbreviating how we refer to the following four outcomes in the main theorem.  
\begin{itemize}
\item[(i)] $M$ has a proper minor $M'$ such that $|E(M)|-|E(M')|\leq 3$ and $M'$ is \ifc\ with an $N$-minor; or 
\item[(ii)] $M$ contains an open rotor chain, a ladder structure, or a ring of bowties that can be trimmed to obtain an \ifc\ matroid with an $N$-minor; 
\item[(iii)] $M$ contains {  an open quartic ladder from which an \ifc\ minor of $M$ with an $N$-minor can be obtained by a mixed ladder move};
%the structure in Figure~\ref{lobster} where all of the elements are distinct except that $a$ may be $f$ and $\{a,b,c\}$ may be $\{d,e,f\}$, and the deletion of all of the dashed elements followed by the contraction of the arrow element is \ifc\ with an $N$-minor;  
\item[(iv)] $M$ contains an enhanced quartic ladder from which an \ifc\ minor of $M$ with an $N$-minor can be obtained by an enhanced-ladder move.
\end{itemize}
When (i) or (iv)  holds, we say, respectively, that $M$ has a {\it quick win} or an {\it enhanced-ladder win}.  
When trimming an open rotor chain, a ladder structure, or a ring of bowties in $M$ produces an \ifc\ matroid with an $N$-minor, we say, respectively, that $M$ has an 
{\it  open-rotor-chain win}, a {\it ladder win}, or a {\it bowtie-ring win}.  
When (iii) holds, we say that $M$ has a {\it mixed ladder win}.  

%The following lemma is a modification of~\cite[Lemma~10.1]{cmoVI}.  
%In this modification, we assume that Hypothesis~VII holds, so $M$ cannot have a double fan win.  
%
%The following lemma is from~\cite[Lemma~10.1]{cmoVI}.  

%\begin{lemma}
%\label{reachtheend}
%Let $M$ and $N$ be \ifc\ binary matroids such that  $|E(M)|\geq 13$ and $|E(N)|\geq 7$.  
%We assume that Hypothesis 1 holds.  
%Suppose that $M$ has a bowtie $(T_0,T_1,D_0)$, such that $M\ba c_0$ is \ffsc, and $M\ba c_0,c_1$ has an $N$-minor.  
%Let $T_0, D_0,T_1,D_1,\dots ,T_n$ be a right-maximal string of bowties.  
%If $(T_n,T_0,\{x,c_n,a_0,b_0\})$ is not a bowtie for all $x$ in $\{a_n,b_n\}$, then one of the following holds.   
%\begin{itemize}
%\item[(i)] $M$ has a quick win; or 
%\item[(ii)] $M$ has an open-rotor-chain win or a ladder win; or 
%\item[(iii)]  $M$ contains the configuration in Figure~\ref{drossfigiinouv}, up to switching the labels of $a_n$ and $b_n$, and $M\ba c_0,c_1,\dots, c_n$ has an $N$-minor; or 
%\item[(iv)]  $M\ba c_0,c_1/b_1$ has an $N$-minor; or $n= 1$ and $M\ba c_0,c_1/b_1$ or $M\ba c_0,c_1/a_1$ has an $N$-minor; or
%\item[(v)]  $M$ has an enhanced-ladder win.
%\end{itemize}
%\end{lemma}

\section{An outline of the proof of the main theorem}
\label{out}

Since the proof of the main theorem is long, we give  an outline of it in this section. By hypothesis, $M$ and $N$ are \ifc\ binary matroids and $M$ has a bowtie $(\{1,2,3\}, \{4,5,6\}, \{2,3,4,5\})$ where $M\ba 4$ is \ffsc\ with an $N$-minor, and $M\ba 1,4$ has no $N$-minor. We may assume that $M\ba 6$ is \ffsc\ otherwise the theorem holds by Theorem~\ref{killcasek}. The one result in this section, Lemma~\ref{ABCDE}, shows that either we get a quick win, or $M$ contains one of  configurations (A), (B), and $C$ in Figure~\ref{ABCDEfig}. In Section~\ref{Cconfig}, we treat the case when $M$ contains configuration (C) noting first that, by Lemma~\ref{airplane}, $M\ba 4/5$ is \ffsc\ with an $N$-minor and with $(a,b,c,6)$ as a $4$-fan. Thus $M\ba 4/5,6$ or $M\ba 4/5\ba a$ has an $N$-minor. These two possibilities are dealt with in Lemmas~\ref{ccrider} and \ref{killtobler2}, respectively. 

In Section~\ref{Aconfigsect}, we deal with  the case when $M$ contains configuration (A). 
{  First we prove a technical lemma detailing the possible structures surrounding a right-maximal bowtie chain in $M$ that is also a right-maximal bowtie chain in $M'$, a minor of $M$.  
Then} 
we show in Lemma~\ref{AconfigBOOM} that we {  obtain our result.}  
%either get a quick win, or we can build onto configuration (A) to get that $M$ contains configuration (J) from Figure~\ref{Afiga}. Then, in Lemma~\ref{Jconfig}, we show that we again either get a quick win, or we can build more structure, this time finding the configuration in Figure~\ref{JIfiga}. The dual of the last configuration, which is shown in Figure~\ref{Jdual}, has a bowtie string. In Lemma~\ref{Jkiller}, by using the self-dual Hypothesis {  VII} along with results from \cite{cmoVI}, we complete the proof of the theorem in the case when $M$ contains configuration (A).

The results of Sections~\ref{Cconfig} and \ref{Aconfigsect} mean that we can assume that $M$ contains neither of configurations (C) or (A). It remains to consider when $M$ contains configuration (B) from Figure~\ref{ABCDEfig}. This is done in Section~\ref{Bconfig}. Finally, in Section~\ref{pomt},  the parts already proved are combined to complete the proof of the main theorem.

\begin{figure}[htb]
\center
\includegraphics[width=5in]{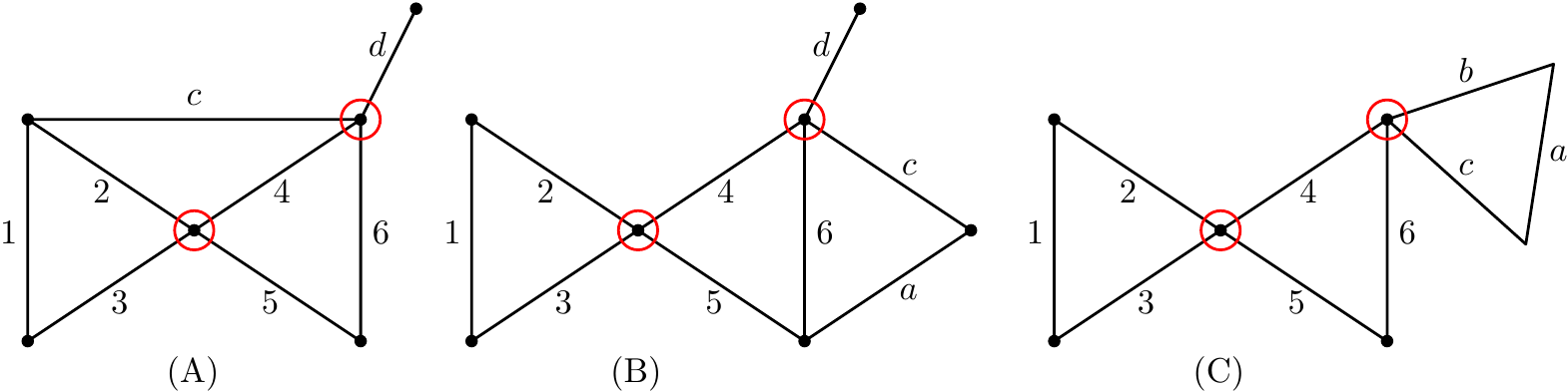}
\caption{In each structure, we view the labels on $2$ and $3$ as being interchangeable.  The elements in each part are distinct except that $a$ may equal $1$ in (B) and (C).}
\label{ABCDEfig}
\end{figure}

We now show that $M$ does indeed contain one of the three structures in Figure~\ref{ABCDEfig}.

\begin{lemma}
\label{ABCDE} 
Let $(\{1,2,3\}, \{4,5,6\}, \{2,3,4,5\})$ be a bowtie in an \ifc\ binary matroid $M$ with $|E(M)| \ge 13$.  Let $N$ be an \ifc\ binary matroid having at least seven elements. Suppose that $M\ba 4$ is \ffsc\ with an $N$-minor and that $N\not \preceq M\ba 1,4$. Then either $M$ has an \ifc\ minor $M'$ with an $N$-minor such that $1 \le |E(M) - E(M')| \le 2$, or, up to switching the labels on the elements $2$ and $3$, the matroid $M$ contains one of the configurations shown in Figure~\ref{ABCDEfig}, the deletion $M\ba 6$ is \ffsc, and $\{4,5,6\}$ is the only triangle in $M$ containing $5$. Moreover, in each of (A), (B), and (C), the elements shown are distinct except that, in (B) and (C), it is possible that $a=1$.
\end{lemma}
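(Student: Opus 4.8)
The plan is to start from the good-bowtie hypothesis together with the assumption (which we may make, since otherwise Theorem~\ref{killcasek} applies) that $M\ba 6$ is \ffsc, and then apply Lemma~\ref{ILK} to the bowtie $(\{1,2,3\},\{4,5,6\},\{2,3,4,5\})$. Since $M\ba 4$ is \ffsc, $N\preceq M\ba 1,4$ is false --- wait, the hypothesis is $N\not\preceq M\ba 1,4$, which is exactly what Lemma~\ref{ILK} needs --- so Lemma~\ref{ILK} gives that $M\ba 1$ is \ffsc\ with an $N$-minor and one of its four conclusions holds. Outcome (iv) of Lemma~\ref{ILK} says $M\ba 6$ is \ifc\ with an $N$-minor, which is a quick win (here $|E(M)-E(M')|=1$), so we may assume one of (i), (ii), (iii) of Lemma~\ref{ILK} holds. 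The task is then to show that each of these three outcomes, possibly after further analysis, either yields a quick win or forces one of configurations (A), (B), (C) of Figure~\ref{ABCDEfig}, and that along the way $\{4,5,6\}$ is the only triangle through $5$.

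First I would dispose of the claim that $\{4,5,6\}$ is the only triangle containing $5$: if $\{5,x,y\}$ were another triangle, then orthogonality with the cocircuit $\{2,3,4,5\}$ and the structure around the bowtie would, together with Lemma~\ref{bowwow} and internal $4$-connectivity, either produce a quasi rotor (allowing us to invoke Lemma~\ref{rotorwin} to get an \ifc\ minor with an $N$-minor removing at most three elements --- a quick win) or lead directly to a contradiction. Then I would handle outcome (i) of Lemma~\ref{ILK}, that $M\ba 1$ is \ifc: since $M\ba 1$ has an $N$-minor and $|E(M)-E(M\ba 1)|=1$, this is immediately a quick win. Outcome (ii), that $M$ has a triangle $\{7,8,9\}$ with $(\{1,2,3\},\{7,8,9\},\{7,8,1,s\})$ a bowtie for some $s\in\{2,3\}$ and the nine elements distinct: this is precisely the ``two triangles sharing nothing meeting through a $4$-cocircuit at element $1$ and one of $\{2,3\}$'' picture, which I expect to match configuration (B) (or (A)) of Figure~\ref{ABCDEfig} once we also record the cocircuit $\{2,3,4,5\}$ and the triangle $\{4,5,6\}$; here the possibility $a=1$ allowed in (B) corresponds to whether $\{7,8,9\}$ meets $\{4,5,6\}$. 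Outcome (iii), that every \ftv\ of $M\ba 1$ is a $4$-fan $(4,t,7,8)$ with $t\in\{2,3\}$ and eight named elements, I expect to produce configuration (A) or (C): the cocircuit $\{t,7,8,1\}$ (from the $4$-fan, since $M$ has a cocircuit containing $\{t,7,8\}$ and meeting the deleted element $1$) together with the triangle $\{1,2,3\}$, the cocircuit $\{2,3,4,5\}$, and the triangle $\{4,5,6\}$ assembles into the configuration; a careful case check on how $7,8$ relate to $\{4,5,6\}$ (again using orthogonality and Lemma~\ref{bowwow}) pins down which of (A), (C) we land in, with $a=1$ again being the degenerate overlap case.

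The main obstacle I anticipate is outcome (iii): ``every \ftv\ of $M\ba 1$ is a $4$-fan of the form $(4,t,7,8)$'' is a statement about \emph{all} \ftv s, and we still have to produce from it an \emph{explicit} configuration --- we need to know a $4$-fan of that form actually exists (if $M\ba 1$ is \ifc\ we are back in case (i) and done), and then we need to extract the correct triangles and cocircuits around $7$ and $8$, deciding whether the third triad/triangle closing up the fan collides with $\{4,5,6\}$ or introduces a genuinely new element. Managing the element-identification bookkeeping --- which elements among $\{6,7,8,9,\dots\}$ can coincide, using $|E(M)|\ge 13$ to rule out small-matroid coincidences, and tracking that we only ever delete or contract a bounded number of elements in the ``quick win'' branches --- is where the real work lies; the structural deductions themselves are routine orthogonality and connectivity arguments of the kind already used in Lemmas~\ref{6.3rsv}, \ref{claim2}, and \ref{3peaks}. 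Finally I would reconcile the three outcomes into the single trichotomy (A)/(B)/(C), noting that the label swap on $2$ and $3$ absorbs the choice of $s$ or $t$, and confirm that in the non-quick-win branches we have indeed verified $M\ba 6$ is \ffsc\ (which we assumed at the outset) and that $5$ lies in no triangle other than $\{4,5,6\}$.
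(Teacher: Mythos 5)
There is a genuine gap, and it is fatal to the whole plan: Lemma~\ref{ILK} hypothesises $N\preceq M\ba 1,4$, whereas the present lemma assumes $N\not\preceq M\ba 1,4$. Your parenthetical ``wait, the hypothesis is $N\not\preceq M\ba 1,4$, which is exactly what Lemma~\ref{ILK} needs'' has it backwards --- Lemma~\ref{ILK} needs the $N$-minor to \emph{survive} in $M\ba 1,4$, which is precisely what fails here, so it cannot be invoked at all. (Two further problems with invoking it: Lemma~\ref{ILK} also assumes Hypothesis~VII, which is not among the hypotheses of Lemma~\ref{ABCDE}; and you cannot ``assume $M\ba 6$ is \ffsc\ since otherwise Theorem~\ref{killcasek} applies,'' because Theorem~\ref{killcasek} requires $M\ba z_0,z_1$ to have an $N$-minor, which is not available --- indeed ``$M\ba 6$ is \ffsc'' is part of the conclusion you must prove.) Even setting all that aside, analysing $M\ba 1$ puts you on the wrong side of the bowtie: configurations (A), (B), (C) consist of triangles and cocircuits hanging off the elements $4$ and $6$, not off $1$, so the outcomes of Lemma~\ref{ILK} would not assemble into the target configurations.

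The correct engine, and the one the paper uses, is Lemma~\ref{airplane}: since $N\not\preceq M\ba 1,4$, that lemma yields that $M\ba 4/5$ is \ffsc\ with an $N$-minor, hence so is $M\ba 6/5\ (\cong M\ba 4/5)$. This immediately gives that $\{4,5,6\}$ is the only triangle containing $5$ (another triangle through $5$ would leave a $2$-circuit in $M\ba 4/5$), and it rules out $\{4,5,6\}$ being the central triangle of a quasi rotor of the form in Lemma~\ref{6.3rsv} (such a rotor would force a $5$-fan in $M\ba 4/5$). Applying Lemma~\ref{6.3rsv} to $M\ba 6$ then gives either a quick win or that $M\ba 6$ is \ffsc\ together with one of its structured outcomes; combining those outcomes with an analysis of the $4$-fans of $M\ba 6/5$ via Lemma~\ref{realclaim1}, and routine orthogonality with $\{2,3,4,5\}$ and $\{5,6,7,8\}$, is what produces (A), (B), (C) and the element-distinctness claims. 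You would need to rebuild your argument along these lines.
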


\begin{proof}
Since $N\not \preceq M\ba 1,4$, it follows by Lemma~\ref{airplane} that $M\ba 4/5$  is \ffsc\ with an $N$-minor. 
Thus $M\ba 6/5$  is \ffsc\ with an $N$-minor. We may assume that $M\ba 6/5$ is not \ifc\ otherwise the lemma holds. 
If $5$ is in a triangle $T$ other than $\{4,5,6\}$, then $M\ba 4/5$ has $T-5$ as a circuit; a \cn.  
Thus $\{4,5,6\}$ is the only triangle in $M$ containing $5$.  

With a view to using Lemma~\ref{6.3rsv}, we now consider $M\ba 6$.  
First suppose that  $\{4,5,6\}$ is the central triangle of a quasi rotor $(\{1,2,3\},\{4,5,6\},\linebreak \{7,8,9\},\{2,3,4,5\},\{y,6,7,8\},\{x,y,7\})$ for some $x$ in $\{2,3\}$ and some $y\in\{4,5\}$.  
Since $M\ba 4$ is \ffsc, we deduce that $y=4$.  
Then $M\ba 4/5$ has $(x,6,7,8,9)$ as a $5$-fan, a \cn.  
We conclude that $\{4,5,6\}$ is not the central triangle of such a quasi rotor.  
Then Lemma~\ref{6.3rsv} implies that either $M\ba 6$ is \ifc, and the lemma holds; or $M\ba 6$ is \ffsc\ but not \ifc. Moreover, in the latter case, one of the following holds.
\begin{itemize}
\item[(i)] $M$ has $\{a,b,c\}$ as a triangle and $\{b,c,d,6\}$ as a cocircuit for some $a$ in 
$\{2,3\}$ and $b$ in $\{4,5\}$, where $|\{1,2,\dots ,6,c,d\}|=8$; or 
\item[(ii)] $M$ contains the structure in Figure~\ref{ABCDEfig}(C) where the elements are all distinct except that $a$ may be the same as $1$; or
\item[(iii)] $M$ has a triangle $\{7,8,9\}$ and a cocircuit $\{5,6,7,8\}$ where the elements are all distinct except that $1$ may be the same as $9$.  
\end{itemize}

To see this, observe first that (i) above occurs when (iii) of Lemma~\ref{6.3rsv} holds. On the other hand, outcomes (ii) and (iv) of Lemma~\ref{6.3rsv} have been combined into (ii) and (iii) above with the separation between the latter being determined by the relative  placement of the elements $4$ and $5$. 

If (i) holds, then we know that $b=4$, otherwise $\{a,5,c\}$ is a triangle in $M$ that contains $5$ but is not  $\{4,5,6\}$; a \cn.  
Thus, up to switching the labels on $2$ and $3$, if (i) holds, then $M$ contains the structure in Figure~\ref{ABCDEfig}(A) where all of the elements shown are distinct.  

Since (ii) yields (C) in Figure~\ref{ABCDEfig}, we may now assume that (iii) holds. 
Recall that $M\ba 6/5$ is \ffsc\ but not \ifc.  
Thus $M\ba 6/5$ has a $4$-fan $(a,b,c,d)$. 
Lemma~\ref{realclaim1} implies that $4\in\{b,c,d\}$, and $\{b,c,d,6\}$ is a cocircuit of $M$.  
By symmetry, we may assume that $4 = b$ or $4 = d$.  

Assume first that $4 = b$. Then $M$ has $\{4,c,d,6\}$ as a cocircuit, and Lemma~\ref{bowwow} implies that $\{c,d\}$ avoids $\{1,2,3\}$ so $|\{1,2,\dots, 6,c,d\}|=8$, and $M$  has $\{a,4,c\}$ or $\{5,a,4,c\}$ as a circuit. In the first case, \ort\ with $\{2,3,4,5\}$ implies that $a\in\{2,3\}$, so, up to switching the labels of $2$ and $3$, the structure (A) in Figure~\ref{ABCDEfig} occurs, where all of the elements are distinct. If $\{5,a,4,c\}$ is a circuit of $M$, then $M$ also has $\{a,c,6\}$ as a circuit, so $M$ contains the structure in Figure~\ref{ABCDEfig}(B) where all of the elements are distinct except that $a$ may be a repeated element.  
Certainly $a \not\in \{5,6,b,c,d\}$. Hence, 
by \ort, either $a$ is distinct from the other elements in (B), or $a=1$.  
%Then \ort\ with cocircuit $\{5,6,7,8\}$ implies that $\{a,c\}$ meets $\{7,8\}$.  If $c\in\{7,8,9\}$, then \ort\ implies that $\{c,d\}\subseteq \{7,8,9\}$, and $M$ has the structure in (C) in Figure~\ref{ABCDEfig} where all of the elements are distinct except that $1$ may be $a$.  If $a\in\{7,8\}$, then the $M$ has the structure in (B) in Figure~\ref{ABCDEfig}.  

We may now assume that $4 = d$. Then $\{b,c,4,6\}$ is a cocircuit of $M$.  
Lemma~\ref{bowwow} implies that $\{b,c\}$ avoids $\{1,2,3,7,8,9\}$.  
Either $\{a,b,c\}$ or $\{a,b,c,5\}$ is a circuit of $M$.  
In the first case, (C) in Figure~\ref{ABCDEfig} occurs and \ort\ implies that all of the elements are distinct except that $a$ may be $1$. Now suppose that $M$ has  $\{a,b,c,5\}$ as a circuit. 
Orthogonality between this circuit and $\{2,3,4,5\}$ implies that $\{a,b,c\}$ meets $\{2,3\}$.  
By symmetry between $2$ and $3$, we may assume that $3 \in \{a,b,c\}$. As  $\{b,c\}$ avoids $\{1,2,3,7,8,9\}$, we deduce that $3=a$. In (iii), we know that $\{5,6,7,8\}$ is a cocircuit. 
Orthogonality between this cocircuit and the circuit $\{3,b,c,5\}$  implies that $\{3,b,c\}$ meets $\{6,7,8\}$; a \cn.  
\end{proof}

We reiterate here that, in each of configurations (A), (B), and (C) in Figure~\ref{ABCDEfig}, while the labels of $1,4,5$, and $6$ are fixed, we allow the labels on $2$ and $3$ to be interchanged without regarding the resulting structures as being different.

%Note ABCDE now contains the contents of old lemmas ABCDE, ABCDE2, EAC, and DAC

\section{Configuration (C)}
\label{Cconfig}

In this section, we treat  the case when $M$ contains the configuration in Figure~\ref{ABCDEfig}(C). We arrived at this configuration by assuming  that $N \not \preceq M\ba 1,4$. 
Thus, by Lemma~\ref{airplane}, $M\ba 4/5$ is \ffsc\ with an $N$-minor. Since $M\ba 4/5$ has $(a,b,c,6)$ as a $4$-fan, by Lemma~\ref{2.2}, $N\preceq M\ba 4/5/6$ or $N \preceq M\ba 4/5 \ba a$. The next lemma deals with the first of these cases.

\begin{lemma}
\label{ccrider}
Let $M$ and $N$ be binary \ifc\ matroids such that $|E(M)|\geq 13$ and $|E(N)|\geq 7$.  
Suppose that 
%Hypothesis {  VII} holds and that 
%% Why do we need the previous line???
$M$ contains structure (C) in Figure~\ref{ABCDEfig}, where $M\ba 4$ is \ffsc\ with an $N$-minor and $N\not \preceq M\ba 1,4$. If $M\ba 4/5,6$ has an $N$-minor, then $M$ has a quick win.  
\end{lemma}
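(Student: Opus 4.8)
\textbf{Proof plan for Lemma~\ref{ccrider}.}
The plan is to start from the hypothesis that $M\ba 4/5,6$ has an $N$-minor and to locate, inside configuration~(C), a triangle or fan that lets us delete or contract a single element while preserving both the $N$-minor and internal $4$-connectivity, up to the bounded moves allowed in outcome~(i). First I would record the circuits and cocircuits visible in Figure~\ref{ABCDEfig}(C): the triangles $\{1,2,3\}$, $\{4,5,6\}$, $\{a,b,c\}$, the cocircuits $\{2,3,4,5\}$ and $\{b,c,4,6\}$ (together with whatever additional cocircuit the figure specifies), and the fact, established in Lemma~\ref{ABCDE}, that $\{4,5,6\}$ is the only triangle of $M$ through $5$. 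From Lemma~\ref{airplane} we already know $M\ba 4/5$ is \ffsc\ with an $N$-minor, and that $(a,b,c,6)$ is a $4$-fan of $M\ba 4/5$; the present hypothesis says the ``contract the coguts element $6$'' branch of Lemma~\ref{2.2} succeeds, i.e. $N\preceq M\ba 4/5/6$.

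The key manipulation I expect to use is a change of the minor-description via the available circuit/cocircuit relations. Since $\{4,5,6\}$ is a triangle and $\{b,c,4,6\}$ is a cocircuit, $M\ba 4/5/6$ is a matroid on which the fan structure of $(a,b,c,6)$ collapses; I would rewrite $M\ba 4/5/6$ as $M\ba 4,6/5$ and then, using that $\{4,5,6\}$ is a triangle so that $\ba 4/5 = \ba 6/5$ after an appropriate relabelling on $\{4,5,6\}$ (as in the routine isomorphisms used in the proof of Lemma~\ref{ILK}), express the minor instead as a deletion/contraction that removes an element of $\{1,2,3\}$ or of $\{a,b,c\}$. Concretely I would aim to show $M\ba 4/5/6 \cong M\ba 6/5\ba 4 \cong M\ba 6\ba a /5$ or a similar identity that frees up a single extra element to delete or contract. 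Then I would check internal $4$-connectivity of the resulting matroid directly: $M\ba 6$ is \ffsc\ by Lemma~\ref{ABCDE}, and one further reduction from a \ffsc\ matroid typically leaves a matroid whose only $3$-separations come from a predictable $4$-fan, which can be pushed into the small side; here the hypothesis $N\not\preceq M\ba 1,4$ is the lever that rules out the bad branch of Lemma~\ref{2.2} when such a fan appears, forcing the other branch and hence an \ifc\ minor with an $N$-minor within three element deletions/contractions of $M$.

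The main obstacle I anticipate is controlling connectivity after the reductions: a single contraction or deletion can create a new $4$-fan or even a larger fan, and one has to verify that every resulting $3$-separation is either trivial or can be absorbed, rather than being a genuine $(4,3)$-violator. I would handle this with orthogonality arguments against the cocircuits $\{2,3,4,5\}$, $\{b,c,4,6\}$, and the extra cocircuit in~(C), exactly in the style of the proofs of Lemmas~\ref{realclaim1}, \ref{claim2}, and \ref{moreteeth}: any new triad must meet one of these cocircuits, and any new triangle must meet one of the triangles $\{1,2,3\}$, $\{a,b,c\}$, which pins down its elements and typically yields a contradiction with $|E(M)|\ge 13$ or with the uniqueness statement of Lemma~\ref{bowwow}. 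The secondary obstacle is the possibility $a=1$ permitted in configuration~(C); I would treat that coincidence as a separate short case, since then the triangles $\{1,2,3\}$ and $\{a,b,c\}$ share the element $1$ and the fan $(a,b,c,6)=(1,b,c,6)$ interacts with $\{2,3,4,5\}$, but the same orthogonality bookkeeping should still deliver a quick win (indeed with fewer than three elements removed).
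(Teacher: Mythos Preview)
Your proposal misses the central observation that makes the proof short. Since $\{4,5,6\}$ is a triangle, in $M/5,6$ the element $4$ is a loop, so
\[
M\ba 4/5,6 \;=\; M/5,6\ba 4 \;=\; M/5,6/4 \;=\; M/\{4,5,6\}.
\]
Thus the hypothesis is exactly that $N\preceq M/T_1$, where $T_1=\{4,5,6\}$ is the middle triangle of the bowtie string $\{1,2,3\},\{2,3,4,5\},\{4,5,6\},\{4,6,b,c\},\{a,b,c\}$ visible in configuration~(C). This is precisely the setup of Lemma~\ref{3peaks}, and the paper simply applies that lemma: outcomes (i) and (iii) give a quick win directly (noting that $N\preceq M/\{4,5,6\}$ implies $N\preceq M\ba c_1/b_1$ for any labelling of $T_1$); outcome (ii) is handled by Lemma~\ref{rotorwin} since $M/e$ has an $N$-minor for every $e\in\{4,5,6\}$; and outcome (iv) forces a triangle through $5$ and one of $2,3$, or through $6$ and one of $b,c$, either of which produces a $5$-fan in $M\ba 4$, contradicting that $M\ba 4$ is \ffsc.

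Your plan, by contrast, tries to manufacture an isomorphism such as $M\ba 6/5\ba 4 \cong M\ba 6\ba a/5$ and then argue connectivity by hand. That isomorphism is not valid in general: there is no circuit or cocircuit relation in configuration~(C) that makes deleting $a$ equivalent to deleting $4$ after $\ba 6/5$. More importantly, the ad hoc orthogonality bookkeeping you sketch would have to reproduce, case by case, exactly the content of Lemma~\ref{3peaks} (together with Lemma~\ref{rotorwin} for the quasi-rotor case), and your outline gives no indication of how to handle the quasi-rotor outcome or the extra-cocircuit outcome~(iv). The right move is to recognise the whole-triangle contraction and invoke Lemma~\ref{3peaks}.
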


\begin{proof} We observe that $M\ba 4/5,6 \cong M/4/5/6$. Now apply Lemma~\ref{3peaks}. If (i) or (iii) of that lemma holds, then the required result is immediate. If (ii) holds, then, as $M/e$ has an $N$-minor for all $e$ in $\{4,5,6\}$, it follows by 
\cite[Lemma~4.5]{cmoVI} that the lemma holds.  
Finally, suppose that (iv) holds.  
Then either $M$ has a triangle containing $5$ and a member of $\{2,3\}$; or $M$ has a triangle containing $6$ and a member of $\{b,c\}$. In each case, we obtain a \cn\ to the fact that $M\ba 4$ is \ffsc.
\end{proof}

\begin{figure}[htb]
\center
\includegraphics{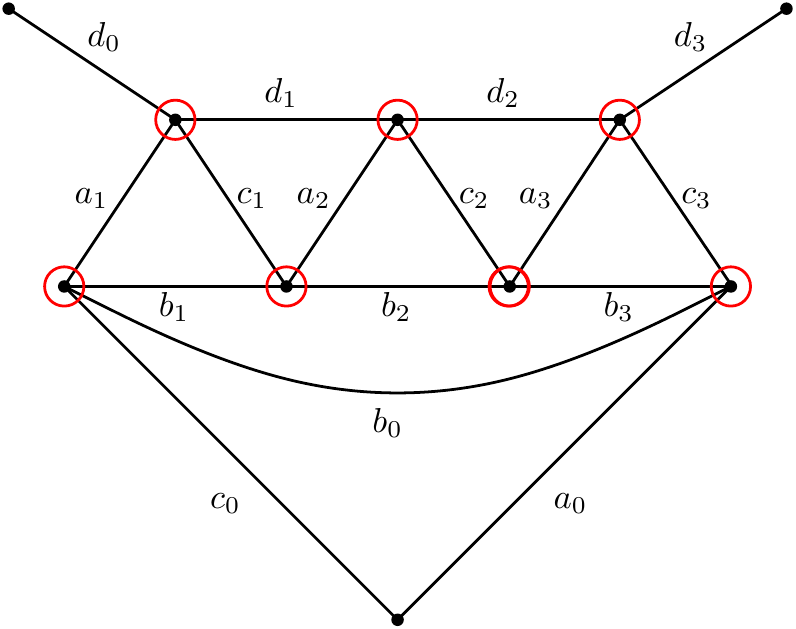}
\caption{All of the elements are distinct and $\{b_0,b_1,b_2,b_3\}$ is a circuit of $M$.}
\label{diamond0}
\end{figure}

The next lemma  concerns    the structure in Figure~\ref{diamond0}, which arises in Lemma~\ref{rainbow}.   

\begin{lemma}
\label{nodiamonds}
Suppose $M$ is an \ifc\ binary matroid. % where $|E(M)|\geq 15$.  
If $M$ contains the structure in Figure~\ref{diamond0}, where all of the elements are distinct, then $M$ is the cycle matroid of a $16$-element quartic planar ladder having $\{a_1,c_0,d_0\}$ and $\{a_0,c_3,d_3\}$ as triangles.  
\end{lemma}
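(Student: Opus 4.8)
The goal is to show that if $M$ contains the configuration in Figure~\ref{diamond0} --- two bowties $(\{a_0,b_0,c_0\},\{a_1,b_1,c_1\},D_0)$ and $(\{a_2,b_2,c_2\},\{a_3,b_3,c_3\},D_2)$, various cocircuits linking the four triangles, together with the circuit $\{b_0,b_1,b_2,b_3\}$ --- with all $16$ elements distinct, then $M$ is forced to be exactly the cycle matroid of a $16$-element quartic planar ladder. The strategy is a rank/count argument: first establish that the $16$ named elements are all of $E(M)$, and then identify the matroid by exhibiting enough circuits and cocircuits (a spanning set of each) to pin it down uniquely.

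\textbf{Step 1: the named elements span and cospan.} Let $Z$ be the set of the $16$ elements in the figure. I would first argue, using the listed triangles and cocircuits together with orthogonality and the internal $4$-connectivity of $M$, that $\lambda_M(Z)$ is small --- in fact I expect $\lambda_M(Z)\le 2$ or even that $(Z, E(M)-Z)$ forces $E(M)=Z$. The idea is that the triangles give many guts dependencies inside $Z$ and the cocircuits give many cogut dependencies, so both $r(Z)$ and $r^*(Z)$ are bounded; since $M$ is internally $4$-connected, any element outside $Z$ would have to fit into a very restricted $3$-separation, and the triangle/triad structure already present (each element of $Z$ lies in a triangle of $M$, or a triad forced by the cocircuits) leaves no room. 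Concretely, I would look for a $3$-separation $(Z', E(M)-Z')$ with $Z'\subseteq Z$ and use Lemma~\ref{bowwow}-type uniqueness arguments plus orthogonality of any putative external triangle/triad with the displayed cocircuits to derive a contradiction unless $E(M)=Z$.

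\textbf{Step 2: identify the matroid.} Once $E(M)$ has $16$ elements, I would compute $r(M)$. The string of two bowties plus the linking structure should give $r(M)=9$: the triangles contribute rank-$2$ flats and the cocircuits (being $4$- or $5$-element cocircuits of a rank-$9$ matroid) are consistent with the quartic ladder. Then I would exhibit the ladder structure explicitly: the quartic planar ladder on $16$ edges is $M(H)$ where $H$ is obtained by taking two paths joined rung-by-rung (a $\Box$-grid strip closed up planarly); its rim/rung circuits are exactly the quadrilaterals, and I would match $\{b_0,b_1,b_2,b_3\}$ with one such quadrilateral, $\{a_1,c_0,d_0\}$ and $\{a_0,c_3,d_3\}$ with the two ``triangular'' faces at the ends. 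Using that a binary matroid is determined by its cocircuit space, I would check that the cocircuits $D_0$, $D_2$, the others in the figure, and the ones forced by orthogonality (e.g.\ each $d_i$ lies in a triad, each $\{b_i\}$-column forces a $4$-cocircuit) together span the cocircuit space of the ladder, and that $M$ has no other cocircuits --- any extra cocircuit would, by orthogonality with the displayed triangles, violate internal $4$-connectivity or produce a $5$-fan. Hence $M\cong M(\text{quartic planar ladder})$.

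\textbf{Main obstacle.} The delicate part is Step~1: ruling out elements outside $Z$. The configuration as drawn only asserts certain triangles, certain cocircuits, and one extra circuit; a priori $M$ could be much larger, with the ladder sitting inside it. The work is to leverage internal $4$-connectivity against the density of the local structure --- essentially showing that the union of the four triangles and their linking cocircuits is already ``full'' in the sense of having trivial $3$-separations to the outside. I would expect to need a careful case analysis on where an external element's triangle or triad could attach, repeatedly invoking orthogonality with the $4$- and $5$-element cocircuits shown and the circuit $\{b_0,b_1,b_2,b_3\}$, much as in the proof of Lemma~\ref{moreteeth}. Step~2 is then comparatively mechanical: it is a finite check that the generated circuit/cocircuit data matches the unique $16$-element quartic planar ladder, using that we are in the binary case so the matroid is pinned down by its cocircuit space.
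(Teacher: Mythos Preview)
Your overall two-step outline matches the paper's, but both steps have concrete errors that would make the argument fail.

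\textbf{Step~2 is wrong on the rank.} You predict $r(M)=9$, but a $16$-element quartic planar ladder has $8$ vertices (since every vertex has degree $4$, so $4|V|=32$), hence $r(M)=7$. In fact the paper computes directly that $T_0\cup T_1\cup T_2\cup T_3\cup\{d_1,d_2\}$ has rank at most $7$: the two bowties each have rank $4$, the circuit $\{b_0,b_1,b_2,b_3\}$ merges them to rank $7$, and $d_1,d_2$ lie in the closure. Your picture of the ladder as ``two paths joined rung-by-rung'' is a cubic ladder, not a quartic one; the quartic planar ladder is the graph whose cycle matroid is dual to a biwheel, and getting this identification right is essential for the final matching step.

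\textbf{Step~1 is missing the key device.} Showing $\lambda_M(Z)\le 2$ for the full $16$-element set $Z$ only gets you $|E(M)-Z|\le 3$, and then you still have to rule out $1$, $2$, or $3$ extra elements, each requiring its own argument. The paper instead takes the \emph{$14$-element} subset $Z'=T_0\cup T_1\cup T_2\cup T_3\cup\{d_1,d_2\}$, observes it has rank $\le 7$ and contains five independent cocircuits (so corank $\le 9$), hence $\lambda(Z')\le 2$. Internal $4$-connectivity then forces $|E(M)|\le 17$ immediately. The single remaining case $|E(M)|=17$ is killed by one short symmetric-difference computation: the extra element $e$ must lie in a triad with $d_0,d_3$, and summing all the vertex cocircuits with this triad yields a triad $\{e,a_0,c_0\}$ meeting the triangle $T_0$, contradicting internal $4$-connectivity. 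Once $|E(M)|=16$ and $r(M)=7$, the paper picks the basis $B=\{a_0,c_0,a_1,b_1,a_2,b_2,c_3\}$ and uses orthogonality to pin down $C(d_0,B)=\{d_0,a_1,c_0\}$ and $C(d_3,B)=\{d_3,a_0,c_3\}$; since $M$ is binary, this determines $M$.
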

\begin{proof}
Clearly $|E(M)| \ge 16$. 
Moreover,  $T_0\cup T_1\cup T_2\cup T_3\cup \{d_1,d_2\}$ has rank at most seven and contains at least five cocircuits, none of which is the symmetric difference of any others.  
Thus this $14$-element set is $3$-separating, so $|E(M)|\leq 17$, and has rank equal to seven.  
%Furthermore, if $|E(M)|=17$, then $\{d_0,d_3\}$ is in a triad with a new element.  
Suppose $|E(M)|=17$. Then $M$ has $\{d_0,d_3,e\}$ as a triad  for some element $e$ that is not shown in Figure~\ref{diamond0}.  
Then the symmetric difference of all of the vertex cocircuits and $\{d_0,d_1,e\}$ is $\{e,a_0,c_0\}$, so $M$ is not \ifc, a \cn.  
We conclude that $|E(M)|=16$. As $r(M)=7$, we see that   
 $\{a_0,c_0,a_1,b_1,a_2,b_2,c_3\}$ is a basis $B$ of $M$.  
By \ort\ with the vertex cocircuits in Figure~\ref{diamond0}, we deduce that the fundamental circuit $C(d_0,B)$ for $d_0$ is $\{d_0,a_1,c_0\}$. Similarly, $C(d_3,B)=\{d_3,a_0,c_3\}$.  
%Also, the symmetric difference of all of the vertex cocircuits in Figure~\ref{diamond0} implies that $\{a_0,c_0,d_0,d_3\}$ is a cocircuit of $M$.  
The lemma now follows because $M$ is binary and so is determined by its fundamental circuits with respect to $B$. 
\end{proof}

\begin{figure}[htb]
\center
\includegraphics{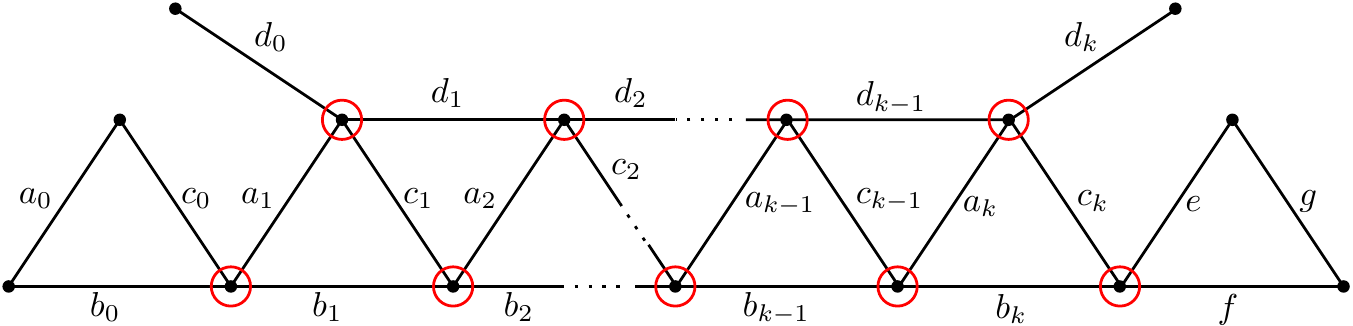}
\caption{All of the elements are distinct except $a_0$ may be $g$, or $T_0$ may be $\{e,f,g\}$. } %Note that the latter does not hold if $k\leq 2$ and $|E(M)|\geq 13$, since $T_0\cup T_1\cup T_2$ is not $3$-separating.}
\label{rainbowbrite}
\end{figure}

Next we deal with a structure that  leads to a mixed ladder win.  

\begin{lemma}
\label{rainbow}
Let $M$ be   an \ifc\ binary matroid having at least fifteen elements.  
Suppose that $M$ contains the structure in Figure~\ref{rainbowbrite}, where $k\geq 2$ and all of the elements are distinct except that $a_0$ may be $g$, or $T_0$ may be $\{e,f,g\}$, {  or $d_0$ may be $d_k$}.  
\begin{itemize}
\item[(i)] The set $\{b_0,c_0\} \neq \{e,f\}$ {  and $d_0\neq d_k$}. 
\item[(ii)] If $M\ba c_i$ is \ffsc\ for all $i$  in $\{1,2,\dots ,k\}$, then 
\begin{itemize}
\item[(a)] $\{d_0,a_1\}$ is contained in a triangle; or
%\item[(ii)] $\{b_{k-1},b_k\}$ is contained in a triangle (THIS DOES NOT OCCUR); or
\item[(b)] $\{c_k,d_k\}$ is contained in a triangle; or
%\item[(iii)] $M\ba c_1,c_2,\dots ,c_k$ is \ifc; or (THIS DOES NOT OCCUR)
\item[(c)] $M\ba c_1,c_2,\dots ,c_k/b_k$ is \ifc.  
%\item[(vi)] $M$ is a $16$-element quartic ladder.  
\end{itemize}
\end{itemize}
\end{lemma}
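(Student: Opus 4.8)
The plan is to handle the two parts of Lemma~\ref{rainbow} separately, in each case arguing by contradiction and extracting enough cocircuits and circuits from the figure to build either a low-order separation violating internal $4$-connectivity or a forbidden fan.

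\textbf{Part (i).} Suppose $\{b_0,c_0\}=\{e,f\}$. Then $T_0$ and $\{e,f,g\}$ are two triangles sharing the two elements $b_0,c_0$, so in a binary matroid their symmetric difference $\{a_0,g\}$ is a circuit or empty; since $M$ is \thc\ with more than one element this forces $a_0=g$ and hence $T_0=\{e,f,g\}$, but then the configuration collapses and the element count drops below what the other cocircuits in the figure force --- more precisely, the union of $T_0$ with the cocircuits attached along the ladder becomes a small $3$-separating set whose complement is too small, contradicting $|E(M)|\ge 15$. (I would phrase this via counting ranks and corank exactly as in Lemma~\ref{nodiamonds}: the displayed cocircuits are independent in the cocircuit space, so the relevant set is $3$-separating and of bounded size.) For the claim $d_0\neq d_k$: if $d_0=d_k$, then the cocircuit through $d_0,a_1$ at one end and the cocircuit through $c_k,d_k=d_0$ at the other end, together with the ladder cocircuits, would make the whole ladder structure into a small $3$-separating set (a ``ring''-type closure), again forcing $|E(M)|$ to be bounded below $15$, or else producing a triad/triangle overlap incompatible with internal $4$-connectivity. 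The key orthogonality inputs are that each $T_i$ is a triangle and each ladder step is a $4$-cocircuit meeting consecutive triangles in two elements.

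\textbf{Part (ii).} Assume $M\ba c_i$ is \ffsc\ for all $i\in\{1,\dots,k\}$ and that neither (a) nor (b) holds; I must show $M\ba c_1,\dots,c_k/b_k$ is \ifc. The structure $T_0,D_0,T_1,D_1,\dots$ restricted to the indices $1,\dots,k$ is a string of bowties, so Lemma~\ref{stringswitch} gives the usual minor identifications, and iterated application of Lemma~\ref{realclaim1}/Lemma~\ref{claim2} (or rather the ``trimming a string of bowties'' analysis from \cite[Lemma~6.5]{cmoVI}, cited already) shows that $M\ba c_1,\dots,c_k/b_k$ is $3$-connected and \ffspc, and that any \ftv\ of it must be a $4$-fan $(\alpha,\beta,\gamma,\delta)$ with $a_1\in\{\beta,\gamma,\delta\}$ and $\{\beta,\gamma,\delta,c_1\}$ (or the appropriate $c_i$) a cocircuit of $M$, or else a $4$-fan localized at the $d_k$ end with $c_k$ playing the analogous role. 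Then I would run the orthogonality case analysis exactly as in the proof of Lemma~\ref{moreteeth}: such a $4$-fan forces a triangle of $M$ containing $\{d_0,a_1\}$ or a triangle containing $\{c_k,d_k\}$, i.e.\ outcome (a) or (b), a contradiction. So no \ftv\ exists and the matroid is \ifc.

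\textbf{Main obstacle.} The delicate point is the bookkeeping in Part (ii): after contracting $b_k$ and deleting all the $c_i$, the ``new'' cocircuits of the minor can be supported on $\{w_2,w_3,w_4\}\cup\{c_0,c_1,\dots,c_k\}$ together with possibly the extra elements $e,f,g$ at the left end of Figure~\ref{rainbowbrite}, and one must rule out a $4$-fan anchored near $T_0=\{e,f,g\}$ using the special overlap possibilities ($a_0=g$, or $T_0=\{e,f,g\}$). This is exactly the flavour of argument in \ref{a1no} and the paragraphs following it in Lemma~\ref{moreteeth}, and I expect it to require a careful enumeration of which elements of $\{w_1,w_2,w_3,w_4\}$ can lie in $T_0\cup\{d_0,a_1\}$, each time invoking orthogonality with the circuits $T_0$, $\{d_0,a_1,\dots\}$ and the ladder cocircuits to force the contradiction. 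The rest is routine connectivity arithmetic of the type already used in Lemmas~\ref{nodiamonds} and \ref{moreteeth}.
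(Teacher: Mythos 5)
Your Part~(i) matches the paper's argument in substance: the contradiction comes from exhibiting a cocircuit inside $X=T_1\cup\dots\cup T_k\cup\{d_1,\dots,d_{k-1}\}$ as a symmetric difference of the two end cocircuits ($D_0\btu\{b_k,c_k,e,f\}$ when $\{b_0,c_0\}=\{e,f\}$, and $\{d_0,a_1,c_1,d_1\}\btu\{d_{k-1},a_k,c_k,d_k\}$ when $d_0=d_k$), which gives $\lambda(X)\le 2$ while $E(M)-X$ still contains $T_0\cup d_0$. Note the contradiction is to internal $4$-connectivity, not directly to $|E(M)|\ge 15$, and your detour through $a_0=g$ is unnecessary.

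Part~(ii) has a genuine gap. A \ftv\ of $M\ba c_1,\dots,c_k/b_k$ is a $4$-fan of one of two kinds: its triad can be new (coming from a cocircuit of $M$ meeting the deleted set $\{c_1,\dots,c_k\}$), or its triangle can be new (coming from a $4$-circuit of $M$ through the contracted element $b_k$). Your ``main obstacle'' paragraph has the duality backwards --- contracting $b_k$ creates new circuits, not new cocircuits --- and, more importantly, none of the results you cite covers the second kind. Lemma~\ref{realclaim1} and Lemma~\ref{claim2} describe the one-step minor $M\ba c_1/b_1$ of an \emph{internally $4$-connected} matroid and cannot simply be iterated once the first deletion destroys internal $4$-connectivity; \cite[Lemma~6.5]{cmoVI} and Lemma~\ref{moreteeth} concern pure deletion minors and say nothing about $4$-circuits through $b_k$. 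The bulk of the paper's proof (sublemma~\ref{bkcoguts}) is precisely this missing analysis: a $4$-fan $(\al,\be,\ga,\de)$ of $M\ba c_1,\dots,c_k/b_k$ that is not a fan of $M\ba c_1,\dots,c_k$ gives a $4$-circuit $C=\{\al,\be,\ga,b_k\}$ of $M$; orthogonality forces $C\supseteq\{b_{k-1},b_k\}$ and forces $C$ to meet both $\{e,f\}$ and $\{b_{k-2},c_{k-2}\}$; for $k\ge 3$ this pins $C$ down to $\{e,b_{k-2},b_{k-1},b_k\}$, forces $k=3$ and $T_0=\{e,f,g\}$, and lands in the configuration of Lemma~\ref{nodiamonds}, which is how outcome~(a) actually arises; for $k=2$ a separate chase through the cocircuit containing $\{\be,\ga,\de\}$ is required. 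This is not ``exactly as in Lemma~\ref{moreteeth}'', and your outline does not supply it. You also omit the preliminary verification (via \cite[Lemma~5.3]{cmoVI}) that $M\ba c_1,\dots,c_k$ is $3$-connected at all.
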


\begin{proof} Let $X = T_1 \cup T_2 \cup \dots \cup T_k \cup \{d_1,d_2,\dots,d_{k-1}\}$. Then $E(M) - X$ contains $T_0 \cup d_0$. To establish (i), it suffices  to show that $\lambda(X) \le 2$ if $\{b_0,c_0\} = \{e,f\}$ or if $d_0 = d_k$. In the first case, $D_0 \btu  \{b_k,c_k,e,f\} = \{a_1,b_1,a_k,c_k\}$, which is a cocircuit contained in $X$. In the second case, $\{d_0,a_1,c_1,d_1\} \btu \{d_{k-1},a_k,c_k,d_k\}$ is also a cocircuit contained in $X$. In each case, one easily checks that $\lambda(X) \le 2$, so (i) holds.

Next we note that, by \cite[Lemma~5.3]{cmoVI}, $M\ba c_1,c_2,\dots ,c_k$ is \thc\ unless this matroid has a $1$- or $2$-element cocircuit $D^*$ that contains $a_j$ or $b_j$ for some $j$ in $\{3,4,\dots,k\}$. Consider the exceptional case. Then $M$ has a cocircuit 
$C^*$ such that $D^* \subsetneqq C^* \subseteq D^* \cup \{c_1,c_2,\dots,c_k\}$. For $i \neq j$, \ort\ implies that $c_i \in C^*$ if and only if $|D^* \cap \{a_i,b_i\}| = 1$. As $C^*$ meets $T_j$, it must have at least four elements. Hence $C^*= \{x,c_i,y,c_j\}$ for some $x$ in $\{a_i,b_i\}$ and $y$ in $\{a_j,b_j\}$. Let $s$ be the smaller of $i$ and $j$.  Then $\{i,j\} = \{s,s+1\}$ and $a_{s+1}\in C^*$ otherwise 
we get  a \cn\ to \ort\ between $C^*$ and $\{c_s,d_s,a_{s+1}\}$. Thus $C^*$ is a $4$-cocircuit meeting $T_s$ and $T_{s+1}$ and containing $\{a_{s+1},c_{s+1}\}$. This contradiction to Lemma~\ref{bowwow} implies that $M\ba c_1,c_2,\dots ,c_k$ is indeed \thc.

Clearly $M\ba c_1,c_2,\dots ,c_k$ has $b_k$ as the coguts element of a $4$-fan.  
The  proof of the next assertion  occupies most of the rest of the proof of the lemma finishing  just before \ref{kgeq3}. 

\begin{sublemma}
\label{bkcoguts}
If $M\ba c_1,c_2,\dots ,c_k$ is \ffsc\ and has $b_k$ as the coguts element of every $4$-fan, then part (b) of the lemma holds.  
\end{sublemma}

Certainly $M\ba c_1,c_2,\dots ,c_k/b_k$ is \thc.  
Suppose $(U,V)$ is a \ns\ \ths\ of this matroid.  
Without loss of generality, $\{e,f,g\}\subseteq U$, so $(U\cup b_k,V)$ is a \ns\ \ths\ of $M\ba c_1,c_2,\dots ,c_k$; a \cn.  
Thus $M\ba c_1,c_2,\dots ,c_k/b_k$ is \sfc.  

Let $(\al,\be,\ga,\de)$ be a $4$-fan in $M\ba c_1,c_2,\dots ,c_k/b_k$.   
By the hypothesis,  $(\al,\be,\ga,\de)$ is not a $4$-fan in   $M\ba c_1,c_2,\dots ,c_k$, so $\{\al,\be,\ga,b_k\}$ is a circuit, $C$.  Next we observe that 

\begin{sublemma}
\label{seize}
$C$ contains $\{b_{k-1},b_k\}$, avoids $\{a_{k-1},a_k,c_{k-1},c_k\}$, and meets both $\{b_{k-2},c_{k-2}\}$ and $\{e,f\}$.
\end{sublemma}

The last part of \ref{seize} is an immediate consequence of \ort.  
 Suppose $a_k\in C$. Then \ort\ with the cocircuits $\{d_{k-1},a_k,c_k,d_k\}$ and $\{d_{k-2},a_{k-1},c_{k-1},d_{k-1}\}$ implies that the last element in $C$ is $d_k$, so $C$ contains $\{a_k,d_k,b_k\}$.  The symmetric difference of $C$ with $T_k$ is a triangle containing $\{c_k,d_k\}$.  
Thus {  (ii)}(b) holds; a \cn.   
We may now assume that $a_k\notin C$.  
Then $b_{k-1}\in C$ and   
 \ort\ implies that $\{b_{k-2},c_{k-2},a_{k-1}\}$ meets $C$. Moreover, \ort\ with $\{d_{k-2},a_{k-1},c_{k-1},d_{k-1}\}$ implies that $a_{k-1}$ is not in $C$. {  Thus~\ref{seize} holds.}  
 
Suppose that $k\geq 3$. Then,  by \ort, $c_{k-2} \not \in C$. Moreover, without loss of generality, we may assume that $e \in C$. Thus $C = \{e,b_{k-2},b_{k-1},b_k\}$. Then, by \ort\ between $C$ and    $\{b_{k-3},c_{k-3},a_{k-2},b_{k-2}\}$, we deduce that $e\in\{b_{k-3},c_{k-3}\}$. Thus $k=3$ and $T_0=\{e,f,g\}$.  
Without loss of generality, $e=b_0$.  
{  Since (i) holds,}
%If $c_0=f$, then the symmetric difference of $\{b_3,c_3,b_0,c_0\}$ and $\{b_0,c_0,a_1,b_1\}$ is   $\{a_1,b_1,b_3,c_3\}$, which must be a cocircuit. Then $\lambda (T_1\cup T_2\cup T_3\cup\{d_1,d_2\})\leq 2$; a \cn.  Thus 
$c_0=g$, and $M$ contains the structure in Figure~\ref{diamond0} with all the elements in that figure being distinct. Then Lemma~\ref{nodiamonds} implies that {  (ii)}(a) holds.

We may now assume that $k = 2$. Recall that $C = \{\alpha,\beta,\gamma,b_2\}$ where 
$(\alpha,\beta,\gamma,\delta)$ is a $4$-fan of $M\ba c_1,c_2/b_2$. Now $M$ has a cocircuit $C^*$ such that  $\{\beta,\gamma, \delta\} \subseteq C^* \subseteq \{\beta,\gamma, \delta, c_1,c_2\}$. Since $\{\beta,\gamma\}$ meets $\{b_1,b_0,c_0\}$, it follows that $|C^*| \neq 3$. 

Next we show the following.

\begin{sublemma}
\label{efg} 
If $\{e,f,g\} = T_0$, then $C = \{b_0,b_1,b_2,y\}$ for some element $y$ that is not in 
$T_0 \cup T_1 \cup T_2 \cup \{d_0,d_1,d_2\}$. If $\{e,f,g\} \neq T_0$, then, without loss of generality,   $e \in C$ and 
  $C$ is $\{b_0,b_1,b_2,e\}$ or $\{c_0,b_1,b_2,e\}$. 
\end{sublemma}

Suppose first that $\{e,f,g\} = T_0$. By (a), we may assume that $(a_0,b_0,c_0) = (e,f,g)$. If $M$ has a triangle that meets $T_0, T_1$, and $T_2$, then 
$\lambda(T_0 \cup T_1 \cup T_2) \le 2$; a \cn. Thus we may assume that $M$ has no such triangle. Now $C$ contains $\{b_1,b_2\}$ and meets both $\{b_0,c_0\}$ and $\{a_0,b_0\}$. Thus $C$ meets each of $T_0$, $T_1$, and $T_2$. If $C$ contains two elements of one of these triangles, say $T_i$, then $C \btu T_i$ is a triangle that meets each of $T_0, 
T_1$, and $T_2$; a \cn. Thus $C= \{b_0,b_1,b_2,y\}$ for some element $y$ that avoids $T_0 \cup T_1 \cup T_2$. By \ort, $y$ also avoids $\{d_0,d_1,d_2\}$.  Thus the first part of \ref{efg} holds. The second part is an immediate consequence of \ref{seize}. 

\begin{sublemma}
\label{beegee} 
{  If $\{e,f,g\} =T_0$, then} $\{\beta,\gamma\}$ is $\{b_1,y\}$; {  otherwise} $\{\beta,\gamma\}$ is $\{b_1,e\}$.
\end{sublemma}

To see this, note that, by \ref{efg}, $\{\alpha, \beta,\gamma\}$ is one of $\{b_0,b_1,y\}$, $\{b_0,b_1,e\}$, or $\{c_0,b_1,e\}$. To prove \ref{beegee}, it suffices to show that $T_0$ avoids $\{\beta,\gamma\}$. Assume the contrary. Then \ort\ between $C^*$ and $T_0$ implies that $\delta \in T_0$. Suppose $c_1 \in C^*$. Then \ort\ implies that $\{\beta,\gamma\}- T_0 = \{b_1\}$ and $c_2 \not \in C^*$. Thus $C^*$ is a $4$-cocircuit that meets both $T_0$ and $T_1$ but is not $\{b_0,c_0,a_1,b_1\}$; a \cn\ to Lemma~\ref{bowwow}. Thus $c_1 \not \in C^*$. Hence $c_2 \in C^*$ and we contradict \ort\ with $T_2$. We conclude that \ref{beegee} holds.

We now know that $b_1 \in C^*$. As $|C^* \cap T_1|$ is even, either $c_1 \in C^*$, or $c_1 \not \in C^*$ and $C^*$ is $\{b_1,y,a_1,c_2\}$ or $\{b_1,e,a_1,c_2\}$. Thus $c_1 \in C^*$ otherwise \ort\   between $C^*$ and the circuit $\{c_1,b_2,c_2,d_1\}$  gives a \cn. Suppose $c_2 \in C^*$. Then, by \ort, $\delta = a_2$, so $C^*$ is a $5$-cocircuit containing $\{b_1,c_1,a_2\}$. The symmetric difference of this cocircuit with the cocircuit $\{b_1,c_1,a_2,b_2\}$ is a triad that contains $c_2$; a \cn. Hence $c_2 \not\in C^*$. Thus 
$C^*$ is $\{b_1,y,\delta,c_1\}$ or  $\{b_1,e,\delta,c_1\}$. By \ort, with the circuit $\{c_1,c_2,d_1,b_2\}$, we deduce that $\delta = d_1$. If $\{e,f,g\} \neq \{a_0,b_0,c_0\}$, then 
we have a \cn\ to \ort\ between $C^*$ and $\{e,f,g\}$. If $\{e,f,g\} = \{a_0,b_0,c_0\}$, then 
$\lambda(T_0 \cup T_1 \cup T_2 \cup \{d_1,y\}) \le 2$; a \cn\ as $|E(M)| \ge 15$. 
This completes the proof of~\ref{bkcoguts}.  

Now assume that part (ii) of the lemma fails. Next we show the following. 
\begin{sublemma}
\label{kgeq3}
$k \ge 3$ 
\end{sublemma}

Assume that $k = 2$.  Then applying Lemma 6.1 of \cite{cmoVI}, we see that   part (i) of that   lemma does not hold. Moreover, part (v) of that lemma does not hold by \ref{bkcoguts}.  If (ii) of \cite[Lemma 6.1]{cmoVI} holds, that is, $\{d_1,d_2\}$ is in a triangle of $M$, then this triangle together with $c_1$ and $a_2$ forms a $5$-fan in $M\ba c_2$; a \cn.  If (iv) of \cite[Lemma 6.1]{cmoVI} holds, that is, $a_1$ is in a triangle that avoids $\{b_1,c_1,d_1\}$, then \ort\ with $\{d_0,a_1,c_1,d_1\}$ implies that this triangle contains $d_0$, so {  (ii)}(a) of the current lemma holds; a \cn. 
Thus (iii) of  \cite[Lemma 6.1]{cmoVI} holds, that is, $\{b_1,b_2\}$ is in a triangle $T$ of $M$. By \ort, $T$ meets both $\{b_0,c_0\}$ and $\{e,f\}$. Thus $T_0 = \{e,f,g\}$, so $\{b_1,b_2,b_0\}$ or $\{b_1,b_2,c_0\}$ is a triangle. Then $\lambda(T_0 \cup T_1 \cup T_2) \le 2$; a \cn. We conclude that \ref{kgeq3} holds.  

We now apply Lemma 6.5 of \cite{cmoVI} to the configuration induced by $T_1 \cup T_2 \cup \dots \cup T_k \cup \{d_1,d_2,\dots,d_k\}$. Neither (i) nor (ii) of that lemma holds, and if (iv) holds, then $\{d_k,c_k\}$ is in a triangle, that is, {  (ii)}(b) of the current lemma holds; a \cn. We deduce that (iii) of \cite[Lemma~6.5]{cmoVI} holds.  Thus $M\ba c_1,c_2,\dots ,c_k$ is \ffsc\ and every \ftv\ of it is a $4$-fan $(u_1,u_2,u_3,u_4)$ where either $u_4=d_1$ and $a_1$ is in $\{u_2,u_3\}$; or $u_4=b_k$.  
Suppose that $(u_1,u_2,a_1,d_1)$ is a $4$-fan in $M\ba c_1,c_2,\dots ,c_k$.  
By \ort, $\{u_1,u_2\}$ meets $\{d_0,d_1\}$.  
{  Hence $\{d_0,a_1\}$ is contained in a triangle and (ii)(a) holds; a \cn.} 
%If $d_0\in\{u_1,u_2\}$, then (i) holds, so   $d_1\in\{u_1,u_2\}$.  
%Then \ort\ implies that $\{u_1,u_2\}$ meets $\{a_2,d_2\}$. But $\{u_1,u_2\}\subseteq \{b_0,c_0,b_1,d_1\}$; a \cn.  
We conclude that every $4$-fan of $M\ba c_1,c_2,\dots ,c_k$ has $b_k$ as its coguts element.  This contradiction to \ref{bkcoguts} completes the proof of the lemma.  
\end{proof}

We now consider the  case when $M$ contains the configuration in Figure~\ref{ABCDEfig}(C), but $N \not \preceq M\ba 1,4$ and  $N \preceq M\ba 4/5 \ba a$.  

\begin{lemma}
\label{killtobler2}
Let $M$ and $N$ be binary \ifc\ matroids such that $|E(M)|\geq 15$ and $|E(N)|\geq 7$.  
Suppose that Hypothesis {  VII} holds and that $M$ contains structure (C) in Figure~\ref{ABCDEfig}, where $M\ba 6/5\ba a$ has an $N$-minor,  and $M\ba 4$ and $M\ba 6$ are \ffsc. % and $\{4,5,6\}$ is the only triangle in $M$ that contains $5$.  
%% JGO The condition above is a consequence of $M\ba 4$ being \ffsc. 
If $M\ba 4,1$ has no $N$-minor, then 
\begin{itemize}
\item[(i)] $M$ has a quick win; or 
\item[(ii)] $M$ has an open-rotor-chain win or a ladder win; or 
\item[(iii)] $M$ has a mixed ladder win; or 
\item[(iv)] $M$ has an enhanced-ladder win.  
\end{itemize}
\end{lemma}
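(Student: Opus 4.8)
## Proof proposal for Lemma~\ref{killtobler2}

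\textbf{Setup and first reductions.} The plan is to work inside structure (C) of Figure~\ref{ABCDEfig}, so we have triangles $\{1,2,3\}$, $\{4,5,6\}$, $\{a,b,c\}$, the cocircuit $\{2,3,4,5\}$, and the cocircuit $\{b,c,4,6\}$, with $\{a,b,c,5\}$ not a circuit (that was eliminated in Lemma~\ref{ABCDE}), so $\{a,b,c,5\}$ being a set we instead have $\{a,b,c\}$ a triangle with $5 \in \cl^*(\{a,b,c\})$ only through the $4$-cocircuit picture shown. By Lemma~\ref{airplane} applied to the triangle $\{4,5,6\}$ and the element $4$, since $N \not\preceq M\ba 1,4$ we get that $M\ba 4/5$ is \ffsc\ with an $N$-minor; it has $(a,b,c,6)$ as a $4$-fan, and we are in the branch $N\preceq M\ba 4/5\ba a = M\ba a,4/5$. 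Since $M\ba 4/5 \cong M\ba 6/5$ (using the triangle $\{4,5,6\}$), we also have $M\ba 6/5$ \ffsc\ with an $N$-minor, and $M\ba a,6/5$ has an $N$-minor. The first move is to recognize $\{a,b,c\}$, $\{4,5,6\}$, and the cocircuit $\{b,c,4,6\}$ (equivalently $\{c,b,6,4\}$) together with the triangle $\{1,2,3\}$ and cocircuit $\{2,3,4,5\}$ as (the start of) a bowtie string once we contract $5$: in $M/5$ the set $\{4,6\}$ is a parallel pair, so after simplification $\{a,b,c\}$ and a contracted image of $\{1,2,3\}$ sit in a bowtie-type configuration. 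I would set this up carefully to produce a string $T_0, D_0, T_1, D_1, T_2, \dots$ with $c_0 = a$ (so that deleting $c_0,c_1,\dots$ corresponds to the deletions $\ba a, \dots$), then invoke Lemma~\ref{stringybark} and Lemma~\ref{stringswitch} to propagate the $N$-minor along a right-maximal such string.

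\textbf{Main line of argument.} Having located a right-maximal bowtie string $T_0, D_0, \dots, T_k$ inside (a minor of) $M$ along which the $N$-minor is preserved by trimming, the strategy is to show that either trimming it already gives an \ifc\ matroid with an $N$-minor (an open-rotor-chain win, ladder win, or bowtie-ring win, using Lemma~\ref{realclaim1}, Lemma~\ref{claim2}, Lemma~\ref{3peaks} to control the $4$-fans that survive), or the obstruction forces one of the highly structured configurations. Concretely, I expect to apply Lemma~\ref{3peaks} to the first three triangles of the string: either $M\ba c_1/b_1$ is \ifc\ (quick win after checking the deletion/contraction count and the $N$-minor), or $T_1$ is the central triangle of a quasi rotor (then apply Lemma~\ref{rotorwin}, since $M/e$ has an $N$-minor for $e$ among the central-triangle elements, to get one of the listed \ifc\ minors), or we land in outcome (iv) of Lemma~\ref{3peaks} which produces the extra cocircuit $\{d_0,a_1,c_1,d_1\}$ with an attached triangle. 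That last case is exactly the hypothesis of Lemma~\ref{moreteeth} and of Lemma~\ref{rainbow}: feeding the resulting configuration (Figure~\ref{drossfig2} / Figure~\ref{rainbowbrite}) into those lemmas yields, after the \ffsc\ bookkeeping, either a triangle on $\{d_0,a_1\}$ or on $\{c_k,d_k\}$ (which re-enters the bowtie-string analysis or contradicts \ffsc ness of $M\ba 4$ or $M\ba 6$), or that $M\ba c_1,\dots,c_k/b_k$ is \ifc, or — when the degenerate identifications of Lemma~\ref{rainbow}(i) would occur — a mixed ladder win via the open quartic ladder of Figure~\ref{lobster}. The enhanced-ladder win arises in the sub-branch where the bowtie string is interlocked with an upward-pointing companion string, i.e. an enhanced quartic ladder of Figure~\ref{bonesaws0} or Figure~\ref{caterpillarwhole0}; there one performs the enhanced-ladder move and checks \ifc ness using \cite[Lemma~6.5]{cmoVI}. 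Throughout, Hypothesis~VII is used (via Lemma~\ref{ILK}) to keep the various one-element deletions \ffsc, which is what licenses each successive step.

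\textbf{Where the real work is.} The bulk of the proof is the case analysis of how the right-maximal bowtie string attaches to the rest of $M$ at its two ends, and ruling out — by orthogonality arguments against the cocircuits $\{2,3,4,5\}$, $\{b,c,4,6\}$, and the $D_i$'s — all attachments except the ones that are either trimmable to an \ifc\ matroid or recognizably an open quartic ladder / enhanced quartic ladder / ring of bowties. The single hardest point I anticipate is the \textbf{degenerate-identification bookkeeping}: the possibility that $a = 1$ in configuration (C), together with the possibilities in Lemma~\ref{rainbow}(i) that $\{b_0,c_0\} = \{e,f\}$ or $d_0 = d_k$, and the wrap-around case $a_0 = c_n$ in a bowtie string, all have to be tracked simultaneously, and it is precisely in these coincidence cases that the mixed ladder move (with its contraction of the "arrow edge" of Figure~\ref{lobster}) rather than a pure trimming is needed. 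Verifying that the mixed ladder move yields an \ifc\ matroid — and that the element count / $N$-minor survives — in each coincidence sub-case is the delicate heart of the argument; everything else is orthogonality plus invocations of the connectivity lemmas assembled above.
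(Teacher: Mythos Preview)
Your plan has the right toolkit (bowtie strings, Lemma~\ref{stringybark}, Lemma~\ref{claim2}, Lemma~\ref{rainbow}) but it misses the specific architecture that makes the argument go through, and in a couple of places it is aimed in the wrong direction.

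First, the relabelling. The string lives in $M$, not in $M/5$, and the correct choice is $(a_0,b_0,c_0)=(5,6,4)$, $(a_1,b_1,c_1)=(b,c,a)$; the cocircuit $\{4,6,b,c\}$ is then $D_0=\{b_0,c_0,a_1,b_1\}$ and you are deleting $c_0=4$ and $c_1=a$, exactly the two elements you already know how to remove. Working in $M/5$ and setting $c_0=a$ scrambles the bookkeeping.

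Second, before you can use Lemma~\ref{stringybark} you need to know that $M\ba c_0,c_1/b_1$ has no $N$-minor. That is not automatic: you must show that neither $M\ba 4,a/c$ nor $M\ba 4,a/b$ has an $N$-minor, and the way to do this is to observe that either would give $N\preceq M\ba 4/6$, hence $N\preceq M\ba 4/5,6$, and then Lemma~\ref{ccrider} finishes. You skip this, and without it the propagation along the string is unfounded.

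Third, your main case split is not the right one. The actual bifurcation is whether the right-maximal string wraps back, i.e.\ whether $\{a_0,b_0,x,c_n\}$ is a cocircuit for some $x\in\{a_n,b_n\}$. If it does \emph{not} wrap, the whole thing is handled in one line by \cite[Lemma~10.1]{cmoVI}, using that $a_0=5$ lies in a unique triangle (Lemma~\ref{ABCDE}); you never mention this lemma, and Lemmas~\ref{3peaks} and \ref{moreteeth} do not substitute for it (the latter is used for configuration (A), not (C)).

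Fourth, in the wrap-around case the crux is something your sketch does not touch: the $4$-fan $(1,2,3,a_0)$ in $M\ba c_0,\dots,c_n$ together with $N\not\preceq M\ba 1,4$ forces $N\preceq M\ba c_0,\dots,c_n/a_0$, and a chain of isomorphisms using the wrap-around cocircuit then shows $N\preceq M\ba c_i/b_i$ for every $i$. Only now does Hypothesis~VII give that each $M\ba c_i$ is \ffsc, which is the input Lemma~\ref{rainbow} needs. After that, the argument is an induction (apply Lemma~\ref{claim2} at $T_n$, rule out the ``new bowtie'' outcome, get the ladder rung, then iterate Lemma~\ref{rainbow} down the string until either a mixed-ladder win appears or $\{c_0,d_0,a_1\}$ is forced to be a triangle, which contradicts \ort\ with $\{2,3,a_0,c_0\}$). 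Your paragraph on Lemma~\ref{rainbow} gestures at this but never identifies the $(1,2,3,a_0)$ mechanism that starts the induction; without it you cannot verify the \ffsc\ hypothesis of Lemma~\ref{rainbow}(ii).
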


\begin{proof}
Suppose that $M$ has no quick win.  
As $M\ba 6/5\ba a\cong M\ba 4/5\ba a$,   each of these matroids has an $N$-minor.  

\begin{sublemma}
\label{josh}
Neither $M\ba 4,a/c$ nor $M\ba 4,a/b$ has an $N$-minor.
\end{sublemma}

Assume that \ref{josh} fails.  
Note that $M\ba 4,a/b\cong M\ba 4,c/b\cong M\ba 4,c/6$ and, by symmetry, $M\ba 4,a/c\cong M\ba 4,b/6$. Since $M\ba 4,a/c$  or $M\ba 4,a/b$ has an $N$-minor, we deduce that $M\ba 4/6$ has an $N$-minor.  
As $M\ba 4/6$ has $(1,2,3,5)$ as a $4$-fan, but $M\ba 4,1$ has no $N$-minor, it follows that $M\ba 4/6/5$ has an $N$-minor. Therefore, by Lemma~\ref{ccrider}, we conclude  that \ref{josh} holds.

Next we relabel letting $(5,6,4)=(a_0,b_0,c_0)$ and $(b,c,a)=(a_1,b_1,c_1)$. Take  $T_0,D_0,T_1,D_1,\dots ,T_n$ to be a right-maximal bowtie string in $M$. 
%, where we may have switched the labels on $b$ and $c$.  
%% JGO Omitted the last line as I didn't see the point.
Now $M\ba c_0,c_1$ has an $N$-minor but none of $M\ba c_0,c_1/b_1,M\ba c_0,c_1/a_1$, and $M\ba c_0,1$ has an $N$-minor.  

Suppose that $\{a_0,b_0,x,c_n\}$ is not a cocircuit for all $x$ in $\{a_n,b_n\}$.  
Now, by {  Lemma~\ref{ABCDE}}, $a_0$ is in a unique triangle of $M$. 
%Moreover, neither $M\ba c_0,c_1/b_1$ nor $M\ba c_0,c_1/a_1$ has an $N$-minor. 
Therefore, by~\cite[Lemma~10.1]{cmoVI},   the lemma holds. 

We may now assume   that $\{a_0,b_0,x,c_n\}$ is a cocircuit for some $x$  in $\{a_n,b_n\}$.  
Then $a_0\neq c_n$, so all of the elements in the bowtie string are distinct.  
Up to relabelling  $a_n$ and $b_n$, we may assume that $x=b_n$.  
{  Lemma~\ref{bowwow} implies that $n> 1$.}  
If $n=2$, then $\lambda (T_0\cup T_1\cup T_2)\leq 2$; a \cn.  
Therefore $n\geq 3$.  
By Lemma~\ref{stringybark} and the observations at the end of the second-last paragraph, we have that  
\begin{sublemma}
\label{dooby}
$M\ba c_0,c_1,\dots ,c_n$ has an $N$-minor, but  $M\ba c_0,c_1,\dots ,c_i/a_i$  has no $N$-minor for all $i$ in $\{1,2,\dots ,n\}$.  
\end{sublemma}

Next we show that 
\begin{sublemma}
\label{123cs}
$\{1,2,3\}$ avoids $\{c_0,c_1,\dots ,c_n\} \cup \{a_n,b_n\}$.
\end{sublemma}

Suppose first that $\{1,2,3\}$ meets $\{c_0,c_1,\dots ,c_n\}$. Then $\{2,3\}$ meets the last set, since $M\ba c_0,1$ has no $N$-minor.  
Up to switching the labels on $2$ and $3$, we may assume that $3=c_i$ for some $i$ in 
$\{0,1,\dots ,n\}$. Then $2$ is in a $1$- or $2$-cocircuit  of $M\ba c_0,c_i$, so $M\ba c_0,c_i/2$ has an $N$-minor. Hence so does $M\ba c_0,/2\ba 1$; a \cn.  We deduce that 
$\{1,2,3\}$ avoids $\{c_0,c_1,\dots ,c_n\}$.

Now suppose that $\{1,2,3\}$ meets $\{a_n,b_n\}$. As $\{1,2,3\} \neq T_n$, \ort\ between $\{1,2,3\}$ and $D_{n-1}$ implies that $b_{n-1} \in \{1,2,3\}$. 
{  If $n=2$, then $\lambda (T_0\cup T_1\cup T_2)\leq 2$; a \cn.  If $n\geq 3$, then} 
\ort\ with $D_{n-2}$ implies, since $\{1,2,3\} \neq T_{n-1}$, that $b_{n-2} \in \{1,2,3\}$. Then \ort\ between $\{1,2,3\}$ 
and $D_{n-3}$ gives a \cn. Thus 
\ref{123cs} holds.

Evidently $(1,2,3,a_0)$ is a $4$-fan in $M\ba c_0,c_1,\dots ,c_n$.  
Since deleting $1$ from the last matroid destroys all $N$-minors, Lemma~\ref{2.2} implies that  $M\ba c_0,c_1,\dots ,c_n/a_0$ has an $N$-minor.  
Now

\begin{align*}
M\ba c_0,c_1,\dots ,c_{n-1},c_n/a_0 &\cong M\ba b_0,c_1,\dots ,c_{n-1},c_n/a_0\\
&\cong M\ba b_0,c_1,\dots ,c_{n-1},c_n/b_n\\
&\cong M\ba b_0,c_1,\dots ,c_{n-1},a_n/b_n\\
&\cong M\ba b_0,c_1,\dots ,c_{n-1},a_n/b_{n-1}\\
& ~~~~~~~~ \vdots ~~~~~~~~\\
&\cong M\ba b_0,c_1,a_2\dots ,a_{n-1},a_n/b_1\\
&\cong M\ba b_0,a_1,\dots ,a_{n-1},a_n/c_0.
\end{align*}
Therefore $M/b_i\ba c_i$ has an $N$-minor for all $i$ in $\{1,2,\dots ,n\}$.  
Hypothesis~VII implies that $M\ba c_1$ is \ffsc\ and, indeed, that $M\ba c_i$ is \ffsc\ for all 
$i$  in $\{1,2,\dots ,n\}$.  

Consider $M/b_n\ba c_n$, which has an $N$-minor.  
Lemma~\ref{claim2} implies that $M/b_n\ba c_n$ is \ffspc\ and either 
\begin{itemize}
\item[(I)] $M$ has a triangle $\{x,y,z\}$   such that $\{y,z,a_n,c_n\}$ is a cocircuit; or 
\item[(II)] $M$ has elements $d_{n-1}$ and $d_n$ such that $\{d_{n-1},d_n\}$ avoids $T_{n-1}\cup T_n\cup T_0$ where $\{d_{n-1},a_n,c_n,d_n\}$ is a cocircuit, and $\{d_{n-1},a_n,s\}$ or $\{d_n,c_n,t\}$ is a triangle for some $s$ in $\{b_{n-1},c_{n-1}\}$ or $t$ in $\{a_0,b_0\}$.  
\end{itemize}

\begin{sublemma}
\label{noI}
Part (I) does not hold.
\end{sublemma}

Suppose  that (I) does hold.    
Since we have a right-maximal bowtie string, we know that $\{x,y,z\}$ meets $T_0\cup T_1\cup \dots \cup T_n$.  
%Using \ort\ with Lemma~\ref{bowwow}, we see  that $\{x,y,z\}$ avoids $D_{n-1}\cup \{a_0,b_0\}$.  
%If $c_n=a_0$, then $M$ has a triangle other than $T_0$ containing $a_0$; a \cn\ to our assumptions.  
%Thus $c_n\neq a_0$, and 
%%JGO It was established earlier that the elements in T_0 \cup ... \cup T_n are distinct. 
By Lemma~\ref{ABCDE}, $T_0$ is the only triangle containing $a_0$. Thus,  by~\cite[Lemma~5.4]{cmoVI},
%By \cite[Lemma~5.4]{cmoVI},  
$\{x,y,z\}=T_i$ for some $i$ in $\{0,1,\dots ,n-2\}$.  Moreover, by  Lemma~\ref{bowwow}, $i \neq 0$. 
If $c_i\in\{y,z\}$, then $M\ba c_0,c_1,\dots ,c_n$ has $a_n$ in a cocircuit of size at most two, so we can contract $a_n$ from the last matroid keeping an $N$-minor; a \cn\ to~\ref{dooby}.
Therefore $c_i=x$, so $\{a_i,b_i\}=\{y,z\}$.  
Now $D_{i-1}\btu \{y,z,a_n,c_n\}$ is $\{b_{i-1},c_{i-1},a_n,c_n\}$, which must be a cocircuit. 
{  Again $a_n$ is}
%Thus $b_{i-1}$ is 
in a cocircuit in $M\ba c_0,c_1,\dots ,c_n$ of size at most two, so contracting {  $a_n$} 
%$b_{i-1}$ 
from the last matroid retains an $N$-minor; a \cn\ to~\ref{dooby}.  
We conclude that \ref{noI} holds.  

We may now assume that (II) holds.  Next we show the following.
\begin{sublemma}
\label{nowayjose}
{   $M$ has no triangle containing $\{d_n,c_n,t\}$.}  
%$M$ has no triangle containing $\{d_n,c_n\}$.  
\end{sublemma}

{   Suppose $M$ has a triangle $T$ containing $\{d_n,c_n\}$. By \ort\ with the cocircuit $\{b_n,c_n,a_0,b_0\}$, we deduce that $a_0$ or $b_0$ is in $T$.} As $T_0$ is the only triangle containing $a_0$, it follows that
 $T = \{d_n,c_n,b_0\}$.  
Orthogonality implies that $d_n\in\{c_0,a_1,b_1\}$  and hence that $\{d_{n-1},d_n\}\subseteq T_1$.  
%Again 
{  Then} (I) holds so we have a \cn\ to \ref{noI} that completes the proof of  
  \ref{nowayjose}.

We now know that $\{d_{n-1},a_n,s\}$ is a triangle for some $s$ in $\{b_{n-1},c_{n-1}\}$.  
If $s=b_{n-1}$, then \ort\ implies that $d_{n-1}\in\{b_{n-2},c_{n-2}\}$. Hence \ort\ implies that $\{d_{n-1},d_n\}\subseteq T_{n-2}$, and $\lambda (T_{n-2}\cup T_{n-1}\cup T_n)\leq 2$; a \cn.  Thus $s=c_{n-1}$.  By assumption, $\{d_{n-1},d_n\}$ avoids $T_{n-1} \cup T_n \cup T_0$. If $\{d_{n-1},d_n\}$ meets $T_{0} \cup T_1 \cup \dots \cup T_n$, then, by \ort\ between $\{d_{n-1},a_n,b_n,d_n\}$ and each $T_i$, we see that  $\{d_{n-1},d_n\} \subseteq T_i$ for some $i \not\in \{n-1,n,0\}$, {  and (I) holds; a \cn.} 
%Thus $M\ba c_n$ has a $5$-fan; a \cn. 
We deduce that the elements of $T_{0} \cup T_1 \cup \dots \cup T_n \cup \{d_{n-1},d_n\}$ are distinct. 

\begin{figure}[htb]
\center
\includegraphics[scale = 0.8]{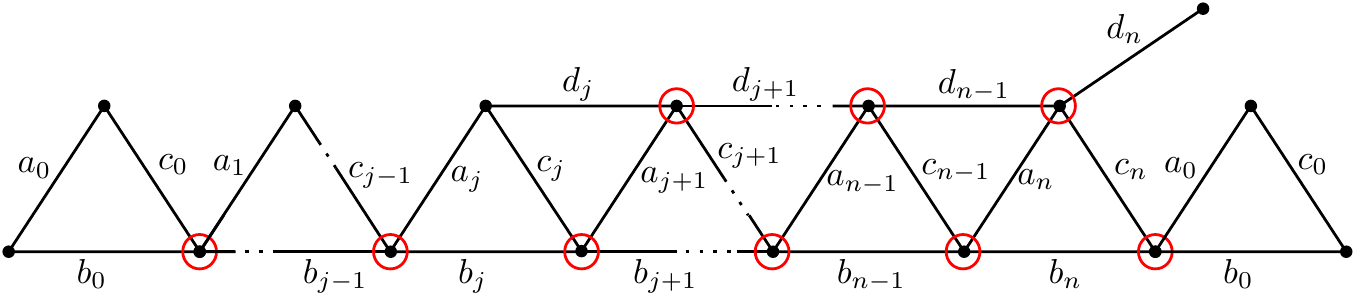}
\caption{All of the elements are distinct except those with the same label.} %Note that the latter does not hold if $k\leq 2$ and $|E(M)|\geq 13$, since $T_0\cup T_1\cup T_2$ is not $3$-separating.}
\label{newton}
\end{figure}

By taking $j = n-1$, we see that $M$ contains the structure in Figure~\ref{newton} where all of the elements shown are distinct except those with the same labels. 
Next we show the following.

\begin{sublemma}
\label{calculusbaby}
Suppose $M$ contains the structure in Figure~\ref{newton} for some $j$ with $1 \le j \le n-1$ 
where all of the elements are distinct except those with the same label. Then either $M$ has a mixed ladder win, or there is an element $d_{j-1}$ that is not in  $T_{0} \cup T_1 \cup \dots \cup T_n \cup \{d_j, d_{j+1},\dots,d_n\}$ such that $\{c_{j-1},d_{j-1},a_j\}$ is a triangle and $\{d_{j-1},a_j,c_j,d_j\}$ is a cocircuit.
\end{sublemma}

We apply Lemma~\ref{claim2} to the bowtie string $T_{j-1},D_{j-1},T_{j},D_{j},T_{j+1}$, 
noting that $M/b_{j}\ba c_{j}$ has an $N$-minor. As $M$ has no quick win,  outcome (i) of that lemma does not hold. 
{  Thus (ii) or (iii) of Lemma~\ref{claim2} holds, so $\{a_j,c_j\}$ is contained in a $4$-cocircuit $D^*$.  
Lemma~\ref{bowwow} implies that $D^*$ avoids $T_{j+1}$.  
By \ort\ with the circuit $\{c_j,d_j,a_{j+1}\}$, we see that $D^*$ contains $d_j$.  
Let $d_{j-1}$ be the fourth element of $D^*$.}  
%Next we eliminate the possibility that outcome  (ii) of 
%Lemma~\ref{claim2} holds. Assume the contrary, letting $\{u,v,w\}$ be a triangle of $M$ that avoids $T_{j}$ such that $\{v,w,a_{j},c_{j}\}$ is a cocircuit. By Lemma~\ref{bowwow}, $\{v,w\}$ avoids $T_{j+1}$. By \ort\ between $\{v,w,a_{j},c_{j}\}$ and $\{c_{j},a_{j+1},d_{j}\}$, we deduce that $d_{j} \in \{v,w\}$. 
%Then  \ort\ between $\{u,v,w\}$ and $\{d_{j},a_{j+1},c_{j+1},d_{j+1}\}$ implies that $\{u,v,w\}$ meets $\{a_{j+1},c_{j+1},d_{j+1}\}$. But $\{d_{j},a_{j+1}\} \not \subseteq \{u,v,w\}$ so $a_{j+1} \not \in \{u,v,w\}$. If $d_{j+1} \in \{u,v,w\}$, then $\{d_{j},d_{j+1}\} \subseteq \{u,v,w\}$ so $M\ba c_{j+1}$ has a $5$-fan; a \cn. Thus $c_{j+1} \in \{u,v,w\}$ so $c_{j+1} = u$. By \ort\ between $\{u,v,w\}$ and $\{a_{j+2},b_{j+2},b_{j+1},c_{j+1}\}$ where subscripts are interpreted modulo $n+1$, we see that $\{a_{j+2},b_{j+2}\}$ meets $\{u,v,w\}$ and so meets $\{v,w\}$. By \ort\ between $T_{j+2}$ and $\{v,w,a_j,c_j\}$, we deduce that $\{v,w\} \subseteq T_{j+2}$. Then $d_{j} \in T_{j+2}$. This \cn\ implies that outcome (ii) of Lemma~\ref{claim2} does not hold, so outcome (iii) of that lemma holds.
%
%We now know that  $\{d_{j-1},a_j,c_j\}$ is contained in a $4$-cocircuit for some element $d_{j-1}$ that is not in $T_{j-1}\cup T_{j}\cup T_{j+1}$. 
%By \ort, the fourth element of this cocircuit is $d_{j}$.  
The structure induced on 
 $T_{j-1}\cup T_{j}\cup \dots \cup T_n \cup T_0 \cup \{d_{j-1},d_{j},\dots,d_n\}$ has the form of the one shown in 
 Figure~\ref{rainbowbrite}. 
 
% Next we show that $d_{j-1} \not\in  T_{0} \cup T_1 \cup \dots \cup T_n \cup \{d_j, d_{j+1},\dots,d_n\}$. 
By \ort\ using the cocircuit $\{d_{j-1},a_j,c_j,d_j\}$ and the triangles in Figure~\ref{newton}, we see that $d_{j-1}$ avoids $T_{0} \cup T_1 \cup \dots \cup T_n$, and  $d_{j-1}$ avoids $\{d_{j},d_{j+1},\dots,d_{n-1}\}$. 
%Suppose $d_{j-1} = d_n$ and let 
%$X = T_{j}\cup T_{j+1} \cup \dots \cup T_n  \cup \{d_{j},d_{j+1},\dots,d_n\}$. Then 
% $|X| = 4(n-j+1)$ while $r(X) \le 2(n-j + 1) +1$ and $r^*(X) \le 2(n-j + 1) +1$. Thus 
% $\lambda(X) \le 2$. Hence $j = 1$. Then taking the symmetric difference of all of the cocircuits 
% $D_0,\dots,D_{n-1},\{b_n,c_n,a_0,b_0\}, \{d_0,a_1,b_1,d_1\}, \{d_1,a_2,b_2,d_2\},\dots, 
% \{d_{n-1},a_n,b_n,d_n\}$ recalling that $d_0 = d_n$, we get $\{a_0,c_0\}$. This set must contain a cocircuit; a \cn. We conclude that $d_{j-1} \not\in  T_{0} \cup T_1 \cup \dots \cup T_n \cup \{d_j, d_{j+1},\dots,d_n\}$. 
We now apply Lemma~\ref{rainbow} to the structure 
on $T_{j-1} \cup T_j \cup \dots \cup T_n \cup T_0 \cup \{d_{j-1}, d_{j},\dots,d_n\}$. 
If {  (ii)}(c) of that lemma holds, then $M$ has a mixed ladder win. Part {  (ii)}(b) does not hold by \ref{nowayjose}, so part {  (ii)}(a) holds; that is, $\{d_{j-1},a_j\}$ is contained in a triangle $T$. By \ort\ between $T$ and the cocircuits $\{b_{j-1},c_{j-1},a_j,b_j\}$ and $\{b_{j-2},c_{j-2},a_{j-1},b_{j-1}\}$, we deduce that  $T =  \{d_{j-1},a_j, c_{j-1}\}$. We conclude that \ref{calculusbaby} holds.

By repeatedly applying \ref{calculusbaby}, we find that either $M$ has a mixed ladder win or 
$M$ has $\{c_0,d_0,a_1\}$ as a triangle. Hence we may assume the latter. 
But recall that we began with a triangle $\{1,2,3\}$ and a cocircuit that, after relabelling, became $\{2,3,{  a_0,c_0}\}$. Now the elements in $\{1,2,3,a_0,b_0,c_0,a_1,b_1,c_1\}$ are distinct except that $1$ and $c_1$ may be equal. 
By \ort\ between $\{c_0,d_0,a_1\}$ and $\{2,3,{  a_0,c_0}\}$, we see that $\{d_0,a_1\}$ meets 
$\{2,3,b_0\}$ so $d_0\in\{2,3\}$. 
Suppose $1 = c_1$. Then the circuit $\{1,2,3\}$ is $\{d_0,c_1,a_2\}$ or $\{d_0,c_1,b_2\}$. The first possibility contradicts the fact that $\{d_1,c_1,a_2\}$ is a circuit; the second violates \ort. We deduce that $1 \neq c_1$, so $\{1,2,3\}$ avoids $\{a_0,b_0,c_0,a_1,b_1,c_1\}$. 
Now \ort\ between $\{1,2,3\}$ and $\{d_0,a_1,c_1,d_1\}$ implies that  $\{d_0,d_1\}\subseteq \{1,2,3\}$.  
 {  Then $\lambda (\{1,2,3\}\cup T_0\cup T_1)\leq 2$; a \cn.}  
% Thus, by \ort, $\{1,2,3\}$ also meets $\{a_2,c_2,d_2\}$. But each element of the last set is in a cocircuit that avoids $\{1,2,3\}$; a \cn\ to \ort.  
\end{proof}

The results in this section enable us to conclude that $M$ does not contain the structure in Figure~\ref{ABCDEfig}(C).

%We will use the following lemma from~\ref{cmoIV} to deal with the string of bowties that is emerging from this configuration.  

%\noindent{\bf NOTE. When case (K) has been inserted, we get a win in (iii) above. When (ii) occurs, we begin to grow a chain of bowties with the first triangle being $\{4,5,6\}$. In this chain, we have $N$ as a minor of $M/5\ba y_1,y_2,\ldots,y_n$  where the $y_i$ elements are the right sides of the mountains. If we ever do a contraction, we can pull that contraction back until we find we can contract  $4$ as well as $5$ to keep an $N$-minor. Then we get the win by the penultimate lemma. Eventually the chain of bowties is going to stop with an (L)-configuration; we use the technique that worked for case (K) here.}

\section{Configuration (A).}
\label{Aconfigsect}

In this section, we deal with the case when $M$ contains configuration (A) from Figure~\ref{ABCDEfig}, where $M\ba 4/5$ has an $N$-minor and $M\ba 4,1$ has no $N$-minor.  We begin with a straightforward lemma that will aid our efforts in this case.

\begin{lemma}
\label{con45} Suppose that $(\{1,2,3\},\{4,5,6\}), \{2,3,4,5\})$ is a bowtie in an \ifc\ binary matroid $M$ and that $M$ has $\{2,4,7\}$ as a triangle. Let $N$ be an \ifc\ matroid with at least seven elements such that $N$  is a minor of $M/\{x,y\}$ for some pair $\{x,y\}$ of elements of $\{4,5,6\}$. Then $N \preceq M\ba 1,4$.
\end{lemma}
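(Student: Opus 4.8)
The plan is to trade the statement for a claim about deleting the single element $1$. Since $\{4,5,6\}$ is a triangle, whichever pair $\{x,y\}$ of $\{4,5,6\}$ is meant, the third element $z$ of $\{4,5,6\}$ is a loop of $M/\{x,y\}$, so, as the internally $4$-connected $N$ is loopless, $N \preceq M/\{x,y\}\ba z = M/\{4,5,6\}$. Next I would record the bookkeeping that makes this useful: $\{4,5,6\}$ is a triangle of $M\ba 1$, so $4 \in \cl_{M\ba 1}(\{5,6\})$ and hence $4$ is a loop of $M\ba 1/5/6$; therefore $M/\{4,5,6\}\ba 1 = M\ba 1/4/5/6 = M\ba 1\ba 4/5/6$, and this is a minor of $M\ba 1\ba 4$. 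Thus it suffices to show $N \preceq M/\{4,5,6\}\ba 1$; that is, that $1$ can be deleted from $M/\{4,5,6\}$ without destroying its $N$-minor.

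To understand $M/\{4,5,6\}$ near $1$, first note that $M$, being $3$-connected with at least seven elements, is simple, so a rank-$2$ flat of $M$ has exactly three elements; in particular none of $1,2,3,7$ lies in $\cl_M(\{4,5,6\})$, so none is a loop of $M/\{4,5,6\}$. Next, $\{2,4,7\}$ and $\{4,5,6\}$ are triangles of the binary matroid $M$ meeting in $\{4\}$, so $\{2,5,6,7\} = \{2,4,7\}\symdif\{4,5,6\}$ is a nonempty disjoint union of circuits of $M$; as it has four elements and $M$ has no circuit of size at most two, it is a single $4$-circuit of $M$. Contracting $\{5,6\}$ turns this circuit into the circuit $\{2,7\}$, so $\{2,7\}$ is a parallel pair of $M/\{4,5,6\}$.

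Now I would split on whether $\{1,2,3\}$ remains a triangle of $M/\{4,5,6\}$. Since contraction only enlarges closures, $1 \in \cl_{M/\{4,5,6\}}(\{2,3\})$, and $\{1,2,3\}$ is certainly dependent there; if it is not a triangle of $M/\{4,5,6\}$ it properly contains a circuit, and since $1,2,3$ are not loops this circuit is a parallel pair, which (using $1 \in \cl(\{2,3\})$ when $\{2,3\}$ is the pair) forces $1$ to be parallel to $2$ or to $3$ in $M/\{4,5,6\}$. As $N$ is simple, $N \preceq M/\{4,5,6\}\ba 1$ in that case and we are done. The remaining case is that $\{1,2,3\}$ is a triangle of $M/\{4,5,6\}$, so that $r_M(\{1,2,3,4,5,6\}) = 4$. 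Here I would use the cocircuit $\{2,3,4,5\}$ of $M$: internal $4$-connectivity forces $\lambda_M(\{1,2,3,4,5\}) = 3$, which pins down how the hyperplane $E-\{2,3,4,5\}$ meets the lines $\cl_M(\{1,2,3\})$, $\cl_M(\{4,5,6\})$ and $\cl_M(\{2,4,7\})$; combining this with orthogonality against the $4$-circuit $\{2,5,6,7\}$ and the triangle $\{2,4,7\}$, I would aim to reach either a contradiction, so that this case does not occur, or a cocircuit of $M/\{4,5,6\}$ meeting $\{1,2,3\}$ in $\{1,2\}$ or $\{1,3\}$, whence $1$ lies in a triad of $M/\{4,5,6\}$ and can be deleted since $N$ is cosimple.

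The main obstacle is exactly this last case, where $\{1,2,3\}$ survives as a triangle of $M/\{4,5,6\}$: there $1$ is not visibly removable, and making the rank-and-orthogonality count rigorous — ideally showing the case is impossible, or else producing the needed triad through $1$ (plausibly a triad $\{1,2,7\}$ or $\{1,3,7\}$ forced by the cocircuit through $1$ and the pair $\{2,7\}$) — is where the real work lies. Everything else is routine minor bookkeeping and orthogonality.
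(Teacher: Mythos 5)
There is a genuine gap, and it sits exactly where you say it does: the case in which $\{1,2,3\}$ survives as a triangle of $M/\{4,5,6\}$. Worse, that case is not an awkward leftover — it is essentially the only case. Since $\{1,2,3,4,5,6\}$ contains the cocircuit $\{2,3,4,5\}$, its complement lies in a hyperplane, so $\lambda(\{1,2,\dots,6\}) \le r(\{1,2,\dots,6\}) - 1$; if $r(\{1,2,\dots,6\})$ were $3$ we would get a $3$-separating $6$-element set, which internal $4$-connectivity forbids once $|E(M)|\ge 10$. So your first case is (essentially) vacuous, and your "remaining case" is the whole lemma. Your two suggested repairs for it also cannot succeed: a triad of $M/\{4,5,6\}$ is a triad of $M$ avoiding $\{4,5,6\}$, and none of $1,2,7$ lies in a triad of $M$ because each lies in a triangle and an internally $4$-connected matroid of this size has no $4$-fans; and a cocircuit meeting $\{1,2,3\}$ in $\{1,2\}$ would not produce a triad through $1$ in any event. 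So the element $1$ is genuinely not removable "on the spot" from $M/\{4,5,6\}$, and no contradiction is available.

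The missing idea is an element exchange rather than a direct deletion of $1$. From $N\preceq M/5,6\ba 4$ and the parallel pair $\{2,7\}$ (which you correctly identified), delete $2$, not $1$: this gives $N\preceq M\ba 4,2$. Now the cocircuit $\{2,3,4,5\}$ of $M$ makes $\{3,5\}$ a series pair of $M\ba 2,4$ (neither $\{2,3,4\}$ nor $\{2,4,5\}$ is a triad, as $2$ and $4$ are in triangles), so, as $N$ is cosimple, $N\preceq M\ba 4,2/3$. Finally the triangle $\{1,2,3\}$ gives $\{1,2\}$ as a parallel pair of $M/3$, whence $M\ba 4,2/3\cong M\ba 4,1/3$ and $N\preceq M\ba 1,4$. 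Everything you proved before the case split is correct and is the same set-up the paper uses; the part you flagged as "where the real work lies" is resolved not by more work on $M/\{4,5,6\}\ba 1$ but by this three-step swap of $2$ for $1$ through a contraction of $3$.
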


\begin{proof}
As $\{4,5,6\}$ is a triangle of $M$, clearly $N \preceq M/\{4,5,6\}$, so $N\preceq M/5,6\ba 4$. Since $M/5,6\ba 4$ has $\{2,7\}$ as a circuit, it follows that $N\preceq M\ba 4,2$, so $N\preceq M\ba 4,2/3$. Hence $N \preceq M\ba 1,4$.
\end{proof}

{  

\begin{figure}[htb]
\center
\includegraphics{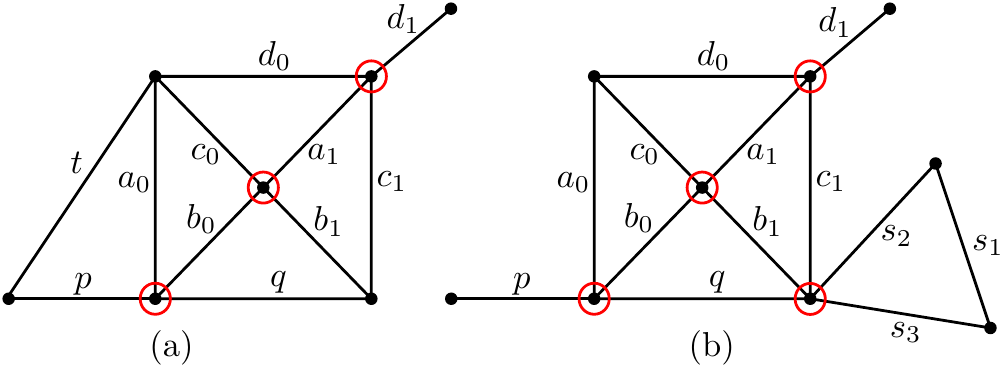}
\caption{The elements in both structures are all distinct, and we view the labels on $a_1$ and $b_1$ as being interchangeable.}
\label{fign=2}
\end{figure}

\begin{figure}[htb]
\center
\includegraphics{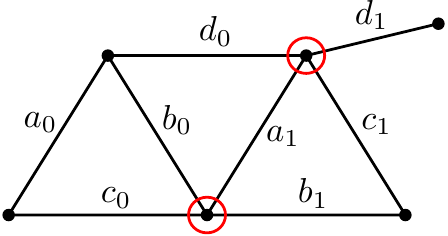}
\caption{The elements in this structure are all distinct, and we view the labels on $a_1$ and $b_1$ as being interchangeable.  Furthermore, $d_1\in Y$, and either $d_0\in X$ or $M'\ba c_0,c_1,d_0$ has an $N$-minor.}
\label{goofy}
\end{figure}

%A {\it wheeler} is a wheel with an odd number of spokes plus a hyperedge incident with every vertex, described in Theorem~6.4 in~\cite{cmoVI}.  

In the following lemma, we consider a matroid that is not necessarily \ifc, or even \thc.  
In the case that a binary matroid has $\{a,b,c\}$ as a disjoint union of circuits and $\{b,c,d\}$ as a disjoint union of cocircuits, we say that $(a,b,c,d)$ is a {\it loose $4$-fan}.  
If $(a,b,c,d)$ and $(e,d,c,b)$ are loose $4$-fans, then we say that $(a,b,c,d,e)$ is a {\it loose $5$-fan} in $M$ and a {\it loose $5$-cofan} in $M^*$.  
It is easy to see, by modifying the proof of Lemma~\ref{2.2}, that if $M$ has a loose $4$-fan and has an \ifc\ minor $N$ having at least seven elements, then either deleting the first element or contracting the last element of the loose $4$-fan retains the $N$-minor.  
We will use this fact within the proof of the next lemma.  

\begin{lemma}
\label{beachtheend}
Let $M$ and $N$ be \ifc\ binary matroids such that $|E(M)|\geq 16$ and $|E(N)|\geq 7$.  
Suppose that Hypothesis VII holds and that $M$ is not the cycle matroid of a terrahawk or a quartic M\"{o}bius ladder {  and is not the dual of a triadic M\"{o}bius matroid}.    
Let $M' = M\ba X/Y$ and let $M$ have $T_0,D_0,T_1,D_1,\dots ,T_n$ as a right-maximal bowtie string that is also  a bowtie string in $M'$.  Suppose that $M'\ba c_0,c_1,\dots ,c_n$ has an $N$-minor.  
Then one of the following holds.  
\begin{itemize}
\item[(i)] $M$ has a quick win; or
\item[(ii)] $M$ has an open-rotor-chain win, a ladder win, or an enhanced-ladder win; or 
\item[(iii)] $M$ has $\{a_0,b_0,z,c_n\}$ as a $4$-cocircuit for some $z$ in $\{a_n,b_n\}$; or
\item[(iv)]  the structure in Figure~\ref{drossfigiinouv} is contained in $M$, up to switching the labels on $a_n$ and $b_n$, and either $\{d_{n-2},a_{n-1},c_{n-1},d_{n-1}\}$ or $\{d_{n-2},a_{n-1},c_{n-1},a_n,c_n\}$ is a cocircuit, where $d_{-1}=\al$; or 
\item[(v)] $M'\ba c_0,c_1/b_1$ has an $N$-minor, or $n=1$ and $M'\ba c_0,c_1/a_1$ has an $N$-minor; or
\item[(vi)] $n=1$ and $M$ contains one of the structures in Figure~\ref{fign=2} or Figure~\ref{goofy}, where all of the elements are distinct and  the labels $a_1$ and $b_1$ are viewed as being interchangeable. Moreover,  if $M$ contains  the structure in Figure~\ref{goofy}, then $d_1\in Y$, and either $d_0\in X$ or $M'\ba c_0,c_1,d_0$ has an $N$-minor; or
\item[(vii)] deleting the central cocircuit of some  augmented $4$-wheel in $M$ gives an \ifc\ matroid with an $N$-minor.    
\end{itemize}
Furthermore, if neither (iii) nor (v) holds, then $M$ has no triangle $T_{n+1}$ such that $\{x,c_n,a_{n+1},b_{n+1}\}$ is a $4$-cocircuit for any $x$ in $\{a_n,b_n\}$.  
\end{lemma}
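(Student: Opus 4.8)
The plan is to study the trim $M\ba c_0,c_1,\dots,c_n$ of the given right-maximal bowtie string, together with the at most one further deletion or contraction that may be needed, and to decide, from the connectivity of the trim and the shape of its \ftv s, which of the seven outcomes must hold.  Since $M'=M\ba X/Y$ and $M'\ba c_0,\dots,c_n$ has an $N$-minor, so does $M\ba c_0,\dots,c_n$; thus the $N$-minor of the trim is never the difficulty, and everything reduces to deciding whether the trim, or a one-step modification of it, is \ifc\ and, when it is not, exactly how the two ends of the string attach to the rest of $M$.

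First I would settle the connectivity of the trim.  Using \cite[Lemma~6.1]{cmoVI} when $n=1$ and \cite[Lemma~6.5]{cmoVI} when $n\ge2$, together with orthogonality and Lemma~\ref{bowwow}, the matroid $M\ba c_0,\dots,c_n$ is \ffsc\ unless one of a short list of local triangle configurations occurs (for instance $\{d_{n-1},d_n\}$ or $\{b_i,b_{i+1}\}$ lying in a triangle, or a triangle through $a_1$ avoiding $\{b_1,c_1,d_1\}$).  Each such configuration is dealt with in turn: it either extends the string to the right, which is governed by the final assertion of the lemma discussed below, or to the left, where right-maximality together with Lemma~\ref{moreteeth} either yields a contradiction or produces the configuration of Figure~\ref{drossfigiinouv}, i.e.\ outcome (iv); the handful of remaining cases, all confined to small $n$, are forced into an augmented $4$-wheel (outcome (vii)) or, when $n=1$, into one of the structures of Figures~\ref{fign=2} and~\ref{goofy} (outcome (vi)).

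Next I would classify the \ftv s of an \ffsc\ trim.  By \cite[Lemma~6.5]{cmoVI} they are $4$-fans of exactly two shapes.  A left-end $4$-fan, whose coguts element is the element $\alpha$ of $D_0$ lying outside the string and whose triad contains $a_1$, is precisely the hypothesis of Lemma~\ref{moreteeth} and yields the configuration of Figure~\ref{drossfigiinouv}, so outcome (iv) holds.  A right-end $4$-fan has $b_n$ as its coguts element; here one must contract once more, and by Lemmas~\ref{stringswitch} and~\ref{stringybark} together with Hypothesis~VII either $M'\ba c_0,c_1/b_1$ (or $M'\ba c_0,c_1/a_1$ when $n=1$) retains the $N$-minor, giving outcome (v), or the cocircuit certifying the fan is forced to meet $T_0$, producing the $4$-cocircuit $\{a_0,b_0,z,c_n\}$ of outcome (iii).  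If instead no \ftv\ survives, the trim is \ifc; then $n\le2$ gives a quick win (outcome (i)), while for $n\ge3$ the structure theory of trimmable bowtie strings from \cite{cmochain} and~\cite{cmoVI} forces the string inside an open rotor chain, a ladder structure, or a ring of bowties, giving outcome (ii).  For the final assertion, suppose that neither (iii) nor (v) holds yet $M$ has a triangle $T_{n+1}=\{a_{n+1},b_{n+1},c_{n+1}\}$ with $\{x,c_n,a_{n+1},b_{n+1}\}$ a cocircuit, $x\in\{a_n,b_n\}$.  When the elements involved survive in $M\ba c_0,\dots,c_n$ this triangle and cocircuit present there a $4$-fan with coguts element in $\{a_n,b_n\}$, which by the classification above is possible only with coguts $b_n$, and that case was just shown to give (iii) or (v); in the degenerate and non-surviving cases I would instead use right-maximality of the string in $M$, which makes $T_0,D_0,\dots,T_n,\{x,c_n,a_{n+1},b_{n+1}\},T_{n+1}$ fail to be a bowtie string, so that some element of $T_{n+1}$ repeats an earlier one (the legal coincidence $c_{n+1}=a_0$, available only if the original string has $a_0\ne c_n$, being ruled out as the cause).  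Orthogonality of the new cocircuit with the triangles of the string, together with the fact that two triangles of a $3$-connected binary matroid sharing two elements coincide, then forces a $4$-cocircuit through $T_n$ and an earlier triangle $T_i$; for $i=0$ this is, after relabelling $T_0$, the cocircuit of (iii), and for $i>0$ it puts an element of $T_n$ in a cocircuit of size at most two in $M\ba c_0,\dots,c_n$, whence — via the minor isomorphisms of Lemmas~\ref{stringswitch} and~\ref{stringybark}, as in the proof of Lemma~\ref{killtobler2}, together with Hypothesis~VII — the contraction of (v) follows.  Since both (iii) and (v) are excluded, no such $T_{n+1}$ exists.

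I expect the main obstacle to be the right-end analysis: one must simultaneously keep track of which \ftv s survive the trim, whether the single extra contraction ($b_1$, or $a_1$ when $n=1$) destroys the $N$-minor, and the full list of genuinely new exceptional configurations — those of Figures~\ref{drossfigiinouv}, \ref{fign=2}, and~\ref{goofy} and the augmented $4$-wheel.  Hypothesis~VII and the minor-isomorphism bookkeeping of Lemmas~\ref{stringswitch} and~\ref{stringybark} are exactly the tools that keep the $N$-minor under control, but arranging the case split to be both exhaustive and disjoint, and verifying in each exceptional case that the prescribed move genuinely yields an \ifc\ minor with an $N$-minor — with the excluded-minor hypotheses (not a terrahawk, a quartic M\"{o}bius ladder, or the dual of a triadic M\"{o}bius matroid) eliminating precisely the remaining bad cases — is where the real work lies.
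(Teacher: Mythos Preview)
Your plan inverts the logical flow of the argument and misidentifies the key tools at several points.

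First, your use of \cite[Lemma~6.5]{cmoVI} to control the trim $M\ba c_0,\dots,c_n$ of a \emph{bare} bowtie string is not justified: that lemma applies only once one already has the ladder elements $d_i$ with the triangles $\{c_i,d_i,a_{i+1}\}$ and the associated cocircuits in place.  Likewise, Lemma~\ref{moreteeth} is not a device for \emph{producing} the configuration of Figure~\ref{drossfigiinouv} from a left-end $4$-fan of the trim; it takes that configuration as a hypothesis.  The paper reaches outcome (iv) by a completely different route: it first analyses $M\ba c_n$ (a single deletion) via Lemma~\ref{6.3rsv}, finds the $4$-fan $(u,a_n,d_{n-1},d_n)$, pins down $u=c_{n-1}$, and then runs a minimal-$m$ argument to build the $d_i$ ladder down from the right-hand end.

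Second, and more seriously, your treatment of the final assertion dismisses in a parenthesis exactly the case that dominates the paper's proof.  When a candidate $T_{n+1}$ exists, right-maximality forces $T_{n+1}$ to coincide with some earlier $T_j$ (\cite[Lemma~5.4]{cmoVI}); if $j\ge 1$ the argument you sketch works, but for $j=0$ with $c_j=c_{n+1}$ you land in (iii), and the remaining possibility --- that the string already satisfies $a_0=c_n$ --- cannot be ``ruled out as the cause''.  That possibility is precisely what forces the long construction in the paper (the sublemma showing $a_0\neq c_n$): one builds a genuine quartic-ladder segment, uses Lemmas~\ref{realclaim1} and~\ref{rotorwin} and Hypothesis~VII to get $4$-cocircuits through each $\{a_i,c_i\}$, and it is \emph{here} that the hypotheses excluding the quartic M\"obius ladder and the dual of a triadic M\"obius matroid are actually invoked (via \cite[Lemma~6.4]{cmoVI}) to force distinctness.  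You have located these exclusions in the wrong place.

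Finally, you are missing the endgame entirely.  After the non-extension sublemma and the reduction to $m=n-1$, the paper does not classify \ftv s of the trim; instead it finds the triangle $\{c_{n-2},b_{n-1},b_n\}$, extends to a right-maximal rotor chain, repeatedly applies Lemma~\ref{rotorwin} to block the relevant contractions, and then pushes one more step (elements $f,g,h$) to force an open-rotor-chain win and obtain the contradiction.  None of this structure is visible in your outline, and it is where the augmented-$4$-wheel outcome (vii) genuinely arises (via \cite[Theorem~4.1]{cmoV}), not in the small-$n$ residue you suggest.
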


\begin{proof}
Suppose that neither (iii) nor (v) holds.  
By Lemma~\ref{stringswitch}, we get the following.
\begin{sublemma}
\label{conning}
{   If $i\in\{1,2,\dots ,n\}$, then $M'\ba c_0,c_1,\dots ,c_i/b_i$ has no $N$-minor.  If $j\in\{1,2,\dots ,n-1\}$, then $M'\ba c_0,c_1,\dots ,c_j/a_{j+1}$ has no $N$-minor. }
\end{sublemma}

{ 

\begin{sublemma}
\label{newsubber}
If $M$ has a triangle $T_{n+1}$ where $\{x,c_n,a_{n+1},b_{n+1}\}$ is a cocircuit for some $x$ in $\{a_n,b_n\}$, then $a_0\neq c_n$.  
\end{sublemma}

To show this, suppose that  $a_0=c_n$.  Then $n\geq 2$.  Without loss of generality, we assume that $x=b_n$.  By \ort\ with $T_0$, the cocircuit $D_n$ meets $\{b_0,c_0\}$.  Up to switching the labels on $a_{n+1}$ and $b_{n+1}$, we may assume that $b_{n+1}\in\{b_0,c_0\}$.  Then \ort\ with $D_0$ implies that $T_{n+1}$ meets $\{a_1,b_1\}$.  As $a_0\in D_n$, Lemma~\ref{bowwow} implies that $a_{n+1}\notin T_1$.  Hence $c_{n+1}\in\{a_1,b_1\}$.

Suppose $c_{n+1}=b_1$. Then \ort\ between $T_{n+1}$ and $D_1$ implies that $a_{n+1}\in\{a_2,b_2\}$.  Moreover,  \ort\ with $D_n$ implies that $T_2$ meets $\{b_n,a_0,b_{n+1}\}$. It follows that  $n=2$, so $c_2=a_0$ and $\lambda (T_0\cup T_1\cup T_2)\leq 2$; a \cn.  Thus $c_{n+1}=a_1$.

As the next step towards \ref{newsubber}, we now show that 

\begin{sublemma}
\label{an+1}  $\{b_{n+1},c_{n+1}\} \subseteq E(M')$ and    $a_{n+1} \in Y$.
\end{sublemma}

Since $b_{n+1}\in\{b_0,c_0\}$ and $c_{n+1}=a_1$, we know that $\{b_{n+1},c_{n+1}\}\subseteq E(M')$.  If $a_{n+1}\in X$, then $M'\ba c_0,c_1,\dots ,c_n$ has $b_n$ in a $1$- or $2$-cocircuit; a \cn\ to~\ref{conning}.  Suppose $a_{n+1}\in E(M')$.  If $b_{n+1}=c_0$, then $M'\ba c_0,c_1,\dots ,c_n$ has $\{b_n,a_{n+1}\}$ as a disjoint union of cocircuits; a \cn\ to~\ref{conning}.  Thus $b_{n+1}=b_0$ and $(a_1,b_0,a_{n+1},b_n)$ is a loose $4$-fan in $M'\ba c_0,c_1,\dots ,c_n$.  By~\ref{conning}, we see that $M'\ba c_0,c_1,\dots ,c_n,a_1$ has an $N$-minor.  The last matroid has $\{b_0,b_1\}$ as a disjoint union of cocircuits; a \cn\ to~\ref{conning}.  Hence ~\ref{an+1} holds. 

\begin{figure}[htb]
\center
\includegraphics{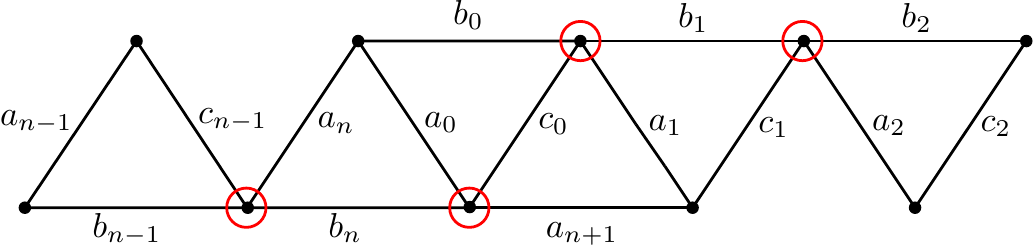}
\caption{}
\label{buildingqml}
\end{figure}

Still aiming to show \ref{newsubber}, we show next that 

\begin{sublemma}
\label{bn+1}  $b_{n+1} = c_0$.
\end{sublemma}

Suppose that  $b_{n+1} \neq c_0$. Then $M'\ba c_0,c_1,\dots ,c_n$ has $\{b_{n+1},a_1\}$ as a disjoint union of circuits, so $M'\ba c_0,c_1,\dots ,c_n,a_1$ has an $N$-minor.  But the last  matroid has $\{b_0,b_1\}$ as a disjoint union of cocircuits; a \cn\ to~\ref{conning}.  Thus 
\ref{bn+1} holds. 

We now know that  $M$ contains the structure in Figure~\ref{buildingqml} and that $M\ba c_0,c_1,\dots ,c_{n-1},c_n/b_n$ has an $N$-minor.  By Lemma~\ref{stringswitch}, $M\ba c_i/b_i$ has an $N$-minor for all $i$ in $\{0,1,\dots ,n\}$.  
%Lemma~\ref{bowwow}, $a_{n+1}$ is in this cocircuit.  Let $d_1$ be the fourth element in this cocircuit.  By \ort\ with the triangles $T_0,T_1,\dots ,T_n$, we know that $d_1$ is a new element.  

As the next step towards \ref{newsubber}, we now show that 

\begin{sublemma}
\label{acco}  $M$ has a $4$-cocircuit containing $\{a_i,c_i\}$ for all $i$ in $\{1,2,\ldots,n\}$.
\end{sublemma}

Since $M$ has no quick win, this follows by Lemma~\ref{realclaim1} and Hypothesis VII unless $M\ba c_1$ is not \ffsc. Consider the exceptional case. By Lemma~\ref{6.3rsv}, $M$ has a quasi rotor $(T_0,T_1,\{7,8,9\},D_0,\{v,c_1,7,8\},\{u,v,7\})$ for some 
$u$ in $\{b_0,c_0\}$ and $v$ in $\{a_1,b_1\}$.
Since $M/b_1$ has an $N$-minor, Lemma~\ref{rotorwin} implies that
$b_1\neq v$.
Thus $v=a_1$.
By \ort\ between   $\{c_0,a_1,a_{n+1}\}$ and the cocircuit $\{a_1,c_1,7,8\}$, it follows that $\{7,8\}$
meets $\{c_0,a_{n+1}\}$.
Since the triangles $T_0,T_1$, and $\{7,8,9\}$ are disjoint, $c_0\notin \{a_1,c_1,7,8\}$.
Hence $a_{n+1}\in\{7,8\}$.
By \ort\ between $\{7,8,9\}$ and  $\{a_0,c_0,b_n,a_{n+1}\}$, it follows that
$b_n\in\{7,8,9\}$.
Then \ort\ between $\{7,8,9\}$ and $D_{n-1}$ implies that $\{7,8,9\}$ meets 
$\{b_{n-1},c_{n-1}\}$.
Then $M$ has
$(T_{n-1},T_n,\{c_0,a_1,a_{n+1}\},D_{n-1},\{a_0,c_0,b_n,a_{n+1}\},\{7,8,9\})$
as a quasi rotor in which $b_n$ is in two triangles. 
As $M/b_n$ has an $N$-minor, Lemma~\ref{rotorwin} gives a \cn.
Hence \ref{acco} holds.}

\begin{figure}[htb]
\center
\includegraphics{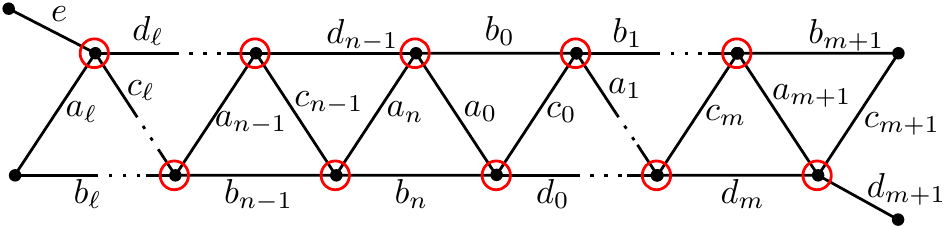}
\caption{The elements depicted are all distinct, $\ell\leq n$, where we let $d_n=b_0$, and $m\geq 0$.  Furthermore, $m\leq \ell -2$.}
\label{buildingdist}
\end{figure}

{  
We continue the proof of~\ref{newsubber} by showing that $M$ contains the structure in Figure~\ref{buildingdist}.  
We will construct the left side of the figure first.  Take $d_n = b_0$. 
Let $\ell=n$ if $\{c_{n-1},a_n\}$ is not contained in a triangle of $M$.  
If $\{c_{n-1},a_n\}$ is contained in a triangle, then take $\ell$ minimal in $\{1,2,\dots ,n-1\}$ such that $\{c_i, a_{i+1},d_i\}$ is a triangle of $M$ for all $i$ in $\{\ell,\ell +1,\dots ,n-1\}$, where $d_i$ is an element in $E(M)$.  
By~\ref{acco}, $M$ has a $4$-cocircuit containing $\{a_n,a_0\}$.  
By \ort\ with $T_0$ and Lemma~\ref{bowwow} applied to $(T_n,\{c_0,a_1,a_{n+1}\},\{a_0,c_0,b_n,a_{n+1}\})$, this cocircuit contains $b_0$.  
If $\ell < n$, then \ort\ and Lemma~\ref{bowwow} imply that the fourth element of this cocircuit is $d_{n-1}$. By repeated application of this argument, we see that $M$ has $\{d_{i-1},a_i,c_i,d_i\}$ as a cocircuit for all $i$ in $\{\ell+1,\ell+2,\ldots,n-1\}$. Moreover, $M$ has $\{e,a_{\ell},c_{\ell},d_{\ell}\}$ as a cocircuit for some element $e$ where we note that this includes the case when $\ell = n$. 

We continue to construct the structure in Figure~\ref{buildingdist} by showing that 
\begin{sublemma}
\label{ellnot1}
$\ell\geq 2$.  
\end{sublemma}

Suppose instead that $\ell=1$.  
Then by  \ort\ between $\{e,a_1,c_1,d_1\}$ and the triangle $\{c_0,a_1,a_{n+1}\}$ and  Lemma~\ref{bowwow}, we deduce that  $e=a_{n+1}$.  
By~\cite[Lemma~6.4]{cmoVI}, the elements in $T_1 \cup T_2\cup\dots\cup T_n\cup\{c_0,a_{n+1}\}\cup\{d_1 ,d_2,\dots,d_{n-1},b_0\}$ are distinct.  
Now it is easy to see that $M$ is the cycle matroid of a quartic M\"obius ladder or is the dual of a triadic M\"{o}bius ladder; a \cn.  
Thus~\ref{ellnot1} holds.  

We now consider the right side of Figure~\ref{buildingdist}. It is convenient to let $a_{n+1} = d_0$. Take $m$ maximal in $\{0,1,\ldots,\ell - 1\}$ such that, for all $j$ in $\{0,1,\ldots,m\}$, there is an element $d_j$ such that $\{c_j,d_j,a_{j+1}\}$ is a triangle. By the definition of $\ell$, we know that $\{c_{\ell - 1},a_{\ell}\}$ is not contained in a triangle of $M$. Hence

\begin{sublemma}
\label{mell}
$m\leq \ell - 2$.  
\end{sublemma}

As before, by \ort\ and Lemma~\ref{bowwow}, we know that $\{d_{j-1},a_j,c_j,d_j\}$ is a cocircuit for all $j$ in $\{1,2,\ldots,m\}$. Moreover, $\{d_m,a_m,c_m,d_{m+1}\}$ is a cocircuit for some element $d_{m+1}$. We deduce that $M$ contains the structure in Figure~\ref{buildingdist} where $\ell \le n$, and $m \ge 0$.

Next we show  that 

\begin{sublemma}
\label{distinctor}
the elements in Figure~\ref{buildingdist} are  distinct.  
\end{sublemma}

By~\cite[Lemma~6.4]{cmoVI}, since $M$ is neither the dual of the triadic M\"{o}bius matroid nor the cycle matroid of a quartic M\"{o}bius ladder, 
the elements in Figure~\ref{buildingdist} other than $\{e,c_{m+1},d_{m+1}\}$ are  distinct  except that  $(b_m,b_{m+1})$ may equal $(a_\ell ,b_\ell)$.  
As $\ell \neq m+1$, we deduce that 
the elements in Figure~\ref{buildingdist} are distinct unless $e, c_{m+1}$, or $d_{m+1}$ is equal to one of the other elements.

Suppose $c_{m+1}$ is equal to another element in Figure~\ref{buildingdist}. Then $c_{m+1}\in\{e,d_0,d_1,\dots ,d_m,d_\ell ,d_{\ell +1},\dots, d_{n-1}\}$.  
By \ort\ between $T_{m+1}$ and the cocircuits in Figure~\ref{buildingdist}, it follows  that $e\in\{a_{m+1},b_{m+1}\}$ and $c_{m+1}=d_\ell$.  
Then \ort\ with the cocircuits in Figure~\ref{buildingdist} implies that $(\ell,m) =(n,0),$  and $T_1$ is $\{e,b_0,b_1\}$ or $\{e,b_0,a_1\}$.  
In either case, $\lambda (T_n\cup\{c_0,d_0,a_1,e,b_0,b_1\})\leq 2$; a \cn.  
Thus $c_{m+1}$ is not equal to any other element in Figure~\ref{buildingdist}.

By \ort\ between the triangles in Figure~\ref{buildingdist} and the cocircuit $\{d_m,a_{m+1},c_{m+1},d_{m+1}\}$, we know that $d_{m+1}$ avoids all of the triangles in that figure.  Similarly,  
 \ort\ between these triangles and the cocircuit $\{e,a_\ell,c_\ell,d_\ell\}$ implies  that $e$ avoids all these triangles. 
We deduce that~\ref{distinctor} holds unless $d_{m+1}=e$.  
In the exceptional case, the set $\{e,a_\ell,c_\ell,d_\ell\}\btu\{d_m,a_{m+1},c_{m+1},d_{m+1}\}$ contains a cocircuit, and the set $Z$ of elements in Figure~\ref{buildingdist} other than $\{e,d_{m+1}\}$ is $3$-separating.  Then $Z$ meets 
 $T_{\ell -1}\cup e$  otherwise we contradict  the fact that $M$ is \ifc.  But now we get a \cn\ to \ort\ between $T_{\ell - 1}$ and the cocircuits in the figure. 
Thus~\ref{distinctor} holds.  

Continuing the proof of~\ref{newsubber}, we show next that   

\begin{sublemma}
\label{dm+1hasn}
$M/d_{m+1}$ has an $N$-minor.  
\end{sublemma}

Recall that $M\ba c_0,c_1,\dots ,c_{n-1},c_n/b_n$ has an $N$-minor.  
By Lemma~\ref{stringswitch}, $M\ba a_1,a_2,\dots a_{m+1},c_{m+1},c_{m+2},\dots ,c_{n-1},c_n/d_m$ has an $N$-minor.  
The last matroid is isomorphic to $M\ba a_1,a_2,\dots a_{m+1},c_{m+1},c_{m+2},\dots ,c_{n-1},a_0/d_{m+1}$.  
Hence~\ref{dm+1hasn} holds.  

Next we show that

\begin{sublemma}
\label{dm+1notintri}
$d_{m+1}$ is in no triangle of $M$.  
\end{sublemma}

Suppose $M$ has $d_{m+1}$ in a triangle, $T$.  
By \ort\ between $T$  and the $4$-cocircuits in Figure~\ref{buildingdist}, we see that $\{c_{m+1},d_{m+1}\}\subseteq T$.    
By the maximality of $m$, we know that $a_{m+2}\notin T$.  
Thus \ort\ with $D_{m+1}$ implies that $b_{m+2}\in T$; a \cn\ to \ort\ with $D_{m+2}$.  
Thus~\ref{dm+1notintri} holds.  

As $M$ has no quick win, we know that $M/d_{m+1}$ is not \ifc.  
We complete the proof of~\ref{newsubber} by giving a \cn\ to this fact.  

Suppose $(U,V)$ is a \ns\ $2$- or \ths\ of $M/d_{m+1}$.  
By~\cite[Lemma~3.3]{cmoV}, we may assume that $T_m\cup T_{m+1}\cup d_m\subseteq U$.  
Then $(U\cup d_{m+1},V)$ is a \ns\ $2$- or \ths\ of $M$; a \cn.  
By \ref{dm+1notintri}, $M/d_{m+1}$ has no sequential $2$-separation.  
It follows that $M/d_{m+1}$ is $3$-connected having  a $4$-fan, $(u,v,w,x)$.  
Hence $M$ has $\{v,w,x\}$ as a cocircuit and $\{d_{m+1},u,v,w\}$ as a circuit.  
By \ort\ with the cocircuit $\{d_m,a_{m+1},c_{m+1},d_{m+1}\}$, the set $\{u,v,w\}$ meets $\{d_m,a_{m+1},c_{m+1}\}$.  
If $\{u,v,w\}$ meets $D_m$ or $D_{m+1}$, then \ort\ implies that two elements of $\{u,v,w\}$ is in one of these cocircuits.  
Then $\{v,w,x\}$ meets $T_m,T_{m+1}$, or $T_{m+2}$; a \cn.  
Thus $\{u,v,w\}$ avoids $\{a_{m+1},c_{m+1}\}$,   
so $d_m\in\{u,v,w\}$.  
Either $\{d_{m-1},a_m,c_m,d_m\}$ is a cocircuit, or $m=0$ and $\{b_n,a_0,c_0,d_0\}$ is a cocircuit.  
Hence $\{u,v,w\}$ contains two elements from one of these cocircuits, so the triad $\{v,w,x\}$ meets a triangle in Figure~\ref{buildingdist}; a \cn.  
Thus $M/d_{m+1}$ has no $4$-fan; a \cn.  
We conclude that $a_0\neq c_n$.  
Thus~\ref{newsubber} holds.  }

Next we show that

\begin{sublemma}
\label{non+1}
$M$ does not have a triangle $T_{n+1}$ such that $(T_n,T_{n+1},\{x,c_n,a_{n+1},b_{n+1}\})$ is a bowtie for some $x$ in $\{a_n,b_n\}$.  
\end{sublemma}

Suppose $M$ has such a bowtie.  
Then~\cite[Lemma~5.4]{cmoVI} implies that $T_{n+1}=T_j$ for some $j$ in $\{0,1,\dots, n-2\}$, so $n\geq 2$.  
If $c_j\in\{a_{n+1},b_{n+1}\}$, then $M'\ba c_0,c_1,\dots ,c_n$ has $x$ in a $1$- or $2$-cocircuit, so $M'\ba c_0,c_1,\dots ,c_n/x$ has an $N$-minor; a \cn\ to~\ref{conning}.  
Thus $c_j=c_{n+1}$ {   so $\{a_j,b_j\} = \{a_{n+1},b_{n+1}\}$.   
If $j=0$, then  (iii) holds; a \cn.  
Thus $j\geq 1$, and $D_{j-1}\btu \{x,c_n,a_{n+1},b_{n+1}\}$}  is $\{b_{j-1},c_{j-1},x,c_n\}$, a disjoint union of cocircuits in $M'$.  
Again $M'\ba c_0,c_1,\dots ,c_n$ has $x$ in a $1$- or $2$-cocircuit; a \cn.  
Thus~\ref{non+1} holds, as does the last assertion of the lemma.  

We now assume that  the lemma fails.  
We show next that 
\begin{sublemma}
\label{cnffsc}
$M\ba c_n$ is \ffsc\ and every $4$-fan in $M\ba c_n$ has {   the form $(u,v,d_{n-1},d_n)$ for some $u$ and $v$ in $\{b_{n-1},c_{n-1}\}$ and $\{a_n,b_n\}$, respectively, where 
$|T_{n-1} \cup T_n \cup \{d_{n-1},d_n\}| = 8$. }
 \end{sublemma}
 
As (i) does not hold, $M\ba c_n$ is not \ifc.  
Observe that, if $T_n$ is the central triangle of a quasi rotor with $a_n$ or $b_n$ as its central element, then we have a \cn\ to~\ref{non+1}.  
{   Thus, applying Lemma~\ref{6.3rsv} to the bowtie $(T_{n-1},T_n,D_{n-1})$, we deduce using \ref{non+1}   that outcome (iii) of that lemma holds; that is,  \ref{cnffsc} holds.  }

{  Without loss of generality, we may now assume that $M\ba c_n$ has a $4$-fan of the form $(u,a_n,d_{n-1},d_n)$  for some $u$  in $\{b_{n-1},c_{n-1}\}$.  }

\begin{sublemma}
\label{ubn}
{   $u \neq b_{n-1}$.}
 \end{sublemma}

{   To show this, we assume the contrary. 
Suppose that  $n>1$. Then \ort\ between $\{b_{n-1},a_n,d_{n-1}\}$ and  $D_{n-2}$ implies that $d_{n-1}\in\{b_{n-2},c_{n-2}\}$.  
Then \ort\ between $\{a_n,d_{n-1},d_n,c_n\}$ and $T_{n-2}$ implies that $\{a_n,c_n,d_n\}$ meets $T_{n-2}$.  
If $a_{n-2} = c_n$, then 
$n=2$ and, since $d_1\in\{b_0,c_0\}$, the triangle $T_0$ is in the closure of $T_1\cup T_2$, so $\lambda (T_0\cup T_1\cup T_2)\leq 2$; a \cn.  
Thus $a_{n-2}\neq c_n$, so $d_n\in T_{n-2}$, a \cn\ to~\ref{non+1}.  
We deduce that  $n=1$ and $M$ contains the structure in Figure~\ref{goofy}.  }

Suppose $\{d_0,d_1\}$ avoids $Y$.  
If $\{d_0,d_1\}$ meets $X$, then $M'\ba c_0,c_1$ has $a_1$ in a $1$- or $2$-cocircuit, so (v) holds; a \cn.  
Thus $\{d_0,d_1\}$ avoids $X$, and $(b_1,b_0,a_1,d_0,d_1)$ is a loose $5$-cofan of $M'\ba c_0,c_1$, so (v) holds; a \cn.  
We deduce that $\{d_0,d_1\}$ meets $Y$.  
If $d_0\in Y$, then $\{b_0,a_1\}$ is a disjoint union of circuits in $M'\ba c_0,c_1$, so $M'\ba c_0,c_1,b_0$ has an $N$-minor.  
As this matroid has $\{a_1,b_1\}$ as a disjoint union of cocircuits, (v) holds; a \cn.  
Thus $d_0\notin Y$, so $d_1\in Y$.  
If $d_0\in X$, then (vi) holds; a \cn.  
Thus $d_0 \not\in X$, so $(d_0,b_0,a_1,b_1)$ is a loose $4$-fan in $M'\ba c_0,c_1$, and (v) or (vi) holds; a \cn.  
We conclude that \ref{ubn} holds.  

\begin{figure}[htb]
\center
\includegraphics{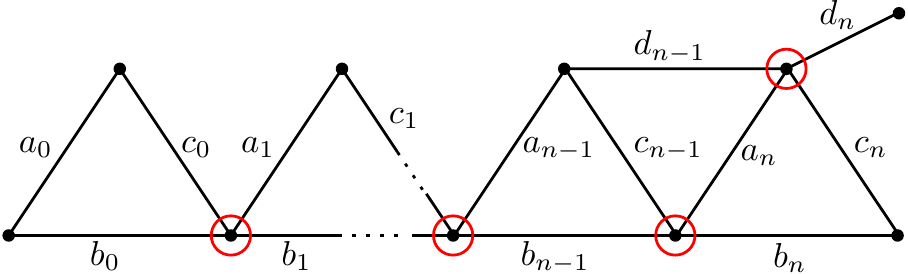}
\caption{We view the labels $a_n$ and $b_n$ as being interchangeable.  These elements are all distinct with the possible exception that $(a_0,b_0)$ may be $(c_n,d_n)$ in the case that $n>1$.  Furthermore, if $n>1$, then $d_{n-1}\notin Y$.  }
\label{figton}
\end{figure}

{   We  show next that
\begin{sublemma}
\label{newfigton}
$M$ contains the configuration in Figure~\ref{figton} where the elements $a_n$ and $b_n$ are viewed as being interchangeable. Moreover, the elements in this figure are distinct with the possible exception that $(a_0,b_0)$ may equal $(c_n,d_n)$ when $n > 1$. Also if $n > 1$, then $d_{n-1} \not\in Y$.
\end{sublemma}

By \ref{ubn},  $u=c_{n-1}$.  Thus, $M$ contains the configuration in Figure~\ref{figton}. Moreover, by \ref{cnffsc}, when $n = 1$, the elements in the figure are all distinct. 
Hence we may suppose that $n\geq 2$.  
By~\ref{non+1}, the pair $\{d_{n-1},d_n\}$ is not contained in a triangle of $M$.  
Hence, by \ort,  $\{d_{n-1},a_n,c_n,d_n\}$ avoids $T_1\cup T_2\cup\dots\cup T_{n-2}$.  
Now $d_{n-1} \not \in Y$ otherwise 
  $(b_{n-2},a_{n-1},b_{n-1},a_n,b_n)$ is a loose $5$-cofan in $M'\ba c_0,c_1,\dots ,c_n$, so $M'\ba c_0,c_1,\dots ,c_n/b_n$ has an $N$-minor; a \cn\ to~\ref{conning}.  

Suppose $T_0$ meets $\{d_{n-1},d_n\}$.  
By \ort\ with $\{d_{n-1},a_n,c_n,d_n\}$,  it follows that $a_0=c_n$,  and $\{b_0,c_0\}$ meets $\{d_{n-1},d_n\}$.  
Orthogonality between  $D_0$ and  $\{c_{n-1},d_{n-1},a_n\}$ implies that  $d_{n-1} \not\in \{b_0,c_0\}$. Hence $d_n\in\{b_0,c_0\}$.  
Suppose $d_n=c_0$.  
As $d_{n-1}\in E(M')\cup X$, we see that $a_n$ is in a $1$- or $2$-cocircuit of $M'\ba c_0,c_1,\dots ,c_n$; a \cn\ to~\ref{conning}.  
We conclude that \ref{newfigton} holds.}

\begin{sublemma}
\label{nge2}
$n\geq 2$.  
\end{sublemma}

{   As a first step towards proving this, we now show the following.
\begin{sublemma}
\label{6.1conseq}
If $n = 1$, then $M$ has $\{b_0,b_1,q\}$ as a triangle for some element $q$ that is not in $T_0 \cup T_1 \cup \{d_0,d_1\}$.
\end{sublemma} 

As $n = 1$, \ref{newfigton} implies that the elements in Figure~\ref{figton} are distinct. We now apply \cite[Lemma 6.1]{cmoVI} assuming first that (v) of that lemma does not hold, that is, that $M$ has no triangle containing $\{b_0,b_1\}$.  As the current lemma fails, it follows using \ref{non+1} that none of (i), (ii), or (iv) of \cite[Lemma 6.1]{cmoVI} holds. Thus (iii) of that lemma holds, so $M\ba c_0,c_1$ has a $4$-fan of the form  $(1,2,3,b_1)$. By \ref{non+1}, $M$ does not have $\{2,3,b_1,c_1\}$ as a cocircuit. 
Thus $M$ has $\{2,3,b_1,c_0,c_1\}$ or $\{2,3,b_1,c_0\}$ as a cocircuit.  
Since $T_0\cup T_1\cup d_0$ is not $3$-separating, this set contains no cocircuit of $M$ other than $D_0$.  
By \ort, $\{2,3\}$ meets both $\{a_0,b_0\}$ and $\{d_0,a_1\}$. Hence  $\{2,3\}=\{b_0,a_1\}$, so $a_1$ is in a triangle of $M\ba c_0,c_1$; a \cn\ to~\cite[Lemma~6.1]{cmoVI}.  
We conclude that (v) of \cite[Lemma~6.1]{cmoVI} holds, that is, $M$ has a triangle of the form $\{b_0,b_1,q\}$. Clearly $q \neq a_0$, and \ort\ with the cocircuits $D_0$ and 
 $\{d_0,a_1,c_1,d_1\}$ implies that $q \not\in T_0 \cup T_1 \cup \{d_0,d_1\}$. Thus \ref{6.1conseq} holds.  }
 
We continue our proof of \ref{nge2} by showing the following.

 \begin{sublemma}
\label{qxm}
Either $q \in X$, or $M'\ba c_0,c_1,q$ has an $N$-minor. 
\end{sublemma}

To prove this, suppose first that $q\in Y$. Then $\{b_0,b_1\}$ is a disjoint union of circuits in $M'\ba c_0,c_1$, so $M'\ba c_0,c_1,b_0$ has an $N$-minor.  
As this matroid has $\{a_1,b_1\}$ as a disjoint union of cocircuits, (v) holds; a \cn.  
Suppose $q\in E(M')$.  
Then $(q,b_0,b_1,a_1)$ is a loose $4$-fan in $M'\ba c_0,c_1$, and, as (v) does not hold, $M'\ba c_0,c_1,q$ has an $N$-minor.  
Thus \ref{qxm} holds.  

Consider $M\ba q$.  
Since this matroid is not \ifc, {  by applying Lemma~\ref{6.3rsv} to the bowtie $(\{d_0,c_0,a_1\}, \{b_0,b_1,q\}, \{a_1,c_0,b_0,b_1\})$,} we deduce that $\{q,b_0\}$ or $\{q,b_1\}$ is contained in a $4$-cocircuit of $M$.  
By \ort\ with the circuits $T_0$ and $T_1$, it follows using Lemma~\ref{bowwow} that $\{q,a_0,b_0\}$ or $\{q,b_1,c_1\}$ is contained in a $4$-cocircuit of $M$.  

\begin{figure}[htb]
\center
\includegraphics{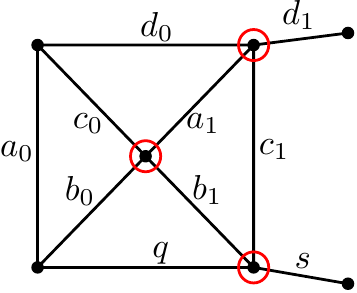}
\caption{We view the labels $a_1$ and $b_1$ as being interchangeable.  The elements shown are distinct.}
\label{twohorns}
\end{figure}

{  Next we show that 

\begin{sublemma}
\label{m4co}
$M$ has no $4$-cocircuit containing $\{q,b_1,c_1\}$. 
\end{sublemma} 
}

Suppose that $\{q,b_1,c_1,s\}$ is a  $4$-cocircuit in $M$, for some element $s$.  
By \ort\ between this cocircuit and  $T_0,T_1$, and $\{c_0,d_0,a_1\}$,  we see that either $s$ is a new element, or $s=d_1$.  
The latter {   gives the \cn\ that $\lambda (T_0\cup T_1\cup\{d_0,q,s\})\leq 2$.}  
Thus $s$ is a new element and $M$ contains the structure in Figure~\ref{twohorns}.  
Since $M\ba c_0,c_1,q$ has $\{b_1,s\}$ as a disjoint union of cocircuits,  $M\ba c_0,c_1,q/b_1$ has an $N$-minor.  
This matroid is isomorphic to $M\ba a_0,a_1,q/b_0$ by Lemma~\ref{stringswitch}.  
By Lemma~\ref{6.3rsv}, as $M$ has no quick win, $a_0$ is in a $4$-cocircuit $D$ of $M$.  
By \ort, $D$ meets $\{b_0,c_0\}$.  
Lemma~\ref{bowwow} implies that $D$ avoids $T_1$, so \ort\ with $\{c_0,d_0,a_1\}$ and $\{b_0,b_1,q\}$ implies that $D$ contains $\{a_0,c_0,d_0\}$ or $\{q,a_0,b_0\}$.  
If the fourth element of $D$ is $d_1$ or $s$, then the symmetric difference of $D$ with the $4$-cocircuit in Figure~\ref{twohorns} containing $d_1$ or $s$, respectively, is a new cocircuit contained in $T_0\cup T_1\cup\{d_0,q\}$.  
Thus the last  set is $3$-separating; a \cn.  
%Evidently, the cocircuit containing $\{a_0,c_0,d_0\}$ or $\{q,a_0,b_0\}$ 
{   Using $D$ together with the structure in Figure~\ref{twohorns}, we see that  $M$ 
contains}  a good augmented $4$-wheel.  
Since $M\ba c_0,c_1,q/b_1$ has an $N$-minor and is isomorphic to $M\ba c_0,a_1,b_0/b_1$ and hence to $M\ba c_0,a_1,b_0,b_1$, we see that $M$ contains  an augmented $4$-wheel  such that removing the central cocircuit preserves an $N$-minor.  
Then~\cite[Theorem~4.1]{cmoV} implies that (i) or (vii) holds; a \cn.  
Thus \ref{m4co} holds.

We now  continue the proof of \ref{nge2}   knowing that  $\{a_0,b_0,q,p\}$ is a cocircuit in $M$, for some element $p$.  
By \ort\ between this cocircuit and the circuits $T_0,T_1$, and $\{c_0,d_0,a_1\}$, we see 
that either $p$ is a new element, or  $p=d_1$.  
If $p=d_1$, then 
%$\{a_0,b_0,q,p\}\btu\{d_0,a_1,c_1,d_1\}$ is a cocircuit, and 
$\lambda (T_0\cup T_1\cup \{d_0,q,{   d_1}\})\leq 2$; a \cn.  
We now apply~\cite[Lemma~6.2]{cmoVI}.  
{   Clearly (i) of that lemma does not hold. Also \ref{non+1} and \ref{m4co} imply that neither (ii) nor (v) holds. If (iv) holds, then $M$ has a triangle $\{a_0,p,t\}$ for some element $t$ which is new unless it equals $s$. Orthogonality excludes the exceptional case.} %Neither (i) nor (ii) of that lemma applies, and we have already shown that (v) also does not apply.  
%If $\{a_0,p,t\}$ is a triangle, for some element $t$, then \ort\ with $D_0,\{d_0,a_1,c_1,d_1\}$, and $\{a_0,b_0,p,q\}$ implies that $t$ is a new element.  
Thus $M$ contains structure (a) in Figure~\ref{fign=2}, and (vi) of the current lemma  holds; a \cn.  
{   We deduce that (iii) of \cite[Lemma~6.2]{cmoVI} holds, that is, } $M$ has a triangle $\{s_1,s_2,s_3\}$ and a cocircuit $\{q,c_1,b_1,s_2,s_3\}$ where $\{s_1,s_2,s_3\}$ avoids $\{b_0,c_0,q,a_1,b_1,c_1\}$.  
Orthogonality and~\ref{non+1} imply that $\{s_1,s_2,s_3\}$ avoids $\{d_0,d_1\}$.  
If $\{s_1,s_2,s_3\}$ meets $\{p,a_0\}$, then \ort\ implies that $\{p,a_0\}\subseteq \{s_1,s_2,s_3\}$, and $\lambda (\{b_0,c_0\}\cup T_1\cup \{{   s_1,s_2,s_3},q\})\leq 2$; a \cn.  
{   We conclude that $\{s_1,s_2,s_3\}$ avoids $p$ as well as all of the elements in Figure~\ref{twohorns}.} 
Thus $M$ contains structure (a) in Figure~\ref{fign=2}, and (vi) holds; a \cn.  
Therefore~\ref{nge2} holds.  

Take $m$ minimal such that, for all $i$ in $\{m,m+1,\dots ,n-1\}$, the matroid $M$ has 
$\{c_i,d_i,a_{i+1}\}$ as a triangle and has $\{d_i,a_{i+1},c_{i+1},d_{i+1}\}$ or $\{d_i,a_{i+1},c_{i+1},a_{i+2},c_{i+2}\}$ as a cocircuit, where the $5$-cocircuit option is only possible when  $i=n-2$.  

\begin{sublemma}
\label{distds}
The elements in $T_m\cup T_{m+1}\cup \dots \cup T_n\cup\{d_m,d_{m+1},\dots ,d_n\}$ are all distinct and $d_j\notin Y$ if ${  \max\{1,m\} }\leq j\leq n-1$.  
\end{sublemma}

To see this, suppose first that the elements in this set are not all distinct.  
By~\ref{non+1}, $\{d_{n-1},d_n\}$ is not contained in a triangle.  
By~\cite[Lemma~6.4]{cmoVI}, we deduce that {   $(a_m,b_m) = (c_n,d_n)$, and $M$ has } 
$\{b_n,c_n,c_m,d_m\}$ as a cocircuit.  {   Then $m \le n-2$.} 
But $\{c_m,d_m,a_{m+1}\}$ is a triangle of $M$, so we get a \cn\ to~\ref{non+1}.  
Thus the elements in $T_m\cup T_{m+1}\cup\dots\cup T_n\cup\{d_m,d_{m+1},\dots ,d_n\}$ are all distinct.  

Suppose  that $d_j\in Y$ for some $j$ with {   $\max\{1,m\}\leq j\leq n-1$. }
Then $(b_{j-1},a_j,b_j,a_{j+1},b_{j+1})$ is a loose $5$-cofan in $M'\ba c_0,c_1,\dots ,c_n$.  
Then $M'\ba c_0,c_1,\dots ,c_n/ b_{j+1}$ has an $N$-minor; a \cn\ to~\ref{conning}.  
Thus~\ref{distds} holds.  

Next we show that
\begin{sublemma}
\label{notdross}
$m=n-1$.  
\end{sublemma}

Suppose $m<n-1$.  
We consider $M\ba c_m,c_{m+1},\dots ,c_n$.  
By~\cite[Lemma~6.5]{cmoVI} and \ref{non+1}, since $M$ has no ladder win, $M\ba c_m,c_{m+1},\dots ,c_n$ is \ffsc\ and every $4$-fan of this matroid is either a $4$-fan of $M\ba c_n$ with $b_n$ as its coguts element   or a $4$-fan of  $M\ba c_m$ with $d_m$ as its coguts element    and with $a_m$ as an interior element.  
By~\ref{non+1},  $M\ba c_n$ does not have a $4$-fan with $b_n$ as its coguts element.  
Therefore $M\ba c_m$ has a $4$-fan of the form $(\be,\al,a_m,d_m)$, so $\{\al,a_m,d_m,c_m\}$ is a cocircuit of $M$.  
As (iv) does not occur, we know that $m\geq 1$.  
Orthogonality implies that $\{\al,\be\}$ meets $\{b_{m-1},c_{m-1},b_m\}$.  
As $\{\al,\be,a_m\}\neq T_m$, we know that $b_m\notin\{\al,\be\}$.  
Furthermore, Lemma~\ref{bowwow} implies that $\al\notin T_{m-1}$.  
Hence $\be\in\{b_{m-1},c_{m-1}\}$.  
By the minimality of $m$, we deduce that $\be=b_{m-1}$.  

Suppose  $m>1$. Then \ort\ between $\{b_{m-1},\al,a_m\}$ and  $D_{m-2}$, implies that $\al\in\{b_{m-2},c_{m-2},a_{m-1}\}$.  
As $\al\notin T_{m-1}$, the cocircuit  $\{\al,a_m,c_m,d_m\}$ meets $T_{m-2}$.  
Orthogonality implies that this $4$-cocircuit is contained in $T_{m-2}\cup T_m$.  Hence $\lambda (T_{m-2}\cup T_{m-1}\cup T_m)\leq 2$; a \cn.  
We conclude that  $m=1$.  

By~\ref{distds},  $d_1\notin Y$. Thus $d_1\in X\cup E(M')$.  
If $d_1\in X$, then $a_2$ is in a $1$- or $2$-cocircuit of $M'\ba c_0,c_1,\dots ,c_n$, so $M'\ba c_0,c_1/a_2$ has an $N$-minor; a \cn\ to~\ref{conning}.  
Hence $d_1\in E(M')$.  
If $\al\in E(M')$, then $(b_1,b_0,a_1,\al,d_1)$ is a loose $5$-cofan in $M'\ba c_0,c_1$, so $M'\ba c_0,c_1/b_1$ has an $N$-minor; a \cn\ to~\ref{conning}.  
Thus $\al\notin E(M')$.  
Then $\al\in Y$ or $\al \in X$ so, in $M'\ba c_0,c_1,\dots ,c_n$, either $\{b_0,a_1\}$ is a  disjoint union of circuits, or $\{a_1,d_1\}$ is a disjoint union of cocircuits.  
The former implies that $M'\ba c_0,c_1,\dots ,c_n,a_1$ has an $N$-minor.  
As the last  matroid also has $\{b_0,b_1\}$ as a disjoint union of cocircuits, we get a \cn\ to~\ref{conning}.  
Therefore $\{a_1,d_1\}$ is a disjoint union of cocircuits in $M'\ba c_0,c_1,\dots ,c_n$, so   
 $M'\ba c_0,c_1/d_1$ has an $N$-minor.  
As the last  matroid  has $(b_0,a_1,b_1,a_2,b_2)$ as a loose $5$-cofan, $M'\ba c_0,c_1/b_2$ has an $N$-minor. This \cn\ to~\ref{conning}   
  completes the proof of~\ref{notdross}.  

We now apply~\cite[Lemma~6.1]{cmoVI} to $M\ba c_{n-1},c_n$.  
By assumption and~\ref{non+1}, neither (i) nor (ii) of that lemma holds. 

{  
We show next that
\begin{sublemma}
\label{no4fan}
$b_n$ is not the coguts element of any $4$-fan of $M\ba c_{n-1},c_n$.   
\end{sublemma}
}
 
Suppose $(\al,\be,\ga,b_n)$ is a $4$-fan in $M\ba c_{n-1},c_n$.  
By~\ref{non+1}, we know that $\{\be,\ga,b_n,c_n\}$ is not a cocircuit of $M$. Hence either $\{\be,\ga,b_n,c_{n-1},c_n\}$ or $\{\be ,\ga ,b_n,c_{n-1}\}$ is a cocircuit $D$ of $M$.  
By \ort\ between $D$ and the triangles $T_{n-1}$ and $\{c_{n-1},d_{n-1},a_n\}$, we see that $\{\be,\ga\}\subseteq \{a_{n-1},b_{n-1},d_{n-1},a_n\}$.  
Now $T_{n-1}\cup T_n\cup d_{n-1}$ is not $3$-separating in $M$, and so $D =  D_{n-1}$.  
Thus $\{\be,\ga\}=\{b_{n-1},a_n\}$, so $a_n$ is in a triangle of $M\ba c_{n-1},c_n$; a \cn\ to~\cite[Lemma~6.1]{cmoVI}.  
Hence \ref{no4fan} holds.

By \ref{no4fan}, part (v) of \cite[Lemma~6.1]{cmoVI} does not hold. Now suppose that   (iv) of~\cite[Lemma~6.1]{cmoVI} holds. Then $M$ has a triangle $\{\al,\be,a_{n-1}\}$ that differs from $T_{n-1}$, and $M$ has  $\{\al,a_{n-1},c_{n-1},d_{n-1}\}$ or 
$\{\al,a_{n-1},c_{n-1},a_n,c_n\}$ as a cocircuit.  
By the minimality of $m$, we deduce that $\be\neq c_{n-2}$.  
By~\ref{nge2},  $n\geq 2$.  
Orthogonality between $\{\al,\be,a_{n-1}\}$ and  $D_{n-2}$ implies that $\{\al,\be\}$ meets $\{b_{n-2},c_{n-2},b_{n-1}\}$, so $b_{n-2}=\be$, or $\al\in\{b_{n-2},c_{n-2}\}$. 

 \begin{sublemma}
\label{bbeta}
{   $b_{n-2} \neq \beta$.}
\end{sublemma} 

{   Suppose that $b_{n-2} = \be$. We begin by locating the element $\al$. Suppose first that $\al \in Y$. Then $\{b_{n-2},a_{n-1}\}$ is a disjoint union of circuits in $M'\ba c_0,c_1,\dots,c_n$, so $M'\ba c_0,c_1,\dots,c_n,a_{n-1}$ has an $N$-minor. The last matroid has $\{b_{n-2},b_{n-1}\}$ as a disjoint union of cocircuits, so we get a \cn\ to \ref{conning}. Thus $\al \not\in Y$. 

Suppose next that $\al \in X$. Then, since $\{\al,a_{n-1},c_{n-1},d_{n-1}\}$ or $\{\al,a_{n-1},c_{n-1},a_n,c_n\}$ is a cocircuit in $M$, either  $M' \ba c_0,c_1,\dots ,c_n/d_{n-1}$ or $M'\ba c_0,c_1,\dots ,c_n/a_n$ has an $N$-minor.  The   second option contradicts \ref{conning}. If $M' \ba c_0,c_1,\dots ,c_n/d_{n-1}$ has an $N$-minor, then, as the last matroid has $(b_{n-2},a_{n-1},b_{n-1},a_n,b_n)$ as a loose $5$-cofan, $M'\ba c_0,c_1,\dots ,c_n/d_{n-1}/b_n$ has an $N$-minor. This \cn\ to~\ref{conning} establishes that $\al \not\in X$.  

We now know that $\al\in E(M')$.  
Then $(b_{n-1},b_{n-2},a_{n-1},\al,x)$ is a loose $5$-cofan in $M'\ba c_0,c_1,\dots ,c_n$ for some $x$ in $\{d_{n-1},a_n\}$, so $M'\ba c_0,c_1,\dots ,c_n/b_{n-1}$ has an $N$-minor. This \cn\ to \ref{conning} completes the proof of \ref{bbeta}.   
}

{   From the above,  if (iv) of~\cite[Lemma~6.1]{cmoVI} holds, then $\al\in\{b_{n-2},c_{n-2}\}$.  
Thus, by Lemma~\ref{bowwow}, 
 $\{\al,a_{n-1},c_{n-1},d_{n-1}\}$ is not a cocircuit of $M$, so  $\{\al,a_{n-1},c_{n-1},a_n,c_n\}$ is a cocircuit. Orthogonality with $T_{n-2}$ implies that $n=2$ and $a_0=c_2$.  
By \ort\ with $\{d_1,a_2,c_2,d_2\}$, the triangle $T_0$ meets $\{d_1,d_2\}$.  
Thus   $T_0$ is $\{\al,c_2,d_{1}\}$ or $\{\al,c_2,d_2\}$, so $\lambda (T_1\cup T_2\cup\{\al,d_{1},d_2\})\leq 2$; a \cn.   We conclude that 
 (iv) of~\cite[Lemma~6.1]{cmoVI} does not hold.  }

It now follows that (iii) of \cite[Lemma~6.1]{cmoVI} holds, that is,  $\{b_{n-1},b_n\}$ is contained in a triangle.  
Recall that $n\geq 2$ by~\ref{nge2}.  
Orthogonality with $D_{n-2}$ implies that the third element of this triangle is in $\{b_{n-2},c_{n-2}\}$.  
If $\{b_{n-2},b_{n-1},b_n\}$ is a triangle, then $M'\ba c_0,c_1,\dots ,c_n$ has $(a_n,b_n,b_{n-1},b_{n-2},a_{n-1})$ as a loose $5$-cofan, so $M'\ba c_0,c_1,\dots ,c_n/a_n$ has an $N$-minor; a \cn\ to~\ref{conning}.  We conclude that

\begin{sublemma}
\label{newtriangle}
{   $\{c_{n-2},b_{n-1},b_n\}$ is a triangle of $M$.} 
\end{sublemma}

Next we show  that 
\begin{sublemma}
\label{cndnno}
$\{c_n,d_n\}$ is not contained in a triangle.  
\end{sublemma}

Suppose $\{c_n,d_n\}$ is contained in a triangle $T$. Then $(T,\{a_n,d_{n-1},c_{n-1}\},\{d_n,c_n,a_n,d_{n-1}\})$ is a bowtie of $M$.   
{   By~\ref{cnffsc}, $M\ba c_n$ is \ffsc\  and $M\ba c_n,c_{n-1}$ has an $N$-minor. Thus, by applying Hypothesis VII to the last bowtie, we get that 
 $M\ba c_{n-1}$ is  \ffsc. This is a \cn\ as the last matroid has a $5$-fan. Hence \ref{cndnno} holds. } 

\begin{figure}[htb]
\center
\includegraphics{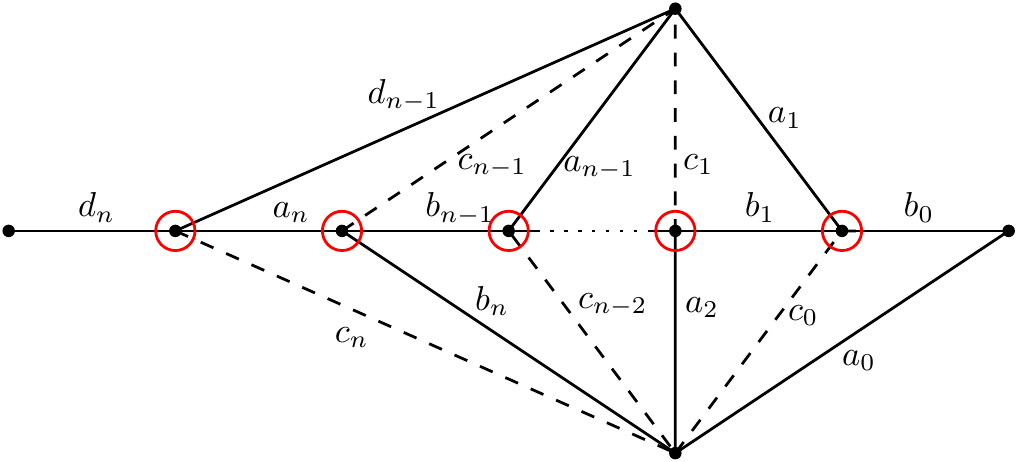}
\caption{These elements are all distinct.}
\label{figrot}
\end{figure}

%By \ref{cndnno} and \ref{cnffsc}, we see that $((a_{n-2},b_{n-2},c_{n-2}),(a_{n-1},b_{n-1},c_{n-1}),(b_n,a_n,c_n))$ is a right-maximal rotor chain in $M$.  

{   Consider the rotor chain
$((c_n,a_n,b_n),(c_{n-1},b_{n-1},a_{n-1}),(c_{n-2},b_{n-2},a_{n-2}))$. If this rotor chain can be extended to the right by adjoining $(x,y,z)$, then $\{b_{n-2},a_{n-2},x,y\}$ is a cocircuit and $\{a_{n-1},b_{n-2},x\}$ is a circuit. By \ort, $x \in \{b_{n-3},c_{n-3}\}$. Indeed, by \ort\ with $D_{n-4}$, it follows that $x = c_{n-3}$ unless $n = 3$. Thus $\{x,y\} = \{b_{n-3},c_{n-3}\}$, so $z= a_{n-3}$. Continuing in this way, we see that a right-maximal bowtie chain that begins as above has one of the following forms:
\begin{enumerate}
\item[(a)] $((c_n,a_n,b_n),(c_{n-1},b_{n-1},a_{n-1}),\ldots,(c_{\ell},b_{\ell},a_{\ell}))$ for some $\ell$, which may be negative; or
\item[(b)] $((c_n,a_n,b_n),(c_{n-1},b_{n-1},a_{n-1}),\ldots,(c_1,b_1,a_1), (b_0,c_0,a_0), (c_{-1},b_{-1},a_{-1}), \linebreak 
\ldots, (c_{\ell},b_{\ell},a_{\ell}))$ for some $\ell \le 0$.
\end{enumerate}

To eliminate the second possibility, assume that it arises. Recall that $M\ba c_0,c_1,\ldots,c_n$ has an $N$-minor. The last matroid has $(b_0,b_1,a_2,b_2)$ as a $4$-fan. Then $M\ba c_0,c_1,\ldots,c_n\ba b_0$ or $M\ba c_0,c_1,\ldots,c_n/b_2$ has an $N$-minor. In both cases, $M/b_1$ has an $N$-minor. To see this, observe in the second case that
$M\ba c_0,c_1,\ldots,c_n/b_2
\cong M\ba c_0,c_1,c_3,c_4\ldots,c_n\ba a_2/b_2
\cong M\ba c_0,c_1,c_3,c_4\ldots,c_n\ba a_2/b_1$; in the first case, note that $M\ba c_0,c_1,\ldots,c_n\ba b_0$ has $b_1$ in a cocircuit of size at most two. Thus $M/b_1$ does indeed have an $N$-minor, and it follows by Lemma~\ref{rotorwin} that $M$ has a quick win; a \cn. We conclude that (a) holds.

If $\ell > 0$, then $M\ba c_0,c_1,\ldots,c_{\ell}$ has an $N$-minor. Now suppose that $\ell \le 0$. Let $k$ be a non-negative integer such that $-k \ge \ell$. We argue by induction on $k$ that
$M\ba c_0,c_1,\ldots,c_0,c_{-1},\ldots,c_{-k}$ has an $N$-minor. This is certainly true if $k = 0$. Hence we may assume that $k \ge 1$. Now
$M\ba c_n,c_{n-1},\ldots,c_0,c_{-1},\ldots,c_{-k+1}$ has
$(c_{-k},b_{-k+1},a_{-k+2},b_{-k+2})$ as a loose $4$-fan. By Lemma~\ref{rotorwin}, $M/b_{-k+2}$ does not have an $N$-minor. Thus
$M\ba c_0,c_1,\ldots,c_{-k}$ has an $N$-minor.

When $\ell \neq 0$, we will now adjust the notation to make $\ell = 0$ irrespective of whether it was originally positive or negative. This will change  $n$ but we will continue to use the same symbol for this quantity noting that we still know that its value is at least two and that
$M\ba c_n,c_{n-1},\ldots,c_{0}$ has an $N$-minor.

We show next that the elements in the rotor chain
 $((c_n,a_n,b_n),(c_{n-1},b_{n-1},a_{n-1}),\ldots,(c_{0},b_{0},a_{0}))$ are distinct. Suppose not. Then $a_0 = c_n$. Orthogonality between $T_0$ and $\{d_{n-1},a_n,c_n,d_n\}$ implies that $T_0$ meets $\{d_{n-1},d_n\}$. By \ref{cndnno}, we see that $d_{n-1} \in T_0$ but $d_{n-1} \neq a_0$. Then $\{c_{n-1},d_{n-1},a_n\}$ meets $D_0$ in a single element; a \cn\ to \ort. We conclude that the elements in the rotor chain are distinct. }

Suppose $\{d_{n-1},d_n\}$ meets a triangle in this rotor chain.  
Then \ort\ with the cocircuit $\{d_{n-1},a_n,c_n,d_n\}$ implies that $\{d_{n-1},d_n\}$ is contained in this triangle; a \cn\ to~\ref{non+1}.  
Thus $M$ contains the structure in Figure~\ref{figrot} where all of the elements shown are distinct.

Since  $N \preceq M\ba c_1,c_0$ and $M\ba c_1$ is not \ffsc, it follows by Hypothesis VII  
that  $M\ba c_0$ is not \ffsc. 
{   Now $M$ has a bowtie of the form $(T, \{z,b_1,c_0\},D_1)$ where $z$ is $b_2$ when $n = 2$, and $z$ is $c_2$ otherwise. Applying  
Lemma~\ref{6.3rsv} to this bowtie gives that $M$ has a quasi rotor having $\{z,b_1,c_0\}$ as its central triangle. Moreover, $M\ba c_0$ has a $5$-fan $(p,q,s,t,u)$ whose elements are contained in this quasi rotor.  
Then $\{q,s,t,c_0\}$ is a cocircuit $D$ of $M$.  
By \ort\ with $\{z,b_1,c_0\}$, the cocircuit $D$ meets either $T_1$ or $T_2$.  
If $D$ meets $T_2$, then \ort\ implies that it is contained in $T_0\cup T_2$, and $\lambda (T_0\cup T_1\cup T_2)\leq 2$; a \cn.  
Thus $D$ meets $T_1$.  
Lemma~\ref{bowwow} implies that $D = D_0$. 
Since $D_0 \cap D_1 = \{b_1\}$, it follows that $b_1$ is the central element of the quasi rotor. 
Thus $\{b_0,a_1,g\}$ is a triangle for some element $g$.  
By \ort\ with the cocircuits shown in Figure~\ref{figrot}, we know that $g$ is a new element. } 

%As $M\ba c_{n-1},c_{n-2}$ has an $N$-minor, we know that $M\ba c_0,c_1,\dots ,c_{n-1}$ is \sfc\ with an $N$-minor by~\cite[Lemma~7.1]{cmoVI}.  
%This matroid has $(c_n,b_n,a_n,b_{n-1})$ as a $4$-fan.  
%If $M/b_{n-1}$ has an $N$-minor, then $M$ has a quick win by Lemma~\ref{rotorwin}; a \cn.  
%Thus 
{   Now $M\ba c_0,c_1,\dots ,c_n$ has an $N$-minor and has  
  $(g,b_0,a_1,b_1)$ as a loose $4$-fan. Since,  by Lemma~\ref{rotorwin}, $M/b_1$ has no $N$-minor, we deduce that }
  
\begin{sublemma}
\label{mgn}
{   $M\ba c_0,c_1,\dots ,c_n,g$ has an $N$-minor. } 
\end{sublemma}

{   Now $M$ has $(\{z,b_1,c_0\}, \{b_0,a_1,g\}, D_0)$ as a bowtie where $z$ is $b_2$ when $n = 2$ and is $a_2$ otherwise. } 
 As $M\ba g$ is not \ifc, Lemma~\ref{6.3rsv} implies that $M$ has a $4$-cocircuit of the form $\{v,w,x,g\}$.  
By \ort\ with $\{g,b_0,a_1\},T_0$, and $T_1$, we know that $\{v,w,x\}$ contains two elements of $T_0$ or two elements of $T_1$.  
By Lemma~\ref{bowwow},  $\{v,w,x\}$ avoids $\{z,b_1,c_0\}$, so $\{v,w,x,g\}$ contains $\{a_0,b_0,g\}$ or $\{a_1,c_1,g\}$.  
If the latter holds, then, since $M\ba c_0,c_1,g$ has an $N$-minor, we deduce that $M\ba c_0,c_1,g/a_1$ has an $N$-minor.  This gives a \cn\ to Lemma~\ref{rotorwin} because 
  $a_1$ is in two triangles of a   quasi rotor of  
$M$.  
Thus $\{v,w,x,g\}=\{a_0,b_0,f,g\}$  for some element $f$.   
By \ort\ with the triangles in Figure~\ref{figrot}, we deduce  that either  $f=d_n$, or $f$ differs from all the elements in Figure~\ref{figrot}.  

We show next that 
\begin{sublemma}
\label{efgnope}
$\{f,g\}$ is not contained in a triangle of $M$.  
\end{sublemma}

Suppose $\{e,f,g\}$ is a triangle of $M$.  
Since we constructed a right-maximal rotor chain, we know that $\{e,f,g\}$ meets the set of elements in Figure~\ref{figrot}.  {   But $g$ avoids this set of elements, and so does $f$ unless $f = d_n$.  
By \ort\ with the cocircuits shown in Figure~\ref{figrot}, it follows  that $f=d_n$ and $e\in\{d_{n-1},a_n,c_n\}$.  
Furthermore, \ort\ implies that $e\notin D_{n-1}$,   
so $e\neq a_n$. } 
By~\ref{non+1}, we deduce that $e=c_n$, so $\{c_n,d_n\}$ is contained in a triangle; a \cn\ to~\ref{cndnno}.  
Thus~\ref{efgnope} holds.  

\begin{figure}[htb]
\center
\includegraphics{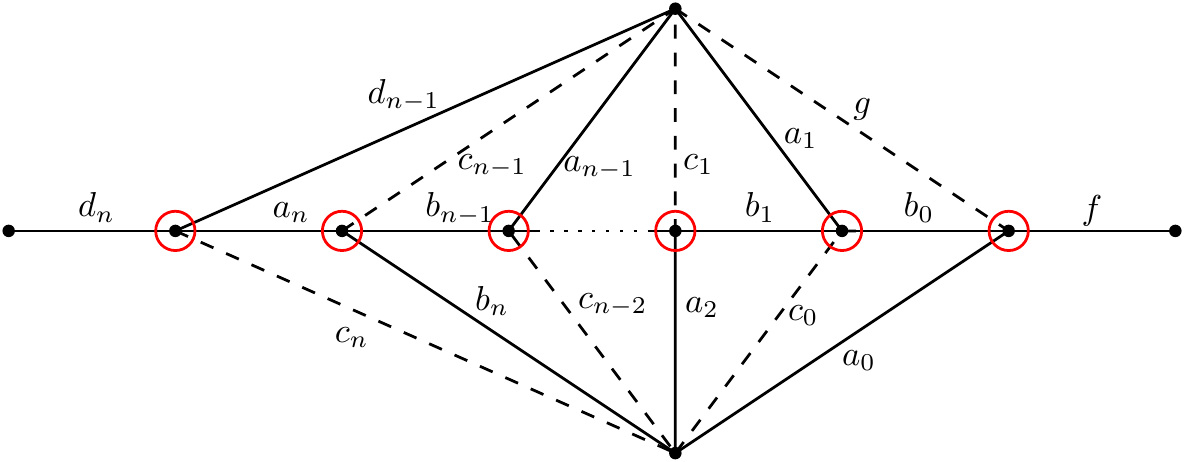}
\caption{These elements are all distinct except that $f$ may be $d_n$.  No set in $\{\{f,g\},\{c_n,d_n\},\{d_{n-1},d_n\}\}$ is contained in a triangle.  Deleting the dashed elements preserves an $N$-minor.}
\label{3not}
\end{figure}

Now $M$ contains the structure in Figure~\ref{3not}.  Moreover, by 
 ~\ref{non+1},  ~\ref{cndnno}, and~\ref{efg},   none of $\{d_{n-1},d_n\},\{c_n,d_n\}$, and $\{f,g\}$ is contained in a triangle.  
Next we show that 

\begin{sublemma}
\label{badthings}
neither $M/a_1$ nor $M/b_0$ has an $N$-minor, and $\{f,g,a_0,b_0\}$ is the only $4$-cocircuit in $M$ containing $a_0$.  
\end{sublemma}

Suppose $M/b_0$ has an $N$-minor. Then, since $M/b_0$ has $\{a_0,c_0\}$ and $\{g,a_1\}$ as circuits, $M/b_0\ba a_1,c_0$ has an $N$-minor.  
This matroid is isomorphic to $M/b_1\ba a_1,c_0$, so $M/b_1$ has an $N$-minor; a \cn\ to Lemma~\ref{rotorwin}.  
{   Thus $M/b_0$ has no $N$-minor. Moreover, an immediate consequence of Lemma~\ref{rotorwin} is that $M/a_1$ has no $N$-minor. }

Now suppose that $a_0$ is in a $4$-cocircuit $D$ other than $\{f,g,a_0,b_0\}$.  
Lemma~\ref{bowwow} implies that $D$ avoids $T_1$.  
Orthogonality with $T_0$ implies that $D$ meets $\{b_0,c_0\}$, so \ort\ with $\{g,b_0,a_1\}$ and either $\{a_2,b_1,c_0\}$ or $\{b_2,b_1,c_0\}$ implies that $D$ contains $\{a_0,b_0,g\}$ or $\{a_0,c_0,z\}$, for some element $z$ in {   $\{a_2,b_2\}$. } 
Since $D$ is not $\{f,g,a_0,b_0\}$, the latter holds.  
Then \ort\ implies that it is contained in $T_0\cup T_2$, and $\lambda (T_0\cup T_1\cup T_2)\leq 2$; a \cn.  
Thus~\ref{badthings} holds.

\begin{sublemma}
\label{withg}
$M\ba c_0,c_1,\dots ,c_n,g$ is \sfc.
\end{sublemma}

To see this, first observe that, by~\cite[Lemma~7.1]{cmoVI}, $M\ba c_0,c_1,\dots ,c_n$ is \sfc.   
The last matroid has $g$ in a triangle, so either it has $g$ in a triad, or  $M\ba c_0,c_1,\dots ,c_n,g$ is \thc.  
The former implies, by \ort\ with $\{g,b_0,a_1\}$, that $\{b_0,g\}$ or $\{a_1,g\}$ is contained in a triad of $M\ba c_0,c_1,\dots ,c_n$.  
Then {    \ref{mgn} implies that} $M\ba g,c_0,c_1,\dots ,c_n/x$ has an $N$-minor for some $x\in\{b_0,a_1\}$; a \cn\ to~\ref{badthings}.  
Thus $M\ba c_0,c_1,\dots, c_n,g$ is \thc.  
Suppose this matroid has a \ns\ \ths.  
Without loss of generality, we may assume that the triad $\{b_0,a_1,b_1\}$ is contained on one side of the \ths, and we can add $g$ to that side to get a \ns\ \ths\ of $M\ba c_0,c_1,\dots ,c_n$; a \cn.  
Thus~\ref{withg} holds.  

Next we show that
\begin{sublemma}
\label{newh}
$M$ has an element $h$ such that $\{a_0,f,h\}$ is a triangle   and $M\ba g,h$ has an $N$-minor.  
\end{sublemma}

{   Since $M$ has  no open-rotor-chain win, $M\ba c_0,c_1,\ldots,c_n,g$ has a $4$-fan $(1,2,3,4)$, so $M$ has a cocircuit $C^*$ such that
$\{2,3,4\} \subsetneqq C^* \subseteq \{2,3,4,c_0,c_1,\ldots,c_n,g\}$. By \ort\ with the cocircuits in Figure~\ref{3not}, we deduce that $\{1,2,3\}$ can only meet the set of elements in that figure if it contains
$\{d_{n-1},d_n\}, \{c_n,d_n\}, \{f,g\},$ or $\{a_0,f\}$. The first three possibilities have been excluded, so either $\{1,2,3\}$ avoids the set of elements in Figure~\ref{3not}, or $\{a_0,f\} \subseteq \{1,2,3\}$. Suppose the latter occurs. Then $M$ has a triangle of the form $\{a_0,f,h\}$, so $M\ba g$ has $(h,f,a_0,b_0)$ as a $4$-fan. By \ref{badthings}, $M\ba g/b_0$ does not have an $N$-minor. Thus $M\ba g,h$ has an $N$-minor, so \ref{newh} holds. We may now assume that $\{1,2,3\}$ avoids the set of elements in Figure~\ref{3not}.  Suppose $c_i \in C^*$ for some $i$ in $\{0,1,\ldots,n-1\}$. Then, since $c_i$ is in two triangles in Figure~\ref{3not}, \ort\ with these triangles implies that $\{2,3,4\}$ contains two elements in this figure. Hence $\{2,3\}$ contains an element in the figure; a \cn. Thus $C^*$ avoids $\{c_0,c_1,\ldots,c_{n-1}\}$, so $C^* \subseteq \{2,3,4,c_n,g\}$. }
Furthermore, \ort\ with $\{g,b_0,a_1\}$ and $T_n$ implies that either $C^*=\{2,3,4,g\}$ and $4\in\{b_0,a_1\}$,  or $C^*=\{2,3,4,c_n\}$ and $4\in\{a_n,b_n\}$.  
Thus $C^*$ meets one of the triangles $T_0,T_1, \{c_{n-1},d_{n-1},a_n\}$, or $\{c_{n-2},b_{n-1},b_n\}$ in a single element; a \cn\ to \ort.  
Thus~\ref{newh} holds.  

By \ort\ with the cocircuits in Figure~\ref{3not}, either $h$ differs from all the elements in that figure,  or $d_n=f$ and $h\in\{c_n,d_{n-1}\}$.  
By~\ref{non+1} and~\ref{cndnno}, we deduce that $h$ is a new element.  

We now show that 
\begin{sublemma}
\label{hffsc}
$M\ba h$ is \ffsc, and every $4$-fan of this matroid has $f$  as its coguts element.  
\end{sublemma}

Let $(z,y,x,w)$ be a $4$-fan in $M\ba h$.  
Then $\{w,x,y,h\}$ is a cocircuit of $M$, and \ort\ with $\{a_0,f,h\}$ implies that $\{w,x,y\}$ meets $\{a_0,f\}$ in a single element.  
By~\ref{badthings},  $a_0 \not\in \{w,x,y\}$, so $f\in\{w,x,y\}$.  
Suppose $f\in\{x,y,z\}$. Then \ort\ with $\{f,g,a_0,b_0\}$ implies that $\{x,y,z\}$ meets $\{g,a_0,b_0\}$.  
{   By \ref{efgnope}, $g \not\in \{x,y,z\}$. As $\{x,y,z\}$ does not contain $\{e_0,f\}$, it must contain $\{f,b_0\}$.} 
 By \ort\ with $D_0$, the triangle {  $\{x,y,z\}$} is $\{f,b_0,b_1\}$; a \cn\ to \ort\ with $D_1$.  
Thus $f=w$,   
that is, $f$ is the coguts element of every $4$-fan of $M\ba h$.  
Thus $M\ba h$ has no $5$-fan.  
It follows by Lemma~\ref{6.3rsv} that  $M\ba h$ is \ffsc. Thus~\ref{hffsc} holds.  

Since $M$ has no quick win, $M\ba h$ has a $4$-fan $(z_0,y_0,x_0,f)$.
Thus $(\{a_0,f,h\},\{x_0,y_0,z_0\},\{f,h,x_0,y_0\})$ is a  bowtie.

\begin{sublemma}
\label{mhf} $M\ba h/f$ has an $N$-minor.
\end{sublemma}

To show this, we assume the contrary. Then $M\ba h,z_0$ has an $N$-minor. Extend the bowtie $(\{a_0,f,h\},\{x_0,y_0,z_0\},\{f,h,x_0,y_0\})$ to a right-maximal bowtie string
$\{a_0,f,h\},\{f,h,x_0,y_0\},\{x_0,y_0,z_0\},\ldots, \{x_k,y_k,z_k\}$. Now $M\ba h,z_0/y_0 \cong M\ba h, x_0/y_0 \cong M\ba h,x_0/f$.
Thus $M\ba h,z_0/y_0$ has no $N$-minor. Therefore, by Lemma~\ref{stringybark}, $M\ba h,z_0,z_1,\ldots,z_k$ has an $N$-minor.

To complete the proof of \ref{mhf}, we aim to apply \cite[Lemma 10.1]{cmoVI}, but first we need to show that

\begin{sublemma}
\label{gammaray} $(\{x_k,y_k,z_k\}, \{a_0,f,h\}, \{\gamma,z_k,a_0,f\})$ is not a bowtie for all $\gamma$ in $\{x_k,y_k\}$.
\end{sublemma}

Assume that $M$ contains such a bowtie. Then, by \ref{badthings},
$\{\gamma,z_k\} = \{g,b_0\}$. By Lemma~\ref{bowwow}, $k > 0$. Suppose $z_k = b_0$. Then $M\ba b_0$ has an $N$-minor. This matroid has $(c_1,a_1,b_1,c_0,z)$ as a $5$-fan where $z = a_2$ unless $n= 2$, in which case $z = b_2$. By Lemma~\ref{2.2}, $M\ba b_0,c_1,z$ or $M\ba b_0,c_1/a_1$ has an $N$-minor. The former implies that $M/b_1$ has an $N$-minor, so in both cases we get a \cn\ to Lemma~\ref{rotorwin}. We conclude that $z_k \neq b_0$, so $z_k = g$. Now $M\ba h, z_0,z_1,\dots ,z_k$ has an $N$-minor, and, by~\ref{hffsc}, $M\ba h$ is \ffsc. 
By Hypothesis VII, it follows that $M\ba z_0$ is \ffsc,  that  
 $M\ba z_i$ is \ffsc\ for all $i$ in $\{1,2,\dots ,k-1\}$, and that $M\ba g$ is \ffsc. 
But $M\ba g$ has a $5$-fan; a \cn. 
Thus~\ref{gammaray} holds.

We can now apply~\cite[Lemma~10.1]{cmoVI}.  
None of (i), (ii), or (v) of that lemma holds.  
If (iii) holds, then $a_0$ is in a $4$-cocircuit with $h$; a \cn\ to~\ref{badthings}.  
Thus (iv) holds, so $M\ba h,z_0/y_0$ or $M\ba h,z_0/x_0$ has an $N$-minor.  
Lemma~\ref{stringswitch} implies that $M\ba h/f$ has an $N$-minor; a \cn.  {   We conclude that \ref{mhf} holds. }

{   Since $M$ has no quick win, we know that $M\ba h/f$  is not \ifc.  We now apply
Lemma~\ref{realclaim1} to the bowtie string $\{a_1,g,b_0\}, \{b_0,g,a_0,f\},\{a_0,f,h\}, \{f,h,x_0,y_0\}$ to obtain that $M$ has $\{a_0,h\}$ contained in a $4$-cocircuit; 
 a \cn\ to~\ref{badthings}.}  
\end{proof}

\begin{figure}[tb]
\center
\includegraphics{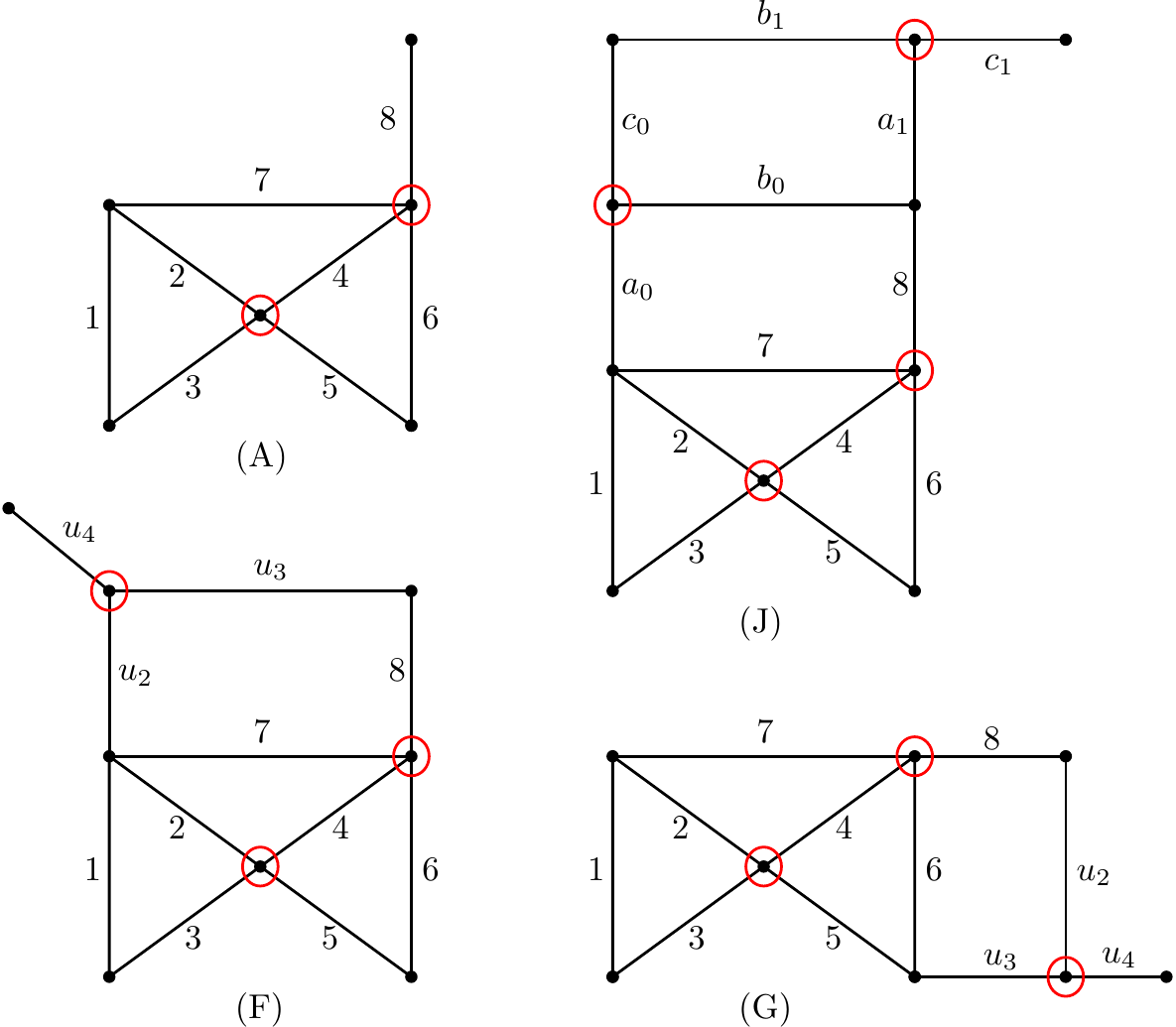}
\caption{}
\label{Afiga}
\end{figure}

\begin{lemma}
\label{AconfigBOOM}
Let $M$ and $N$ be \ifc\ binary matroids such that $|E(M)|\geq 16$ and $|E(N)|\geq 7$.  
Suppose Hypothesis VII holds.  
Suppose that $M$ contains configuration (A) labelled as in Figure~\ref{Afiga} where $M\ba 4$ is \ffsc\ with an $N$-minor, but $N\not \preceq M\ba 1,4$. Then 
\begin{itemize}
\item[(i)] $M$ has a quick win; or 
\item[(ii)] $M^*$ has an open-rotor-chain win, a ladder win, or an enhanced-ladder win; or
\item[(iii)] deleting the central cocircuit of an augmented $4$-wheel in $M^*$ gives an \ifc\ matroid with an $N^*$-minor.  
\end{itemize}
\end{lemma}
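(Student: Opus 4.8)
The plan is to pass to the dual and apply Lemma~\ref{beachtheend} to $M^*$ with minor $N^*$: the three conclusions of the present lemma are precisely the conclusions (i), (ii) and (vii) of Lemma~\ref{beachtheend} read for $M^*$ (noting that a quick win for $M^*$ is a quick win for $M$). So I would first assemble the standing reductions and dualize. Configuration~(A) contains a triangle of the form $\{s,4,c\}$ with $s\in\{2,3\}$ meeting both $\{1,2,3\}$ and $\{4,5,6\}$, together with the cocircuit $\{4,c,d,6\}$; after the permitted relabelling of $2$ and $3$ we may take $s=2$. Then Lemma~\ref{con45} applies, and since $N\not\preceq M\ba 1,4$ it yields $N\not\preceq M/\{x,y\}$ for every $2$-element subset $\{x,y\}$ of $\{4,5,6\}$. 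By Lemma~\ref{airplane}, $M\ba 4/5$ is \ffsc\ with an $N$-minor, and since $\{4,5,6\}$ is a triangle, $M\ba 4/5\cong M\ba 6/5$, which contains a $4$-fan with triangle $\{2,4,c\}$ and triad $\{4,c,d\}$. Dualizing, $M^*$ and $N^*$ are \ifc\ binary with $|E(M^*)|\ge 16$ and $|E(N^*)|\ge 7$; Hypothesis~VII is self-dual, so it holds; $M^*/6\ba 5\ (\cong(M\ba 6/5)^*)$ is \ffsc, not \ifc, with an $N^*$-minor and has a $4$-fan with triangle $\{4,c,d\}$ and triad $\{2,4,c\}$; and the dual of configuration~(A) consists of the triads $\{1,2,3\}$, $\{4,5,6\}$, $\{2,4,c\}$ and the $4$-circuits $\{2,3,4,5\}$, $\{4,c,d,6\}$ in $M^*$. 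If $M^*$ (equivalently $M$) is the cycle matroid of a terrahawk or of a quartic M\"obius ladder, or the dual of a triadic M\"obius matroid, then either the hypotheses of the lemma fail or conclusion~(i) holds by inspection, so we may assume none of these.

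The step with real content is to produce in $M^*$ a right-maximal bowtie string $T_0,D_0,T_1,\dots,T_n$ that is also a bowtie string in a minor $M'=M^*\ba X/Y$, chosen so that $M'$ is essentially $M^*/6\ba 5$ (hence $\{5,6\}\subseteq X\cup Y$), and to show that $M'\ba c_0,c_1,\dots,c_n$ retains the $N^*$-minor. Configuration~(A) by itself exhibits no triad of $M$, equivalently no triangle of $M^*$, so this step must first manufacture the triangles of $M^*$ that seed the string; I expect these to come out of the \ffsc, non-\ifc\ matroid $M^*/6\ba 5$ and its $4$-fan, analysed via Lemmas~\ref{6.3rsv}, \ref{realclaim1}, \ref{claim2}, \ref{3peaks} and~\ref{rotorwin} applied to the bowties that appear, with the $4$-circuits $\{2,3,4,5\}$ and $\{4,c,d,6\}$ supplying the orthogonality constraints. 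The $N^*$-minor is then pushed through the trimming by Lemmas~\ref{stringswitch} and~\ref{stringybark}.

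Granting a valid such input, Lemma~\ref{beachtheend} applied to $M^*$ returns one of its outcomes (i)--(vii). Outcomes (i), (ii) and (vii) are conclusions (i), (ii) and (iii) of the present lemma. For outcome~(iii) ($M^*$ has $\{a_0,b_0,z,c_n\}$ as a $4$-cocircuit) and outcome~(iv) (the configuration of Figure~\ref{drossfigiinouv} lies in $M^*$), the structure has become rigid enough that I would feed it into Lemma~\ref{moreteeth} and the bowtie/ladder results of Section~\ref{bl} to extract a ladder win, an enhanced-ladder win, or a quick win for $M^*$. For outcomes~(v) and~(vi) (a bounded deletion or contraction of $M'$ keeps the $N^*$-minor, or $M^*$ contains one of the small configurations of Figures~\ref{fign=2} and~\ref{goofy}), I would combine $N\not\preceq M\ba 1,4$ and $N\not\preceq M/\{x,y\}$ for $\{x,y\}\subseteq\{4,5,6\}$ with the facts that $\{4,5,6\}$ is the only triangle of $M$ through $5$ and that $M\ba 4$ and $M\ba 6$ are \ffsc, to force in each case either a \cn\ or one of the three desired wins (the augmented-$4$-wheel case being exactly conclusion~(iii)).

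The principal obstacle is precisely this middle step together with the enumeration of outcomes (iii)--(vi): the bowtie string in $M^*$ must be built from a configuration that supplies none of its triangles, the minor $M'$ must be chosen so that trimming preserves the $N^*$-minor, and every sub-configuration returned by Lemma~\ref{beachtheend} must be resolved without letting any sub-case — or any $N^*$-minor — slip through. The duality bookkeeping (keeping straight which operations on $M$ correspond to which on $M^*$, and that "win" statements for $M^*$ are exactly what the present lemma asks for) is routine by comparison.
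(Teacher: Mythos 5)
Your global architecture is the paper's: dualize, seed a right-maximal bowtie string in $M^*$, feed it to Lemma~\ref{beachtheend} with outcomes (i), (ii), (vii) matching the three conclusions, and finish the surviving outcome (iv) with Lemma~\ref{moreteeth}. The outcome bookkeeping, the use of Lemma~\ref{con45} to kill $N\preceq M/\{x,y\}$ for pairs in $\{4,5,6\}$, and the use of Lemma~\ref{airplane} are all as in the paper. But the step you defer as ``the principal obstacle'' is the bulk of the actual proof, and the route you sketch for it would not work. The $4$-fan $(2,4,7,8)$ of $M\ba 6/5$ does not dualize to a triangle of $M^*$: the set $\{4,7,8\}$ is a triad of $M\ba 6$ only because $\{4,6,7,8\}$ is a $4$-cocircuit of $M$, so in $M^*$ it sits inside a $4$-circuit, not a triangle. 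Consequently Lemmas~\ref{6.3rsv}, \ref{realclaim1}, \ref{claim2}, \ref{3peaks} and~\ref{rotorwin}, which all presuppose genuine bowties, have nothing to bite on yet. What the paper does instead is an inductive extraction of triads of $M$ itself: show $M/8$ is \sfc\ with an $N$-minor, so (no quick win) it has a $4$-fan whose triad $\{u_2,u_3,u_4\}$ is a triad of $M$ (configuration (F)); relabel it $\{a_0,b_0,c_0\}$, show $M/c_0$ and then $M/c_0,8$ are \sfc\ with $N$-minors, and extract from a $4$-fan of $M/c_0,8$ a second triad $\{a_1,b_1,c_1\}$ together with the cocircuit data making $(T_0,T_1,D_0)$ a bowtie of $M^*$ (configuration (J)). None of this is recoverable from the lemmas you cite; it needs the ``contract the coguts element, re-establish sequential $4$-connectivity via \cite[Lemma~3.3]{cmoV}, extract a new $4$-fan'' loop.

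Two further points. First, your $M'$ should be $M^*\ba 5,8/6$, not $M^*/6\ba 5$: the element $8$ must go into $X$, since it reappears as the element $\beta$ of Figure~\ref{drossfig2} and has to be deleted along with $c_0,\ldots,c_n$ to obtain the \ifc\ minor at the end (via Lemma~\ref{moreteeth}). Second, your treatment of outcomes (iii), (v), (vi) of Lemma~\ref{beachtheend} is too vague to check. Outcome (iii) is not fed into Lemma~\ref{moreteeth} in the paper; it is killed directly, because the $4$-cocircuit $\{a_0,b_0,z,c_n\}$ of $M^*$ is a circuit of $M$ whose symmetric difference with the circuit $\{7,8,a_0,b_0\}$ puts $7$ in a $2$-circuit of $M/5\ba 6/8,c_n$, contradicting the fact (established en route, your sketch never records it) that $M/5\ba 6/8\ba 7$ has no $N$-minor. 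Outcome (v) is excluded by showing the relevant contractions $M'\ba c_0,\ldots,c_i/b_i$ and $/a_i$ never keep an $N^*$-minor, which again traces back to $N\not\preceq M'/7$; and outcome (vi) is excluded by \ort\ of the small configurations against the circuit $\{7,8,a_0,b_0\}$ and the cocircuits $\{4,6,7,8\}$, $\{2,3,4,5\}$ of $M$. These are all finite checks, but they hinge on facts your outline does not establish, so as written the proposal has a genuine gap rather than a routine one.
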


\begin{proof} 
Assume the lemma does not hold.  
First we show that 
\begin{sublemma}
\label{mcon8}
$M\ba 6/8$ and $M/8$ are \thc, and $M/5\ba 6/8$ is \thc\ with an $N$-minor.
\end{sublemma}

{   Clearly $M/5\ba 4 \cong M/5 \ba 6$. 
As $N \not \preceq M\ba 1,4$, it follows by Lemma~\ref{airplane} that $M/5\ba 6$ is \ffsc\ having an $N$-minor.} 
Since $M/5\ba 6$ has $(2,4,7,8)$ as a $4$-fan, both $M/5\ba 6/8$ and $M/5\ba 6\ba 2$ are \thc, and at least one of these matroids has an $N$-minor. 
As $M/5\ba 6,2 \cong M/5\ba 4,2 \cong M\ba 4,2/3 \cong M/3 \ba 4,1$, and  $N \not \preceq M\ba 1,4$, we deduce that $N\preceq M/5\ba 6/8$. 

By Lemma~\ref{ABCDE}, we know that $M\ba 6$ is \ffsc. As $M\ba 6$ has $(2,4,7,8)$ as a $4$-fan, $M\ba 6/8$ is \thc.  Thus $M/8$ is \thc\ unless $M$ has a triangle containing 
$\{6,8\}$.  In the exceptional case,  $M\ba 4/5$ has a $5$-fan containing $\{2,6,7,8\}$. This 
\cn\  completes the proof of \ref{mcon8}.   
 
Next we note that 

\begin{sublemma}
\label{mcon8sfc}
$M/8$ is \sfc.
\end{sublemma}

Suppose that  $M/8$ has a \ns\ \ths\ $(U,V)$. Then, by \cite[Lemma 3.3]{cmoV}, we may assume that $\{1,2,\ldots,7\} \subseteq U$. Thus we can adjoin $8$ to $U$ to obtain a \ns\ \ths\ of $M$; a \cn. Thus \ref{mcon8sfc} holds. 

Next we show that 
\begin{sublemma}
\label{effgee}
$M$ contains one of the configurations (F) and (G) in Figure~\ref{Afiga}, 
where all of the elements shown are distinct, and every $4$-fan in $M/8$ has its guts element in $\{6,7\}$. 
\end{sublemma}

To see this, note that, since $N \preceq M/8$ but $M$ has no quick win, $M/8$ has a $4$-fan $(u_1,u_2,u_3,u_4)$. Thus $M$ has $\{u_1,u_2,u_3,8\}$ as a circuit and has $\{u_2,u_3,u_4\}$ as a triad. By \ort, $\{u_1,u_2,u_3\}$ meets $\{4,6,7\}$, so $u_1 \in \{4,6,7\}$. If $u_1 = 4$, then, by \ort, $2, 3$, or $5$ is in $\{u_2,u_3,u_4\}$, so $M$ has a $4$-fan; a \cn. Thus $u_1 \in \{6,7\}$. 
By construction, $8\notin\{u_2,u_3,u_4\}$.  
As $\{u_2,u_3,u_4\}$ is a triad, this set also avoids $\{1,2,\dots ,7\}$.  
Hence \ref{effgee} holds.

Next we show the following. 

\begin{sublemma}
\label{m68sfc}
If $M$ contains configuration (G), then $M\ba 6/8$ is \sfc.  
\end{sublemma}

By~\ref{mcon8}, we know that $M\ba 6/8$ is \thc.  
Let $(U,V)$ be a \ns\ \ths\ of $M\ba 6/8$. Then we may assume that $\{u_2,u_3,u_4\} \subseteq U$. Thus $(U \cup 6,V)$ is a \ns\ \ths\ of $M/8$; a \cn\ to \ref{mcon8sfc}. Hence \ref{m68sfc} holds.

\begin{sublemma}
\label{mgf}
If $M$ contains configuration (G), then $M$ contains configuration (F). 
\end{sublemma}

To see this, observe, since 
 $M\ba 6/8$ is \sfc\ with an $N$-minor,    
but (i) does not hold, $M\ba 6/8$  has a $4$-fan $(v_1,v_2,v_3,v_4)$. Suppose first that 
$\{v_2,v_3,v_4\}$ is a triad of $M$. 
Then the $4$-fan is a fan in $M/8$, and~\ref{effgee} implies that $v_1=7$.  
Hence $M$ contains configuration (F) where $v_i$ replaces $u_i$ for each $i$ in $\{1,2,3\}$.

We may now assume that $\{v_2,v_3,v_4,6\}$ is a cocircuit of $M$. 
Lemma~\ref{bowwow} implies that $\{v_2,v_3,v_4\}$ avoids $\{1,2,3\}$.  
Orthogonality implies that $\{4,5\}$ meets $\{v_2,v_3,v_4\}$. Suppose $4 \in \{v_2,v_3,v_4\}$. Then, by \ort, $\{2,7\}$ meets $\{v_2,v_3,v_4\}$. 
Thus $\{v_2,v_3,v_4\}$  contains $\{4,7\}$, so $\{6,v_2,v_3,v_4\} = \{6,4,7,8\}$; a \cn. We conclude that $4 \not \in \{v_2,v_3,v_4\}$. Thus $5 \in \{v_2,v_3,v_4\}$. 

By Lemma~\ref{ABCDE}, we know that $\{4,5,6\}$ is the only triangle containing $5$.  
Suppose that  $5\in\{v_2,v_3\}$. Then, without loss of generality, $\{v_1,v_2,5,8\}$ is a circuit.  
In this case, \ort\ with $\{2,3,4,5\}$ and $\{4,6,7,8\}$ implies that either $4\in\{v_1,v_2\}$,  or $\{v_1,v_2\}=\{7,z\}$, for some $z$ in $\{2,3\}$.  
If $4\in\{v_1,v_2\}$, then $\{v_1,v_2,5,8\}\btu\{4,5,8,u_2,u_3\}$ is a triangle meeting $\{u_2,u_3\}$; a \cn.  
Thus $\{v_1,v_2\}=\{7,z\}$, for some $z\in\{2,3\}$, and $\lambda (\{1,2,\dots ,8\})\leq 2$; a \cn.  
We conclude that  $5\notin\{v_2,v_3\}$,  so $5=v_4$.  
Moreover, by \ort\ between $\{6,8,u_2,u_3\}$ and $\{v_2,v_3,5,6\}$, we deduce that $\{u_2,u_3\}$ meets $\{v_2,v_3\}$. 
Thus, we may assume that $v_3=u_3$.  
As $u_3$ is not in a triangle of $M$, the set $\{v_1,v_2,u_3,8\}$ is a circuit.  
By \ort, $\{4,7\}$ and $\{u_2,u_4\}$ meet $\{v_1,v_2\}$.  
Hence $v_2\in\{4,7,u_2,u_4\}$.  
By \ort\ between $\{v_2,v_3,5,6\}$ and the  circuits $\{2,4,7\}$ and $\{6,8,u_2,u_3\}$, we know that $v_2\notin\{{   4,}7,u_2\}$.  
Hence $v_2=u_4$.  
Then $\{u_4,u_3,5,6\}\btu \{u_2,u_3,u_4\}$, which is $\{u_2,5,6\}$, is a triad; a \cn.  
We conclude that \ref{mgf} holds. 

We may now assume that $M$ contains configuration (F).  
We relabel $(u_2,u_3,u_4)$ as $(a_0,b_0,c_0)$, for reasons that will become clear later.  
We show next that 
\begin{sublemma}
\label{f568}
{   $M/5\ba 6/8/c_0$ has an $N$-minor but  $M/5\ba 6/8\ba 7$ does not.}
\end{sublemma}

As $M/5\ba 6/8$  has an $N$-minor and has  $(7,a_0,b_0,c_0)$ as a $4$-fan, we deduce that $N \preceq M/5\ba 6/8\ba 7$  or $N \preceq M/5\ba 6/8/c_0$. In the first case, as 
$M/5\ba 6/8\ba 7 \cong M/5\ba 6,7/4$, we see that $N\preceq M/5,4$. Thus, by Lemma~\ref{con45}, we deduce that $N\preceq M\ba 4,1$; a \cn. We conclude that \ref{f568} holds.

From \ref{f568}, $N \preceq M/c_0$.  
Since $c_0$ is in a triad of $M$, we see that $M/c_0$ is $3$-connected. 

\begin{sublemma}
\label{fu4}
$M/c_0$ is \sfc. 
\end{sublemma}

To show this, suppose that $(U,V)$ is a \ns\ \ths\ of $M/c_0$. Then, by \cite[Lemma 3.3]{cmoV}, we may assume that $\{1,2,\ldots,7\} \subseteq U$. Hence we may also assume that $8 \in U$. Now if $a_0$ or $b_0$ is in $U$, then we may assume that both are in $U$, 
in which case 
  we can adjoin $c_0$ to $U$ to get a \ns\ \ths\ of $M$; a \cn. Hence $\{a_0,b_0\} \subseteq V$ and we can adjoin $c_0$ to $V$ to get a \ns\ \ths\ of $M$; a \cn.  Thus \ref{fu4} holds. 

{   Next we observe the following.

\begin{sublemma}
\label{jif}
If $a_0$ or $b_0$ is the guts element of a $4$-fan of $M/c_0$, then, up to switching the labels on $a_0$ and $b_0$, the matroid $M$ contains structure (J) in Figure~\ref{Afiga} where all of the elements shown are distinct.
\end{sublemma}

We need only check that the elements are distinct. Clearly $\{a_1,b_1,c_1\}$ avoids $\{1,2,\ldots,7,b_0,c_0\}$. If $\{a_1,b_1,c_1\}$ meets $\{a_0,8\}$, then it contains this set. Now $a_0 \not\in \{a_1,b_1\}$ as $M$ is binary, so $a_0 = c_1$. Then $8 \in \{a_1,b_1\}$ and we get a \cn\ to \ort. Hence \ref{jif} holds.

We now show that

\begin{sublemma}
\label{jif2}
$M$ contains structure (J) in Figure~\ref{Afiga} where all of the elements shown are distinct and the labels on $a_0$ and $b_0$ may be interchanged.
\end{sublemma}

As $M$ has no quick win, by \ref{fu4}, $M/c_0$ has a $4$-fan $(\alpha,\beta,\gamma,\delta)$. Thus $M$ has $\{\alpha,\beta,\gamma,c_0\}$ as a circuit. By \ort, $\{a_0,b_0\}$ meets $\{\alpha,\beta,\gamma\}$. If $\alpha \in \{a_0,b_0\}$, then the result follows by \ref{jif}.  Thus we may assume that $\gamma = b_0$. Then $\{\beta,\gamma,\delta\}$ contains exactly two elements of $\{a_0,b_0,7,8\}$. Now $7 \not\in \{\beta,\gamma,\delta\}$, and $\{\beta,\gamma,\delta\}$ does not contain $\{a_0,b_0\}$, so $8 \in \{\beta,\delta\}$.

As the next step towards proving \ref{jif2}, we now show that

\begin{sublemma}
\label{jif3}
$M$ does not have $\{2,5,a_0,c_0\}$ or $\{6,8,b_0,c_0\}$ as a circuit.
\end{sublemma}

Since $M$ has $\{a_0,b_0,2,5,6,8\}$ as a circuit and
$\{a_0,b_0,2,5,6,8\} \bigtriangleup \{2,5,a_0,c_0\} = \{6,8,b_0,c_0\}$, it suffices to prove that $\{2,5,a_0,c_0\}$ is not a circuit. Assume otherwise. By \ref{f568}, we know that $M/5,8,c_0\ba 6$ has an $N$-minor. Hence so does $M/5,8,c_0\ba 6,2$. But
$M/5,8,c_0\ba 6,2\cong M/5,8,c_0\ba 4,2 \cong  M\ba 4,2/3,8,c_0
\cong  M/3,8,c_0\ba 4,1$, so $N\preceq M\ba 1,4$; a \cn. Hence \ref{jif3} holds.

Recall that $8 \in \{\beta,\delta\}$. Suppose that $8 = \beta$. Then, by \ort\ between $\{\alpha,8,b_0,c_0\}$ and the cocircuits $\{4,6,7,8\}$ and $\{2,3,4,5\}$, it follows that $\alpha \in \{6,7\}$. As $\{7,8,a_0,b_0\}$ is also a circuit, $\alpha \neq 7$, so $\alpha = 6$ and we have a \cn\ to \ref{jif3}. We deduce that $8 \neq \beta$, so $8 = \delta$. Hence $M/c_0$ has $(\alpha,\beta,b_0,8)$ as a $4$-fan.

\begin{sublemma}
\label{jif5}
$M$ has $\{\alpha,\beta,b_0,c_0\}$ as a circuit and has $\{\beta,b_0,8\}$ and $\{a_0,b_0,c_0\}$ as triads, these being its only triads containing $b_0$.
\end{sublemma}

The first part of this is immediate. By \ort, a triad containing $b_0$ must contain $a_0$ or $8$, so the last part also holds.

By \ref{f568}, we see that $N \preceq M/c_0,8$. Still aiming at obtaining \ref{jif2}, we show next that

\begin{sublemma}
\label{jif4}
$M/c_0,8$ is $3$-connected.
\end{sublemma}

Assume the contrary. As $M/c_0$ is $3$-connected having $(\alpha,\beta,b_0,8)$ as a $4$-fan, we deduce that $M/c_0$ has $\{8,b_0\}$ or $\{8,\beta\}$ in a triangle. Suppose $\{8,\beta\}$ is in a triangle of $M/c_0$. Then $\{8,\beta,c_0\}$ is contained in a $4$-circuit of $M$, which, by \ort, must be $\{8,\beta,c_0,a_0\}$ or $\{8,\beta,c_0,b_0\}$. But $M$ has $\{\alpha,\beta,b_0,c_0\}$ as a circuit. As $\alpha \neq 8$, it follows that $\{8,\beta,c_0,a_0\}$ is a circuit. By taking the symmetric difference of the last two circuits, we deduce that
$\{\alpha,8,a_0,b_0\}$ is a circuit, so $\alpha = 7$. Then $\{7,\beta,b_0,c_0\}$ is a circuit, so $\beta \in \{4,6\}$; a \cn\ to \ort. We conclude that $\{8,\beta\}$ is not in a triangle of $M/c_0$. Thus $M/c_0$ has $\{8,b_0\}$ contained in a triangle, so $M$ has $\{8,b_0,c_0\}$ contained in a $4$-circuit. By \ort, this $4$-circuit is $\{8,b_0,c_0,6\}$; a \cn\ to \ref{jif3}. We conclude that \ref{jif4} holds.

By \ref{fu4}, $M/c_0$ is sequentially $4$-connected. It follows that $M/c_0,8$ is sequentially $4$-connected for if $(U,V)$ is a non-sequential $3$-separation of the last matroid, then we may assume that $\{\alpha,\beta,b_0\} \subseteq U$, so $(U \cup 8,V)$ is a non-sequential $3$-separation of $M/c_0$; a \cn.

Since $M$ has no quick win, $M/c_0,8$ has a $4$-fan $(s_1,s_2,s_3,s_4)$. Thus $M$ has a circuit $C$ such that
$\{s_1,s_2,s_3\} \subsetneqq C \subseteq \{s_1,s_2,s_3,c_0,8\}$,  and $\{s_2,s_3,s_4\}$ avoids $\{1,2,\ldots,7,8,c_0\}$. Suppose $c_0 \not\in C$. Then $C = \{s_1,s_2,s_3,8\}$. By \ort, $s_1 \in \{4,6,7\}$. But $s_1 \neq 4$ otherwise $\{s_2,s_3\}$ meets $\{2,3,5\}$. Thus $s_1 \in \{6,7\}$. Now $M$ has $\{\beta,b_0,8\}$ as a triad, so $\{s_2,s_3\}$ meets $\{\beta,b_0\}$ in a single element. The triad $\{s_2,s_3,s_4\}$ avoids $\{c_0,8\}$ so differs from both $\{a_0,b_0,c_0\}$ and $\{b_0,\beta,8\}$. Hence, by \ref{jif5}, $b_0 \not\in \{s_2,s_3,s_4\}$, so
$\beta \in \{s_2,s_3\}$. Thus, by \ort\ between $\{s_2,s_3,s_4\}$ and $\{\alpha,\beta,b_0,c_0\}$, we deduce that $\alpha \in \{s_2,s_3,s_4\}$. It follows that $M/c_0$ has a $4$-fan having $b_0$ as its guts element and $\{s_2,s_3,s_4\}$ as its triad; a \cn\ to \ref{jif}. We conclude that $c_0 \in C$.

Suppose $C = \{s_1,s_2,s_3,c_0\}$. By \ort\ and \ref{jif}, $\{s_2,s_3\}$ meets $\{a_0,b_0\}$. As $\{s_2,s_3,s_4\}$ is not $\{a_0,b_0,c_0\}$, it follows by \ort\ that $\{s_2,s_3,s_4\}$ meets $\{7,8\}$; a \cn. We deduce that $C \neq \{s_1,s_2,s_3,c_0\}$, so $C = \{s_1,s_2,s_3,c_0,8\}$. Then, by \ort, $s_1 \in \{4,6,7\}$, and $\{s_2,s_3\}$ meets $\{a_0,b_0\}$. By \ref{jif5}, $b_0 \not\in \{s_2,s_3,s_4\}$ so $a_0 \in \{s_2,s_3\}$. Thus, by \ort\ between $\{s_2,s_3,s_4\}$ and $\{a_0,b_0,7,8\}$, we obtain a \cn. We conclude that \ref{jif2} holds. }

{   We may now assume that $M^*$ has $(T_0,T_1,D_0)$ as a bowtie.
In $M^*$, take a right-maximal bowtie string $T_0,D_0,T_1,D_2,\dots ,T_n$ noting that  $n$ may equal $1$. Next we show the following.

\begin{sublemma}
\label{distink}
The elements in $\{1,2,\dots ,8\}\cup T_0\cup T_1\cup\dots\cup T_n$ are  distinct.
\end{sublemma}

The  triangles in this string are triads in $M$, so the elements in the string avoid $\{1,2,\dots ,7\}$. Therefore \ref{distink} holds unless either $a_0 = c_n$, or $8$ is in the bowtie string. Suppose $8 \in T_i$. Then, since $8 \not\in T_0$, \ort\ between $T_i$ and $\{7,8,a_0,b_0\}$ implies that $a_0 = c_i$ and $i = n$. Then $8 \in \{a_n,b_n\}$. By \ort, $\{b_{n-1},c_{n-1},a_n,b_n\}$ meets $\{4,6,7\}$,
 so a triangle in $M$ meets a triangle in $M^*$; a \cn.  We conclude that $8$ is not in the bowtie string. By \ort\ between $T_n$ and the cocircuit $\{7,8,a_0,b_0\}$ in $M^*$, we see that $c_n \neq a_0$. Thus \ref{distink} holds. }

{  We want to apply Lemma~\ref{beachtheend} to $M^*$. Since $M^*$ contains both triangles and triads, it is not isomorphic to the cycle matroid of a quartic M\"{o}bius ladder. Moreover, if $M^*$ is isomorphic to the cycle matroid of a terrahawk, then so is $M$, and therefore $M$ has a second triangle containing $5$; a \cn\ to Lemma~\ref{ABCDE}. Let $X= \{5,8\}$, let $Y= \{6\}$, and let $M' = M^*\ba X/Y$. In $M'$, each $T_i$ is a disjoint union of circuits while each $D_j$ is a disjoint union of cocircuits. Since $M'$ is $3$-connected, it follows that $T_0,D_0,T_1,D_2,\dots ,T_n$ is a bowtie string in $M'$. }

\begin{sublemma}
\label{whatn*}
$M'\ba c_0,c_1,\dots ,c_n$ has an $N^*$-minor and $M'\ba c_0,c_1,\dots ,c_i/e$ has no $N^*$-minor  for all $e$ in $\{a_i,b_i\}$ where $i\in\{1,2,\dots ,n\}$.  
\end{sublemma}

{   To see this, observe first that, by \ref{f568}, $M'\ba c_0$ has an $N^*$-minor but $M'/7$ does not. It follows from this that $M'\ba a_0/b_0$ has no $N^*$-minor since $M'\ba a_0$ has $\{b_0,7\}$ as a cocircuit, so $M'\ba a_0/b_0 \cong M'\ba a_0/7$. }
 Suppose $M'\ba c_0,c_1,\dots ,c_i$ has an $N^*$-minor for some $i$ in $\{0,1,\dots ,n-1\}$.  
As this matroid has $(c_{i+1},a_{i+1},b_{i+1},b_i)$ as a $4$-fan, either $M'\ba c_0,c_1,\dots ,c_{i+1}$ has an $N^*$-minor, or  $M'\ba c_0,c_1,\dots ,c_i/b_i$ has an $N^*$-minor.  
{   By Lemma~\ref{stringswitch}, the latter implies that $M'\ba a_0,a_1,\dots ,a_i/b_0$ has an $N^*$-minor. Hence so does $M'\ba a_0/b_0$; a \cn.  
 It follows by induction that $M'\ba c_0,c_1,\dots ,c_n$ has an $N^*$-minor. }
Furthermore, if $M'\ba c_0,c_1,\dots ,c_i/a_i$ has an $N^*$-minor for some $i$  in $\{1,2,\dots ,n\}$, then Lemma~\ref{stringswitch} implies that $M'\ba a_0,a_1,\dots ,a_{i-1},b_i/b_0$ has an $N^*$-minor, so $M'\ba a_0/b_0$ has an $N^*$-minor; a \cn. Thus~\ref{whatn*} holds.

Clearly none of (i), (ii), and (vii) of Lemma~\ref{beachtheend} holds in $M^*$.  

\begin{sublemma}
\label{not3or5}
{  Neither (iii) nor (v) of Lemma~\ref{beachtheend} holds in $M^*$.} 
\end{sublemma}

To see this, observe that, by~\ref{whatn*}, it follows that  (v) of Lemma~\ref{beachtheend} does not hold. 
Suppose (iii) of Lemma~\ref{beachtheend} holds in $M^*$. Then $\{a_0,b_0,z,c_n\}$ is a $4$-circuit of $M$ for some $z$ in $\{a_n,b_n\}$.  
Taking the symmetric difference of this circuit with $\{7,8,a_0,b_0\}$, we see that $\{7,8,z,c_n\}$ is   a  circuit of $M$.  
But $M/5\ba 6/8,c_n$ has an $N$-minor and has $7$ in a $2$-circuit. Hence $M/5\ba 6/8,c_n\ba 7$ has an $N$-minor; a \cn\ to~\ref{f568}.  {  We deduce that \ref{not3or5} holds.}

Suppose $M^*$ contains the structure in Figure~\ref{goofy} where $d_1\in Y$ and either $d_0\in X$ or $M'\ba c_0,c_1,d_0$ has an $N^*$-minor.  
Then $d_1=6$, and $\{d_0,a_1,c_1,6\}$ is a circuit in $M$.  
By \ort\ with $\{4,6,7,8\}$, we know that $d_0\in\{4,7,8\}$.  
But $\{b_0,d_0,a_1\}$ is a triad of $M$, so it avoids $\{4,7\}$.  
Thus $d_0=8$.  
The symmetric difference $\{4,5,6\}\btu\{6,8,a_1,c_1\}$ is  $\{4,5,8,a_1,c_1\}$, which must be a circuit  in $M$.  
Now $M/5\ba 6/8/c_0/c_1$ has $\{4,a_1\}$ as a circuit.  
Hence $M/5\ba 6/8/c_0,c_1\ba a_1$ has an $N$-minor and (v) of Lemma~\ref{beachtheend} holds; a \cn.  {   Thus $M^*$ does not contain the structure in Figure~\ref{goofy}.}

%Then Lemma~\ref{beachtheend} implies that $M^*$ contains a structure in Figure~\ref{fign=2} or Figure~\ref{drossfigiinouv} and $\{d_{n-1},d_n\}$ is not contained in a triangle of $M^*$.  
{  Suppose $M^*$ contains the structure in Figure~\ref{fign=2}(a).  
Then $M$ has $\{p,t,a_0\},\{a_0,b_0,c_0\}$, and $\{b_0,b_1,q\}$ as distinct cocircuits. } 
By \ort\ with the circuit $\{7,8,a_0,b_0\}$, each of $q$ and $\{p,t\}$ meets $\{7,8\}$ in a unique element.  
As $7$ is in no triad of $M$, we deduce that $q=8$ and $8\in\{p,t\}$. Hence $q\in\{p,t\}$; a \cn.  
Suppose next that $M^*$ contains the structure in Figure~\ref{fign=2}(b).  
Then $\{b_0,b_1,q\}$ and $\{s_1,s_2,s_3\}$ are disjoint cocircuits of $M$ and $\{b_1,c_1,q,s_2,s_3\}$ is a circuit of $M$.  
By \ort\ with the circuit $\{7,8,a_0,b_0\}$, it follows that $q=8$.  
By \ort\ with the cocircuit $\{4,6,7,8\}$, we see  that $\{b_1,c_1,s_2,s_3\}$ meets $\{4,6,7\}$; a \cn.  {  We conclude that $M^*$   contains neither of the structures in 
Figure~\ref{fign=2}.}

By Lemma~\ref{beachtheend}, it follows that $M^*$ contains the structure in Figure~\ref{drossfigiinouv} and $\{d_{n-1},d_n\}$ is not contained in a triangle of $M^*$.   
By~\cite[Lemma~6.4]{cmoVI}, as $a_0\neq c_n$ and $\{d_{n-1},d_n\}$ is not contained in a triangle, we know that the elements in this figure are all distinct with the possible exception that $\al$ and $\be$ may be repeated elements.  
Thus $\{\al,\be,a_0\}$ is a triangle of $M^*$ distinct from $T_0$, and $\{\al,a_0,c_0,d_0\}$ or $\{\al,a_0,c_0,a_1,c_1\}$ is a cocircuit of $M^*$ for some elements $\al,\be$, and $d_0$.  
Furthermore, $\{c_0,d_0,a_1\}$ is a triangle of $M^*$.  
Since $M^*$ has $\{7,8,a_0,b_0\}$ as a cocircuit, \ort\ implies that $\{\al,\be\}$ meets $\{7,8\}$.  
Clearly $7$ avoids $\{\al,\be\}$, so $8\in\{\al,\be\}$.  

{  
\begin{sublemma}
\label{beta8}
$\beta = 8$.
\end{sublemma}

To show this, suppose that $\alpha  = 8$. 
Then \ort\ implies that $\{4,6,7\}$ meets $\{a_0,c_0,d_0\}$ or $\{a_0,c_0,a_1,c_1\}$. Thus $M^*$ is not \ifc; a \cn.   
Thus \ref{beta8} holds. }
  
We relabel $7$ as $\ga$ to see that $M^*$ contains the structure in Figure~\ref{drossfig2}, where the elements are all distinct with the possible exception that $\al,\be$, and $\ga$ may be repeated elements.  

{  In preparation for applying Lemma~\ref{moreteeth}, we now show the following.}

\begin{sublemma}
\label{distink2}
The elements in Figure~\ref{drossfig2} are   distinct  except that $\ga$ and $d_n$ may be equal.
\end{sublemma} 

As $\ga$ is in a triad of $M^*$, we know that $\ga$ avoids all of the other elements in Figure~\ref{drossfig2} with the possible exception of $d_n$.  
By \ort\ between the triangles in this figure and {  the cocircuits $\{\be,\ga,a_0,b_0\}$ 
and $\{\alpha,a_0,c_0,d_0\}$, we deduce that $\be$ and $\alpha$ avoid all of the elements with the possible exception of $d_n$.}  
%By \ort\ between the triangles in this figure and $\{\al,a_0,c_0,d_0\}$, we know that $\al$ avoids all of the elements with the possible exception of $d_n$.  
Thus the elements are all distinct except that $d_n$ may be in $\{\al,\be,\ga\}$.  
But \ort\ between $\{d_{n-1},a_n,c_n,d_n\}$ and $\{\al,\be, a_0\}$ implies that $d_n\notin\{\al,\be\}$.  
Thus \ref{distink2} holds. 
%the elements in Figure~\ref{drossfig2} are all distinct with the possible exception that $\ga$ may be $d_n$.  

By \ref{not3or5} and Lemma~\ref{beachtheend}, $M^*$ has no triangle $T_{n+1}$ such that $\{x,c_n,a_{n+1},b_{n+1}\}$ is a $4$-cocircuit for any $x$ in $\{a_n,b_n\}$.  
Then Lemma~\ref{6.3rsv} implies that $M^*\ba c_n$ is \ffsc.  

By \ref{whatn*},  $M^*\ba \be ,c_0,c_1,\dots ,c_n$ has an $N$-minor.  
By \ref{distink2}, we are now in a position to apply Lemma~\ref{moreteeth} to get  that $M^*\ba \be,c_0,c_1,\dots ,c_n$ is \ffsc\ and every $4$-fan of this matroid is either a $4$-fan in $M^*\ba c_n$ with $b_n$ as its coguts element,  or is a $4$-fan in $M^*\ba \be$ with $\al$ as its coguts element.  
Let $(u,v,w,x)$ be such a $4$-fan.  
{  Suppose first that $x = b_n$ and this $4$-fan is a $4$-fan in $M^*\ba c_n$. Then 
 $\{v,w,b_n,c_n\}$ is   a $4$-cocircuit of $M^*$ and, taking $T_{n+1} = \{u,v,w\}$, we get a \cn\ to the previous paragraph.  }
It follows that  $(u,v,w,\al)$ is a $4$-fan in $M^*\ba \be$.  
By \ref{beta8}, we know that $\be =8$. Thus $(\al,w,v,u)$ is a $4$-fan in $M/8$; a \cn\ to~\ref{effgee}.  
We conclude that $M^*\ba \be,c_0,c_1,\dots ,c_n$ has no $4$-fans and so is \ifc.  
Thus (ii) holds; a \cn.  
\end{proof}
}

This completes our analysis of the case when $M$ contains configuration (A) in Figure~\ref{ABCDEfig}. 

\section{Configuration (B)}
\label{Bconfig}
 
In this section, we deal with the case when $M$ contains configuration (B) in Figure~\ref{ABCDEfig}. The results from the last two sections mean that if we find that $M$ contains configuration (C) or (A) from Figure~\ref{ABCDEfig}, then we are guaranteed to get one of the desired outcomes from the main theorem.

\begin{figure}[htb]
\center
\includegraphics{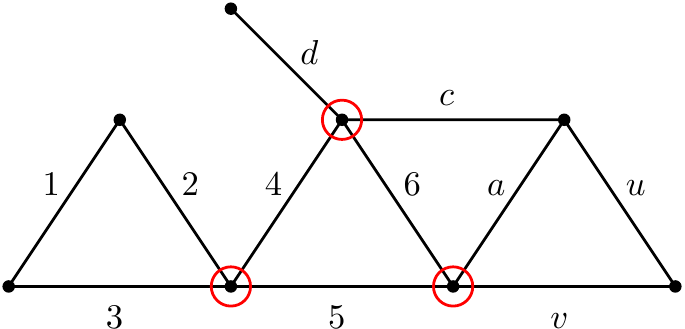}
\caption{All of the elements are distinct except that $u$ may be $1$.}
\label{BAfiga}
\end{figure}

\begin{lemma}
\label{BAlemma} 
Suppose $M$ and $N$ are \ifc\ binary matroids, $|E(M)|\geq 13$ and $|E(N)|\geq 7$, and $M$ contains structure (B) in Figure~\ref{ABCDEfig} where all of the elements are distinct except that $1$ may be $a$. 
% and $\{4,5,6\}$ is the only triangle containing $5$.  
%% JGO The last line is omitted because it is a consequence of $M\ba 4$ being \ffsc\
Suppose $M\ba 4$ is \ffsc\ with an $N$-minor but $M\ba 1,4$ does not have an $N$-minor. 
%Suppose that Hypothesis 1 holds.  
%Suppose that $M$ does not contain structure (A) or (C) in Figure~\ref{ABCDEfig}, that is, $M$ does not have a $4$-cocircuit $\{4,6,x,y\}$ and a triangle $\{4,x,z\}$ where $z$ is some element of $\{2,3\}$ and $M$ does not have $\{7,8,9\}$ as a triangle where $\{4,6,7,8\}$ is a cocircuit. 
Then 
\begin{itemize}
%\item[(i)] %$M$ contains structure (A) in Figure~\ref{ABCDEfig}, that is, 
%$M$ has a $4$-cocircuit $\{4,6,x,y\}$ and a triangle $\{4,x,z\}$ where $z$ is some element of $\{2,3\}$ and $|\{1,2,\dots ,6,x,y\}|=8$; or 
\item[(i)] %$M$ contains structure (C) in Figure~\ref{ABCDEfig}, that is, 
$M$ has a triangle  $\{7,8,9\}$ where $\{4,6,7,8\}$ is a cocircuit and the elements in $\{1,2,\dots ,9\}$ are distinct except that $1$ may be $9$; or 
\item[(ii)] $M$ contains the configuration in Figure~\ref{BAfiga}, where all of the elements are distinct except that $u$ may be $1$, and $M\ba 6$ is \ffsc\ with an $N$-minor; or 
\item[(iii)] $M\ba 6$ is \ifc\ with an $N$-minor.
\end{itemize}
\end{lemma}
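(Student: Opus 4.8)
The plan is to analyze the deletion $M\ba 6$, pushing the argument behind Lemma~\ref{ABCDE} one step further by means of Lemma~\ref{6.3rsv}. Label configuration~(B) so that its triangles are $\{1,2,3\}$, $\{4,5,6\}$, and $\{a,c,6\}$ and its $4$-cocircuits are $\{2,3,4,5\}$ and $\{4,c,d,6\}$, where $\{1,\dots ,6,c,d\}$ has eight elements, the element $a$ is distinct from $2,3,\dots ,6,c,d$, and $a$ may equal $1$; recall also that $M$ then has $\{4,5,a,c\}$ as a circuit. First I would recover the standing facts. Since $\{1,2,3\}$ is a triangle and $\{2,3,4,5\}$ a cocircuit, $(1,2,3,5)$ is a $4$-fan of $M\ba 4$; as $M\ba 4$ is \ffsc\ with an $N$-minor and $N\not\preceq M\ba 1,4$, Lemma~\ref{airplane} gives that $M\ba 4/5$ is \ffsc\ with an $N$-minor. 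Because $\{4,5,6\}$ is a triangle, $4$ and $6$ are parallel in $M/5$, so $M\ba 4/5\cong M\ba 6/5$; hence $M\ba 6$ has an $N$-minor, and $\{4,5,6\}$ is the only triangle of $M$ through $5$, since a second such triangle would yield a $2$-circuit in the $3$-connected matroid $M\ba 4/5$. Next I would rule out $\{4,5,6\}$ being the central triangle of a quasi rotor, exactly as in the proof of Lemma~\ref{ABCDE}: its central element would lie in $\{4,5\}$, hence equal $4$ since $M\ba 4$ is \ffsc, and then $M\ba 4/5$ would contain a $5$-fan, a \cn.

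With the quasi-rotor exception excluded, Lemma~\ref{6.3rsv}, applied to the bowtie $(\{1,2,3\},\{4,5,6\},\{2,3,4,5\})$, gives that $M\ba 6$ is \ffsc\ and that one of its four listed outcomes holds. The first outcome, $M\ba 6$ \ifc, is part~(iii) of the present lemma. Outcomes (ii) and (iv) each produce a triangle through $\{7,8\}$, namely $\{7,8,9\}$ (or $\{1,7,8\}$, with $M\ba 1$ \ifc), together with a cocircuit $\{\epsilon,6,7,8\}$ where $\epsilon\in\{4,5\}$. In both cases I would eliminate $\epsilon=5$: the cocircuit $\{5,6,7,8\}$ meets the triangle $\{a,c,6\}$ in $6$, so it meets it again, and iterating orthogonality against that triangle and against $\{7,8,9\}$ (or $\{1,7,8\}$), using that $5$ lies in a unique triangle of $M$ and that two distinct triangles of a simple binary matroid meet in at most one element, one is driven into a configuration in which two distinct triangles share two elements, a \cn. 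Hence $\epsilon=4$, and in either case $M$ has a triangle $\{7,8,9\}$ (with $9=1$ in outcome (iv)) and the cocircuit $\{4,6,7,8\}$; this is part~(i) of the present lemma.

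It remains to treat outcome (iii) of Lemma~\ref{6.3rsv}. If $M\ba 6$ is \ifc\ then part~(iii) holds, so assume not; then $M\ba 6$ has a \ftv, which by that outcome is a $4$-fan $(u,v,w,x)$ with $\{u,v,w\}$ a triangle, $\{v,w,x,6\}$ a cocircuit, $u\in\{2,3\}$, $v\in\{4,5\}$, and $\{1,\dots ,6,w,x\}$ an eight-element set. Since $5$ lies in a unique triangle, $v=4$. Orthogonality of the cocircuit $\{4,c,d,6\}$ with the triangle $\{u,4,w\}$ then forces $w\in\{c,d\}$; the case $w=c$ is impossible, since then $\{u,4,c\}\symdif\{4,5,a,c\}=\{u,5,a\}$ would be a triangle through $5$ distinct from $\{4,5,6\}$. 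So $w=d$, and comparing the cocircuits $\{4,d,x,6\}$ and $\{4,c,d,6\}$ (whose symmetric difference cannot be a $2$-cocircuit, as $M$ is $3$-connected) gives $x=c$. Thus $M$ contains, besides configuration~(B), the triangle $\{u,4,d\}$ with $u\in\{2,3\}$, which is the configuration of Figure~\ref{BAfiga}; since $M\ba 6$ is \ffsc\ with an $N$-minor, part~(ii) of the present lemma holds.

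The step I expect to be the main obstacle is the bookkeeping in the last two paragraphs: keeping precise track of which elements may coincide (the exception that $a$ may equal $1$ in configuration~(B), and the triangle $\{1,7,8\}$ in outcome (iv) of Lemma~\ref{6.3rsv}) while iterating the orthogonality arguments, and checking in outcome (iii) of Lemma~\ref{6.3rsv} that the forced triangle $\{u,4,d\}$ assembles with configuration~(B) into precisely the configuration of Figure~\ref{BAfiga}. None of this is conceptually deep; it is simply where the care is needed.
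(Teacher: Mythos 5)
There is a genuine gap, and it traces to a misreading of what Figure~\ref{BAfiga} actually is. That figure is configuration (B) together with a \emph{new} triangle $\{u,v,a\}$ through the element $a$ and a \emph{new} cocircuit $\{v,a,5,6\}$, where $u$ and $v$ do not appear in (B); equivalently, it records a $4$-fan $(u,v,a,5)$ of $M\ba 6$ whose triad is $\{v,a,5\}$. In your case analysis this configuration arises precisely from the sub-case you claim to eliminate, namely a cocircuit $\{5,6,7,8\}$ with $\{7,8,9\}$ a triangle. Orthogonality of $\{5,6,7,8\}$ with the triangle $\{a,c,6\}$ forces $\{7,8\}$ to meet $\{a,c\}$; if $a\in\{7,8\}$ you obtain exactly the triangle $\{u,v,a\}$ and cocircuit $\{v,a,5,6\}$ of Figure~\ref{BAfiga} with no contradiction (the promised pair of distinct triangles sharing two elements never materializes), and if $c\in\{7,8\}$ then orthogonality with $\{4,6,c,d\}$ puts $\{c,d\}$ inside $\{7,8,9\}$ and you land in outcome (i). So the case $\epsilon=5$ cannot be discarded: it is where outcome (ii) lives, and discarding it leaves outcome (ii) unproved.

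Conversely, the configuration you extract from outcome (iii) of Lemma~\ref{6.3rsv} --- a triangle $\{u,4,d\}$ with $u\in\{2,3\}$ and the cocircuit $\{4,c,d,6\}$ --- is not Figure~\ref{BAfiga} (it introduces no new elements and its triangle passes through $4$, not $a$), and in fact it cannot occur: in $M\ba 6/5\cong M\ba 4/5$ the triangle $\{u,4,d\}$, the triad $\{4,c,d\}$, and the triangle $\{4,a,c\}$ coming from the circuit $\{4,5,a,c\}$ of (B) form the $5$-fan $(u,d,4,c,a)$, contradicting that $M\ba 4/5$ is \ffsc. For comparison, the paper uses Lemma~\ref{6.3rsv} only to show that $M\ba 6$ is \ffsc\ (reading off outcome (i) directly in the quasi-rotor case), and then analyses an arbitrary $4$-fan $(u,v,w,x)$ of $M\ba 6$ against the circuits and cocircuits of (B): it shows $4,5\notin\{u,v,w\}$ (the case $4\in\{u,v,w\}$ dying by the $5$-fan above), deduces $w\in\{a,c\}$ and $x\in\{4,5\}$, and obtains outcome (i) when $x=4$ or $w=c$, and Figure~\ref{BAfiga} when $x=5$ and $w=a$.
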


\begin{proof}
{  Suppose that the lemma fails.  }
As $N\not \preceq M\ba 1,4$, it follows by Lemma~\ref{airplane} that $N\preceq M\ba 4/5$, and $M\ba 4/5$ is \ffsc. Since $M/5\ba 4 \cong M/5\ba 6$, we deduce that $N\preceq M\ba 6$.

\begin{sublemma}
\label{m6ffsc} 
$M\ba 6$ is \ffsc\ with an $N$-minor.
\end{sublemma}

Assume that~\ref{m6ffsc} fails. Then, by Lemma~\ref{6.3rsv}, $\{4,5,6\}$ is the central triangle of a quasi rotor.   Moreover, since $M\ba 4$ is \ffsc, the central element of this quasi rotor is $4$, and Lemma~\ref{6.3rsv} specifies that this quasi rotor is $(\{1,2,3\},\{4,5,6\},\{7,8,9\},\{2,3,4,5\},\{4,6,7,8\},\{x,4,7\})$ for some $x$ in $\{2,3\}$, so (i) holds; a \cn.  
We conclude that \ref{m6ffsc} holds.

Since $M\ba 6$ is not \ifc,  
it has a $4$-fan $(u,v,w,x)$. Thus $M$ has $\{v,w,x,6\}$ as a cocircuit. Hence $\{v,w,x\}$ meets $\{4,5\}$ and $\{a,c\}$. 
Clearly $\{u,v,w\} \neq \{4,5,6\}$, so {  Lemma~\ref{ABCDE} implies that $5 \not\in \{u,v,w\}$}. 
{  Suppose $4 \in \{u,v,w\}$. Then \ort\ between $\{u,v,w\}$ and   $\{4,6,c,d\}$ implies that $\{c,d\}$ meets $\{u,v,w\}$.  
If $\{4,c\}$ is in a triangle, then the symmetric difference of this triangle with $\{4,5,a,c\}$ is a triangle other than $\{4,5,6\}$ that contains $5$; a \cn\ to Lemma~\ref{ABCDE}.  
Thus $\{4,d\}\subseteq \{u,v,w\}$, so $M\ba 6/5$ has a $5$-fan; a \cn.  
 We conclude that $4 \not\in \{u,v,w\}$, so $x \in \{4,5\}$ and, without loss of generality, $w$ 
 in $\{a,c\}$.

\begin{sublemma}
\label{123uvw}
Either $\{1,2,3\}$ avoids $\{u,v,w\}$, or $\{1,2,3\} \cap \{u,v,w\} = \{1\} = \{u\}$.
\end{sublemma}

To see this, observe that, as $(\{u,v,w\},\{4,5,6\},\{v,w,x,6\})$ is a bowtie, Lemma~\ref{bowwow} implies that the cocircuit $\{2,3,4,5\}$ avoids $\{u,v,w\}$.  
Furthermore, Lemma~\ref{bowwow} applied to the bowtie $(\{1,2,3\},\{4,5,6\}, \{2,3,4,5\})$ implies that $\{v,w,x,6\}$ avoids $\{1,2,3\}$.  
It follows that \ref{123uvw} holds. }

By \ref{123uvw}, if $x = 4$, then {  (i)} of the lemma holds; a \cn.  Thus we may assume that $x = 5$. 
Suppose $w=c$. 
Then $\{c,d\}$ is not in a triangle, as {  (i)} does not hold, so \ort\ implies that {  $4\in\{u,v\}$, which we have already shown does not occur.}  
We conclude that $a = w$. 
Then $\{v,a,5,6\}$ is a cocircuit of $M$, so $M$ contains the configuration in Figure~\ref{BAfiga}.  

%The elements in Figure~\ref{ABCDEfig}(B) are all distinct except that $a$ may be $1$.  
%Lemma~\ref{bowwow} implies that $\{a,v\}$ avoids $\{1,2,3\}$, so $a\neq 1$.  
{  By \ref{123uvw}, since $a = w$, we see that $a\neq 1$.} 
Thus the elements in Figure~\ref{ABCDEfig}(B) are distinct.  
{  Lemma~\ref{bowwow} implies that $\{u,v\}$ avoids $\{2,3,4,5\}$ and $\{4,6,c,d\}$.  
Hence the elements in Figure~\ref{BAfiga} are distinct {  with the possible exception that} $1\in\{u,v\}$.  
{  By \ref{123uvw}, $1\neq v$. Thus} 
%By \ort\ using the cocircuit $\{5,6,a,v\}$, we see that $v$ is not in Figure~\ref{ABCDEfig}(B) unless $v=d$.  
%In the exceptional case, $\lambda(\{4,5,6,a,c,u,v\})\le 2$; a \cn. Thus $v \not\in \{1,2,\dots,6,a,c,d\}$.  
%If $u$ is contained in $\{1,2,\dots ,6,a,c,d,v\}$, then it is not in a vertex cocircuit in Figure~\ref{BAfiga}, by \ort\ with $\{a,u,v\}$. Thus $u=1$ and 
(ii) holds.  }
\end{proof}

If (i) of the last lemma holds, then, possibly after a minor relabelling, we see that $M$ contains structure (C) from Figure~\ref{ABCDEfig}. Since this case has already been treated, it remains for us to consider the 
 structure in Figure~\ref{BAfiga}.  

\begin{lemma}
\label{preBA2}
Suppose $M$ and $N$ are \ifc\ binary matroids, $|E(M)|\geq 16$ and $|E(N)|\geq 7$, and $M$ contains the structure in Figure~\ref{BAfiga} where all of the elements are distinct except that $u$ may be $1$.  
Suppose Hypothesis {  VII} holds.  
If $M\ba 6$ is \ffsc\ with an $N$-minor but $M\ba 6,u$ has no $N$-minor, then 
\begin{itemize}
\item[(i)] $M$ has a quick win; or 
{  
\item[(ii)] $M^*$ has an open-rotor-chain win, a ladder win,  or an enhanced-ladder win; or
\item[(iii)] deleting the central cocircuit of an augmented $4$-wheel in $M^*$ gives an \ifc\ matroid with an {magenta $N^*$}-minor.  
}
%\item[(i)] $M$ has a quick win; or 
%\item[(ii)] $M^*$ has a ladder win, that is, $M^*$ contains a ladder structure that can be trimmed to obtain an \ifc\ matroid with an $N^*$-minor.  
\end{itemize}
\end{lemma}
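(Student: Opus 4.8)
The plan is to assume the lemma fails---so, in particular, $M$ has no quick win---and to reach a contradiction by imitating the proof of Lemma~\ref{AconfigBOOM}, whose conclusion is identical to the present one: we exhibit a bowtie in $M^*$ together with the $N^*$-minor data needed to apply Lemma~\ref{beachtheend} to $M^*$, and then eliminate every outcome of that lemma.

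First I would make the basic reductions. Applying Lemma~\ref{airplane} to the triangle $\{4,5,6\}$ (which contains $6$) and the $4$-fan $(u,v,a,5)$ of the \ffsc\ matroid $M\ba 6$, the hypothesis $N\not\preceq M\ba 6,u$ forces $N\preceq M\ba 6/5$ with $M\ba 6/5$ \ffsc; since $\{4,6\}$ is parallel in $M/5$ we also have $M\ba 6/5\cong M\ba 4/5$. I would record too that $M\ba 6$ is not \ifc\ (the fan would otherwise force $|E(M)|\le 7$) and that $(u,v,a,5)$ cannot be lengthened to a fan of $M\ba 6$, and I would establish, as an analog of the early claims in Lemma~\ref{AconfigBOOM}, a statement constraining the guts elements of the $4$-fans of a suitable contraction minor of $M$. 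The aim is to build a bowtie in $M^*$, that is, two disjoint triads of $M$ whose union contains a $4$-circuit of $M$; since Figure~\ref{BAfiga} exhibits no triad of $M$, these must be manufactured.

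The core step is this construction. Since $M\ba 6/5$ is \ffsc\ but, as $M$ has no quick win, not \ifc, it has a $4$-fan; chasing $N$-minors through further contractions and deletions (via Lemma~\ref{2.2} and the sequential-$4$-connectivity device of \cite[Lemma~3.3]{cmoV}) and combining Lemma~\ref{6.3rsv}, Lemma~\ref{bowwow}, and orthogonality against the cocircuits $\{2,3,4,5\}$, $\{4,6,c,d\}$, $\{v,a,5,6\}$ of Figure~\ref{BAfiga}, I would produce a $4$-fan whose triad is a genuine triad of $M$ and which, with one of the triangles already present, forms a bowtie of $M^*$; the degeneracy $u=1$ is treated separately. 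I would then extend this bowtie to a right-maximal bowtie string $T_0,D_0,T_1,D_1,\dots,T_n$ of $M^*$, set $M'=M^*\ba X/Y$ for the small explicit sets $X,Y$ drawn from $\{5,6,\dots\}$ (so that $(M')^*=M/X\ba Y$ is a minor of $M\ba 6/5$ and hence has an $N$-minor), check that the string persists in $M'$, and verify that $M'\ba c_0,c_1,\dots,c_n$ has an $N^*$-minor while the relevant further contractions of it do not.

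Finally I would apply Lemma~\ref{beachtheend} to $M^*$ and rule out its outcomes. Outcomes (i), (ii), (vii) of that lemma yield, respectively, conclusions (i), (ii), (iii) of the present lemma; outcome (iii), outcome (v), and the structures of Figures~\ref{fign=2} and~\ref{goofy} are excluded by orthogonality against the $4$-cocircuits of Figure~\ref{BAfiga} together with the $N^*$-minor bookkeeping, just as in Lemma~\ref{AconfigBOOM}; and in outcome (iv), where $M^*$ contains the configuration of Figure~\ref{drossfigiinouv}, Lemma~\ref{moreteeth} shows that every $4$-fan of $M^*\ba\beta,c_0,\dots,c_n$ is either a $4$-fan of $M^*\ba c_n$ with coguts element $b_n$, impossible by the right-maximality of the bowtie string, or a $4$-fan of $M^*\ba\beta$ with coguts element $\alpha$, which dualizes to a $4$-fan of a small contraction minor of $M$ that has already been forbidden; hence $M^*\ba\beta,c_0,\dots,c_n$ is \ifc\ and conclusion (ii) holds, a contradiction. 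The main obstacle is the core step: because Figure~\ref{BAfiga} contains no triad of $M$, extracting two disjoint triads of $M$ (hence a bowtie of $M^*$) requires a delicate $N$-minor chase through contraction minors and a long orthogonality-driven case analysis comparable in length to the corresponding part of Lemma~\ref{AconfigBOOM}, with the case $u=1$ adding a further layer.
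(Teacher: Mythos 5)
Your proposal misses the observation that makes this lemma essentially a corollary, and as a result its central step is left as an unexecuted project. The paper's entire proof is a relabelling: writing the elements $1,2,3,4,5,6,a,v,u,c,d$ of Figure~\ref{BAfiga} as $x,y,z,6,5,4,2,3,1,c,d$ respectively, the triangle $\{u,v,a\}$ and cocircuit $\{v,a,5,6\}$ become the triangle $\{1,2,3\}$ and cocircuit $\{2,3,4,5\}$ of configuration~(A), while the triangle through $\{a,6\}$ and the cocircuit $\{4,6,c,d\}$ become the triangle $\{2,4,7\}$ and cocircuit $\{4,6,7,8\}$ of Figure~\ref{Afiga}; the hypotheses ``$M\ba 6$ is \ffsc\ with an $N$-minor'' and ``$N\not\preceq M\ba 6,u$'' become exactly ``$M\ba 4$ is \ffsc\ with an $N$-minor'' and ``$N\not\preceq M\ba 1,4$''. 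So Lemma~\ref{AconfigBOOM} applies verbatim, and its conclusion is precisely the conclusion sought.

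Because of this, your plan of imitating the proof of Lemma~\ref{AconfigBOOM} is not wrong in principle --- carried out faithfully it would simply reconstruct that proof under new names --- but as written it does not constitute a proof. The step you yourself flag as ``the main obstacle,'' namely manufacturing two disjoint triads of $M$ and the attendant $N$-minor bookkeeping needed to invoke Lemma~\ref{beachtheend} in $M^*$, is described only as a delicate chase ``comparable in length to the corresponding part of Lemma~\ref{AconfigBOOM},'' with no indication of which elements are contracted or how the triads arise. That is exactly the content of the argument, and it is absent from the proposal. The remedy is not to supply that long case analysis but to notice the relabelling above and cite Lemma~\ref{AconfigBOOM} directly.
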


\begin{proof}
%Suppose $M$ does not have a quick win.  
As $M\ba 6,u$ has no $N$-minor, we relabel the elements $1,2,3,4,5,6,a,v,u,c$, and $d$ 
in Figure~\ref{BAfiga}, 
as $x,y,z,6,5,4,2,3,1,c$, and $d$. We then restrict our attention to the configuration in $M$ that is the same as that in Figure~\ref{ABCDEfig}(A).  
Now $M\ba 1,4$ has no $N$-minor.  
Lemma~\ref{AconfigBOOM} implies that {  the result holds.}  
%$M$ contains the configuration in Figure~\ref{Afiga}(J) where all of the elements are distinct, $M/y_1$ is \ffsc, and $M/5,8,y_1\ba 6/y_2$ has an $N$-minor.  
%By Lemma~\ref{Jconfig}, $M$ contains the configuration in Figure~\ref{JIfiga} where all of the elements are distinct and $M/5,8,y_1\ba 6/y_2,y_3$ has an $N$-minor.  
% Lemma~\ref{con45} implies that  $M/\{x,y\}$ has no $N$-minor for any pair of elements in $\{4,5,6\}$.  
%Hence $N \not \preceq M/4,5,6$. 
%By Lemma~\ref{Jkiller}, $M^*$ has a ladder win.  
\end{proof}

We may now assume that $M$ contains the structure in Figure~\ref{BAfiga} and $M\ba 6,u$ has an $N$-minor.  
Recall that $M\ba 4/5$ has an $N$-minor.  
As this matroid has $(a,6,c,d)$ as a $4$-fan, we may delete $a$ or contract $d$ keeping an $N$-minor.  
In the next lemma, we consider the second possibility.   

\begin{lemma}
\label{BAlemma2} 
Suppose $M$ is an \ifc\ binary matroid containing at least thirteen elements, and $M$ has the structure in Figure~\ref{BAfiga} where all of the elements are distinct except that $u$ may be $1$. 
% and $\{4,5,6\}$ is the only triangle containing $5$.  
Suppose that both $M\ba 4$ and $M\ba 6$ are   \ffsc. 
Then 
\begin{itemize}
\item[(i)] $M/d$ is \ffsc\ and every \ftv\ of $M/d$ is a $4$-fan with $c$ as its guts element; or
\item[(ii)] $M$ contains the structure in Figure~\ref{ABCDEfig}(A).  
\end{itemize}
\end{lemma}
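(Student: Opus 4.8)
The plan is to analyze the matroid $M/d$ directly, using the structure visible in Figure~\ref{BAfiga}. First I would record what the figure gives us in $M$: there is a triangle $\{4,5,6\}$, a cocircuit $\{2,3,4,5\}$, a triangle $\{u,v,a\}$ (with possibly $u=1$), a cocircuit $\{v,a,5,6\}$, a cocircuit $\{4,6,c,d\}$, and a triangle $\{c,d,6\}$ obtained as the symmetric difference $\{4,5,6\}\btu\{4,6,c,d\}$ — wait, more carefully, the circuit structure places $(a,6,c,d)$ as a $4$-fan in $M\ba 4/5$, so in $M$ we have $\{a,6,c\}$ or a related circuit together with the triad/cocircuit $\{6,c,d\}$-type relations; I would pin these down from the figure before starting. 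In particular $\{c,d,6\}$ should be a triangle of $M$ and $\{4,6,c,d\}$ a cocircuit, so that $d$ is the coguts element of a $4$-fan $(a,6,c,d)$ in $M\ba 4/5$ but actually we want to study $M/d$ itself.

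The key steps I would carry out: (1) Show $M/d$ is $3$-connected. Since $d$ lies in the triad contained in the cocircuit $\{4,6,c,d\}$ — more precisely, I would identify a triad of $M$ containing $d$, which makes $M/d$ $3$-connected by the standard fact that contracting an element of a triad of a $3$-connected matroid preserves $3$-connectivity (provided the triad does not become degenerate, which it cannot here since $|E(M)|\ge 13$). (2) Show $M/d$ is sequentially $4$-connected: suppose $(U,V)$ is a non-sequential $3$-separation of $M/d$; since $d$ sits in a triad $\{c,d,e\}$ (for the appropriate $e\in\{4,6\}$) together with being in the triangle $\{c,d,6\}$, we can move the two companion elements of $d$ to one side and then adjoin $d$ back to that side to contradict the fact that $M$ itself, being \ifc, has no non-sequential $3$-separation. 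This mirrors the repeated ``$(U\cup x,V)$ is a non-sequential \ths\ of $M$'' arguments used throughout the paper (e.g. in Lemma~\ref{ccrider}'s ambient results and in \cite[Lemma~3.3]{cmoV}). (3) Conclude that if $M/d$ is not \ifc, it has a $4$-fan $(\alpha,\beta,\gamma,\delta)$, so $M$ has a cocircuit $\{\beta,\gamma,\delta\}$ (a triad) and a circuit $\{\alpha,\beta,\gamma,d\}$.

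Then the heart of the argument is a case analysis on where this $4$-fan sits relative to the known structure. Using orthogonality between the circuit $\{\alpha,\beta,\gamma,d\}$ and the cocircuits $\{4,6,c,d\}$ and $\{v,a,5,6\}$ and $\{2,3,4,5\}$, and between the triad $\{\beta,\gamma,\delta\}$ and the triangles $\{4,5,6\}$, $\{u,v,a\}$, $\{c,d,6\}$, I would force $\{\alpha,\beta,\gamma\}$ to meet $\{4,6,c\}$. If the guts element $\alpha=c$, then (i) of the lemma holds — this is the ``good'' outcome we want to extract. If instead $\alpha\in\{4,6\}$, or if $c\in\{\beta,\gamma,\delta\}$, I would use Lemma~\ref{ABCDE} (specifically that $\{4,5,6\}$ is the only triangle containing $5$) together with Lemma~\ref{bowwow} (to control which $4$-cocircuits meet a given bowtie) to derive that $M$ must contain the configuration (A) of Figure~\ref{ABCDEfig}, giving (ii); in the remaining subcases orthogonality with $D$-type cocircuits and the $5$-fan obstruction (``$M\ba 6/5$ or $M\ba 4/5$ would have a $5$-fan'', contradicting that $M\ba 4$ and $M\ba 6$ are \ffsc) eliminates the possibility, or again produces configuration (A). I expect the main obstacle to be the bookkeeping in step (3): there are several placements of the $4$-fan $(\alpha,\beta,\gamma,\delta)$ to rule out — in particular the subcase where $\{\beta,\gamma,\delta\}$ straddles two of the known triads/triangles, and the subcase where $\delta$ is a previously unseen element — and each requires a short but careful orthogonality-plus-connectivity computation, with the recurring punchline that either we land in configuration (A) or we contradict $\lambda$ being small on an eight- or nine-element set, or we contradict \ffsc-ness of $M\ba 4$ or $M\ba 6$ via a hidden $5$-fan. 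Getting the relabelling in the statement consistent with how configuration (A) is drawn (with labels on $2$ and $3$ interchangeable) is the final fiddly point.
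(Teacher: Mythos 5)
Your overall shape (show $M/d$ is $3$-connected, then sequentially $4$-connected, then analyse its $4$-fans by \ort) matches the paper, but the proposal rests on a misreading of Figure~\ref{BAfiga} that breaks the first two steps and misplaces where outcome (ii) comes from. The figure gives the triangle $\{a,c,6\}$ and the $4$-cocircuit $\{4,6,c,d\}$; there is no triangle $\{c,d,6\}$ (it would force the parallel pair $\{a,d\}$ against $\{a,c,6\}$) and no triad containing $d$ (a triad $\{c,d,e\}$ with $e\in\{4,6\}$ would have symmetric difference of size one with the cocircuit $\{4,6,c,d\}$). So your step (1) cannot ``identify a triad of $M$ containing $d$,'' and the ``standard fact'' you invoke is false anyway: contracting an element of a triad need not preserve $3$-connectivity (any rim element of a wheel is a counterexample). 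The correct opening move, and the \emph{only} place outcome (ii) enters, is to show $d$ lies in no triangle of $M$: a triangle through $d$ must by \ort\ with $\{4,6,c,d\}$ meet $\{4,6,c\}$, the options $c$ and $6$ create a $5$-fan in $M\ba 4$, and the option $4$ forces (via $\{2,3,4,5\}$) an element of $\{2,3\}$ into the triangle, which is exactly configuration (A). With $d$ triangle-free, $M/d$ is $3$-connected because $M$ is \ifc, and your step (2) is then repaired by pushing $\{4,5,6,a,u,v,c\}$ to one side with \cite[Lemma~3.3]{cmoV} and adjoining $d$ through the cocircuit $\{4,6,c,d\}$ — not through a nonexistent triad.

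Your step (3) is also heavier than it needs to be, and as written it would not close: you defer some subcases to ``again produces configuration (A),'' but once $d$ is known to be in no triangle, configuration (A) never reappears and every $4$-fan must have guts element $c$. The short route is: the triad $\{\be,\ga,\de\}$ of a $4$-fan of $M/d$ avoids every triangle of $M$ (else $M$ has a $4$-fan, contradicting internal $4$-connectivity), so the element of $\{4,6,c\}$ that the circuit $\{\al,\be,\ga,d\}$ must contain (by \ort\ with $\{4,6,c,d\}$) can only be $\al$; then \ort\ with $\{2,3,4,5\}$ and $\{5,6,a,v\}$ rules out $\al=4$ and $\al=6$, leaving $\al=c$. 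No appeal to Lemma~\ref{bowwow} or to a $\lambda$-count on eight- or nine-element sets is needed.
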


\begin{proof}  Assume that (ii) does not hold. First we show that 

\begin{sublemma}
\label{dnotry} 
$M$ has no triangle containing $d$.
\end{sublemma}

Assume that $M$ has a triangle $T$ containing $d$. Then, by \ort, $T$ contains $c$, $6$, or $4$. 
In the first two cases, we obtain the \cn\ that $M\ba 4$ is not \ffsc. 
We conclude that $4 \in T$. Then, by \ort\ and symmetry, we may assume that $2 \in T$, so (ii) holds.  
This \cn\ implies  that~\ref{dnotry} holds.

\begin{sublemma}
\label{mdwelby} 
$M/d$ is \sfc.
\end{sublemma}

By \ref{dnotry}, $M/d$ is \thc. Suppose that $M/d$ has a \ns\ \ths\ $(U,V)$. Then, by \cite[Lemma 3.3]{cmoV}, we may assume that $\{4,5,6,a,u,v,c\} \subseteq U$. Then $(U \cup d,V)$ is a \ns\ \ths\ of $M$; a \cn. Thus \ref{mdwelby} holds. 

Now suppose that $M/d$ has a $4$-fan $(\al,\be,\ga,\de)$. Then $M$ has $\{\al,\be,\ga,d\}$ as a circuit. By \ort, 
$\{\al,\be,\ga\}$ meets $\{4,6,c\}$. 
Since $M$ has no $4$-fan, $\{\be,\ga\}$ avoids the triangles of $M$. Thus $\al \in \{4,6,c\}$. By \ort\ between $\{\al,\be,\ga,d\}$ and the cocircuits $\{2,3,4,5\}$ and $\{5,6,a,v\}$, we see that $\al \not \in \{4,6\}$. Thus $\al = c$. 

We now know that every $4$-fan in $M/d$ has $c$ as its guts element. It follows easily that $M$ has no $5$-fan and no $5$-cofan, so $M/d$ is \ffsc\ as required.
\end{proof}

We have not eliminated the case that $M$ contains the structure in Figure~\ref{BAfiga} and $M/d$ has an $N$-minor, but we have built up more structure, which will assist us in our later analysis.  

\begin{figure}[htb]
\center
\includegraphics{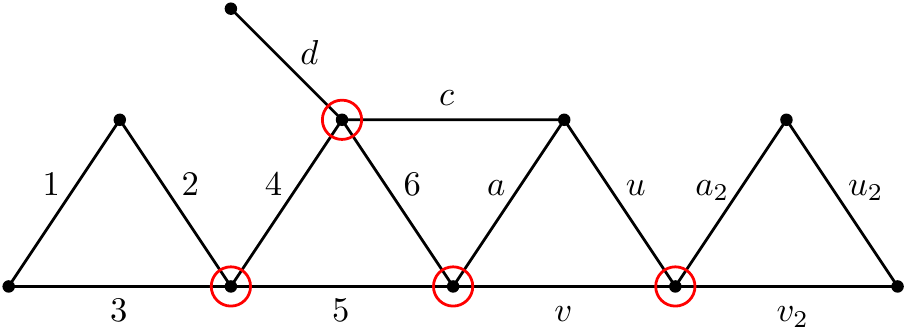}
\caption{All of the elements are distinct except that $u_2$ may be the same as $1$, {  or $\{1,2,3\}$ may be $\{a_2,u_2,v_2\}$}.}
\label{band}
\end{figure}

\begin{lemma}
\label{newtry}
Suppose $M$ and $N$ are \ifc\ binary matroids, $|E(M)|\geq 13$ and $|E(N)|\geq 7$, and $M$ contains the structure in Figure~\ref{BAfiga} where all of the elements are distinct except that $u$ may be $1$.  
Suppose that Hypothesis {  VII} holds. 
Suppose that $M\ba 4$ and $M\ba 6$ are each \ffsc\ with an $N$-minor, and $M\ba 1,4$ has no $N$-minor.  
Then 
\begin{itemize}
\item[(i)] $\{4,5,6\}$ is the only triangle of $M$ containing $4$, and $M$ contains the structure in Figure~\ref{band} where all of the elements are distinct except that $u_2$ may be the same as $1$, or $\{1,2,3\}$ may equal $\{a_2,u_2,v_2\}$; or
\item[(ii)] $M$ has a quick win; or
\item[(iii)] $M$ has a ladder win. 
%\item[(iv)] $M$ contains the structure in Figure~\ref{ABCDEfig}(A).  
\end{itemize}
\end{lemma}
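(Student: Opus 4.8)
The plan is to suppose that $M$ has neither a quick win nor a ladder win and to deduce conclusion~(i). As a first step I would show that $\{4,5,6\}$ is the only triangle of $M$ containing $4$. Suppose $\{4,p,q\}$ is another triangle of $M$. Orthogonality with the cocircuit $\{2,3,4,5\}$ forces $\{p,q\}$ to meet $\{2,3,5\}$; since $\{4,5,6\}$ is, by Lemma~\ref{ABCDE}, the only triangle of $M$ on $5$, in fact $\{p,q\}$ meets $\{2,3\}$, say $p\in\{2,3\}$. Orthogonality with the cocircuit $\{4,6,c,d\}$ then puts $q$ in $\{6,c,d\}$. Taking symmetric differences of $\{4,p,q\}$ with the circuits $\{4,5,6\}$ and $\{4,5,a,c\}$ rules out $q\in\{6,c\}$ (each such possibility would yield a second triangle on $5$), leaving the case of a triangle $\{4,p,d\}$ with $p\in\{2,3\}$; a short further argument using Lemma~\ref{bowwow} and the absence of a quick win disposes of this as well.

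Next I would invoke Lemma~\ref{airplane}: since $M\ba 4$ is \ffsc\ with an $N$-minor and has $(1,2,3,5)$ as a $4$-fan, while $N\not\preceq M\ba 1,4$, the matroid $M\ba 4/5$ is \ffsc\ with an $N$-minor. Because $\{a,c,6\}$ is a triangle and $\{6,c,d\}$ a triad of $M\ba 4/5$, the tuple $(a,6,c,d)$ is a $4$-fan of $M\ba 4/5$, so Lemma~\ref{2.2} gives that either $N\preceq M\ba 4/5/d$ or $N\preceq M\ba 4/5\ba a$, and I would treat these two possibilities separately.

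Suppose first that $N\preceq M\ba 4/5/d$. By the first step, $M$ cannot contain configuration~(A) of Figure~\ref{ABCDEfig} (which needs a second triangle on $4$), so Lemma~\ref{BAlemma2} yields its first alternative: $M/d$ is \ffsc\ and every \ftv\ of $M/d$ is a $4$-fan with guts element $c$. If $M/d$ were \ifc, then contracting the single element $d$ from $M$ would give a quick win; hence $M/d$ has a $4$-fan $(c,\be,\ga,\de)$, so $M$ has $\{c,\be,\ga,d\}$ as a circuit and $\{\be,\ga,\de\}$ as a triad. I would then grind orthogonality of $\{c,\be,\ga,d\}$ against the cocircuits $\{2,3,4,5\}$, $\{4,6,c,d\}$, $\{5,6,a,v\}$, and of the triad $\{\be,\ga,\de\}$ against the triangles $\{1,2,3\}$, $\{4,5,6\}$, $\{a,c,6\}$, $\{u,v,a\}$, invoking Lemma~\ref{bowwow} wherever a $4$-cocircuit threatens to join two of these triangles. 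This should identify $\{\be,\ga\}$, $\de$, and the remaining new element as the extra rung of Figure~\ref{band}, with the permitted coincidences ($u_2=1$, or $\{1,2,3\}=\{a_2,u_2,v_2\}$) emerging from the leftover orthogonality; if instead it forces a trimmable sub-configuration, we obtain a ladder win.

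Suppose now that $N\preceq M\ba 4/5\ba a$. Here I would note that $(\{u,v,a\},\{4,5,6\},\{a,v,5,6\})$ is a bowtie of $M$ with $M\ba 6$ \ffsc, relabel so that $T_0=\{4,5,6\}$ with $c_0=6$, $D_0=\{5,6,a,v\}$, and $T_1=\{u,v,a\}$ with $c_1=u$, and extend to a right-maximal bowtie string $T_0,D_0,T_1,\dots,T_n$ in $M$. Using $M\ba 4/5\cong M\ba 6/5$ and Lemma~\ref{stringswitch} to transport the $N$-minor along the string, together with Lemma~\ref{stringybark}, I would determine exactly which contractions of the $c_i$'s preserve and which destroy the $N$-minor, and then analyse how the string attaches to $M$, imitating the proof of Lemma~\ref{killtobler2}: right-maximality and the uniqueness of the triangle on $a_0$ put us in position to apply \cite[Lemma~10.1]{cmoVI} together with Lemmas~\ref{realclaim1}, \ref{claim2}, and \ref{moreteeth}, yielding either a ladder win, a quick win arising from a short-circuit of the string, or the string wrapping around onto $\{1,2,3\}$ — which is precisely the second exceptional coincidence allowed in Figure~\ref{band}. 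I expect this last case to be the main obstacle: the bowtie-string analysis with careful $N$-minor bookkeeping and the separation of the several ways the string can close up or embed in a quartic ladder, all of which rests heavily on the apparatus of \cite{cmoVI}; the orthogonality bookkeeping in the contraction case, needed to force exactly the coincidences appearing in Figure~\ref{band} and no others, is the secondary difficulty.
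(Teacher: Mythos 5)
Your opening step (uniqueness of the triangle on $4$) matches the paper's \ref{4try}, but after that your proposal diverges onto a route that does not reach conclusion~(i). The paper's engine is entirely different: it first deduces from Lemma~\ref{preBA2} that $N\preceq M\ba 6,u$, and then applies Lemma~\ref{ILK} to the bowtie $(\{u,v,a\},\{6,5,4\},\{v,a,6,5\})$ (with $M\ba 6$ in the role of the \ffsc\ deletion). The \ftv-structure of $M\ba u$ forced by Lemma~\ref{ILK} is what produces the new triangle $\{a_2,u_2,v_2\}$ and the cocircuit $\{x,u,a_2,v_2\}$, after which one rules out the alternative $4$-fan outcome and checks distinctness. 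You never establish $N\preceq M\ba 6,u$ and never examine $M\ba u$, so the mechanism that creates the extra rung of Figure~\ref{band} is absent from your argument. The split you propose instead, $N\preceq M\ba 4/5/d$ versus $N\preceq M\ba 4/5\ba a$, is precisely the case division the paper performs \emph{after} this lemma (Lemmas~\ref{parti} and \ref{BAkiller}); it is not the right tool for deriving Figure~\ref{band}.

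Concretely, in your contraction case the $4$-fan of $M/d$ supplied by Lemma~\ref{BAlemma2} gives a circuit $\{c,\beta,\gamma,d\}$ and a triad $\{\beta,\gamma,\delta\}$, i.e.\ structure attached at $\{c,d\}$ on the right-hand side of Figure~\ref{BAfiga}. The extra rung of Figure~\ref{band} is a triangle $\{a_2,u_2,v_2\}$ with cocircuit $\{v,u,a_2,v_2\}$ attached at $\{u,v\}$ on the opposite side; no orthogonality computation converts the former into the latter, and indeed in the paper the $M/d$ information is kept as a separate auxiliary fact that is only exploited inside the proof of Lemma~\ref{parti}. So in this case your plan cannot terminate in outcome~(i). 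Your deletion case has a second gap: to read Figure~\ref{band} off a right-maximal bowtie string you need the string to have at least two bowties, and right-maximality gives no such guarantee — the whole point of the paper's Lemma~\ref{ILK} step is to \emph{prove} the second bowtie exists. Moreover the machinery you would invoke there (\cite[Lemma~10.1]{cmoVI}, as in Lemma~\ref{killtobler2}) outputs wins such as enhanced-ladder and mixed-ladder wins, which are not among the permitted conclusions (i)–(iii) of this lemma.
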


\begin{proof}
We assume that {  neither (ii) nor (iii)}  holds.  We show first that 

\begin{sublemma}
\label{4try}
$\{4,5,6\}$ is the only triangle that meets $\{4,5\}$.  
\end{sublemma}

{  Lemma~\ref{ABCDE} implies that} $\{4,5,6\}$ is the only triangle that contains $5$. 
Let $T$ be a triangle that contains $4$ but differs from $\{4,5,6\}$.  
Then \ort\ implies that $T$ meets $\{2,3,5\}$ and $\{6,c,d\}$, so, up to switching the labels on $2$ and $3$, the triangle is $\{2,4,d\}$ or $\{2,4,c\}$,  so {  $M\ba 4/5$ has a $5$-fan; a \cn\ to Lemma~\ref{airplane}.  }
We deduce that~\ref{4try} holds.  

Lemma~\ref{preBA2} implies that $M\ba 6,u$ has an $N$-minor.  
From Lemma~\ref{ILK}, we know that $M\ba u$ is \ffsc\ and, as $M\ba 4$ is not \ifc, either 
\begin{itemize}
\item[(I)] $M$ has a triangle $\{a_2,u_2,v_2\}$ and a cocircuit $\{x,u,a_2,v_2\}$ where $x\in\{a,v\}$ and $|\{4,5,6,a,u,v,a_2,u_2,v_2\}|=9$, or 
\item[(II)] every \ftv\ of $M\ba u$ is a $4$-fan of the form $(6,y_2,y_3,y_4)$ where  $y_2 \in\{a,v\}$.  
\end{itemize}

Suppose (II) holds.  
Since $y_2\in\{a,v\}$, \ort\ implies that $(y_1,y_2,y_3)$ is $(6,v,d)$ or $(6,a,c)$.  
If $\{6,v,d\}$ is a triangle, then $\lambda(\{4,5,6,a,v,u,c,d\})\leq 2$; a \cn.  Thus $y_2=a$ and $y_3 =c$.  
We now consider $M\ba 6,u$.  
By~\cite[Lemma~6.1]{cmoVI}, since $M\ba 6,u$ is not \ifc, one of the following occurs: $\{c,y_4\}$ is contained in a triangle; or $\{5,v\}$ is contained in a triangle; or $M$ has a triangle that contains $4$ but avoids $\{5,6,a,c,u,v\}$; or $M\ba 6,u$ is \ffsc\ and $v$ is  the coguts element of every $4$-fan in it.  
If $\{c,y_4\}$ is contained in a triangle, then \ort\ implies that the third element of this triangle is in $\{4,6,d\}$ and so $\lambda (\{4,5,6,a,u,v,c,d,y_4\})\leq 2$; a \cn.  
By \ref{4try}, $\{5,v\}$ is not contained in a triangle and  $M$ has no triangle that contains $4$ but avoids $\{5,6\}$. We deduce  that $M\ba 6,u$ has a $4$-fan of the form $(z_1,z_2,z_3,v)$.  
As every $4$-fan of $M\ba u$ contains $6$, we see that $(z_1,z_2,z_3,v)$ is not a $4$-fan of $M\ba u$. Hence $\{6,z_2,z_3,v\}$ or $\{6,u,z_2,z_3,v\}$ is a cocircuit of $M$.  
Then \ort\ implies that $\{z_2,z_3\}$ meets $\{4,5\}$, a \cn\ to \ref{4try}.   
We conclude that (II)  does not hold.  Therefore (I) holds.

If the triangle $\{a_2,u_2,v_2\}$ meets $\{c,d\}$, then \ort\ {  with the cocircuit $\{4,6,c,d\}$} implies that $\{c,d\}\subseteq \{a_2,u_2,v_2\}$, so $M\ba 4$ has a $5$-fan; a \cn.  
Thus the elements in $\{4,5,6,a,u,v,a_2,u_2,v_2,c,d\}$ are   distinct.  Now consider the cocircuit $\{x,u,a_2,v_2\}$ recalling that $x \in \{a,v\}$. 
If $x=a$, then \ort\ implies that $\{u,a_2,v_2\}$ meets $\{6,c\}$; a \cn.  
Thus $x=v$.  
By hypothesis, $\{2,3\}$ avoids $\{4,5,6,a,c,d,u,v\}$.  
Suppose $\{2,3\}$ meets $\{a_2,u_2,v_2\}$.  
Then \ort\ with $\{2,3,4,5\}$ implies that $\{2,3\}\subseteq\{a_2,u_2,v_2\}$, so 
$\{1,2,3\} =\{a_2,u_2,v_2\}$, and (i) holds.  
 %{\bf The rest of the argument needs repair.}
% and $\lambda (\{4,5,6,a,u,v,a_2,u_2,v_2\})\leq 2$; a \cn.  
Now suppose that  $\{2,3\}$ avoids $\{a_2,u_2,v_2\}$. Then the elements in $\{2,3,4,5,6,a,u,v,a_2,u_2,v_2,c,d\}$ are  distinct.  Finally, \ort\ implies that $1$ can only be in the last set if it equals $u_2$. Thus (i) holds. 
%By \ort\ between the cocircuits in this set and the triangle $\{1,2,3\}$, we deduce that the elements in $\{1,2,3,4,5,6,a,u,v,a_2,u_2,v_2,c,d\}$ are all distinct except that $1$ may be $u_2$.  
\end{proof}

When $M$ contains the configuration in Figure~\ref{band} and $M\ba 4$ is \ffsc\ with an $N$-minor but 
$M\ba 1,4$ does not have an $N$-minor, we know, by Lemma~\ref{airplane}, that $M\ba 6/5$ is \ffsc\ with an $N$-minor. Since $M\ba 6/5$ has $(a,4,c,d)$ as a $4$-fan, it follows that either 
\begin{itemize}
\item[(i)] $N\preceq M\ba 6/5\ba a$; or 
\item[(ii)] $N\preceq M\ba 6/5/d$.
\end{itemize}

As we showed in Lemma~\ref{BAlemma2}, we are able to find a new triad in the case that (ii) holds.  
In the following lemma, we dispense with the case that (i) holds.  

%\begin{figure}[htb]
%\center
%\includegraphics{drossflipsr}
%\caption{All of the elements depicted are distinct.}
%\label{drossflipsr}
%\end{figure}

\begin{lemma}
\label{parti}
Suppose $M$ and $N$ are \ifc\ binary matroids, $|E(M)|\geq 15$ and $|E(N)|\geq 7$, and $M$ contains the structure in Figure~\ref{band} where all of the elements are distinct except that $u_2$ may be $1$, or $\{1,2,3\}$ may equal $\{a_2,u_2,v_2\}$.   
Suppose that Hypothesis {  VII} holds and that $\{4,5,6\}$ is the only triangle that contains $4$.  
Suppose that $M\ba 4$ and $M\ba 6$ are each \ffsc\ with an $N$-minor and $M\ba 6/5\ba a$ has an $N$-minor but $M\ba 1,4$ has no $N$-minor.  
Then 
\begin{itemize}
\item[(i)] $M$ has a quick win; or
\item[(ii)] $M$ has a ladder win; or
\item[(iii)] $M$ has a mixed ladder win.  
\end{itemize}
\end{lemma}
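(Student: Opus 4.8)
The plan is to adapt the argument for Lemma~\ref{killtobler2}, using the string of bowties that Figure~\ref{band} supplies. Assume $M$ has no quick win. I would first do the minor bookkeeping: since $\{4,5,6\}$ is a triangle, $M\ba 6/5\cong M\ba 4/5$, so $M\ba 4,a/5\cong M\ba 4/5\ba a$ has an $N$-minor and hence so does $M\ba 4,a$ (which forces $a\ne 1$, consistently with Figure~\ref{band}); then, exploiting the triangle $\{a,u,v\}$ and the cocircuit $\{5,6,a,v\}$ — after deleting $a$ these produce, respectively, a parallel pair and a series pair — I would deduce that $M\ba 6,u$ and $M\ba 6,u/v$ each have an $N$-minor, with $M\ba 6,u/v\cong M\ba 6/5\ba a$ by Lemma~\ref{stringswitch}. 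Next I would relabel, setting $(a_0,b_0,c_0)=(4,5,6)$, $(a_1,b_1,c_1)=(a,v,u)$, and $(a_2,b_2,c_2)=(a_2,v_2,u_2)$ unless $\{1,2,3\}=\{a_2,u_2,v_2\}$; then, with $T_i=\{a_i,b_i,c_i\}$ and $D_i=\{b_i,c_i,a_{i+1},b_{i+1}\}$, Figure~\ref{band} is precisely a string of bowties $T_0,D_0,T_1,D_1,T_2$, which I would extend to a right-maximal string $T_0,D_0,T_1,D_1,\dots,T_n$ with $n\ge 1$. Here $T_0=\{4,5,6\}$ is the unique triangle of $M$ containing $a_0=4$, and $M\ba c_0=M\ba 6$ is \ffsc\ with an $N$-minor — the same picture as in Lemma~\ref{killtobler2}, with $c_0=6$ in place of $c_0=4$.

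The argument then splits on whether $\{4,5,x,c_n\}$ is a cocircuit of $M$ for some $x$ in $\{a_n,b_n\}$. If it is not, then since $4$ lies in only one triangle I would invoke the structural description of a right-maximal bowtie string, \cite[Lemma~10.1]{cmoVI}, exactly as in the corresponding place in the proof of Lemma~\ref{killtobler2}, to obtain a ladder win or a mixed ladder win. If it is a cocircuit, I would take $x=b_n$; then $a_0=4\ne c_n$, so the string has distinct elements, and orthogonality of $\{4,5,b_n,c_n\}$ with $T_0$, together with Lemma~\ref{bowwow} and a $\lambda$-count on $T_0\cup T_1\cup T_2$, forces $n\ge 3$.

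For this wrap-around case I would mirror the second half of the proof of Lemma~\ref{killtobler2}. Using the isomorphisms of Lemma~\ref{stringswitch} and Hypothesis~VII, I would first show that $M\ba c_i$ is \ffsc\ for every $i$. Then, regarding $T_0=\{4,5,6\}$ as the far triangle of the wrapped triple $T_{n-1},D_{n-1},T_n$ with closing cocircuit $\{b_n,c_n,4,5\}$, I would apply Lemma~\ref{claim2} — its ``triangle'' outcome is impossible because $4$ is in a unique triangle — to obtain a cocircuit $\{d_{n-1},a_n,c_n,d_n\}$ and a triangle $\{c_{n-1},d_{n-1},a_n\}$, i.e.\ the configuration of Figure~\ref{newton}. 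A ``calculus''-type descent then shows that, from the configuration of Figure~\ref{newton} at level $j$, either $M$ has a mixed ladder win — via Lemma~\ref{rainbow} applied to the Figure~\ref{rainbowbrite}-type structure induced on $T_{j-1}\cup\dots\cup T_n\cup T_0\cup\{d_{j-1},\dots,d_n\}$ — or one obtains the configuration one level lower, with triangle $\{c_{j-2},d_{j-2},a_{j-1}\}$ and cocircuit $\{d_{j-2},a_{j-1},c_{j-1},d_{j-1}\}$. Iterating to the bottom gives either a mixed ladder win or a triangle $\{c_0,d_0,a_1\}=\{6,d_0,a\}$; the latter contradicts orthogonality with $D_0=\{5,6,a,v\}$ and with $\{2,3,4,5\}$ together with a $\lambda$-count on $\{1,2,3\}\cup T_0\cup T_1$. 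This completes the plan.

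The hard part will be the $N$-minor bookkeeping. In Lemma~\ref{killtobler2} the driving fact is that $M\ba c_0,c_1/b_1$ has no $N$-minor, which (via Lemma~\ref{stringybark}) forces every interior contraction of the trimmed string to destroy the $N$-minor. Here this breaks: $M\ba c_0,c_1/b_1=M\ba 6,u/v$ \emph{does} have an $N$-minor, since it is isomorphic to $M\ba 6/5\ba a$. So the ``no $N$-minor after a contraction'' facts that power the contradictions in the wrap-around case — in particular that $M\ba c_0,\dots,c_i/a_i$ has no $N$-minor for $i\ge 2$ — must be re-derived from $N\not\preceq M\ba 1,4$, Hypothesis~VII, and Lemma~\ref{rotorwin} rather than from Lemma~\ref{stringybark}. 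Pinning that repackaging down, and confirming that in the non-wrap-around case the conclusion of \cite[Lemma~10.1]{cmoVI} really falls inside the restricted list $\{$quick win, ladder win, mixed ladder win$\}$ (so that no open-rotor-chain, ring-of-bowties, or enhanced-ladder outcome can occur here), is where most of the effort will go.
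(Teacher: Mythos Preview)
Your diagnosis of the difficulty is correct, but your proposed fix is the wrong direction, and this is a genuine gap. In Lemma~\ref{killtobler2} the engine is precisely that $M\ba c_0,c_1/b_1$ has \emph{no} $N$-minor, which via Lemma~\ref{stringybark} gives the whole family of ``no $N$-minor after contraction'' facts (\ref{dooby}) that drive both halves of that proof. Here, as you observe, $M\ba c_0,c_1/b_1 = M\ba 6,u/v \cong M\ba 6/5\ba a$ \emph{does} have an $N$-minor. You propose to re-derive the needed ``no $N$-minor'' facts from $N\not\preceq M\ba 1,4$, Hypothesis~VII, and Lemma~\ref{rotorwin}, but there is no reason these should produce statements of the form ``$M\ba c_0,\dots,c_i/a_i$ has no $N$-minor'' for general $i$; the triangle $\{1,2,3\}$ is only loosely attached to the bowtie string, and Lemma~\ref{rotorwin} requires a quasi-rotor structure you have not exhibited. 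Moreover, your non-wrap-around branch via \cite[Lemma~10.1]{cmoVI} already fails: one of that lemma's non-winning outcomes is exactly ``$M\ba c_0,c_1/b_1$ has an $N$-minor'', which is true here, so you get no contradiction and no win.

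The paper's proof takes the opposite tack: it \emph{uses} the fact that $M\ba u_0,u_1/v_1$ has an $N$-minor as the inductive fuel. Relabelling $(4,5,6,a,v,u,d,c)$ as $(a_0,v_0,u_0,a_1,v_1,u_1,t_0,t_1)$ and extending to a right-maximal bowtie string, the key step shows that if $M\ba u_0,\dots,u_{i-1}/v_{i-1}$ has an $N$-minor, then Lemma~\ref{realclaim1} forces a $4$-cocircuit $\{t_{i-1},a_{i-1},u_{i-1},t_i\}$, and Lemma~\ref{rainbow} (whose outcome (ii)(c) would be a mixed ladder win) then supplies a triangle $\{u_{i-1},t_i,a_i\}$, together with $M\ba u_0,\dots,u_i/v_i$ still having an $N$-minor. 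Iterating this builds a full ladder along the string, and \cite[Lemma~6.5]{cmoVI} finishes with a ladder win. So rather than trying to recover the ``no $N$-minor'' machinery of Lemma~\ref{killtobler2}, you should exploit the positive $N$-minor fact to grow the ladder from the $\{c,d\}$ side.
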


\begin{proof} Assume that the lemma does not hold. 
We relabel the elements $4,5,6,a,v,u,d$, and $c$  in Figure~\ref{band} as $a_0,v_0,u_0,a_1,v_1,u_1,t_0,$ and $t_1$, respectively.  
Since $M\ba u_0/v_0\ba a_1$ is isomorphic to $M\ba u_0,u_1/v_1$, the second matroid has an $N$-minor.  
We take a right-maximal bowtie string $\{a_0,v_0,u_0\},\{v_0,u_0,a_1,v_1\},\{a_1,v_1,u_1\},\{v_1,u_1,a_2,v_2\},\dots,\{a_n,v_n,u_n\}$.  
Let $X$ be the set of elements in this bowtie string.  

%By Hypothesis~VII, we know that $M\ba u_i$ is \ffsc\ for all $i\in\{1,2,\dots, n\}$.  

%Suppose that $\{1,2,3\}$ meets $\{a_0,v_0,u_0\}\cup\{a_1,v_1,u_1\}\cup\dots\cup\{a_n,v_n,u_n\}$.  

\begin{figure}[htb]
\center
\includegraphics{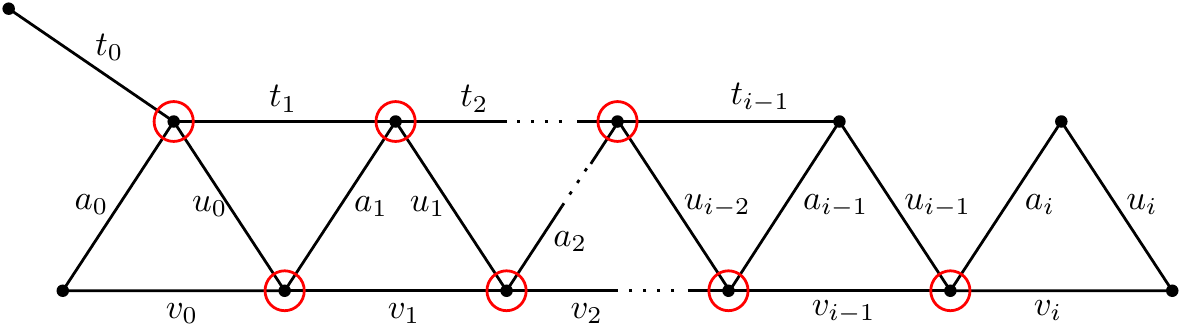}
\caption{All of the elements shown are distinct.}
\label{auvi}
\end{figure}

\begin{sublemma}
\label{newsub}
Suppose $M$ contains the structure in Figure~\ref{auvi}, where $i \ge 2$, all of the elements are distinct, and $\{t_0,t_1,\dots ,t_{i-1}\}$ avoids $X$.  
Suppose $M\ba u_0,u_1,\dots ,u_{i-1}/v_{i-1}$ has an $N$-minor. Then $M$ has an element $t_i$ 
that is not in $\{t_0,t_1,\ldots,t_{i-1}\} \cup X$
such that $\{t_{i-1},a_{i-1},u_{i-1},t_i\}$ is a cocircuit, $\{u_{i-1},t_i,a_i\}$ is a triangle,  and $M\ba u_0,u_1,\dots ,u_i/v_i$ has an $N$-minor.  Moreover, if $\{1,2,3\}$ meets 
$(\cup_{j=0}^i \{a_j,u_j,v_j\}) \cup \{t_0,t_1,\ldots,t_{i}\}$, then either $1 = u_i$, or $\{1,2,3\} = \{a_i,u_i,v_i\}$. 
\end{sublemma}

%Suppose that $M$ has the structure in Figure~\ref{auvi}, where $M\ba u_0,u_1,\dots ,u_{i-1}/v_{i-1}$ has an $N$-minor.  
Since $M/v_{i-1}\ba u_{i-1}$ has an $N$-minor, Lemma~\ref{realclaim1} implies that $\{a_{i-1},u_{i-1}\}$ is contained in a $4$-cocircuit.  
Orthogonality implies that this cocircuit meets $\{u_{i-2},t_{i-1}\}$, and Lemma~\ref{bowwow} implies that it avoids $\{a_{i-2},u_{i-2},v_{i-2}\}$, hence it contains $t_{i-1}$.  
Let $t_i$ be the fourth element in this cocircuit.  
Orthogonality implies that $t_i$ avoids the triangles in Figure~\ref{auvi} and also avoids $X$.  Thus $t_i$ is a new element unless $t_i=t_0$.  
In the exceptional case, $\{t_0,a_0,u_0,t_1\}\btu\{t_{i-1},a_{i-1},u_{i-1},t_0\}$, which equals $\{a_0,u_0,t_1,t_{i-1},a_{i-1},u_{i-1}\}$, is a cocircuit. Hence the elements in Figure~\ref{auvi}, excluding $\{t_0,a_i,u_i,v_i\}$, comprise a $3$-separating set in $M$; a \cn.  
Thus $t_i\neq t_0$.  

Next we establish the last part of \ref{newsub}. 
{  Suppose that $\{1,2,3\}$ meets $(\cup_{j=0}^i \{a_j,u_j,v_j\}) \cup \{t_0,t_1,\ldots,t_{i}\}$.  
If $\{1,2,3\}$ avoids $X$, then \ort\ with the cocircuits in Figure~\ref{auvi} implies that $\{1,2,3\}$ contains $\{t_0,t_1,\dots ,t_i\}$.  
Hence $i=2$, and $\lambda (\{a_0,u_0,v_0,a_1,u_1,v_1,t_0,t_1,t_2\})\leq 2$; a \cn.  
We deduce that  $\{1,2,3\}$ meets $X$.  
By~\cite[Lemma~5.4]{cmoVI}, we see that $1=u_i$, as desired, or $\{1,2,3\}=\{a_k,u_k,v_k\}$ for some $k$ in $\{2,3,\dots ,i\}$.  
We assume the latter.  
By \ort\ with the cocircuit $\{2,3,a_0,v_0\}$, we see  that $\{2,3\}$ avoids $\{u_{k-1},t_k,a_k\}$, so $1=a_k$ and $\{2,3\}=\{u_k,v_k\}$.  
{  Now \ort\ with $\{2,3,a_0,v_0\}$ implies that $\{u_k,t_{k+1},a_{k+1}\}$ is not a triangle. }   
Hence $k=i$.}  
%Suppose first that $\{1,2,3\}$ meets $\{t_{i-1},a_{i-1},u_{i-1},t_i\}$ or one of the vertex cocircuits in Figure~\ref{auvi}. Then one of these cocircuits contains exactly two elements of $\{1,2,3\}$. By \ort\ with $\{2,3,a_0,u_0\}$, the only triangle that meets $\{2,3\}$ is $\{1,2,3\}$. We may assume that $\{2,3\}$ avoids $\{a_i,u_i,v_i\}$ otherwise the desired assertion holds. The pattern of interlocking   triangles in  Figure~\ref{auvi} implies that either $\{2,3\}$ is $\{u_{i-1},v_{i-1}\}$, or $i = 2$ and $\{1,2,3\} = \{t_0,t_1,t_2\}$. In the first case, $1 = a_{i-1}$. By assumption, $N \preceq M\ba u_0,u_1,\dots ,u_{i-1}/v_{i-1}$ so, by Lemma~\ref{stringswitch}, $N \preceq M\ba a_0,a_1,\dots ,a_{i-1}/v_{0}$. But $(a_{i-1},a_0) = (1,4)$, so $N \preceq M\ba 1,4$; a \cn. If $i = 2$ and $\{1,2,3\} = \{t_0,t_1,t_2\}$, then $M\ba u_1$ has a $5$-fan; a \cn. To complete the proof of the last part of \ref{newsub}, it remains to consider what happens if $\{1,2,3\}$ meets $\{a_0,v_0\}$ or $\{u_i,v_i\}$. The first possibility is excluded because $\{2,3,a_0,v_0\}$ is a cocircuit. The second possibility implies that $1 \in \{u_i,v_i\}$. Orthogonality with the cocircuit $\{u_{i-1},v_{i-1},a_i,v_i\}$ implies that $1 = u_i$. 
Thus  the last part of \ref{newsub} holds. 

%If $\{1,2,3\}$ contains $t_i$ or meets the elements in Figure~\ref{auvi}, then \ort\ implies that $1=u_i$, or $\{1,2,3\}$ meets $\{t_{i-1},a_{i-1},u_{i-1},t_i\}$ or a vertex cocircuit in Figure~\ref{auvi}.  
%Suppose the latter holds.  
%Then one of these cocircuits contains two elements in $\{1,2,3\}$.  
%Any triangle in Figure~\ref{auvi} that meets $\{2,3\}$ is triangle $\{1,2,3\}$ by \ort\ with $\{2,3,a_0,v_0\}$, so it is straightforward to check that $\{2,3\}=\{u_{i-1},v_{i-1}\},\{2,3\}\subseteq \{a_i,u_i,v_i\}$, or $i=2$ and $\{1,2,3\}=\{t_0,t_1,t_2\}$.  
%The last option does not occur, since $M\ba u_1$ is \ffsc.  
%If $1=a_{i-1}$, then, since $M\ba u_0/v_{i-1}$ has an $N$-minor, $M\ba 1,a_{i-1}$ also has an $N$-minor; a \cn.  
%Thus $\{1,2,3\}=\{a_i,u_i,v_i\}$.  
%Evidently $\{1,2,3\}$ is disjoint from the elements in Figure~\ref{auvi}, or $1=u_i$, or $\{1,2,3\}=\{a_i,u_i,v_i\}$, so 

We can now apply Lemma~\ref{rainbow} to our structure noting that, by assumption, (ii)(c) of that lemma cannot hold. Since $\{a_0,u_0,v_0\}$ is the unique triangle of $M$ containing $a_0$, it follows that $\{a_0,t_0\}$ is not in a triangle of $M$. Thus   
$M$ has a triangle containing  $\{u_{i-1},t_i\}$.  
By \ort,   this triangle meets $\{v_{i-1},a_i,v_i\}$.  
If $i<n$, then \ort\ with $\{v_i,u_i,a_{i+1},v_{i+1}\}$ implies that the third element of this triangle is $a_i$.  
If $i=n$, then, up to switching the labels on $a_n$ and $v_n$, we may assume that the third element of this triangle is $a_i$.  

To complete the proof of~\ref{newsub}, it remains only to show that $M\ba u_0,u_1,\dots ,u_i/v_i$ has an $N$-minor.  
Suppose not.  
Since $M\ba u_0,u_1,\dots ,u_{i-1}/v_{i-1}$ has an $N$-minor and has $(a_i,t_i,a_{i-1},t_{i-1})$ as a $4$-fan, we know that $M\ba u_0,u_1,\dots ,u_{i-1}/v_{i-1}\ba a_i$ or $M\ba u_0,u_1,\dots ,u_{i-1}/v_{i-1}/t_{i-1}$ has an $N$-minor.  
Since the first matroid is isomorphic to $M\ba u_0,u_1,\dots ,u_{i-1}/v_i\ba u_i$ by Lemma~\ref{stringswitch}, we may assume that the second matroid has an $N$-minor.  
Now Lemma~\ref{stringswitch} implies that $M\ba u_0,u_1,\dots ,u_{i-1}/v_{i-1}/t_{i-1}$ is isomorphic to $M\ba a_0,a_1,\dots ,a_{i-1}/v_{0}/t_{i-1}$. Applying Lemma~\ref{stringswitch} again, this time focusing on the bowtie string at the top of the diagram, we get that the last  matroid is isomorphic to $M\ba a_0,u_0,u_1,\dots ,u_{i-2}/v_{0}/t_{1}$.  
Thus $M\ba a_0,u_0$ has an $N$-minor.  
As this matroid has $\{t_0,t_1\}$ as a cocircuit, we deduce that $M/t_0$ has an $N$-minor. Because $\{a_0,u_0,v_0\}$ is the unique triangle containing $a_0$, it follows that $M$ does not contain the structure in Figure~\ref{ABCDEfig}(A).  Thus Lemma~\ref{BAlemma2} implies that $M/t_0$ has a $4$-fan of the form $(t_1,\be,\ga,\de)$. Hence $\{t_0,t_1,\be,\ga\}$ is a circuit of $M$, and $\{\be,\ga,\de\}$ is a triad of $M$.  
Orthogonality with $\{t_1,a_1,u_1,t_2\}$ implies that $\{\be,\ga\}$ meets a triangle of $M$; a \cn.  
Thus~\ref{newsub} holds.  

\begin{figure}[htb]
\center
\includegraphics{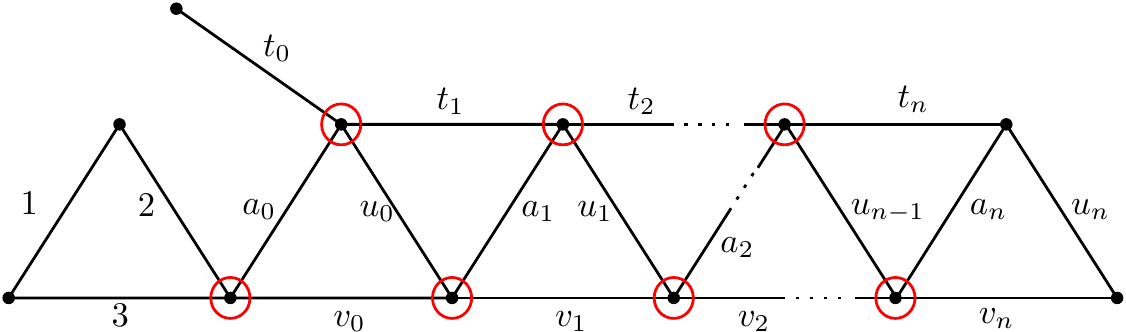}
\caption{$n\geq 2$ and all of the elements shown are distinct except that $1$ may be the same as $u_n$.}
\label{auvn}
\end{figure}

By repeatedly applying \ref{newsub} on our bowtie string, we deduce that $M$ contains the structure in Figure~\ref{auvn} and $M\ba u_0,u_1,\dots ,u_n/v_n$ has an $N$-minor.  By 
Lemma~\ref{stringswitch}, we see that 

\begin{sublemma}
\label{switcheroo}
$M\ba u_0,u_1,\dots ,u_n/v_n \cong M\ba a_0,a_1,\dots ,a_n/v_0$. 
\end{sublemma}

We also get from \ref{newsub} that  
 $\{1,2,3\}$ avoids the other elements in Figure~\ref{auvn} except that $1$ may be $u_n$, or $\{1,2,3\}$ may be $\{a_n,u_n,v_n\}$.  
If $\{1,2,3\}=\{a_n,u_n,v_n\}$, then \ort\ implies that $\{2,3\}=\{u_n,v_n\}$, so $1=a_n$.  By \ref{switcheroo},  we deduce, since $a_0 = 4$, that   $M\ba 1,4$ has an $N$-minor; a \cn.  
We conclude that all the elements in Figure~\ref{auvn} are distinct, except that $1$ may be $u_n$.  

Next we show that 
\begin{sublemma}
\label{vnnotcoguts}
$M\ba u_n$ has no $4$-fan with $v_n$ as its coguts element.  
\end{sublemma}

Suppose that $M\ba u_n$ has $(7,8,9,v_n)$ as a $4$-fan.  
{  Then $(\{a_n,u_n,v_n\},\{7,8,9\},\{8,9,v_n,u_n\})$ is a bowtie, and~\cite[Lemma~5.4]{cmoVI} implies that $\{7,8,9\}=\{a_j,u_j,v_j\}$ for some $j$ in $\{0,1,\dots ,n-2\}$.  
Then $\lambda (X\cup \{t_1,t_2,\dots ,t_n\})\leq 2$. {  Since $\{1,2,3,t_0\}$ avoids $X \cup \{t_1,t_2,\ldots,t_n\}$, to avoid a \cn, we must have that $1 = u_n$. Hence $\{1,2,3\}$ meets $\{8,9,v_n,u_n\}$ in a single element; a \cn. } }  
%Since $M$ is \ifc, $\{7,8,9\}$ avoids $\{a_n,u_n,v_n\}$.  
%Then $\{8,9,v_n,u_n\}$ is a cocircuit, and, since we constructed a right-maximal bowtie string,  $\{7,8,9\}$ meets $X-\{a_n,u_n,v_n\}$. Then \ort\ between $\{8,9,v_n,u_n\}$ and the triangles in Figure~\ref{auvn} other than $\{1,2,3\}$ implies that $\{7,8,9\}$ is one of these triangles other than $\{a_n,u_n,v_n\}$. 
%Now $1 \neq u_n$ otherwise \ort\ implies that $\{2,3\}$ meets $\{v_n,8,9\}$ so $\{2,3\}$ meets $X \cup \{t_1,t_2,\ldots,t_n\}$; a \cn. 
%The set $Z$ of elements in Figure~\ref{auvn} other than $\{1,2,3,t_0\}$ contains the cocircuit $\{8,9,v_n,u_n\}$ in addition $2n-1$ vertex cocircuits shown in the figure. Hence $Z$   is $3$-separating in $M$; a \cn.  
Thus~\ref{vnnotcoguts} holds.  

Next we show that 
\begin{sublemma}
\label{new4}
$\{a_n,u_n\}$ is contained in a $4$-cocircuit.  
\end{sublemma}

Since $M\ba u_0,u_1,\dots ,u_n/v_n\cong M\ba a_0,a_1,\dots ,a_n/v_0$ by \ref{switcheroo}, we deduce that the second matroid has an $N$-minor.  
Thus both $M\ba u_n$ and $M\ba a_{n-1}$ have $N$-minors.  
Moreover, by Hypothesis {  VII}, $M\ba u_n$ is \ffsc. 
As $M$ has no quick win, Lemma~\ref{6.3rsv} and \ref{vnnotcoguts} imply that~\ref{new4} holds unless $v_n$ is in a triangle $T$ with $u_{n-1}$ or $v_{n-1}$.  
In the exceptional case, \ort\ with the vertex cocircuits in Figure~\ref{auvn} implies that $T$ is $\{v_n,u_{n-1},t_n\}$; a \cn.  
Thus~\ref{new4} holds.  

Orthogonality implies that the $4$-cocircuit containing $\{a_n,u_n\}$ meets $\{u_{n-1},t_n\}$, and Lemma~\ref{bowwow} implies that the cocircuit avoids $\{a_{n-1},u_{n-1},v_{n-1}\}$.  Hence it contains $t_n$.  
Let $t_{n+1}$ be the fourth element in this cocircuit.  
Orthogonality with the triangles in Figure~\ref{auvn} implies that $t_{n+1}$ avoids all of the elements in the figure with the possible exception of $t_0$.  
Now we apply~\cite[Lemma~6.5]{cmoVI} and conclude that one of the following holds:
$M\ba u_0,u_1,\dots ,u_n$ is \ifc; or $M\ba u_0,u_1,\dots ,u_n$ is \ffsc\ and every $4$-fan in this matroid is also a $4$-fan of $M\ba u_n$ with $v_n$ as its coguts element  or is a $4$-fan of $M\ba u_0$ with $a_0$ in its triangle; or $M$ is a quartic M\"obius ladder with $a_0$ in two triangles.  
Since $\{a_0,u_0,v_0\}$ is the only triangle of $M$ containing $a_0$, we deduce that either $M$ has a ladder win, a \cn;  or   $M\ba u_n$ has a $4$-fan with $v_n$ as its coguts element, which  contradicts~\ref{vnnotcoguts}.  We conclude that the lemma holds.
\end{proof}

% If Part (i) is completed (without referring back to (ii)) then we have a new config A, since $M\ba 6$ is \ffsc\ with an $N$-minor and $M\ba 6,a$ maybe has no $N$-minor???

We continue to consider the case when $M$ contains the structure in Figure~\ref{band}, $M\ba 4$ is \ffsc\ with an $N$-minor, $M\ba 1,4$ does not have an $N$-minor, and $M\ba 6/5$ is \ffsc\ with an $N$-minor.  
Now $M\ba 6/5$ has $(a,4,c,d)$ as a $4$-fan, and the preceding lemma dealt with the case when $M\ba 6/5\ba a$ has an $N$-minor. Our final lemma deals with the case when the last matroid does not have an $N$-minor.  
%Then $M\ba 6/5/d$ has an $N$-minor.  

\begin{lemma}
\label{BAkiller} 
Let $M$ and $N$ be \ifc\ binary matroids with $|E(M)| \ge 16$ and $|E(N)| \ge 7$. 
Suppose that $M$ contains the structure in Figure~\ref{band}, {  where the elements are all distinct except that $u_2$ may be $1$, or $\{a_2,u_2,v_2\}$ may be $\{1,2,3\}$.}  
Suppose that $M\ba 4$ and $M\ba 6$ are \ffsc\ having $N$-minors, and that $M\ba 1,4$ does not have an $N$-minor.  Suppose that Hypothesis {  VII} holds, that 
$\{4,5,6\}$ is the only triangle containing $4$, and that $M\ba 6/5\ba a$ does not have an $N$-minor.  
%$M\ba 6/5/d$ has an $N$-minor but that $M\ba 6/5\ba a$ does not.  
Then  
\begin{itemize}
\item[(i)] $M$ has a quick win; or 
\item[(ii)] {  $M$ or $M^*$} has  an open-rotor-chain win,  a bowtie-ring win, or a ladder win; or 
\item[(iii)] {  $M$ or $M^*$}  has an enhanced-ladder win; or 
{  
\item[(iv)] deleting the central cocircuit of an augmented $4$-wheel in $M^*$ gives an \ifc\ matroid with an {  $N^*$}-minor.  
}
\end{itemize}
\end{lemma}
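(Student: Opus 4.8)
## Proof proposal for Lemma~\ref{BAkiller}

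The plan is to reduce to the structures already analyzed in the preceding lemmas, just as Lemma~\ref{newtry} reduced us to Figure~\ref{band}. We are in the situation where $M$ contains Figure~\ref{band}, both $M\ba 4$ and $M\ba 6$ are \ffsc\ with $N$-minors, $M\ba 1,4$ has no $N$-minor, $\{4,5,6\}$ is the only triangle containing $4$, and $M\ba 6/5\ba a$ has no $N$-minor. By Lemma~\ref{airplane}, $M\ba 6/5$ is \ffsc\ with an $N$-minor, and it has $(a,4,c,d)$ as a $4$-fan. Since $M\ba 6/5\ba a$ has no $N$-minor, Lemma~\ref{2.2} forces $N\preceq M\ba 6/5/d$. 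Hence we are in case (ii) of the dichotomy stated just before the lemma, and Lemma~\ref{BAlemma2} applies: either $M$ contains configuration (A) of Figure~\ref{ABCDEfig}, or $M/d$ is \ffsc\ and every \ftv\ of $M/d$ is a $4$-fan with $c$ as its guts element. In the first case we are done by Lemma~\ref{AconfigBOOM} (noting that configuration (A) gives exactly outcomes (i)--(iv) here, via the dual), so we may assume the second alternative holds.

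Next I would follow the bowtie-string strategy used throughout: relabel Figure~\ref{band} so that $\{4,5,6\}$ and the surrounding triads/triangles become a bowtie string pointing one way, with the $d,c$-triad giving a handle on the other side, exactly as in the proof of Lemma~\ref{parti} but now contracting $d$ rather than deleting $a$. Take a right-maximal bowtie string in $M$ (or possibly in $M^*$, since the $N$-minor is being preserved by a contraction of $d$, which is dual to a deletion) starting from the relevant triangle and use Lemma~\ref{stringswitch} to move the deletions/contractions along the string while tracking the $N$-minor. The key structural input will be Lemma~\ref{moreteeth} applied to the ladder-like configuration that grows out of Figure~\ref{band}: since $M/d$ is \ffsc\ with its \ftv's controlled (every such is a $4$-fan with $c$ as guts element), repeated application of the orthogonality arguments in Lemmas~\ref{realclaim1}, \ref{claim2}, and \ref{bowwow} will produce either a new triad at each stage (building an enhanced quartic ladder or a ladder structure, giving outcome (ii) or (iii)), or a point where trimming the string directly yields an \ifc\ matroid with an $N$-minor (outcome (ii)), or where a short configuration like one of those in Figure~\ref{drossfigiinouv}, Figure~\ref{fign=2}, or Figure~\ref{goofy} appears, which by Lemma~\ref{beachtheend} (applied to $M$ or $M^*$) forces outcome (i) or the augmented-$4$-wheel outcome (iv).

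Concretely, the main steps in order: (1) derive $N\preceq M\ba 6/5/d$ and invoke Lemma~\ref{BAlemma2} to get either configuration (A)---handled by Lemma~\ref{AconfigBOOM}---or the controlled-\ftv\ structure on $M/d$; (2) relabel and take a right-maximal bowtie string (in $M$ or $M^*$ as dictated by whether we are pushing deletions or contractions), verifying via Lemma~\ref{bowwow} and orthogonality that successive elements are distinct and that the string is also a string in the relevant minor; (3) run the inductive "find a new triad" argument, as in Lemma~\ref{parti}'s sublemma~\ref{newsub} and Lemma~\ref{beachtheend}, using Hypothesis~VII at each stage to keep the relevant deletions \ffsc, and using Lemma~\ref{6.3rsv} and Lemma~\ref{rotorwin} to handle the quasi-rotor exceptions; (4) terminate: either the string wraps around (bowtie-ring win), or it embeds in a modified quartic ladder (ladder or enhanced-ladder win), or it is short and we hit an augmented $4$-wheel or a quick win. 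The hard part will be step (3): managing the case analysis for where the new $4$-cocircuit's fourth element lies---whether it is a genuinely new element continuing the ladder, or coincides with an earlier element (forcing a low-connectivity $\lambda\le 2$ contradiction or a wrap-around), or lies in $X$ or in the contract/delete sets $X,Y$ of the minor (forcing, via Lemma~\ref{stringybark} and \ref{stringswitch}, that some forbidden $N$-minor survives)---and in particular correctly identifying when the augmented-$4$-wheel outcome (iv) is the only possibility, which is where invoking \cite[Theorem~4.1]{cmoV} as in the proof of Lemma~\ref{beachtheend} becomes essential.
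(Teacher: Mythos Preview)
Your proposal misses the direct reduction that makes this proof short. The crucial observation you do not make is that the hypothesis ``$M\ba 6/5\ba a$ has no $N$-minor'' is exactly the input needed for Lemma~\ref{stringybark}. After relabelling $(4,5,6,a,u,v)$ as $(a_0,v_0,u_0,a_1,u_1,v_1)$, the structure in Figure~\ref{band} already exhibits a bowtie string $T_0,D_0,T_1,D_1,T_2$ in $M$, and $M\ba u_0,u_1/v_1\cong M\ba 6,a/5=M\ba 6/5\ba a$. So Lemma~\ref{stringybark} immediately yields that $M\ba u_0,\ldots,u_n$ has an $N$-minor while $M\ba u_0,\ldots,u_i/v_i$ and $M\ba u_0,\ldots,u_j/a_j$ do not, for all relevant $i,j$. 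Hypothesis~VII then gives that each $M\ba u_i$ is \ffsc. After a sublemma excluding the wrap-around bowtie $(\{a_n,u_n,v_n\},\{a_0,v_0,u_0\},\{x,u_n,a_0,v_0\})$, the paper simply invokes \cite[Lemma~10.1]{cmoVI}: since $n\ge 2$ and none of the winning outcomes hold, that lemma forces either $M\ba u_0,u_1/v_1$ to have an $N$-minor (contradicting the hypothesis) or $a_0=4$ to lie in a second triangle (contradicting the hypothesis that $\{4,5,6\}$ is the only triangle containing $4$).

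Your plan instead detours through $M/d$, Lemma~\ref{BAlemma2}, and a dual bowtie-string/ladder build-up via Lemma~\ref{beachtheend}. None of this is needed; Lemma~\ref{BAlemma2} plays no role in the actual proof of this lemma. More importantly, your plan is too vague at step~(2)---``in $M$ or $M^*$ as dictated''---and at step~(3) you never explain which string you would take or why the hypothesis on $M\ba 6/5\ba a$ propagates along it. The substantive content of the argument is the single observation $M\ba u_0,u_1/v_1\cong M\ba 6/5\ba a$ together with the call to \cite[Lemma~10.1]{cmoVI}; everything else is bookkeeping to rule out the wrap-around.
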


\begin{proof} Assume that the lemma fails. Then, by Lemma~\ref{preBA2}, $N\preceq M\ba 6,u$. 
As before,  we relabel  $4,5,6,a,u,$ and $v$ as $a_0,v_0,u_0,a_1,u_1,$ and $v_1$, respectively.   
Let $\{a_0,v_0,u_0\},\{v_0,u_0,a_1,v_1\},\{a_1,v_1,u_1\},\{v_1,u_1,a_2,v_2\},\dots ,\{a_n,v_n,u_n\}$ be a right-maximal bowtie string.  We show next that 

\begin{sublemma}
\label{notminor}
$M\ba u_0,u_1,\dots, u_i/v_i$ has no $N$-minor for all $i$ in $\{1,2,\dots ,n\}$ and 
$M\ba u_0,u_1,\dots, u_j/a_j$ has no $N$-minor for all $j$ in $\{2,3,\dots ,n\}$, but 
$M\ba u_0,u_1,\dots ,u_n$ has an $N$-minor.  
\end{sublemma}

Since $M\ba u_0,u_1/v_1 \cong M\ba 6,a/5$, the first matroid does not have an $N$-minor. It follows by 
 Lemma~\ref{stringybark}  that~\ref{notminor} holds.  

By \ref{notminor} and Hypothesis {  VII}, it follows that $M\ba u_i$ is \ffsc\ for all $i$ in $\{0,1,\ldots,n\}$. 
Establishing the next assertion will occupy most of the rest of the proof of Lemma~\ref{BAkiller}. 
\begin{sublemma}
\label{notbowtie}
$M$ has no bowtie of the form $(\{a_n,u_n,v_n\},\{a_0,v_0,u_0\},\{x,u_n,a_0,v_0\})$ with $x$  in $\{a_n,v_n\}$.  
\end{sublemma}

Suppose that $M$ does have such a bowtie. 
Then $a_0\neq u_n$.  
By possibly interchanging  the labels on $a_n$ and $v_n$, we may assume that $x=v_n$. 
Next we show the following. 

\begin{sublemma}
\label{23alt}
Either $\{2,3\} = \{u_n,v_n\}$; or $\{u_n,v_n,2,3\}$ is a cocircuit of $M$.  
\end{sublemma}

To see this, observe that 
both  $\{u_n,v_n,a_0,v_0\}$ and  $\{2,3,a_0,v_0\}$ are cocircuits of $M$. Taking their symmetric difference, we immediately get \ref{23alt}. 

We now eliminate the first possibility in \ref{23alt}. 

\begin{sublemma}
\label{23unvn}
$\{2,3\} \neq \{u_n,v_n\}$.  
\end{sublemma}

Suppose $\{2,3\} = \{u_n,v_n\}$. Then $a_n = 1$ and $M$ has 
$(\{a_0,u_0,v_0\}, \{u_0,v_0,a_1,v_1\}, \linebreak \{a_1,u_1,v_1\},\ldots, \{a_n,u_n,v_n\}, \{u_n,v_n,a_0,v_0\})$ as a ring of bowties. We now apply \cite[Lemma {  5.5}]{cmoVI} noting that, since $M$ does not have a bowtie-ring win, part (i) of that lemma does not hold. Moreover, by \ref{notminor}, part (iii) of that lemma does not hold. Thus part (ii) of that lemma holds, that is, $M\ba u_0,u_1,\dots ,u_n$ is \sfc\ but not \ifc, and every $4$-fan of it has the form $(\alpha, \beta,\gamma, \delta)$ where $\{\alpha, \beta, \gamma\}$ avoids $\{a_0,u_0,v_0,a_1,u_1,v_1,\ldots,a_n,u_n,v_n\}$, and $M$ has a cocircuit $\{\beta,\gamma, \delta,u_i\}$ for some $i$ in $\{0,1,\ldots,n\}$ and some $\delta$ in $\{a_i,v_i\}$. 

With a view to applying Lemma 10.4 of \cite{cmoVI}, we show next that $i \neq 0$. Assume the contrary. Then $M$ has $\{\beta,\gamma, \delta,u_0\}$ as a cocircuit and $\{u_0,c,a_1\}$ as a triangle where $\delta \in \{a_0,v_0\}$. By \ort, $c \in \{\beta,\gamma\}$. Then \ort\ between $\{\alpha, \beta, \gamma\}$ and $\{d,a_0,u_0,c\}$ implies that 
$d \in \{\alpha, \beta, \gamma\}$. Hence $M\ba a_0$ has a $5$-fan; a \cn. We conclude that $i \neq 0$.

{  We now apply  \cite[Lemma 10.4]{cmoVI} noting that we get a \cn\ using \ref{notminor} unless $i = 1$ and $\delta = a_i$.  In the exceptional case, \ort\ between $\{u_0,c,a_1\}$ and $\{a_1,u_1,\beta,\gamma\}$ implies that $\{u_0,c\}$ meets $\{\beta, \gamma\}$. By construction, $u_0\notin \{\al,\be,\ga\}$. Hence
$c \in \{\beta,\gamma\}$. By \ort\ with $\{a_0,v_0,c,d\}$, the triangle $\{\alpha, \beta, \gamma\}$ contains $\{c,d\}$, so $M\ba 4$ has a $5$-fan; a \cn. Thus \ref{23unvn} holds.}

By \ref{23alt}, we now know that $M$ has $\{2,3,u_n,v_n\}$ as a cocircuit.  
If $\{1,2,3\}$ avoids $\{a_0,u_0,v_0,a_1,u_1,v_1,\ldots,a_n,u_n,v_n\}$, then we can adjoin $\{2,3,u_n,v_n\}$ and $\{1,2,3\}$ to our right-maximal bowtie string to get a \cn. Thus $\{1,2,3\}$ meets $\{a_0,u_0,v_0,a_1,u_1,v_1,\ldots,a_n,u_n,v_n\}$. By \cite[Lemma {  5.4}]{cmoVI}, 
$\{1,2,3\} = \{a_j,u_j,v_j\}$ for some $j$ with $0 \le j \le n-2$. Certainly $j \neq 0$. Moreover, $j \neq 1$ otherwise the cocircuit $\{2,3,a_0,v_0\}$ contradicts Lemma~\ref{bowwow}.  
{  If $u_j\in\{2,3\}$, then $M\ba u_0,u_1,\dots ,u_n$ has $v_n$ in a $2$-cocircuit; a \cn\ to~\ref{notminor}.  
Thus $\{2,3\}=\{a_j,v_j\}$ and $\{2,3,u_n,v_n\}\btu\{v_{j-1},u_{j-1},a_j,v_j\}$, which is   $\{v_{j-1},u_{j-1},u_n,v_n\}$, is a cocircuit in $M$.  
Again $M\ba u_0,u_1,\dots ,u_n$ has $v_n$ in a $2$-cocircuit; a \cn. 
{  Thus~\ref{notbowtie} holds. 

We can now apply~\cite[Lemma~10.1]{cmoVI}.  Since $n\geq 2$, we conclude that either $M\ba u_0,u_1/v_1$ has an $N$-minor, or $M$ has $a_0$ in a triangle other than $\{a_0,u_0,v_0\}$.  The former option gives a \cn\ to~\ref{notminor}, and the latter gives a \cn\ to the assumptions of the lemma.}}
\end{proof}

\section{The proof of the main theorem}
\label{pomt}

In this section, as we shall see, it is quite straightforward to  assemble the parts from earlier sections to complete the proof of the main result.

\begin{proof}[Proof of Theorem~\ref{mainguy}]
Assume that the theorem fails.   
Theorem~\ref{killcasek} implies that Hypothesis~VII holds.   
Now $M$ has a bowtie $(\{1,2,3\},\{4,5,6\},\{2,3,4,5\})$ where $M\ba 4$ is \ffsc\ with an $N$-minor and $M\ba 1,4$ has no $N$-minor.  
Lemma~\ref{airplane} implies that $M\ba 4/5$ is \ffsc\ with an $N$-minor.  
By Lemma~\ref{ABCDE}, we know that $M$ contains (A), (B), or (C) in Figure~\ref{ABCDEfig}, that $M\ba 6$ is \ffsc, and that $\{4,5,6\}$ is the only triangle in $M$ containing $5$.  
Moreover, the elements in (A), (B), or (C) are all distinct except that $a$ may equal $1$ in (B) or (C).  

Suppose that $M$ contains the configuration in Figure~\ref{ABCDEfig}(C).  
Lemma~\ref{ccrider} implies that $M\ba 4/5,6$ does not have an $N$-minor.  
Since $M\ba 4$ is \ffsc\ with an $N$-minor and has $(1,2,3,5)$ and $(a,b,c,6)$ as $4$-fans, and $M\ba 1,4$ does not have an $N$-minor, we deduce that $M\ba 4/5\ba a$ has an $N$-minor.  
Then Lemma~\ref{killtobler2} gives a \cn.  
We conclude  that $M$ does not contain the configuration in Figure~\ref{ABCDEfig}(C).  

{  If $M$ contains the configuration in Figure~\ref{ABCDEfig}(A),   
then Lemma~\ref{AconfigBOOM} {  gives a \cn.}  
%implies that $M$ contains the configuration in Figure~\ref{Afiga}(J) 
%where $M/y_1$ is \ffsc, $M/5,8,y_1\ba 6/y_2$ has an $N$-minor,  and all of the elements in Figure~\ref{Afiga}(J) are distinct.  
%By Lemma~\ref{Jconfig}, we  deduce that $M$ contains the configuration in Figure~\ref{JIfiga} where all of the elements are distinct and $M/5,8,y_1\ba 6/y_2,y_3$ has an $N$-minor.  
%Then the dual of Lemma~\ref{Jkiller} gives a \cn\ unless $N\preceq M/4,5,6$. In the exceptional case, by Lemma~\ref{con45}, $N \preceq M\ba 1,4$. This \cn\  establishes that $M$ does not contain the configuration in Figure~\ref{ABCDEfig}(A).  
Hence we may assume that} $M$ does not contain either of the configurations in Figure~\ref{ABCDEfig}(A) or 
Figure~\ref{ABCDEfig}(C). Thus  $M$ contains the configuration in Figure~\ref{ABCDEfig}(B). Lemma~\ref{BAlemma} implies that $M$ contains the configuration in Figure~\ref{BAfiga}, where $M\ba 6$ is \ffsc\ and all of the elements are distinct except that $1$ may be $u$ {  or $\{1,2,3\}$ may equal $\{a_2,u_2,v_2\}$}.   
%By Lemma~\ref{preBA2}, we know that $M\ba 6,u$ has an $N$-minor.  
By Lemma~\ref{newtry},   $M$ contains the configuration in Figure~\ref{band} where $\{4,5,6\}$ is the only triangle containing $4$, and all of the elements are distinct except that $u_2$ may be $1$.   
If $M\ba 6/5\ba a$ has an $N$-minor, then Lemma~\ref{parti} gives a \cn.  Thus we may assume that $N \not \preceq M\ba 6/5\ba a$. 
Now $M\ba 4/5\cong M\ba 6/5$, so $M\ba 6/5$ is \ffsc\ with an $N$-minor. % and it has $(a,4,c,d)$ as a $4$-fan, so $M\ba 6/5/d$ has an $N$-minor and $M\ba 6/5\ba a$ does not.  
Using Lemma~\ref{BAkiller}, we get a  \cn\ that completes the proof of the theorem.   
\end{proof}

\section*{Acknowledgements} 
The authors thank Dillon Mayhew for numerous helpful discussions.

\end{document}